\numberwithin{equation}{section}
\newcommand{\bR}{{\mathbb R}}
\newcommand{\bZ}{{\mathbb Z}}
\newcommand{\bF}{{\mathbb F}}
\newcommand{\sQ}{\EuScript Q}
\newcommand{\csQ}{\check{\EuScript Q}}
\newcommand{\csigma}{\check{\sigma}}
\newcommand{\ctau}{\check{\tau}}
\newcommand{\crho}{\check{\rho}}
\newcommand{\cY}{\mathcal Y}
\newcommand{\Tree}{\EuScript T}
\newcommand{\TN}{T^{*}N}
\newcommand{\Tn}{T^{*}_{n} N}
\newcommand{\TtN}{T^{*}\tN}
\newcommand{\Ttn}{T^{*}_{\tn} \tN}
\newcommand{\tM}{\tilde{M}}
\newcommand{\tL}{\tilde{L}}
\newcommand{\tN}{\tilde{N}}
\newcommand{\tQ}{\tilde{Q}}
\newcommand{\tn}{\tilde{n}}
\newcommand{\tx}{\tilde{x}}
\newcommand{\ty}{\tilde{y}}
\newcommand{\tz}{\tilde{z}}
\newcommand{\tE}{\tilde{E}}
\newcommand{\tu}{\tilde{u}}
\newcommand{\inte}{int}
\newcommand{\vx}[1][\!]{\vec{x}^{\, #1}}
\newcommand{\vtx}[1][\!]{\vec{\tilde{x}}^{\, #1}}
\newcommand{\Chord}{{\EuScript X}}
\newcommand{\colim}{\operatorname*{colim}}
\newcommand{\id}{\operatorname{id}}
\renewcommand{\mod}{\operatorname{mod}}
\newcommand{\Disc}{{\mathcal R}}
\newcommand{\Discbar}{\overline{\Disc}}
\newcommand{\Ann}{{\mathcal C}}
\newcommand{\OC}{\mathcal {OC}}
\newcommand{\CO}{\mathcal {CO}}
\newcommand{\Wrap}{\EuScript{W}}
\newcommand{\Seidel}{\EuScript{S}}
\newcommand{\tWrap}{\tilde{\EuScript{W}}}
\newcommand{\tSeidel}{\tilde{\Seidel}}
\newcommand{\Hom}{\operatorname{Hom}}
\newcommand{\End}{\operatorname{End}}
\newcommand{\Action}{\mathcal{A}}
\newcommand{\Univ}{\mathcal{U}}
\newcommand{\Bimod}{\operatorname{P}}
\def\co{\colon\thinspace}
\newtheorem{thm}{Theorem}[section]
\newtheorem{cor}[thm]{Corollary}
\newtheorem{lem}[thm]{Lemma}
\newtheorem{prop}[thm]{Proposition}
\newtheorem{defin}[thm]{Definition}
\newtheorem{def-lem}[thm]{Definition-Lemma}
\theoremstyle{remark}
\newtheorem{rem}[thm]{Remark}
\newtheorem{example}[thm]{Example}
\newcommand{\superscript}[1]{\ensuremath{^{\textrm{#1}}} }
\renewcommand{\th}[0]{\superscript{th}}
\newcommand{\comment}[1]{}
\title[Nearby Lagrangians with vanishing Maslov class are homotopy equivalent]{Nearby Lagrangians with vanishing Maslov class are homotopy equivalent}
\author[M.~Abouzaid]{Mohammed Abouzaid} \date{\today}
\thanks{This research was conducted during the period the author served as a Clay Research Fellow. }
\begin{document}

\begin{abstract}
We prove that the inclusion of every closed exact Lagrangian with vanishing Maslov class in a cotangent bundle is a homotopy equivalence.  We start by adapting an idea of Fukaya-Seidel-Smith to prove that such a Lagrangian is equivalent to the zero section in the Fukaya category with integral coefficients.  We then study an extension of the Fukaya category in which Lagrangians equipped with local systems of arbitrary dimension are admitted as objects, and prove that this extension is generated, in the appropriate sense, by local systems over a cotangent fibre.  Whenever the cotangent bundle is simply connected, this generation statement is used to prove that every closed exact Lagrangian of vanishing Maslov index is simply connected.  Finally, we borrow ideas from coarse geometry to develop a Fukaya category associated to the universal cover, allowing us to prove the result in the general case.
\end{abstract}
\maketitle
\setcounter{tocdepth}{1}
\tableofcontents

\section{Introduction}
Arnol'd conjectured that every closed exact Lagrangian in the cotangent bundle of a closed manifold is Hamiltonian isotopic to the zero section.  The main purpose of this paper is to prove the following result:
\begin{thm}\label{thm:simply_connected}
If $N$ is a closed manifold, and $Q \subset \TN$ is a closed exact Lagrangian whose Maslov class vanishes, then the inclusion of $Q$ induces an isomorphism on fundamental groups.
\end{thm}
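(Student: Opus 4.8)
I will take as given the two principal Floer-theoretic inputs of the paper: that $Q$ is isomorphic to the zero section $N$ in the Fukaya category $\cF(\TN;\bZ)$, and that the extension $\cF^{ls}(\TN)$ of this category by exact Lagrangians equipped with finite rank local systems is generated by the cotangent fibre $F=T^{*}_{q}N$ together with its local systems, with $CW^{*}(F,F)\simeq C_{-*}(\Omega_{q}N;\bZ)$. Since $\TN$ deformation retracts onto the zero section, the homomorphism induced on $\pi_{1}$ by the inclusion $Q\hookrightarrow\TN$ coincides with the one induced by the restriction $p\co Q\to N$ of the bundle projection, so it suffices to show $p_{*}$ is an isomorphism. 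The plan is to settle the case $\pi_{1}(N)=1$ first, and then to reduce the general case to it by passing to the universal cover.

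\emph{Simply connected base.} Assume $N$ is simply connected; the goal is $\pi_{1}(Q)=1$. Then $A:=CW^{*}(F,F)\simeq C_{-*}(\Omega_{q}N;\bZ)$ is a connected dg algebra concentrated in non-positive degrees. For a finite rank $\bZ$-local system $E$ on $Q$ the $A$-module $M_{E}:=CW^{*}(F,(Q,E))$ is proper, since $F\cap Q$ is compact. Using the equivalence $Q\simeq N$ in $\cF(\TN;\bZ)$ and the vanishing of the Maslov class one checks that $H^{*}(M_{E})$ is free and concentrated in a single degree, where it necessarily has rank $\operatorname{rk}(E)$; because $A$ is connected and non-positively graded, a degree count then annihilates every higher $A_{\infty}$ operation on the minimal model of $M_{E}$, so $M_{E}$ is the trivial $A$-module of that rank. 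As $F$ generates $\cF^{ls}(\TN)$, the functor $CW^{*}(F,-)$ is cohomologically full and faithful, whence $(Q,E)\cong(N,\bZ^{\operatorname{rk}E})$ in $\cF^{ls}(\TN)$. Reinterpreting morphism complexes as Floer cohomology gives $H^{*}(Q;E)\cong H^{*}(Q;\bZ)\otimes_{\bZ}E_{q}$ for every finite rank $E$; comparing degree zero terms shows $\pi_{1}(Q)$ acts trivially on every such $E$, and then — applying this, say, to the regular representation of a finite quotient, and using that $\pi_{1}(Q)$ is perfect (as $H_{1}(Q)\cong H_{1}(N)=0$) and finitely presented (as $Q$ is closed) — one concludes $\pi_{1}(Q)=1$.

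\emph{The general case.} First, $p_{*}$ is onto: if $H=\operatorname{im}(p_{*})$ had finite index $d$ in $\pi_{1}(N)$, then $Q$ would lift to a closed exact Lagrangian $Q_{H}\subset T^{*}N_{H}$ mapping homeomorphically to $Q$, and since holomorphic strips on $\{F,Q\}$ lift uniquely to $T^{*}N_{H}$ and have connected boundary on $F$, the complex $CF^{*}(F,Q)$ would split as $\bigoplus_{g=1}^{d}CF^{*}(\tilde{F}_{g},Q_{H})$ over the sheets of the preimage of $F$; each summand has cohomology $\cong\bZ$ by the equivalence with the zero section in $T^{*}N_{H}$, while $HF^{*}(F,Q)\cong\bZ$, forcing $d=1$ (the infinite-index case is subsumed in what follows). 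For injectivity, pass to the universal cover $\tN\to N$, so that $T^{*}\tN\to\TN$ is the universal cover of $\TN$ carrying the $\pi_{1}(N)$-action by deck transformations; the preimage of $Q$ is a $\pi_{1}(N)$-invariant, properly embedded exact Lagrangian whose connected components are copies of the cover $\bar{Q}\to Q$ associated to $\ker(p_{*})$. These lifts are non-compact once $\pi_{1}(N)$ is infinite, so one first constructs, using coarse-geometric control of holomorphic curves near infinity, a Fukaya category adapted to $T^{*}\tN$ in which such a lift and the cotangent fibres of $\tN$ are objects and the fibres still generate; running the simply connected argument there — $\tN$ being simply connected — shows the lift is simply connected, i.e. $\ker(p_{*})=1$.

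The essential obstacle is precisely this last construction. In $T^{*}\tN$ holomorphic curves can escape to infinity, so neither the identification of Floer cohomology with cotangent fibres nor the generation statement — the two pillars of the argument above — is available off the shelf. The role of the coarse-geometric machinery is to impose uniform control at infinity, drawn from the bounded geometry of $\tN$ as a cover of a closed manifold, so that those statements, and with them the entire reduction, carry over to the universal cover.
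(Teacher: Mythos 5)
Your simply connected argument breaks down at the final group-theoretic step. What you have shown (granting the Floer-theoretic inputs) is that every \emph{finite rank} local system on $Q$ is trivial, i.e.\ that $\pi_{1}(Q)$ has no nontrivial finite-dimensional representations; via Mal'cev (finitely generated linear groups are residually finite) this is equivalent to $\pi_{1}(Q)$ having no nontrivial finite quotients. Adding that $\pi_{1}(Q)$ is perfect and finitely presented does not force triviality: Higman's group is an infinite, finitely presented, perfect group with no nontrivial finite quotients, hence with no nontrivial finite-dimensional representations over any field, so the inference ``trivial finite-rank local systems $+$ perfect $+$ finitely presented $\Rightarrow \pi_{1}(Q)=1$'' is invalid, and your proof does not rule out such a fundamental group. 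This is exactly the gap the paper's machinery is built to close: it admits local systems of \emph{arbitrary} (infinite) dimension and applies the triviality statement to the single local system $\bF_{2}[\pi_{1}(Q)]$ (Damian's group-ring system), whose triviality kills $\pi_{1}(Q)$ outright. Note also that at infinite rank the functor $CW^{*}(F,\_)$ is no longer fully faithful (the $T^{*}S^{1}$ example in the introduction), which is why Theorem \ref{thm:localising} is formulated in terms of large trivial systems $V\boxtimes F$ rather than $F$ itself; so one cannot repair your argument by simply ``allowing bigger $E$'' without reproving generation in that enlarged sense, which is the content of Theorem \ref{thm:localising}.

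In the general case your reduction hinges on a category for $\TtN$ containing the non-compact lift of $Q$, in which the fibres of $\TtN$ generate and Floer cohomology agrees with classical cohomology, and you explicitly leave this construction as ``the essential obstacle'' --- so the decisive step is not carried out. Moreover, where your sketch is specific it points away from what actually works: the paper's category $\tSeidel$ requires no new ``coarse-geometric control of holomorphic curves near infinity'' --- its morphism spaces are of finite-propagation type (direct sums over chords in the base, products over their lifts), and all operations are defined by lifting curves from $\TN$, so Gromov compactness downstairs suffices; the coarse geometry enters only through the algebraic shape of the morphism spaces. Crucially, the paper never proves (nor needs) a generation statement upstairs --- Remark 5.3 warns this would be genuinely problematic (e.g.\ symplectic cohomology of an infinite cover, Hochschild homology versus cohomology). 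Instead, Lemma \ref{lem:E-iso-complex-local-systems} is proved \emph{downstairs} using Theorem \ref{thm:localising}, applied to $\bF_{2}[\pi_{1}(Q)]$; this is transported by the pullback functor $\pi^{*}$, and the adjunction $H^{*}(\tQ;\tE)\cong H^{*}(Q;\pi_{*}\tE)$ together with the Floer/classical comparison (Lemma \ref{lem:floer_class_cover}) shows $\pi^{*}\bF_{2}[\pi_{1}(Q)]$ is trivial on $\tQ$, whence $\pi_{1}(\tQ)=\ker(p_{*})=1$. Surjectivity is likewise obtained by computing that $H^{0}$ of the full preimage of $Q$ in $\TtN$ has rank one (connectedness); your finite-index covering count does not address the infinite-index case, and the parenthetical ``subsumed in what follows'' is not justified by anything you construct.
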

In \cite{FSS2}, Fukaya, Seidel, and Smith proved, assuming certain conjectural facts about the Fukaya categories of cotangent bundles, that every such inclusion induces an isomorphism on homology when the Maslov index vanishes, and $N$ is orientable.  We shall explain a modified version of their proof in Appendix \ref{sec:FSS}.  
\begin{cor}
Under the assumptions of Theorem \ref{thm:simply_connected}, the inclusion of $Q$ is a homotopy equivalence.
\end{cor}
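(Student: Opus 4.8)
The plan is to deduce the corollary from Theorem~\ref{thm:simply_connected} together with the homological content of the comparison of $Q$ with the zero section, via Whitehead's theorem. Let $\iota \co Q \to N$ denote the composition of the inclusion $Q \hookrightarrow \TN$ with the projection $\TN \to N$; the projection is a homotopy equivalence, so $\iota$ represents the inclusion of $Q$ up to homotopy and it suffices to prove that $\iota$ is a homotopy equivalence. As $Q$ and $N$ are closed manifolds they have the homotopy type of finite CW complexes, so by Whitehead's theorem it is enough to show that $\iota$ is a weak homotopy equivalence. By Theorem~\ref{thm:simply_connected} and the isomorphism $\pi_1(\TN) \cong \pi_1(N)$, the map $\iota$ induces an isomorphism $\pi_1(Q) \xrightarrow{\ \cong\ } \pi_1(N) =: \pi$. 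Passing to universal covers, it then remains to check that $\tilde\iota \co \tQ \to \tN$ induces an isomorphism on integral homology: both spaces are simply connected, so by the Hurewicz and Whitehead theorems this forces $\tilde\iota$, and hence $\iota$, to be a weak homotopy equivalence.

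I would obtain the homology isomorphism of universal covers by reusing the universal-cover Fukaya category constructed for the proof of Theorem~\ref{thm:simply_connected}, in which $\tQ$ is identified, in an appropriate $\bZ[\pi]$-linear sense, with the zero section of $\TtN$; equivalently one works in the Fukaya category of $\TN$ with the objects $Q$ and the zero section each carrying the local system given by the regular $\bZ[\pi]$-module over its fundamental group. Testing this equivalence against a cotangent fibre --- against which the Floer complex of the zero section with such a local system is concentrated in a single degree and free of rank one over $\bZ[\pi]$ --- and using the identification of the Floer cohomology of the zero section with its singular cohomology, together with the compatibility of the relevant module-structure maps with $\iota$, one reads off that $\iota$ induces an isomorphism on cohomology with $\bZ[\pi]$-coefficients; dualizing and invoking Poincar\'e duality on the closed manifolds $Q$ and $N$ yields $H_*(\tQ;\bZ)\cong H_*(\tN;\bZ)$. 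It should be stressed that the isomorphism $H_*(Q;\bZ)\cong H_*(N;\bZ)$ coming from the integral comparison with the zero section does \emph{not} suffice for the step above when $\pi$ is infinite: a $\bZ$-homology isomorphism of closed manifolds inducing an isomorphism on $\pi_1$ need not be a weak equivalence, since the mapping cone of the induced map of universal covers can be a non-acyclic complex of $\bZ[\pi]$-modules that becomes acyclic after tensoring with $\bZ$.

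The main obstacle is precisely the one the later part of the paper is designed to handle: $\TtN$ is non-compact as soon as $\pi$ is infinite, so the generation statement and the $\bZ[\pi]$-linear identification of $\tQ$ with the zero section cannot be proved by the compact-target arguments used for the integral statement and instead require the coarse-geometric constructions. A further point to be verified is that the categorical equivalence is genuinely compatible with the geometric inclusion $\iota$, so that it computes $\iota_*$ on homology with $\bZ[\pi]$-coefficients rather than merely producing an abstract isomorphism between the homologies of $Q$ and $N$. Granting these inputs, the corollary follows from the Whitehead-theoretic argument above.
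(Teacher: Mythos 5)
Your argument is organised quite differently from the paper's. The paper's own proof is short: for orientable $N$ it combines the $\pi_1$-isomorphism of Theorem \ref{thm:simply_connected} with the integral cohomology isomorphism supplied by Appendix \ref{sec:FSS}, and appeals directly to the Whitehead and Hurewicz theorems; for non-orientable $N$ it passes to the orientation double cover, applies the orientable case there, and recovers the statement for $Q \subset \TN$ by a diagram chase on fundamental groups together with the invariance of higher homotopy groups under covers. You instead descend to universal covers and ask for an isomorphism on $H_*(\tQ;\bZ)$, to be produced from a $\bZ[\pi]$-linear comparison of $Q$ with the zero section. Your observation that a $\pi_1$-isomorphism together with an untwisted (co)homology isomorphism does not in general force a weak equivalence is correct, and it pinpoints exactly what the paper's brief appeal to Whitehead and Hurewicz leaves implicit; in that sense your route is the more scrupulous one, at the cost of invoking the machinery of Section \ref{sec:fukaya-categ-cover} in a place where the paper does not.

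The genuine gap in your plan is the coefficient ring. Everything in the body of the paper --- the enlarged category $\Seidel(M)$, Theorem \ref{thm:localising}, the cover category $\tSeidel(M)$ and Lemma \ref{lem:floer_class_cover} --- is constructed over $\bF_2$; the only integral statements are those of Appendix \ref{sec:FSS}, which concern the ordinary wrapped category of $\TN$ and its finite covers. ``Reusing'' the universal-cover category therefore yields at best $H^*(\tQ;\bF_2)\cong H^*(\tN;\bF_2)$, i.e.\ cohomology with $\bF_2[\pi]$-coefficients, which cannot feed the Hurewicz/Whitehead step: that step needs integral homology of the universal covers. To execute your argument you would have to redo the local-system and cover constructions over $\bZ$ (relatively spin Lagrangians and signs, as indicated in the introduction; that $Q$ is relatively spin does follow from the Wu-formula argument of Appendix \ref{sec:FSS}), which is plausible but is genuine additional work not contained in the paper. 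Two smaller points: the compatibility of the resulting isomorphism with $\iota$, which you defer, is not optional --- an abstract isomorphism of $\bZ[\pi]$-modules does not imply that $\tilde{\iota}_*$ is one, since such modules need not be Hopfian --- and the dualization step requires $w_1$-twisted Poincar\'e duality (or a prior passage to orientation covers, as in the paper) when $N$ is not orientable.
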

\begin{proof}
The Whitehead and Hurewicz  theorems imply that a map which induces an isomorphism on fundamental groups and on cohomology is a homotopy equivalence, proving the result whenever $N$ is orientable.  Otherwise, we pass to the orientation cover $\tilde{N}$ of $N$, and let $\tQ$ denote the inverse image of $Q$ in $\TtN$, to which we can apply the argument for oriented bases, and conclude that the inclusion $\tQ \subset   \TtN$ is a homotopy equivalence.  In particular, $ \tQ  $ is connected, so the composition of the homomorphisms
\begin{equation}
  \pi_{1}(Q) \to \pi_{1}(\TN) \to \bZ_{2} 
\end{equation}
is surjective, where the second homomorphism is the one associated to the cover $\TtN $.  By considering the commutative diagram
\begin{equation}
  \xymatrix{ 1 \ar[r] &   \pi_{1}(\tQ)  \ar[r] \ar[d]^{\cong} & \pi_{1}(Q) \ar[r] \ar[d] &  \bZ_{2} \ar[r]  \ar[d]^{=} & 1  \\
1 \ar[r] &   \pi_{1}(\TtN)  \ar[r] \ar[r] & \pi_{1}(\TN) \ar[r] &  \bZ_{2} \ar[r]  & 1}
\end{equation}
we conclude that the inclusion of $Q$ in $\TN$ induces an isomorphism on fundamental groups.   Using the fact that the higher homotopy groups are invariant under passage to a cover, and that they are isomorphic for $\TtN$ and $\tQ$, we conclude from the Whitehead theorem that the inclusion of $Q$ in $\TN$ is a homotopy equivalence.
\end{proof}

Let $M$ be a Liouville manifold, and write $CW^{*}(L)$ for the self-Floer complex of some exact Lagrangian $L$.  As this is an $A_{\infty}$ algebra, it has a Hochschild homology group $HH_{*}(CW^*(L))$, which is the source of a map 
\begin{equation}
H^{*}(\OC) \co  HH_{*-n}( CW^{*}(L) )  \to SH^{*}(M)
\end{equation}
constructed in \cite{generate}, whose target is symplectic cohomology.  We say that $L$ \emph{resolves the diagonal} if the identity lies in the image of this map.  The main result of \cite{generate} is that this condition implies that every exact Lagrangian in $M$ which, away from a compact set, is a cone on a Legendrian manifold of the ideal contact boundary of $M$, can be obtained from $L$  by taking cones and summands; i.e. that  $L$ split-generates the wrapped Fukaya category.

In this paper, we shall enlarge the wrapped Fukaya category to a category we denote $\Seidel(M)$ whose objects are Lagrangians equipped with $\bF_{2}$ local systems of arbitrary dimension (the local systems are not equipped with any topology).  By dimension, we mean the cardinality of a basis.  We shall write $CW^{*}(E^1, E^2)$ for the morphism spaces in this category between two different objects, and $CW^{*}(E)$ for the endomorphism algebra of a single object.   The main technical result we shall prove is an extension of Theorem 1.1 of \cite{generate}:
\begin{thm} \label{thm:localising}
If $L$ resolves the diagonal, then trivial local systems over $L$ split-generate $\Seidel(M)$.  More precisely, trivial local systems over $L$ of a given dimension split-generate the subcategory of $\Seidel(M)  $ consisting of local systems of equal or lesser dimension.
\end{thm}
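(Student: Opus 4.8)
The plan is to transplant the proof of Theorem~1.1 of \cite{generate}, that $L$ split-generates the wrapped category, into the enlarged category $\Seidel(M)$, carrying a local system along every Lagrangian other than $L$ itself. Write $\cA = CW^{*}(L)$ for the wrapped Floer algebra of $L$ equipped with its trivial rank-one local system. Since $L$ resolves the diagonal, fix once and for all a Hochschild cycle $b$ in $CC_{*}(\cA)$ with $\OC([b]) = 1 \in SH^{0}(M)$; this cycle is the only input from the hypothesis, and everything downstream is formal manipulation of the geometric operations together with one cobordism argument.

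The first task is to record that the operations used in \cite{generate} persist in $\Seidel(M)$. The open--closed map, the bimodule structure maps, and the operation on moduli of annuli (equivalently, disks carrying an interior marked point with a prescribed asymptotic) that implements the Cardy relation are all defined by counting the same pseudoholomorphic curves; the single new feature is that each boundary arc mapped to a Lagrangian $K$ bearing a local system $\cE$ contributes a parallel-transport isomorphism between fibres of $\cE$, and these compose consistently under the codimension-one degenerations of the compactified moduli spaces. Working over $\bF_{2}$ removes all sign bookkeeping, so this step is routine; its output is that, for any object $E$ of $\Seidel(M)$, the complex $CW^{*}(L,E)$ is a left $\cA$-module and $CW^{*}(E,L)$ a right $\cA$-module, and the open--closed formalism relating them to $SH^{*}(M)$ holds verbatim.

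The heart of the matter is the Cardy relation with coefficients. Using $b$ to decorate the auxiliary inputs yields, for every object $E$, a chain map
\begin{equation}
  \Phi_{b} \co CW^{*}(E) \longrightarrow CW^{*}(E, L) \otimes_{\cA} CW^{*}(L, E),
\end{equation}
and composing with the tautological module-composition pairing $\nu$ in the opposite direction produces, by the same one-dimensional moduli-space argument as in \cite{generate} (now decorated by parallel transport along the $K$-boundary), a homotopy $\nu \circ \Phi_{b} \simeq [\OC(b)] \cdot \id_{CW^{*}(E)} = \id_{CW^{*}(E)}$. Thus $E$ is a homotopy retract of the object $\tilde{E}$ of $\Tw \Seidel(M)$ characterized by natural isomorphisms $CW^{*}(X,\tilde{E}) \simeq CW^{*}(X,L) \otimes_{\cA} CW^{*}(L,E)$. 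Resolving the $\cA$-module $CW^{*}(L,E)$ by the bar construction realizes $\tilde{E}$ as a (generally infinite) twisted complex whose terms are local systems over $L$; the underlying graded vector space of $CW^{*}(L,E)$ is a sum of $\dim\cE$ copies of $CW^{*}(L,K)$, and tracking this through the construction lets one arrange that only local systems over $L$ of dimension at most $\dim\cE$ appear, with the filtration and convergence conditions that legitimize such infinite twisted complexes checked as in \cite{generate}. Hence $E$ lies in the split-closure of the trivial local systems over $L$ of dimension $\le \dim\cE$; imposing $\dim\cE \le d$ gives the refined statement, and the unrestricted case is the first assertion.

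The step I expect to be the main obstacle is the Cardy relation of the third paragraph: one has to revisit the boundary strata of the moduli of annuli (and of disks with an interior constraint) used in \cite{generate} and verify that the parallel-transport contributions are compatible with every degeneration --- in particular the one that caps off the $L$-boundary and converts the inserted cycle $b$ into the action of $\OC(b)$ on $CW^{*}(E)$. The remaining delicate point is the dimension bookkeeping: ensuring that the repackaging of the necessarily infinite resolution does not inflate the dimension of the local systems over $L$ that occur, since it is exactly this quantitative refinement --- not the bare generation statement --- that the later arguments about simple connectivity and the universal cover will require.
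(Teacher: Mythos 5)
Your outline follows the same skeleton as the paper (decorate the coproduct with a Hochschild cycle witnessing the resolution of the diagonal, use a Cardy-type relation to exhibit $E$ as a retract of a bar-type resolution built from $L$, then track dimensions), but it has a genuine gap exactly where this paper has to go beyond \cite{generate}. Your map $\Phi_{b}$ cannot land in $CW^{*}(E,L)\otimes_{\cA} CW^{*}(L,E)$ when the fibres of $E$ are infinite dimensional: each rigid disc contributing to the coproduct yields a parallel-transport map $E_{y(0)}\to E_{x(1)}$, which is an isomorphism of fibres and in particular not of finite rank, so it lies in $\bigoplus \Hom\bigl(E_{y(0)},E_{x(1)}\bigr)$ but not in the (underived or derived) tensor product, whose elements are finite-rank homomorphisms. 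For the same reason your characterisation $CW^{*}(X,\tilde{E})\simeq CW^{*}(X,L)\otimes_{\cA}CW^{*}(L,E)$ fails for non-compact test objects $X$; compare the finiteness hypotheses in Lemma \ref{lem:factoring_trivial_complexes}. Since the content of the theorem beyond \cite{generate} is precisely the infinite-dimensional case (the application uses the group-ring local system $\bF_{2}[\pi_{1}(Q)]$), this is not a technicality. The paper's replacement is the completed bimodule $\Bimod_{L}(E)$ of Definition \ref{def:bimodule_non-closed}, and correspondingly the resolution is built from the trivial local systems $CW^{*}_{\geq r}(E,L)\boxtimes L$ over $L$ rather than from tensor products; the Cardy relation you describe is then Proposition \ref{prop:commutative_diagram}, proved for this completed target.

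Second, exhibiting $\cY_{E}$ as a retract of an infinite bar resolution does not yet give split-generation, which requires $\cY_{E}$ to be a summand of a \emph{finite} twisted complex; your appeal to ``filtration and convergence conditions as in \cite{generate}'' does not supply this in the enlarged category. The paper's mechanism is the family of action-truncated finite twisted complexes $\Univ_{L}^{n}(E)$ of Section \ref{sec:twist-compl}, together with Lemma \ref{lem:evaluation_is_composition}, which shows that the class hitting the identity of $HW^{*}(E)$ lives in $\colim_{n}\Hom_{\mod-\Seidel(M)}(\cY_{E},\Univ_{L}^{n}(E))$ and hence at some finite stage $n$; only then is $\cY_{E}$ a summand of an honest iterated cone of local systems over $L$, and the dimension bound follows because those local systems have fibres the action-filtered pieces $CW^{*}_{\geq r}(E,L)$, of dimension at most $\dim E$. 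These two devices --- the completed bimodule and the action-filtered finite complexes --- are what your proposal omits, and they are the new work of Section \ref{sec:bimod-twist-compl}.
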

\begin{rem}
In his proof of Theorem \ref{thm:simply_connected} for odd dimensional spheres which appeared in \cite{damian}, Damian was first to consider Floer cohomology with coefficients in the infinite local system which corresponds to the group ring of $\pi_{1}(N)$.  This local system will also be used in our proof of  Theorem \ref{thm:simply_connected}.
\end{rem}
\begin{rem}
Except for Appendix \ref{sec:FSS} which proves the statement that the inclusion of $Q$ in $\TN$ induces an isomorphism on cohomology, this entire paper is written over the field with two elements, so that, unadorned, the symbols $\otimes$ and $\Hom$ will stand for the tensor product and maps of vector spaces over $\bF_{2}$, and all cochain and cohomology groups, whether coming from Floer theory or classical constructions are taken with such coefficients. The only modification that would be required  for our statements to hold over the integers is the requirement that the Lagrangians be relatively spin, together with an appropriate sprinkling of signs.  Such a modification is not required for our purpose.
\end{rem}
In the finite rank case, Theorem \ref{thm:localising} asserts that a local system of a given finite rank lies in the category split-generated by the trivial local system on $L$ of equal rank.  However, since any trivial local system of finite rank is quasi-isomorphic to a (finite) direct sum of copies of $L$ equipped with the trivial local system of rank $1$, which is the object of $\Seidel(M)$ corresponding to the Lagrangian $L$.  We conclude  that $L$ itself split-generates the subcategory of $ \Seidel(M) $ consisting of local systems whose dimension is finite, which gives a minor strengthening of the main result proved  in \cite{generate}.

If the dimension of a trivial system is some infinite cardinal number, then it is not in general quasi-isomorphic to a direct sum of copies of $L$, and the functor
\begin{equation}
 \xymatrix@C40pt{ \Seidel(M) \ar[r]^>>>>>>>>>{CW^{*}(L, \_ )} &   \mod-CW^{*}(L) }
\end{equation}
may not be fully faithful as the following example shows:
\begin{example}
  Consider $M=T^* S^1$, and let $L$ be the cotangent fibre at a point, which resolves the diagonal by the results of \cite{fibre-generate}, and whose Floer cochain algebra  is isomorphic to the group ring of the integers.  This group ring also gives rise to a local system   $\bF_{2}[t,t^{-1}]$ on $S^1$ whose monodromy around a counter-clockwise loop is multiplication by $t$.   The reader may easily check that  the Floer cohomology from $L$ to this local system is again isomorphic to the group ring as a right module.  If $CW^{*}(L, \_)$ were a fully faithful embedding, the fibre would be quasi-isomorphic  to this local system, which contradicts the fact that the Floer cohomology in the other direction is the dual module $\Hom( \bF_{2}[t,t^{-1}] , \bF_{2}  ) $.   Note that the only fact we used here is the  contractibility of the universal cover, so that one can perform the same computation for any aspherical manifold.
\end{example}

The proof of Theorem \ref{thm:localising} is given in Section \ref{sec:bimod-twist-compl}, where it is reduced to Lemma \ref{lem:evaluation_is_composition} and Proposition \ref{prop:commutative_diagram} which respectively assert the existence of a certain map of bimodules, and the commutativity of a diagram.  Once the correct extension of the wrapped Fukaya category has been constructed (as we shall do in the next section), one must adapt both the algebraic and geometric methods developed in \cite{generate} while keeping track of local systems throughout.  The algebraic part is discussed in Section \ref{sec:bimod-twist-compl}, and requires a bit of work for local systems supported on non-closed Lagrangians.  The geometric part of the construction is significantly easier, and we shall explain it in Section \ref{sec:constr-struct-maps}.

The proof of Theorem \ref{thm:simply_connected}  is relegated to Section \ref{sec:fukaya-categ-cover}, but we shall use the remainder of this introduction to indicate the ideas.  The main point is that any local system $E$ on a closed exact Lagrangian $Q \subset \TN$ defines a local system on $N$ whose fibres are the Floer cohomology groups $HW^{*}(\Tn, E) $ for varying fibres.  One of the strategies pursued by Fukaya and Smith in order to prove the results that appear in \cite{FSS2} was to produce such a local system using what is called \emph{Family Floer cohomology} by choosing perturbations that locally eliminate all singularities of the projection $Q \to N$.

Instead, we use homological algebra to produce such a local system (see, in particular Lemma \ref{lem:E-iso-complex-local-systems}), and Theorem \ref{thm:localising}  to show that this functor from local systems on $Q$ to local systems on $N$ is a cohomologically fully faithful embedding.  More precisely, applying  Theorem \ref{thm:localising}, together with the proof that a cotangent fibre resolves the diagonal,  implies that we obtain such a cohomologically fully faithful embedding by considering categories of such local systems equipped with morphisms defined using Floer cohomology.  Via the correspondence between Floer and ordinary cohomology (see Appendix \ref{sec:floer-morse-class}), we conclude that the category of local systems on $Q$ whose morphism spaces are
\begin{equation}
  \Hom(E^1,E^2) = H^{*}(Q, \Hom(E^1,E^2)),
\end{equation}
also embeds in the category of local systems on $N$.

If $N$ is simply connected, the proof is essentially complete:  the only local systems on $N$ are trivial, and this category is not rich enough to admit an embedding from the category of local systems on a non-simply connected manifold.  The precise result that we prove is given in Lemma \ref{lem:local_system_iso_system_on_base}.  This shows that every closed exact Lagrangian in a simply connected cotangent bundle is also simply connected.

\begin{rem}
Whenever $N$ is not simply connected, the proof of Theorem \ref{thm:simply_connected} will require constructing a Fukaya category associated to the universal cover of $\TN$. Even when this cover has finite type, the category  we shall  construct differs from the usual wrapped Fukaya category; this can be seen most easily by noting that the universal cover of $T^* S^1$ is symplectomorphic to the plane $\bR^2$ with its standard symplectic structure, and all Lagrangians in $\bR^2$  have vanishing wrapped Floer cohomology groups.  This later statement is familiar for closed Lagrangians as they may be displaced by Hamiltonian isotopies, but also holds for non-closed Lagrangians as a consequence of the vanishing of symplectic cohomology.  On the other hand, the category we shall assign to $T^* \bR$ will at least have the fibre and the zero section as non-vanishing objects. 

The model we shall use is related to ideas that have appeared in coarse geometry, in particular the notion of \emph{finite propagation} (see e.g. \cite{roe} and Example \ref{ex:all-fibres}). 
\end{rem}

\subsection*{Notation and conventions}
There are no new moduli spaces introduced in this paper which have not already appeared in \cite{generate}.  All new ideas involve using these moduli spaces for constructions which have a more infinite flavour.  Therefore, most details about the  families of Cauchy-Riemann operators satisfying the correct properties are suppressed, and we focus  on defining the desired algebraic structures from moduli spaces of curves.  In particular, the expression \emph{pseudo-holomorphic curve} stands for a solution to the appropriate Cauchy-Riemann operator.

\subsection*{Acknowledgments}
Comments by Shmuel Weinberger and Kate Ponto during my early attempts to prove these results helped me understand that there should be a Fukaya category associated to a universal cover which would allow the correct proof of Theorem \ref{thm:simply_connected} for a  simply connected base to extend almost immediately to the general case.   

While trying to find such a proof, the algebraic aspects of this paper evolved significantly.  I am most grateful to Andrew Blumberg, Amnon Neeman, and Dima Orlov for answering numerous questions which helped me correct various misconceptions.  I would like to thank Ivan Smith for many conversations we had about this, and related problems, and for helpful comments on an early draft.  Finally, patient comments from anonymous referees have been greatly helpful in producing a conceptually clearer proof, and improving the exposition.

\section{Extending the Fukaya category}
Recall that a Liouville manifold is a smooth manifold  $M$ equipped with a $1$-form $\lambda$ whose differential $\omega$ is a symplectic form and which satisfies the following additional property:   there exists a codimension $0$ compact submanifold with boundary $M^{in} \subset M$ such that $\lambda$ restricts to a contact form on $\partial M^{in} $ and the complement admits a diffeomorphism
\begin{equation} \label{eq:infinite_end_contact}
M -  \inte(M^{in}) \cong   [1, +\infty) \times \partial M^{in},
\end{equation}
which takes $\lambda$ to $r (\lambda| \partial M^{in} ) $  where $r$ is the coordinate on $[1,+\infty) $.  We shall say that $  [1, +\infty) \times \partial M^{in} $  is the infinite end of $M$, and write $\psi^{\rho}$ for the time-$\log(\rho)$ Liouville flow on $M$.   We assume that the Reeb flow associated to $\lambda$ is generic is the sense that all Reeb orbits are non-degenerate; this can be achieved by a small perturbation of $\partial M^{in}$.

The geometric Lagrangians we shall consider are exact Lagrangians $L \subset M$ such that the following additional condition holds: If we write $L^{in}  $ for the intersection of $L$ with $M^{in}$, then
 \begin{equation} \label{eq:boundary_Legendrian} \parbox{35em}{$\partial L^{in}$ is Legendrian, and the complement $L - L^{in}$ is given by the product $[1,+\infty) \times \partial L^{in}  $ in the coordinates of Equation \eqref{eq:infinite_end_contact}.} \end{equation}
We may equivalently require that $\lambda$ vanish away from $L^{in}$.   In addition, we also assume that all Reeb chords with endpoints on $\partial L^{in}$ are non-degenerate, which can be achieved by a small perturbation of $L$, preserving the Legendrian boundary condition.

\subsubsection{Auxiliary choices} 
We fix a generic Hamiltonian function $H \co M \to \bR$ which agrees with $r^2$ along the infinite end of $M$.  Given a pair of Lagrangians $(L^0, L^1)$, we define
\begin{equation}
  \label{eq:chords}
  \Chord(L^0, L^1)
\end{equation}
to be the set of time-$1$ Hamiltonian flow lines of $H$ which start on $L^0$ and end on $L^1$.   These are maps
\begin{equation}
  x \co [0,1] \to M
\end{equation}
whose tangent vector is the Hamiltonian vector field associated to $H$, and which take $0$ to $L^0$ and $1$ to $L^1$.   In particular, if $L^0=L^1$, every critical point of the restriction of $H$ gives such a chord, though there might be others.

We choose a primitive for the restriction of $\lambda$ to every Lagrangian.  If  $f^0$ and $f^1$ are respectively primitives on $L^0$ and $L^1$, we then assign a real-valued \emph{action} to each chord $x \in \Chord(L^0, L^1)$:
\begin{equation}
  \Action(x) = \int - x^{*}(\lambda) + H \circ x \, dt + f^1(x(1)) - f^{0}(x(0)). 
\end{equation}
Our geometric conditions imposed on Lagrangians (and on the Hamiltonian) imply
\begin{lem}[See Lemma 3.1 of \cite{fibre-generate}] \label{lem:properness_chords}
For each real number $a$, the set $  \Chord_{\geq a}(L^0, L^1) $  of chords whose action is bounded below by $a$ is compact.  For a generic $C^{\infty}$ small perturbation, of either $L^0$ or $L^1$, the sets $  \Chord_{\geq a}(L^0, L^1) $  are finite.  \qed
\end{lem}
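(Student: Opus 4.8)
The plan is to reduce the lemma to one a priori estimate controlling how far into the infinite end an action-bounded chord can travel; once that estimate is available, compactness and finiteness follow formally. This is the argument of Lemma 3.1 of \cite{fibre-generate}.

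I would first analyse chords near infinity. Along the infinite end, where $H = r^2$, a short computation with $\omega = d(r\alpha)$ for $\alpha = \lambda|_{\partial M^{in}}$ shows that $X_H$ is tangent to the hypersurfaces $\{r = \mathrm{const}\}$, where it is a multiple of the Reeb vector field of $\alpha$; in particular $dr(X_H) = 0$, so $r$ is constant along any portion of a chord that lies in the infinite end. Since $\{t : r(x(t)) > 1\}$ is open in $[0,1]$ and $r \circ x$ is constant on each of its connected components, continuity forces that set to be either empty or all of $[0,1]$. Hence every $x \in \Chord(L^0, L^1)$ either has image in the compact piece $M^{in}$, or sits at a single level $\{r = R\}$ with $R > 1$, where it projects to a Reeb chord of length $2R$ from $\partial L^0$ to $\partial L^1$. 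For a chord of the second type, $\lambda$ vanishes on $L^0$ and $L^1$ near infinity, so $f^0$ and $f^1$ take there only the finitely many values attained on $\partial L^0$ and $\partial L^1$; using $H \equiv R^2$ and the above description of $X_H$ along the chord, one computes
\[
\Action(x) = -R^2 + f^1(x(1)) - f^0(x(0)) .
\]
Thus $\Action(x) \geq a$ forces $R^2 \leq C - a$ for a constant $C$ depending only on the chosen primitives, and so $\Chord_{\geq a}(L^0, L^1)$ is contained in the compact set $\{r \leq R_0\}$ for a suitable $R_0 = R_0(a)$.

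I would then deduce compactness. Chords solve $\dot x = X_H(x)$, and those with image in $\{r \leq R_0\}$ are uniformly Lipschitz, hence form a compact family by the Arzel\`a--Ascoli theorem, since a $C^0$-limit of solutions is again a solution and the conditions $x(0) \in L^0$, $x(1) \in L^1$ are closed. The action is continuous on this family --- an integral over the chord together with evaluations of the continuous primitives at the endpoints --- so $\Chord_{\geq a}(L^0, L^1)$ is a closed, hence compact, subset. For finiteness I would pass to the fixed-endpoint description $\Chord(L^0, L^1) \cong L^0 \cap (\phi^1_H)^{-1}(L^1)$ given by $x \mapsto x(0)$, where $\phi^1_H$ denotes the time-$1$ Hamiltonian flow of $H$ (complete, since $H$ is quadratic at infinity); this is an intersection of two exact Lagrangians. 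A generic $C^\infty$-small Hamiltonian perturbation of $L^1$ supported in $\inte(M^{in})$ makes this intersection transverse in the interior by the usual argument for a pair of Lagrangians; beyond $\{r = 1\}$ the intersection points are the level-$R$ chords found above, and transversality there is precisely the non-degeneracy of the associated Reeb chords, which we have assumed. Hence all chords are isolated, and a compact set of isolated points is finite.

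The hard part is the estimate of the first step --- ruling out escape of action-bounded chords to infinity and pinning down the exact dependence of $\Action$ on the radial coordinate. The remaining steps are soft point-set topology together with the standard transversality package for Lagrangian intersections.
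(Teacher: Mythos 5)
Your proposal is correct and is essentially the argument the paper relies on: the paper gives no proof here beyond citing Lemma 3.1 of \cite{fibre-generate}, whose proof is exactly your scheme (on the conical end $X_{H}$ is $2r$ times the Reeb field, so chords are either contained in $M^{in}$ or sit at a fixed level $r=R$ where $\Action = -R^{2} + f^{1}-f^{0}$, giving the a priori bound, then Arzel\`a--Ascoli plus non-degeneracy). The only points worth stating with a little more care are the sign convention for $X_H$, which is what makes the action of level-$R$ chords equal $-R^2+O(1)$ rather than $+3R^2+O(1)$, and the fact that non-degeneracy of a level-$R$ chord uses both the assumed non-degeneracy of the underlying Reeb chord and the strict monotonicity of $h'(r)=2r$ to handle the radial direction.
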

\begin{rem}
  The reader might want to note that our conventions on actions differ by a sign from those used, for example, by Abbondandolo and Schwarz in \cite{ASchwarz}.  With the choices we make, the differential strictly \emph{raises action}, and all other operations raise action up to a possible additive constant coming from the inhomogeneous nature of the Cauchy-Riemann equation we use.  
\end{rem}

In order to enhance Floer cohomology to a theory defined over integrally graded complexes, we first assume that  $ 2 c_{1}(M) $ vanishes, and consequently fix a quadratic complex volume form $\eta$ on $M$.  Given a Lagrangian  $L \subset M$ we obtain a phase map
\begin{equation}
  L \to S^{1}
\end{equation}
whose value at a point $x$ is obtained by evaluating $\eta/|\eta|$ on a basis of $T_{x}L$.   Since $\eta$ is a quadratic volume form,  its value on a basis does not change upon reordering the elements, so that this map is well defined, and does not, in particular, depend on choosing an orientation of $L$.

The vanishing of the associated cohomology class in $H^{1}(L,\bZ)$ is the obstruction to $L$ defining an object of the $\bZ$-graded Fukaya category of $M$ with respect to the chosen volume form.  Whenever the phase map defines a trivial cohomology class, it factors through $\bR$.  Lagrangians equipped with such a factorisation are called \emph{graded} (see Section 12 of \cite{seidel-book}).   Choosing a grading on both $L_0$ and $L_1$ assigns a \emph{Maslov index} which we denote $|x|$ to each chord $x \in \Chord(L^0, L^1)$. 

\subsection{Wrapped Floer cohomology} \label{sec:wrapp-floer-cohom}

\begin{defin}
The objects of $\Seidel(M)$ are pairs $(E,L)$ where $E$ is a local system of chain complexes on an exact graded Lagrangian  $L$ satisfying Conditions \eqref{eq:boundary_Legendrian}.
\end{defin}
\begin{rem}
Recall that a local system on $L$ is the assignment of a vector space $E_{x}$ to every point $x \in L$ (the fibre at $x$), and of a \emph{parallel transport} map
\begin{equation}
E_{x} \to E_{y}
\end{equation}
to every homotopy class of paths starting at $x$ and ending at $y$, such that the map associated to the concatenation of two paths (with a common endpoint) agrees with the composition of their associated parallel transport maps.  Two local systems are isomorphic if there are isomorphisms of all fibres which commute with parallel transport maps, and the set of local systems up to isomorphism agrees with the set of representations of the fundamental group of $L$.  The isomorphism between these two sets depends on choosing a basepoint $x \in L$, and the map in one direction assigns to a local system its \emph{monodromy representation}
\begin{equation}
  \pi_{1}(L,x) \to \End(E_{x}).
\end{equation}
A local system of complexes is a collection $E_{i}$ of local systems indexed by the integers, together with maps of local systems $\delta \co E_{i} \to E_{i+1}$ which square to $0$.  We say that such a complex is bounded if $E_{i}$ vanishes whenever $i \ll 0$ and $0 \ll i$.
\end{rem}
Let $(E^{0}, L^0)$ and $(E^{1}, L^{1})$ be two complexes of local systems.  We shall define a  Floer complex
\begin{equation}
  CW^{*}(E^{0}, E^{1})
\end{equation}
which generalises the usual wrapped Floer complex that we would obtain by specialising to the case each local system $E^i$ is trivial of rank one.  As a graded vector space we have
\begin{equation} \label{eq:wrapped_complex}
  CW^{k}(E^{0}, E^{1})  = \bigoplus_{x \in \Chord(L^0, L^1)}  \Hom^{k-\deg(x)}(  E_{x(0)}^{0},   E_{x(1)}^{1}).
\end{equation}
In the right hand side, $ \Hom^{k-\deg(x)}(  E_{x(0)}^{0},   E_{x(1)}^{1}) $ is the space of linear maps from $  E_{x(0)}^{0}$ to $ E_{x(1)}^{1} $ of degree $ k-\deg(x) $.

One particularly useful example will come from studying trivial local systems.  Given a chain complex $V$ (of arbitrary dimension), we write $V \boxtimes L$ for the trivial local system over $L$ with fibre $V$, and omit $V$ altogether when it has rank $1$.  It is useful to record some immediate consequences of this definition in the presence of trivial local systems:
\begin{lem} \label{lem:factoring_trivial_complexes}
For any pair of chain complexes $U_0$ and $U_1$, and Lagrangians $L^0$ and $L^1$, we have an isomorphism of graded vector spaces
\begin{equation}
  CW^{*}(U_0 \boxtimes L^0, U_1 \boxtimes L^1)   = \Hom(U_0,U_1) \otimes CW^{*}(L^0,L^1).
\end{equation}
Assuming that $E^0$ is trivial  with fibre $U_0$, and either that $L^1$ is compact or that $U^0$ is finite dimensional, we have
\begin{equation}
  CW^{*}(U_0 \boxtimes L^0, E^1)   = \Hom\left(U_0,CW^{*}(L^0,E^1) \right),
\end{equation}
while assuming that $E^1$ is trivial with fibre $U_1$, and that either $E^0$ or $U_1$ is finite dimensional, we have
\begin{equation}
  CW^{*}(E^0, U_1 \boxtimes L^1 )   =CW^{*}(E^0,  L^1 ) \otimes U_1.
\end{equation}
\end{lem}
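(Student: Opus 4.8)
The plan is to work directly from the defining formula \eqref{eq:wrapped_complex} for the Floer complex and observe that each of the three claims amounts to a linear-algebra identity about $\Hom$-spaces applied summand-by-summand over the set $\Chord(L^0,L^1)$, which is unchanged when we pass from $L^i$ to a trivial local system over it. The point is purely formal: the underlying graded vector space of $CW^{*}(E^0,E^1)$ does not see the parallel transport maps (those enter only in the differential, which is not asserted to be respected here), so the entire statement reduces to identities of graded vector spaces.

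First I would treat the first isomorphism. For a trivial local system $U_i \boxtimes L^i$, every fibre is canonically $U_i$, so the $x$-summand of \eqref{eq:wrapped_complex} is $\Hom^{k-\deg(x)}(U_0,U_1)$, independent of $x$. Hence
\begin{equation*}
  CW^{k}(U_0\boxtimes L^0,U_1\boxtimes L^1)=\bigoplus_{x\in\Chord(L^0,L^1)}\Hom^{k-\deg(x)}(U_0,U_1).
\end{equation*}
On the other side, $CW^{*}(L^0,L^1)$ is by definition $\bigoplus_{x}\bF_2[-\deg(x)]$ (the rank-one case), and tensoring with $\Hom(U_0,U_1)$ and collecting the degree-$k$ part gives exactly the same direct sum; the needed natural isomorphism $\Hom(U_0,U_1)\otimes \Hom^{j}(\bF_2,\bF_2)\cong \Hom^{j}(U_0,U_1)$ is immediate. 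Compatibility of the grading shift with $\deg(x)$ is the only bookkeeping, and it is routine.

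Next I would do the second isomorphism, which is the one requiring a finiteness hypothesis. With $E^0=U_0\boxtimes L^0$ trivial, the $x$-summand is $\Hom^{k-\deg(x)}(U_0,E^1_{x(1)})$. The claimed right-hand side $\Hom\!\big(U_0,CW^{*}(L^0,E^1)\big)$ expands, using $CW^{*}(L^0,E^1)=\bigoplus_{x}E^{1}_{x(1)}[-\deg(x)]$, to $\Hom\big(U_0,\bigoplus_x E^1_{x(1)}[-\deg(x)]\big)$. So the identity to verify is that $\Hom$ out of a fixed $U_0$ commutes with the direct sum $\bigoplus_x$. This holds in general if $U_0$ is finite-dimensional (a map into a direct sum from a finite-dimensional space has image in a finite subsum), and it holds for arbitrary $U_0$ when the index set is such that each target $E^1_{x(1)}$ is finite-dimensional and the sum is ``locally finite'' in the relevant sense — which is guaranteed when $L^1$ is compact, since then $\Chord(L^0,L^1)$ may be taken finite (Lemma \ref{lem:properness_chords}) after a small perturbation, collapsing the issue entirely; in the general compact-$L^1$ case one argues degreewise using that only finitely many chords contribute in each degree. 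The third isomorphism is completely dual and easier: with $E^1=U_1\boxtimes L^1$ trivial, the $x$-summand is $\Hom^{k-\deg(x)}(E^0_{x(0)},U_1)=\Hom^{k-\deg(x)}(E^0_{x(0)},\bF_2)\otimes U_1$, and since the contravariant $\Hom(-,U_1)$ always turns the direct sum $\bigoplus_x$ into a product, the finiteness hypothesis on $E^0$ or on $U_1$ is exactly what is needed to identify that product with the direct sum $CW^{*}(E^0,L^1)\otimes U_1$; alternatively one notes tensoring with a fixed $U_1$ commutes with $\bigoplus$, reducing to the same observation as before.

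The main obstacle, such as it is, is not conceptual but careful: getting the finiteness hypotheses to line up with the precise failure of $\Hom$ and $\otimes$ to commute with infinite direct sums, and making sure the argument is genuinely degreewise so that ``$CW^{*}(L^0,L^1)$ has finite rank in each degree after perturbation'' (Lemma \ref{lem:properness_chords}, via boundedness of the number of chords below a given action in each Maslov degree) is what rescues the cases where the global $\Hom$ or tensor product would otherwise be wrong. Once that is pinned down, each isomorphism is the corresponding elementary identity applied termwise, and naturality in all variables is manifest from the construction.
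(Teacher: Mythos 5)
Your proposal is correct and follows essentially the same route as the paper's proof: work straight from the definition \eqref{eq:wrapped_complex}, identify each chord summand by elementary linear algebra, use finite-dimensionality of $U_0$ to commute $\Hom(U_0,-)$ with the direct sum over chords, and use compactness of one of the Lagrangians to make $\Chord(L^0,L^1)$ finite so that the interchange is trivial (the paper, like you, writes out only the second isomorphism and leaves the rest to the reader). One small repair in your third case: the hypothesis that $E^0$ or $U_1$ be finite dimensional is not needed for any direct-sum/product interchange --- both sides are by definition direct sums over chords and $-\otimes U_1$ commutes with $\bigoplus$ --- rather it is needed exactly for the fibrewise identity $\Hom(E^0_{x(0)},U_1)\cong \Hom(E^0_{x(0)},\bF_{2})\otimes U_1$, which you asserted without comment and which fails in general because the right-hand side consists only of finite-rank maps.
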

\begin{proof}
We describe the second case, and leave the remaining ones to the reader.  Whenever $U_{0} \boxtimes L^0$ is a finite-dimensional trivial local system over $L^0$ with fibre $U_0$,  
\begin{align*}
  CW^{*}(U_{0} \boxtimes L^0 , E^1) & \cong \bigoplus_{x \in \Chord(L^0, L^1)  } \Hom( U_0 ,  E^{1}_{x(1)}) \\
& \cong \Hom \left( U_0 , \bigoplus_{x \in \Chord(L^0, L^1)  } \Hom( \bF_{2} ,  E^{1}_{x(1)})  \right)\\
& \cong \Hom \left( U_0 , CW^{*}(L^0,E^1) \right).
\end{align*}
Also, whenever $L^0$ is closed, there can only be finitely many chords with endpoints on $L^0$ and $L^1$, so the direct sum in Equation \eqref{eq:wrapped_complex} is finite.  In particular, allowing $U_0$  to have arbitrary dimension we still have
\begin{align*}
  CW^{*}( U_{0} \boxtimes L^0 , E^1) &  \cong \Hom(U_0 , CW^{*}(L^0,E^1) ).
\end{align*}
\end{proof}

\subsection{The differential}\label{sec:diff}
\begin{figure}
  \centering
\includegraphics{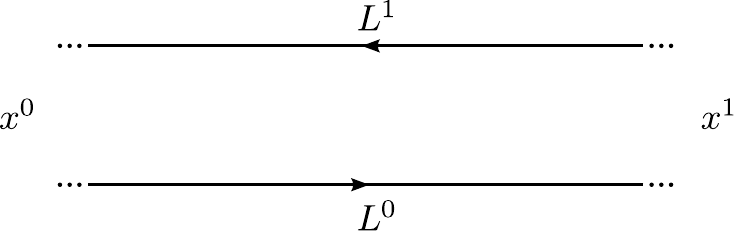}
  \caption{ }
  \label{fig:strip}
\end{figure}
Given two chords $x^{0}$ and $x^{1}$ starting on $L^0$ and ending on $L^1$, we set $\Disc(x^0, x^1)$ to be the moduli space of  strips $u$ with boundary conditions on $L^0$ and $L^1$ and converging at $-\infty$ to $x^0$ and at $+\infty$ to $x^1$ (see Figure \ref{fig:strip}), solving the Equation
\begin{equation} \label{eq:CR-equation-strip}
  (du - X_{H} \otimes dt)^{0,1} = 0
\end{equation}
with respect to some family of compatible almost complex structures on $M$.  In order to ensure that the operations we define using these moduli spaces respect action filtrations, we need the following result:
\begin{lem} \label{lem:differential_respects_filtration}
  If $ \Disc(x^0, x^1) $ is not empty, then
  \begin{equation}
    \Action(x^0) \geq \Action(x^1).
  \end{equation}
\end{lem}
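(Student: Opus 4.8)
The plan is to perform the standard energy estimate for the inhomogeneous Cauchy--Riemann equation \eqref{eq:CR-equation-strip}, turning the existence of a strip $u \in \Disc(x^0,x^1)$ into the inequality $\Action(x^0) \geq \Action(x^1)$. Concretely, I would consider the ``geometric energy'' $E(u) = \int_{\bR \times [0,1]} |du - X_H \otimes dt|^2$, which is manifestly nonnegative, and show that it equals the difference of actions $\Action(x^0) - \Action(x^1)$. The sign arrangement here is dictated by the convention recorded in the earlier remark, namely that the differential \emph{raises} action, so that a strip asymptotic to $x^0$ at $-\infty$ and $x^1$ at $+\infty$ forces $\Action(x^1) \geq \Action(x^0)$ in the homogeneous case --- but since we are asked for $\Action(x^0) \geq \Action(x^1)$, I should double-check that the $-x^{*}(\lambda)$ sign in the definition of $\Action$, together with the direction of the strip, produces the stated inequality; this is the one place where I expect to have to be careful rather than merely routine.

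First I would recall the pointwise identity relating the $(0,1)$-part of $du - X_H \otimes dt$ to the energy density: for a $J$-holomorphic-type map, $\tfrac12 |du - X_H \otimes dt|^2 \, ds \wedge dt = u^{*}\omega - d(H \circ u)\wedge dt + \text{(terms involving } dH)$, or more cleanly, $u^{*}\omega - u^{*}(dH)\wedge dt$ integrates to the energy when the equation holds. Then I would write $\omega = d\lambda$ and use Stokes' theorem on the strip $\bR \times [0,1]$, being careful about the non-compactness in the $s$-direction: the integrals converge because $u$ converges exponentially to the chords $x^0, x^1$ at the two ends (standard for non-degenerate chords, which is guaranteed by our hypotheses on $H$ and on the Legendrian boundary). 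The boundary of the strip has four pieces: the two ends contribute $\int x^{i*}\lambda$ terms and the two boundary lines $\bR \times \{0\}$, $\bR \times \{1\}$ contribute $f^i(x^i(\cdot))$ differences via exactness of $\lambda|_{L^i}$ (here $d f^i = \lambda|_{L^i}$, and along the Lagrangian boundary $u^{*}(dt) = 0$ up to the asymptotic ends). Collecting these terms reproduces exactly the expression $\Action(x) = \int -x^{*}\lambda + H\circ x\, dt + f^1(x(1)) - f^0(x(0))$ at each end, so that $0 \leq E(u) = \Action(x^0) - \Action(x^1)$.

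The only genuine subtlety --- and the step I expect to be the main obstacle --- is bookkeeping the signs and orientations so that the energy comes out as $\Action(x^0) - \Action(x^1)$ rather than its negative, given that the cited references (e.g.\ \cite{ASchwarz}) use the opposite action convention, while our differential raises action. I would handle this by pinning down the convention for which end of the strip is $-\infty$ versus $+\infty$ relative to the ordering of $\Disc(x^0,x^1)$ (i.e.\ $x^0$ is the \emph{input}), and then simply tracking the orientation of $\partial(\bR\times[0,1])$ through Stokes. I would also note that, strictly speaking, one should cut off the strip at $\{|s| \leq R\}$, apply Stokes there, and let $R \to \infty$, using the exponential convergence to the (non-degenerate) chords to kill the correction terms --- this is entirely standard and I would not belabor it. A clean alternative, which I might actually use to avoid sign headaches, is to cite the general principle from \cite{generate} or \cite{fibre-generate} that all the operations in this package raise action up to the additive constant coming from the inhomogeneous term, and observe that for the differential (which has no inhomogeneous constant, the Cauchy--Riemann equation \eqref{eq:CR-equation-strip} being the ``honest'' one) the constant is zero, giving the sharp inequality $\Action(x^0) \geq \Action(x^1)$.
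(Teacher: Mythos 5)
Your proposal is essentially identical to the paper's proof: the paper also defines the geometric energy $E(u)=\int |du - X_{H}\otimes dt|^{2}\geq 0$, rewrites it as $\int u^{*}(\omega) - d(H\circ u)\otimes dt$, applies Stokes using the vanishing of $dt$ along the boundary and the primitives $f^{0}, f^{1}$ of $\lambda|L^{i}$, and rearranges to get $0 \leq \Action(x^{0}) - \Action(x^{1})$. Your sign bookkeeping lands on the same final identity, so there is nothing substantively different to flag.
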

\begin{proof}
We define the energy of $u$ to be
\begin{equation}
  E(u) = \int |du - X_{H} \otimes dt|^{2}
\end{equation}
where the norm is taken with respect to the metric obtained from the symplectic form and the complex structure which enters in the Cauchy-Riemann equation \eqref{eq:CR-equation-strip}.  In particular, we find that
\begin{equation}
 0  \leq E(u) = \int u^{*}(\omega) - d (H \circ u) \otimes dt.
\end{equation}
Applying Stokes's theorem, we conclude that 
\begin{multline}
   0 \leq   \int - {x^{0}}^{*}(\lambda) + H \circ x^{0} dt -  \int - {x^{1}}^{*}(\lambda) + H\circ x^{1} dt \\
+ \int_{\bR \times \{ 0 ,1\}} u^{*}(\lambda) - H \circ u \otimes dt|\bR \times \{0,1\} .
\end{multline}
At this stage, we use the fact that (i) the restriction of $dt$ to the boundary of the strip vanishes and (ii) the images of  $ u (  \bR \times \{0\} ) $ and $ u (  \bR \times \{1\} ) $ respectively lie on $L^{0}$ and $L^{1} $ and we have chosen primitives for the restriction of $\lambda$ to these exact Lagrangians.  We compute that
   \begin{multline}
   0 \leq   \int - {x^{0}}^{*}(\lambda) + H \circ x^{0} dt -  \int - {x^{1}}^{*}(\lambda) + H\circ x^{1} dt \\
+ f^{1}(x^{0}(1)) - f^{1}(x^{1}(1)) - f^{0}(x^{0}(0)) + f^{0}(x^{1}(0)).
\end{multline}
Rearranging the terms, we conclude, as desired that
\begin{equation}
    0 \leq  \Action(x^{0}) - \Action(x^{1}).
\end{equation}
\end{proof}

If $u$ is a strip in  $\Disc(x^0, x^1)$, we define
\begin{align*}
  \gamma_{u}^{0} \co E^{0}_{x^0(0)} & \to E^{0}_{x^1(0)} \\ 
  \gamma_{u}^{1} \co E^{1}_{x^1(1)} & \to E^{1}_{x^0(1)} 
\end{align*}
to be the parallel transport maps along the images under $u$ of the two boundary components.  Note the different directions of the parallel transport maps which are indicated by the arrows in Figure  \ref{fig:strip}. We then associate to $u$ a linear map
\begin{align} \label{eq:transport_map_strip}
  \mu^{u} \co  \Hom( E^{0}_{x^1(0)},   E^{1}_{x^1(1)}) & \to \Hom( E^{0}_{x^0(0)},   E^{1}_{x^0(1)}) \\
\mu^{u}(\phi)(a) & =  \gamma_{u}^{1}  \circ \phi \circ  \gamma_{u}^{0} (a).
\end{align}
To decode this formula, start with $a \in E^{0}_{x^0(0)}$  together with a linear map $ \phi \in   \Hom( E^{0}_{x^1(0)},   E^{1}_{x^1(1)}) $.  We first move $a$ using parallel transport to obtain the element $  \gamma_{u}^{0} (a) $  in $  E^{0}_{x^1(0)} $.  We then apply $\phi$, and move the result back to the fibre of $E^1$ over $ x^0(1) $ using parallel transport.  

In addition, since both $E^{0}$ and $E^{1}$ have an internal differential $\delta_0$ and $\delta_1$, we have a map
\begin{align}
   \delta \co  \Hom( E^{0}_{x(0)},   E^{1}_{x(1)}) & \to \Hom( E^{0}_{x(0)},   E^{1}_{x(1)}) \\
\delta(\phi) & = \phi \circ \delta_{0} + \delta_{1} \circ \phi.
\end{align}

 The differential $\mu^1$ on $   CW^{k}(E^0, E^1) $ is defined as a sum of this internal differential with the contributions $\mu^{u} $ associated to each rigid strip
\begin{align}
 \notag \mu^{1} \co   CW^{k}(E^0, E^1)    & \to  CW^{k+1}(E^0, E^1)    \\
\mu^{1}  & = \delta +  \sum_{ \substack{u \in \Disc(x^0, x^1) \\ k = |x^1| = |x^0|-1}  }  \mu^{u}. \label{eq:differentia_sum}
\end{align}

\begin{lem} \label{lem:differential_squares_0}
The map $\mu^1$ is a differential.
\end{lem}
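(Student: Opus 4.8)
The plan is to show $(\mu^1)^2 = 0$ by combining three standard ingredients: the identity $\delta^2 = 0$ coming from the internal differentials of $E^0$ and $E^1$; the usual Gromov compactification of the one-dimensional components of the moduli spaces $\Disc(x^0,x^2)$; and the compatibility of parallel transport with concatenation of paths. First I would expand $(\mu^1)^2 = \delta^2 + \sum_u(\delta\circ\mu^u + \mu^u\circ\delta) + \sum_{u,v}\mu^v\circ\mu^u$, where the last sum runs over composable pairs of rigid strips. The term $\delta^2$ vanishes because $\delta_0^2 = \delta_1^2 = 0$ and the two internal differentials act on opposite sides of $\phi$. The cross terms $\delta\circ\mu^u + \mu^u\circ\delta$ vanish because parallel transport is a chain map with respect to the internal differentials — i.e., $\gamma^i_u$ commutes with $\delta_i$ — which is part of what it means for $E^i$ to be a local system of chain complexes; this lets one cancel these terms against the boundary contributions of strips breaking off a "trivial" internal-differential piece, or, more cleanly in this setup, one notes that $\delta$ and each $\mu^u$ graded-commute so that the mixed sum telescopes to zero once paired with the moduli-space boundary analysis.

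The main content is the double sum over composable rigid strips. By Lemma \ref{lem:differential_respects_filtration} and Lemma \ref{lem:properness_chords}, for fixed $x^0, x^2$ with $|x^0| - |x^2| = 2$ the moduli space $\Disc(x^0,x^2)$ is a one-manifold whose Gromov--Floer compactification has boundary consisting precisely of broken strips $(u,v)$ with $u \in \Disc(x^0,x^1)$, $v \in \Disc(x^1,x^2)$ rigid, for intermediate chords $x^1$; the action bound guarantees only finitely many such $x^1$ contribute, so all sums are well-defined. The key algebraic point is that for such a broken configuration the composition of parallel transport maps satisfies $\mu^v \circ \mu^u = \mu^{u\#v}$ up to the homotopy class of the glued boundary path — concretely, $\gamma^0_{u\#v} = \gamma^0_v \circ \gamma^0_u$ and $\gamma^1_{u\#v} = \gamma^1_u \circ \gamma^1_v$ because gluing concatenates the two boundary arcs, and local systems assign to a concatenation the composite of parallel transports. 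Hence the contribution of each end of each compactified one-dimensional moduli space is exactly one term $\mu^v\circ\mu^u$ of the double sum, and since the ends of a compact one-manifold come in a set of even cardinality (working over $\bF_2$, they cancel in pairs), the double sum vanishes.

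The step I expect to require the most care is the bookkeeping that ensures every term of $\sum_{u,v}\mu^v\circ\mu^u$ is hit exactly once by a boundary point of $\bigcup_{x^1}\Disc(x^0,x^2)$, together with the verification that the parallel-transport maps glue correctly — in particular that the arcs of $u$ and $v$ along $L^0$ (resp. $L^1$) concatenate to a path homotopic to the corresponding boundary arc of the nearby unbroken strip, so that $\mu^{u\#v}$ is literally computed by $\gamma^1_u\gamma^1_v$ on one side and $\gamma^0_v\gamma^0_u$ on the other, matching Equation \eqref{eq:transport_map_strip}. This is a routine but notation-heavy adaptation of the proof that $\mu^1$ squares to zero for the ordinary wrapped complex, the only new feature being that one carries the $\Hom(E^0_{\bullet},E^1_{\bullet})$-factors along and invokes functoriality of parallel transport; no new moduli-space analysis beyond \cite{generate} is needed, exactly as the paper's conventions paragraph promises. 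I would therefore present the argument as: (1) expand the square; (2) dispose of $\delta^2$ and the mixed terms using that parallel transport is a chain map; (3) identify the remaining double sum with a count of boundary points of compact $1$-manifolds via Gromov compactness and the gluing/concatenation property of parallel transport; (4) conclude by the mod-$2$ count of boundary points.
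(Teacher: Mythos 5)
Your proposal is correct and follows essentially the same route as the paper: dispose of $\delta^2$ and the mixed terms using that the internal differentials commute with parallel transport, and cancel the strip-strip terms by identifying the boundary points of the compactified one-dimensional moduli spaces with broken configurations whose concatenated boundary paths are homotopic to those of nearby unbroken strips, so the two ends of each interval contribute the same linear map and cancel over $\bF_{2}$. The only cosmetic difference is that the paper packages this as the vanishing of $(\mu^1-\delta)\circ(\mu^1-\delta)$ over closed intervals, rather than as an ``even number of ends'' count --- and note that mere evenness would not suffice; it is precisely your homotopy-of-concatenated-paths observation that makes the pairwise cancellation of maps legitimate.
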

\begin{proof}
Gromov compactness implies  that the sum in Equation \eqref{eq:differentia_sum} is finite, and hence that $\mu^1$ is well-defined.  To prove that the differential squares to $0$,  we note that since $E^0$ and $E^1$ are local systems,  the homotopy class of a path determines the associated parallel transport map.   Moreover, any two pseudo-holomorphic strips which lie in the same component of a moduli space $\Disc(x,y)$  restrict to homotopic paths on the boundary.  Passing to the Gromov compactification, we find that any two broken strips which lie on the same component must have the property that the paths obtained by restricting them to the boundary, then concatenating, are homotopic.  In particular, we may adapt the usual proof in Floer theory which uses the fact that the Gromov compactification of the moduli spaces of dimension $1$ are closed intervals whose boundary points may be identified with the terms in the composition $( \mu^1 - \delta) \circ (\mu^1 - \delta)$, which then necessarily vanishes.  Since both $\delta_0$ and $\delta_1$ square to $0$, so does $\delta$.  Moreover, since $\delta_0$ and $\delta_1$ commute with parallel transport, $\delta$ commutes with $  \mu^1 - \delta$.  Having accounted for all terms in $\mu^{1} \circ \mu^{1}$, we conclude that it vanishes.   
\end{proof}

Having defined the differential, we can now revisit our computation of Floer complexes in the presence of trivial local systems:
\begin{lem}  \label{lem:iso_complexes_trivial_V}
Under the hypotheses stated in Lemma \ref{lem:factoring_trivial_complexes}, we have isomorphisms of complexes
\begin{align} \label{eq:end(V)-HF(L)}
   CW^{*}(U_0 \boxtimes L^0, U_1 \boxtimes L^1) &  = \Hom(U_0,U_1) \otimes CW^{*}(L^0,L^1) \\ \label{eq:factor_U_0}
  CW^{*}(U_0 \boxtimes L^0, E^1) &  = \Hom\left(U_0,CW^{*}(L^0,E^1) \right) \\
  CW^{*}(E^0, U_1 \boxtimes L^1 )  &  = CW^{*}(E^0,  L^1 ) \otimes U_1. \label{eq:compact_object}
\end{align}
\end{lem}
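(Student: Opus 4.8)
The plan is to notice that the identifications of graded vector spaces are already supplied by Lemma \ref{lem:factoring_trivial_complexes}, so the only thing left is to check that they intertwine the differentials $\mu^1$ of Equation \eqref{eq:differentia_sum}. The mechanism is simple: by definition a trivial local system $V \boxtimes L$ has parallel transport equal to $\id_{V}$ along every path, so whenever $u \in \Disc(x^0,x^1)$ is a strip with boundary on $L^0$ and $L^1$, one of the two boundary transport maps entering the structure map $\mu^{u}$ of Equation \eqref{eq:transport_map_strip} is an identity, and $\mu^{u}$ degenerates to a one-sided composition.

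First I would treat Equation \eqref{eq:factor_U_0}, where $E^{0} = U_0 \boxtimes L^0$. Here $\gamma^{0}_{u} = \id_{U_0}$ for every strip, so under the identification $\Hom(E^{0}_{x^1(0)}, E^{1}_{x^1(1)}) = \Hom(U_0, E^{1}_{x^1(1)})$ we get $\mu^{u}(\phi) = \gamma^{1}_{u}\circ \phi$, which is precisely $\Hom(U_0,-)$ applied to the contribution of $u$ to the differential of $CW^{*}(L^0, E^1)$ (regarding $L^0$ as carrying the rank-one trivial system, so that its boundary transport is also trivial there). Likewise the internal differential $\delta(\phi) = \phi\circ \delta_{U_0} + \delta_{E^1}\circ \phi$ on the left decomposes as the sum of $\Hom(U_0,-)$ applied to the internal differential of $E^1$ with the differential induced by $\delta_{U_0}$, and these are exactly the two pieces of the differential on $\Hom(U_0, CW^{*}(L^0,E^1))$. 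Matching the two sums term by term yields the claim, once one observes --- as in the proof of Lemma \ref{lem:factoring_trivial_complexes} --- that the stated hypothesis ($L^1$ compact, hence $\Chord(L^0,L^1)$ finite, or $U_0$ finite-dimensional) is exactly what makes the canonical map $\Hom(U_0, \bigoplus_{x} \Hom(\bF_{2}, E^{1}_{x(1)})) \to \bigoplus_{x} \Hom(U_0, E^{1}_{x(1)})$ an isomorphism, so that the differential on the right does land in the displayed direct sum.

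The other two cases are entirely parallel. For Equation \eqref{eq:compact_object}, $E^{1} = U_1 \boxtimes L^1$ is trivial, so now $\gamma^{1}_{u} = \id_{U_1}$ and $\mu^{u}(\phi) = \phi\circ\gamma^{0}_{u}$ becomes the strip contribution to the differential of $CW^{*}(E^0,L^1)$ tensored with $\id_{U_1}$; the internal differential likewise matches, and the finiteness hypothesis ($E^0$ or $U_1$ finite-dimensional) again guarantees that the identification of the possibly infinite direct sum with a tensor product is compatible with differentials, not merely with the underlying graded vector spaces. For Equation \eqref{eq:end(V)-HF(L)} both local systems are trivial, so both boundary transport maps are identities, $\mu^{u}$ acts as the identity on the $\Hom(U_0,U_1)$ factor times the strip contribution to $CW^{*}(L^0,L^1)$, and the internal differential is the tensor-product differential built from $\delta_{U_0},\delta_{U_1}$ and the internal differentials of the generators (no signs, since we work over $\bF_{2}$); this is precisely the differential on $\Hom(U_0,U_1)\otimes CW^{*}(L^0,L^1)$.

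I do not expect a genuine obstacle here: the substance is the degeneration of parallel transport on trivial systems, plus careful bookkeeping of which direct sums are finite. The one point worth care is the infinite-dimensional case, where one must verify that passing between $\Hom$ (resp. $\otimes$) and a direct sum respects the differential and not just the grading --- but this is exactly the content of the finiteness hypotheses inherited from Lemma \ref{lem:factoring_trivial_complexes}.
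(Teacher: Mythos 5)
Your proposal is correct and follows essentially the same route as the paper: identify the graded vector spaces via Lemma \ref{lem:factoring_trivial_complexes}, then observe that triviality of the local system forces the relevant parallel transport map in Equation \eqref{eq:transport_map_strip} to be the identity, so each strip contribution degenerates to exactly the corresponding term of the differential on the right-hand side. The paper writes this out only for Equation \eqref{eq:factor_U_0} and declares the other cases analogous, whereas you spell out all three cases and the role of the internal differential and finiteness hypotheses, which is just a more explicit rendering of the same argument.
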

\begin{proof}
Consider the case when $E^0$ is assumed to be trivial and $U_0$ to be of finite rank.  Identifying all fibres of $E^0$ with $U_0$ in this case,  the map in Equation \eqref{eq:transport_map_strip}, which defines the differential on the left hand side of Equation \eqref{eq:factor_U_0}, becomes
\begin{align} 
  \mu^{u}_{E^0,E^1} \co  \Hom( U_0,   E^{1}_{x^1(1)}) & \to \Hom( U_0,   E^{1}_{x^0(1)}) \\
\mu^{u}_{E^0,E^1}(\phi)(a) & =   \gamma_{u}^{1} ( \phi(a)).
\end{align}
Note that this is the definition of the differential on the right hand side of Equation \eqref{eq:factor_U_0}, which proves the isomorphism between these chain complexes.   All other cases follow from a similar analysis.
\end{proof}

\begin{rem} \label{rem:compactness}
If, in Equation \eqref{eq:compact_object}, we assume that $E^0$ and  $E^1$ are supported on the same compact Lagrangian, we may interpret this formula to say that finite rank local systems on a given compact Lagrangian form \emph{compact objects} of the subcategory of $\Seidel(M)$ consisting of local systems with the same support. It seems quite likely that every finite rank local system over a Lagrangian (closed or not) defines a compact object in an appropriate geometric enlargement of $\Seidel(M)$ which would admit all coproducts.

In particular, the failure of $CW^{*}(L,\_) $  to be an embedding when $L$ resolves the diagonal suggests that  such an enlargement would not be \emph{compactly generated}.  It would be interesting to know whether it is well-generated in the sense of Neeman (see \cite{neeman}).
\end{rem}

\subsection{The $A_{\infty}$ structure}
We shall write $\Disc_{d}$ for the abstract moduli space of holomorphic discs with $d$ positive punctures $(\xi^1, \ldots, \xi^{d})$ and $1$ negative puncture which we denote $\xi^{0}$ or $\xi^{d+1}$ depending on the context.  We let $\Discbar_{d}$ denote its Deligne-Mumford compactification.  Let $L^0, \ldots, L^d$ be a sequence of exact Lagrangians in $M$.  We assume that the restriction of $H$ to each Lagrangian $L^{k}$ is a Morse function, and that all element of $ \Chord(L^{j}, L^{k}) $ are non-degenerate chords (note that the first condition implies the second condition can be achieved if $L^{j} = L^{k}$).   Given a sequence $\vx = (x^1, \ldots, x^d)$ of Hamiltonian chords such that $x^{k} \in \Chord(L^{k-1}, L^{k})$ as well as a chord $ x^{0} \in \Chord(L^{0}, L^{d}) $ we have a moduli space 
\begin{equation}
  \Disc(x^0, \vx)
\end{equation}
consisting of pseudo-holomorphic maps $u \co S \to M$ from a disc $S \in \Disc_{d}$.  In the usual definition of Lagrangian Floer cohomology, one requires that $u$ map the boundary components of $S$ to the Lagrangians $L^k$.  However, in order to construct Floer cohomology in the wrapped setting using a quadratic Hamiltonian, it is more convenient to study a moduli space of pseudo-holomorphic curves with \emph{moving Lagrangian boundary conditions} in which there exists a family of Lagrangians $L^{k}_{z,S}$ such that the boundary condition imposed on elements  $u\co S \to M $ of $  \Disc(x^0, \vx)  $ is:
\begin{equation}
  u(z) \in L^{k}_{z,S} \textrm{  if $z$  lies the interval between $\xi^{k}$ and $\xi^{k+1}$.}
\end{equation}
By construction (see Definition 4.1 and Equation (4.3) of \cite{generate}), we can always compose $u$ with a $z$-dependent diffeomorphism of $M$ so that the boundary conditions are indeed on $L^k$:
\begin{lem} \label{lem:diffeo_for_evaluation}
For each nodal surface $S \in \Discbar_{d}$, there exists a family of maps $\psi_{z,S} \co M \to M$ parametrised by $z \in \partial S$ such that, if $0 \leq k \leq d$,  then
\begin{align}
\psi_{z,S} \circ u (z) & \in L^{k} \textrm{  if $z$  lies the interval between $\xi^{k}$ and $\xi^{k+1}$} \\ 
  \lim_{z \to \xi^{k}} \psi_{z,S} \circ u (z) & = x^{k}(1)  \\
  \lim_{z \to \xi^{k+1}} \psi_{z,S} \circ u (z) & = x^{k+1}(0).
\end{align}
Moreover, if $S$ is decomposes into components $S_1$ and $S_2$, with $u_1$ and $u_2$ the corresponding maps on $S_1$ and $S_2$ then 
\begin{equation}
\left( \psi_{z,S} \circ u \right) |{S_i} = \psi_{z,S_i} \circ u_{i}.
\end{equation}
 \qed
\end{lem}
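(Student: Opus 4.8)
The plan is to observe that this lemma records no new geometric input beyond the construction of Floer data carried out in \cite{generate}, and to extract it from there. Recall that the moving Lagrangian boundary conditions on a surface $S \in \Discbar_{d}$ are built (Definition 4.1 and Equation (4.3) of \cite{generate}) by first assigning positive weights $w_{j}$ to the punctures $\xi^{j}$, then choosing — consistently over all the compactified moduli spaces — a positive function $\rho \co \partial S \to \bR_{>0}$ which is equal to $w_{j}$ near each $\xi^{j}$, and finally producing from this data a family of self-maps $\phi_{z,S} \co M \to M$, parametrised by $z \in \partial S$, which agrees at infinity with the Liouville flow $\psi^{\rho(z)}$; one sets $L^{k}_{z,S} = \phi_{z,S}(L^{k})$. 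The essential feature, which is what makes the lemma work, is that $\phi_{z,S}$ is a \emph{single} family of maps of $M$, independent of the index $k$: only the choice of which fixed Lagrangian $L^{k}$ is imposed along which boundary arc of $S$ depends on $k$. I would therefore simply define $\psi_{z,S}$ to be the inverse $\phi_{z,S}^{-1}$.

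With this choice the first displayed identity is immediate: on the arc between $\xi^{k}$ and $\xi^{k+1}$ one has $u(z) \in L^{k}_{z,S} = \phi_{z,S}(L^{k})$, hence $\psi_{z,S}\circ u(z) \in L^{k}$, and the $k$-independence of $\phi_{z,S}$ ensures this holds on all $d+1$ arcs simultaneously. For the asymptotic conditions I would use that, again by construction, the Floer datum on $S$ near the positive puncture $\xi^{k}$ is the weight-$w_{k}$ rescaling of the datum defining the wrapped complex through the chords of $\Chord(L^{k-1},L^{k})$ (and similarly, read cyclically, at the negative puncture with $x^{0}$). Consequently an element $u \in \Disc(x^{0},\vx)$ converges, in the strip-like end at $\xi^{k}$, to $\phi_{\xi^{k},S}$ applied to the chord $x^{k}$; since $\psi_{z,S}$ is equal near $\xi^{k}$ to the $z$-independent map $\phi_{\xi^{k},S}^{-1}$, composing with it undoes this conjugation, and evaluating the resulting chord $x^{k}$ at whichever of its two endpoints lies on the Lagrangian carried by the relevant adjacent arc yields $\lim_{z\to\xi^{k}}\psi_{z,S}\circ u(z) = x^{k}(1)$ and $\lim_{z\to\xi^{k+1}}\psi_{z,S}\circ u(z) = x^{k+1}(0)$.

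Finally, the compatibility under nodal degeneration is inherited directly from the analogous consistency demanded of the family $\{\phi_{z,S}\}$: the inductive construction of Floer data in \cite{generate} arranges that, when $S$ breaks into components $S_{1}$ and $S_{2}$, the restriction of $\phi_{z,S}$ to $S_{i}$ agrees with $\phi_{z,S_{i}}$; taking inverses gives $(\psi_{z,S}\circ u)|_{S_{i}} = \psi_{z,S_{i}}\circ u_{i}$. The step I expect to need the most care is precisely this simultaneous consistency — that one family of diffeomorphisms can be chosen which straightens the moving boundary conditions, which near each puncture conjugates the prescribed asymptotic chords, and which is compatible with every boundary stratum of every $\Discbar_{d}$ at once — but this is a point already settled in \cite{generate}, so here the work reduces to citing the correct statements and unwinding the definitions; once it is granted, every assertion of the lemma is a formal manipulation.
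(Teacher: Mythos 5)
Your proposal is correct and follows the same route as the paper, which states this lemma without proof precisely because it is immediate from the construction of the moving boundary conditions in Definition 4.1 and Equation (4.3) of \cite{generate}: the boundary conditions there are obtained by applying a single $k$-independent family of Liouville-type rescaling maps to the fixed Lagrangians, consistently over the compactified moduli spaces, and one takes $\psi_{z,S}$ to be its inverse exactly as you do. Your closing caveat about simultaneous consistency matches the paper's own remark that only the homotopy class of $\psi_{z,S}$ restricted to the Lagrangians, compatible with breaking, is actually needed.
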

By considering parallel transport maps along $  \psi_{z,S} \circ u  $  we conclude:
\begin{cor}  \label{cor:parallel_transport_from_curve}
To each element $u \in  \Disc(x^0, \vx)$ there is a canonically assigned collection of parallel transport maps
 \begin{align}
  \gamma_{u}^{0} \co E^{0}_{x^0(0)} & \to E^{0}_{x^1(0)} \\ 
  \gamma_{u}^{k} \co E^{k}_{x^k(1)} & \to E^{k}_{x^{k+1}(0)} \textrm{ if  $1 \leq k \leq d-1$}  \\ 
  \gamma_{u}^{d} \co E^{d}_{x^d(1)} & \to E^{d}_{x^0(1)} .
\end{align} \qed
\end{cor}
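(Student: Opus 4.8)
The plan is to read this off directly from Lemma \ref{lem:diffeo_for_evaluation}: the diffeomorphisms $\psi_{z,S}$ are precisely the device that turns the moving boundary condition of $u$ into an honest boundary condition on the $L^{k}$, at which point parallel transport in the local systems $E^{k}$ makes sense.

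First I would fix $u \in \Disc(x^0, \vx)$ with domain $S \in \Disc_{d}$ and invoke Lemma \ref{lem:diffeo_for_evaluation} to produce the family $\psi_{z,S} \co M \to M$, $z \in \partial S$, so that $\psi_{z,S} \circ u$ sends the boundary arc between $\xi^{k}$ and $\xi^{k+1}$ into $L^{k}$ (with $\xi^{0} = \xi^{d+1}$ the negative puncture), and has the prescribed limits at the two ends of that arc. Then $\psi_{z,S} \circ u$, restricted to such an arc, is a genuine path in $L^{k}$: for $1 \le k \le d-1$ it runs from $x^{k}(1)$ to $x^{k+1}(0)$, both of which lie in $L^{k}$ since $x^{k} \in \Chord(L^{k-1}, L^{k})$ and $x^{k+1} \in \Chord(L^{k}, L^{k+1})$; the arc adjacent to $\xi^{0}$ on the $L^{0}$-side runs from $x^{0}(0)$ to $x^{1}(0)$; and the arc on the $L^{d}$-side runs from $x^{d}(1)$ to $x^{0}(1)$, using $x^{0} \in \Chord(L^{0}, L^{d})$.

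Next I would define $\gamma_{u}^{k}$ to be the parallel transport in $E^{k}$ along the homotopy class rel endpoints of this path; this is exactly the operation of \emph{considering parallel transport maps along $\psi_{z,S} \circ u$}. The directions of the arrows are then forced by the directions of the paths just listed, and they match the strip case of Equation \eqref{eq:transport_map_strip}, giving $\gamma_{u}^{0} \co E^{0}_{x^{0}(0)} \to E^{0}_{x^{1}(0)}$, $\gamma_{u}^{k} \co E^{k}_{x^{k}(1)} \to E^{k}_{x^{k+1}(0)}$ for $1 \le k \le d-1$, and $\gamma_{u}^{d} \co E^{d}_{x^{d}(1)} \to E^{d}_{x^{0}(1)}$.

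The only point needing comment is the word \emph{canonically}: a priori $\gamma_{u}^{k}$ could depend on the choice of the auxiliary family $\psi_{z,S}$. I would argue that it does not, because any two families $\psi_{z,S}$ and $\psi_{z,S}'$ satisfying the conclusions of Lemma \ref{lem:diffeo_for_evaluation} are joined by a homotopy through families with the same boundary and limiting behaviour — either by appealing to the explicit recipe of Definition 4.1 and Equation (4.3) of \cite{generate}, or by observing that the relevant space of $z$-dependent diffeomorphisms is contractible — and such a homotopy exhibits a homotopy rel endpoints between the two boundary paths, so the parallel transport maps agree since $E^{k}$ is a local system. I would also record, for later use in the compactness arguments, that the assignment $u \mapsto (\gamma_{u}^{k})$ is compatible with decompositions: if $S = S_{1} \cup S_{2}$ with restrictions $u_{1}, u_{2}$, the last clause of Lemma \ref{lem:diffeo_for_evaluation} shows the boundary path of $\psi_{z,S} \circ u$ is the concatenation of those of $\psi_{z,S_{1}} \circ u_{1}$ and $\psi_{z,S_{2}} \circ u_{2}$, so the corresponding transport maps compose. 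There is essentially no obstacle here: all of the substantive content sits in Lemma \ref{lem:diffeo_for_evaluation}, which is quoted from \cite{generate}, and the only care required is in pinning down the source and target of each arrow and in checking independence of the auxiliary choice.
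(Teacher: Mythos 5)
Your proposal is correct and follows the paper's own route: the paper deduces the corollary immediately from Lemma \ref{lem:diffeo_for_evaluation} by taking parallel transport along the boundary paths of $\psi_{z,S} \circ u$, exactly as you do. Your extra discussion of independence of the auxiliary family $\psi_{z,S}$ and compatibility with breakings matches the remark the paper places right after the corollary, so nothing is missing and nothing differs in substance.
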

\begin{rem}
Note that only the homotopy class of the restriction of $\psi_{z,S}$ to the union of all Lagrangians under consideration enters our construction.  In particular, it is not necessary to know that $ \psi_{z,S} $ are canonically determined, but only that the homotopy class of the restriction is compatible with breakings of holomorphic curves.   
\end{rem}

\begin{figure}
  \centering
\includegraphics{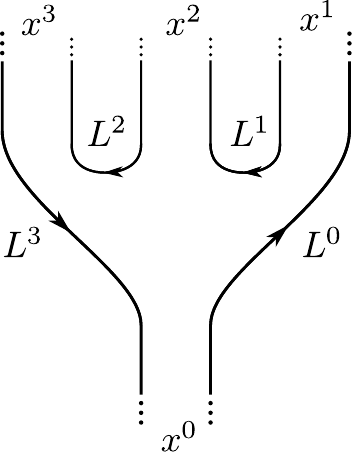}
  \caption{ }
  \label{fig:product}
\end{figure}
We now associate to each curve $u$ its contribution to the $A_{\infty}$ operations, which is a map
\begin{multline}
\mu^{u} \co  \Hom( E^{d-1}_{x^d(0)},  E^{d}_{x^d(1)}) \otimes \cdots \otimes   \Hom( E^{k-1}_{x^k(0)},  E^{k}_{x^k(1)})  \otimes \cdots \otimes  \Hom( E^{0}_{x^1(0)},  E^{1}_{x^1(1)}) \\ \to  \Hom( E^{0}_{x^0(0)}, E^{1}_{x^0(1)})
 \end{multline}
defined as follows (see Figure \ref{fig:product} for the case $d=3$): Given a tensor product of linear maps $\phi^{d}\otimes \cdots \otimes \phi^1 $ in the source, and an element $e$ in the fibre of $ E^{0} $  at the starting point of $x^0$, we first use parallel transport along the boundary of $\psi_{z} \circ u $ to move $e$ to the fibre at the starting point of the chord $x^1$.  We then apply the map $\phi^1$ to obtain an element of the fibre of $E^1$ at the endpoint of $x^1$, which we then transport along the boundary of our map to the starting point of the chord $x^2$.  Repeating this procedure, we end up with an element of the fibre of $E^d$ at the endpoint of $x^{d}$, which we can move using parallel transport to the end point of $x^0$.    The formula is
\begin{equation} \label{eq:higher_product_curve}
  \mu^{u}(\phi^d, \ldots, \phi^{1})(a) = \gamma_{u}^{d} \circ \phi^{d} \circ \cdots \circ \gamma_{u}^{1} \circ \phi^1 \circ \gamma_{u}^{0}(a).
\end{equation}
Again, Gromov compactness implies that given a sequence  $ (\phi^d, \ldots, \phi^{1}) $, there are only finitely many maps $u$ which are rigid (see, e.g. Lemma 3.2  of \cite{generate} for the case $d=2$).  In particular we have a well defined finite sum
\begin{align} \label{eq:A_oo_structure_local_systems}
  \mu^{d} \co CW^{*}(E^{d-1}, E^{d}) \otimes \cdots \otimes CW^{*}(E^{0}, E^{1}) & \to  CW^{*}(E^{0}, E^{d} ) \\
\mu^{d}  & = \sum_{\substack{u \in \Disc(x^0, \vx) \\  u \textrm{ is rigid} } }  \mu^{u},
\end{align}
which is the $d$\th higher product of the $A_{\infty}$ structure on $\Seidel(M)$.

We now derive the $A_{\infty}$ analogue of Lemma \ref{lem:factoring_trivial_complexes}, and  consider a trivial local system $V$ on a Lagrangian $L$.  Equation \eqref{eq:end(V)-HF(L)} asserts that we have an isomorphism of complexes
\begin{equation} \label{eq:end(V)-factors_out}
  CW^{*}(V \boxtimes L )   = \End(V) \otimes CW^{*}(L).
\end{equation}
In order to compute the $A_{\infty}$ structure on the left hand side, we use the fact that all parallel transport maps are the identity to rewrite Equation \eqref{eq:higher_product_curve} as
\begin{equation}
   \mu^{u}(\phi^d, \ldots, \phi^{1})(a) = \phi^{d} \circ \cdots \circ \phi^1 (a).
\end{equation}
Taking the sum over all discs $u$, and writing  $x_{i}$  for the generator of $\Hom(\bF_{2}|x_{i},  \bF_{2}|x_{i}) $, we find that the $A_{\infty}$ structure on $   CW^{*}(V \boxtimes L )  $ is given by
\begin{equation} 
\label{eq:higher_products_tensor}
  \mu^{d}( \phi^d \otimes x_d,  \ldots, \phi^1 \otimes x_1) = \left( \phi^{d} \circ \phi^{d-1}\circ \cdots \circ  \phi^{1}\right) \otimes  \mu^{d}(x_d,  \ldots, x_1). 
\end{equation}
This is precisely the formula for the $A_{\infty}$ structure on the tensor product of the ordinary algebra  $\End(V)$ with  the $A_{\infty}$ algebra $ CW^{*}(L) $.

More generally, consider an object $E$ of $\Seidel(M)$ which is supported on a compact Lagrangian.    From Equation \eqref{eq:factor_U_0}, we have an isomorphism of complexes
\begin{equation} \label{eq:iso_of-modules-tensor_V}
  CW^{*}(V \boxtimes L, E )   = \Hom\left(V, CW^{*}(L,E) \right).
\end{equation}
Note that the right hand-side is naturally a right $A_{\infty}$-module over $\End(V) \otimes CW^{*}(L)  $, with operations
\begin{equation} \label{eq:structure_tensor_with_v}
  \mu^{1|d}(  \phi,  \phi^d \otimes x_d,  \ldots, \phi^1 \otimes x_1)(v) = \mu^{d+1}( \phi^{d} \circ \phi^{d-1}\circ \cdots \circ  \phi^{1} \circ \phi (v), x_d,  \ldots, x_1).
\end{equation}
The same analysis as for the algebra structure on $  CW^{*}(V \boxtimes L )$ shows that these structure maps are the same as those coming from considering $CW^{*}(V \boxtimes L, E )  $ as a module over $CW^{*}(V \boxtimes L )  $ using the $A_{\infty}$ structure of $\Seidel(M)$ given by Equation \eqref{eq:A_oo_structure_local_systems}. Summarising these results, we conclude:
\begin{lem} \label{lem:Floer_infinity-structure-cpatible-with-tensor}
Identifying $   CW^{*}(V \boxtimes L ) $ and $\End(V) \otimes CW^{*}(L)  $ as $A_{\infty}$ algebras, the right $A_{\infty}$ module structures on $   CW^{*}(V \boxtimes L, E )  $ and $ \Hom\left(V, CW^{*}(L,E) \right) $ are isomorphic. \qed 
\end{lem}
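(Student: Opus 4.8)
The plan is purely to unwind definitions: both module structures are assembled from the same moduli spaces $\Disc(x^0, \vx)$, and the only data distinguishing the geometric right $A_\infty$-module structure on $CW^*(V \boxtimes L, E)$ coming from $\Seidel(M)$ from the algebraic one on $\Hom(V, CW^*(L,E))$ are the parallel transport maps of Corollary \ref{cor:parallel_transport_from_curve}, which are the identity along every boundary arc lying on the trivial local system $V \boxtimes L$. So the argument is the module analogue of the computation already carried out for the algebra $CW^*(V \boxtimes L)$.

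Concretely, I would proceed in four steps. First, record the identification of underlying graded vector spaces $CW^*(V \boxtimes L, E) \cong \Hom(V, CW^*(L,E))$ from Equation \eqref{eq:iso_of-modules-tensor_V}; this is the case of \eqref{eq:factor_U_0} that applies because $E$ is supported on a compact Lagrangian $L_E$, which supplies the finiteness hypothesis of Lemma \ref{lem:factoring_trivial_complexes}. Second, spell out the right $A_\infty$-module operation $\mu^{1|d}$ on $CW^*(V \boxtimes L, E)$ induced by the $A_\infty$-structure of $\Seidel(M)$ as the sum over rigid discs $u$ of the maps $\mu^u$ of Equation \eqref{eq:higher_product_curve}, whose inputs are the module generator — a chord from $L$ to $L_E$ — together with algebra generators of $CW^*(V \boxtimes L)$, which are chords lying within $L$. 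Third, observe that such a disc $u$ has exactly one boundary arc mapping to $L_E$ while all its remaining boundary arcs map to $L$; since $V \boxtimes L$ is the trivial local system, the transport maps $\gamma_u^k$ along the $L$-arcs are all the identity, so Equation \eqref{eq:higher_product_curve} collapses to the composition of $\phi$ with the maps $\phi^1,\dots,\phi^d$ (in the order dictated by that formula), post-composed with the single surviving transport map of $E$ along the $L_E$-arc, and with no signs since we work over $\bF_2$. Fourth, separate the $\End(V)$-composition of the $\phi^i$ from the Floer-theoretic contribution $\sum_u(-)$ — exactly as in the derivation of Equation \eqref{eq:higher_products_tensor} and in the proof of Lemma \ref{lem:iso_complexes_trivial_V} — so as to recognise the result as the operation of Equation \eqref{eq:structure_tensor_with_v} defining the right $A_\infty$-module structure on $\Hom(V, CW^*(L,E))$ over $\End(V) \otimes CW^*(L) \cong CW^*(V \boxtimes L)$, the last identification being Equation \eqref{eq:end(V)-factors_out}. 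The internal-differential contributions match by the same reasoning, as in Lemma \ref{lem:iso_complexes_trivial_V}.

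The only point requiring genuine care — and hence the sole potential obstacle — is the combinatorial claim in the third step: that among the boundary arcs of a disc contributing to $\mu^{1|d}$, precisely one lies on $L_E$ and the rest lie on $L$, so that the $\End(V)$-factors compose in the stated linear order and decouple cleanly from the part of the formula that sees only $L$ and $E$. This is immediate from the labelling of boundary components in the definition of $\Disc(x^0, \vx)$ together with Lemma \ref{lem:diffeo_for_evaluation} and Corollary \ref{cor:parallel_transport_from_curve}, and is compatible with breakings, so the comparison is consistent across the whole $A_\infty$-module structure. Everything else is book-keeping identical to what was already carried out for the algebra $CW^*(V \boxtimes L)$ and for Equation \eqref{eq:iso_of-modules-tensor_V}, so no new geometric input is needed.
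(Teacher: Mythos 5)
Your proposal is correct and follows essentially the same route as the paper: identify the underlying complexes via Equation \eqref{eq:iso_of-modules-tensor_V}, observe that all parallel transport maps along the $L$-arcs are the identity because $V \boxtimes L$ is trivial, so Equation \eqref{eq:higher_product_curve} collapses and the geometric module operations coincide with those of Equation \eqref{eq:structure_tensor_with_v}. The extra care you take with the single $L_E$-arc and the surviving $E$-transport is exactly the point the paper summarises as ``the same analysis as for the algebra structure.''
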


We end this section by discussing action filtrations.  First, let us introduce the notation
\begin{equation}
  \left( CW^{*}(E^{d-1}, E^{d}) \otimes \cdots \otimes CW^{*}(E^{0}, E^{1})  \right)_{\geq c}
\end{equation}
for the subspace generated by all tensor products supported by chords the sum of whose actions is bounded below by some constant $c \in \bR$.  Lemma \ref{lem:differential_respects_filtration} implies that this is in fact a subcomplex.  While this subcomplex may be infinitely generated, only generators corresponding to finitely many chords appear in it, as follows from Lemma \ref{lem:properness_chords}.  In particular, the following result is a consequence of  the finiteness of the set of holomorphic discs with a given set of inputs (see, e.g. Lemma 3.2  of \cite{generate} which considers the case of the product):
\begin{lem} \label{lem:action_filtration_infinity}
For each constant $c$ (and each collection of Lagrangians $L^0, \ldots, L^{d}$), there exists some constant $b$ such that the image of
\begin{equation}
  \mu^{d}  \co \left( CW^{*}(E^{d-1}, E^{d}) \otimes \cdots \otimes CW^{*}(E^{0}, E^{1}) \right)_{\geq c}  \to  CW^{*}(E^{0}, E^{d} ) 
\end{equation}
lies in the subcomplex
\begin{equation}
   CW^{*}_{\geq b}(E^{0}, E^{d} )
\end{equation}
of morphisms whose action is bounded below by $b$. \qed
\end{lem}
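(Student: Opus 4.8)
The plan is to reduce the statement to an energy estimate of the same shape as Lemma \ref{lem:differential_respects_filtration}, applied now to discs with $d$ positive punctures. The starting point is that the action of a Hamiltonian chord depends only on the chord and not on the local systems attached to the Lagrangians; in particular, for any rigid $u \in \Disc(x^0, \vx)$ the map $\mu^{u}$ of Equation \eqref{eq:higher_product_curve} sends the summand of $ CW^{*}(E^{d-1}, E^{d}) \otimes \cdots \otimes CW^{*}(E^{0}, E^{1}) $ indexed by the tuple $\vx = (x^1, \ldots, x^d)$ into the summand of $CW^{*}(E^{0}, E^{d})$ indexed by $x^0$, and the parallel transport maps $\gamma_{u}^{k}$ from Corollary \ref{cor:parallel_transport_from_curve} play no role in this bookkeeping. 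It therefore suffices to exhibit a constant $b$, depending on $c$ and on $L^0, \ldots, L^{d}$ but on nothing else, such that $\Disc(x^0, \vx)$ is empty whenever $\sum_{k=1}^{d} \Action(x^k) \geq c$ while $\Action(x^0) < b$.

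To produce such a $b$, I would run the Stokes's theorem argument from the proof of Lemma \ref{lem:differential_respects_filtration} on a curve $u \co S \to M$ with $S \in \Disc_{d}$. As in that proof, $0 \leq E(u)$ is computed by integrating $u^{*}(\omega)$ and invoking the chosen primitives $f^{k}$ of $\lambda$ along the boundary arcs; the boundary contributions of the $f^{k}$, together with the contributions at the $d+1$ punctures, reassemble into $\sum_{k=1}^{d}\Action(x^{k}) - \Action(x^{0})$, exactly as they did for $d = 1$. The one feature absent when $d = 1$ is the integral over $S$ of $(H \circ u)\, d\gamma_{S}$, where $\gamma_{S}$ is the Hamiltonian-valued one-form defining the inhomogeneous Cauchy-Riemann equation solved by elements of $\Disc(x^0,\vx)$; this is precisely the term responsible for the qualifier ``up to a possible additive constant'' in the Remark following Lemma \ref{lem:properness_chords}. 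Granting that this term is bounded in absolute value by a constant $C_{d}$ depending only on $d$ and on $L^0, \ldots, L^{d}$ --- which is part of the content of Lemma 3.2 of \cite{generate} for $d = 2$, the general case being identical --- one obtains
\begin{equation*}
  \Action(x^{0}) \;\geq\; \sum_{k=1}^{d}\Action(x^{k}) \;-\; C_{d},
\end{equation*}
and $b = c - C_{d}$ has the required property. The same inequality makes the sum $\sum_{u} \mu^{u}$ defining $\mu^{d}$ on the subspace $(\cdots)_{\geq c}$ finite: by Lemma \ref{lem:properness_chords} only finitely many output chords $x^{0}$ satisfy $\Action(x^0) \geq c - C_{d}$, and for each such $x^{0}$ the set of rigid $u \in \Disc(x^0,\vx)$ is finite by Gromov compactness.

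The main obstacle is the uniform bound on the curvature term $\int_{S} (H\circ u)\, d\gamma_{S}$. Since $H$ is quadratic along the infinite end of $M$, the composite $H \circ u$ is not a priori bounded, so one must use that the perturbation data of \cite{generate} --- the moving Lagrangian boundary conditions of Lemma \ref{lem:diffeo_for_evaluation} together with an integrated maximum principle --- confine $u$ to a region of $M$ on which $H$ and $d\gamma_{S}$ are simultaneously controlled, independently of the chords involved. In keeping with the conventions of this paper, no new moduli spaces are at issue, so rather than reprove this estimate I would simply cite the corresponding result of \cite{generate}; once it is granted, everything above is formal, and the case $d = 1$ is in any event immediate from Lemma \ref{lem:differential_respects_filtration} together with the fact that the internal differentials $\delta$ preserve the underlying chord and hence the action.
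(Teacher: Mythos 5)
Your argument is essentially sound, but it is a genuinely different, and more laborious, route than the one the paper takes. The paper treats Lemma \ref{lem:action_filtration_infinity} as a soft finiteness statement: by Lemma \ref{lem:properness_chords} the actions of chords are bounded above, so the filtered subspace $\left(\cdots\right)_{\geq c}$ only involves finitely many tuples of input chords; for each fixed tuple, the finiteness of rigid discs with given inputs (the statement the text cites as Lemma 3.2 of \cite{generate}, stated there for the product) shows that only finitely many output chords $x^{0}$ support non-empty rigid moduli spaces; hence the image of $\mu^{d}$ lies in a finitely generated subspace and one may simply take $b$ to be the minimum of the finitely many output actions. No energy estimate is needed, which is why the lemma carries no proof beyond the sentence preceding it. Your quantitative inequality $\Action(x^{0}) \geq \sum_{k}\Action(x^{k}) - C_{d}$ is stronger than what the lemma asserts, and the paper's own remark immediately after the lemma confirms it can be proved --- but that remark also pinpoints the caveat in your sketch: because the boundary conditions are the moving Lagrangians $L^{k}_{z,S}$ obtained by conformal rescaling, the Stokes computation does \emph{not} reassemble ``exactly as for $d=1$''; the boundary terms acquire contributions from the variation of the family $\psi_{z,S}$ and the conformal constants, in addition to the curvature term you isolate, and Lemma 3.2 of \cite{generate} is the finiteness statement rather than a packaged uniform bound on that term. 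So your proposal is acceptable modulo an estimate you are largely outsourcing, whereas the ingredients you already invoke at the end (Gromov compactness for fixed inputs together with Lemma \ref{lem:properness_chords}) suffice on their own and give the paper's shorter proof; what your route buys, if the rescaling bookkeeping is carried out, is an explicit bound for $b$ in terms of $c$, which the paper deliberately forgoes.
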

\begin{rem}
In our statement of the previous result, we give no quantitative bound for $b$ in terms of $c$.  In order to provide such a bound, we would have to make explicit the fact that  the diffeomorphisms $ \psi_{z,S} $ are conformal symplectomorphisms, i.e. they rescale the symplectic form.  With this in mind, an analysis similar to that performed in Lemma \ref{lem:differential_respects_filtration} can be used to prove that the difference between $b$ and $c$ can be bounded in terms of the supremum of the primitive for $\lambda|L^i$, and the conformal constants of the maps $\psi_{z,S}$.
\end{rem}

\subsection{A generalisation of the product} \label{sec:gener-prod}
The following generalisation of the product will only be used for the proof of Theorem \ref{thm:simply_connected} in the non-simply connected case:  Let $E^{0,1}$, $E^{1,2}$ and $E^{0,2}$ be local systems on the same closed exact Lagrangian $Q$, and assume that we are given a map of local systems
\begin{equation}
  E^{1,2} \otimes E^{0,1} \to E^{0,2}.
\end{equation}
One can imitate the construction of $\mu^2$ to define a map
\begin{equation} \label{eq:product_local_sys}
  CW^{*}(Q, E^{1,2}) \otimes CW^{*}(Q, E^{0,1}) \to  CW^{*}(Q, E^{0,2} )
\end{equation}
with $Q$ denoting, as before, the trivial local system of rank $1$.  Start by assuming that the Hamiltonian $H$ is sufficiently $C^{2}$ small near $Q$ that all chords which start and end on $Q$ are constant, and map to critical points of $H$.   In this case, we have a canonical identification between the fibre of $E^{i,j}$ at the beginning and at the end points of such a chord, so we may describe the Floer complex in two different ways
\begin{equation} \label{eq:Floer_cohomology_local_system_coeff}
    CW^{*}(Q, E^{i,j})  = \bigoplus_{x \in \Chord(Q)}  E^{i,j}_{x(1)} = \bigoplus_{x \in \Chord(Q)}  E^{i,j}_{x(0)}.
\end{equation}
Given a holomorphic disc $u$ with two incoming ends converging to $x^1$ and $x^2$, and one outgoing end converging to $x^{0}$, we have parallel transport maps
 \begin{align}
  \gamma_{u}^{0,1} \co E^{0,1}_{x^1(0)} & \to E^{0,1}_{x^0(0)} = E^{0,1}_{x^0(1)} \\ 
    \gamma_{u}^{1,2} \co E^{1,2}_{x^2(1)} & \to E^{1,2}_{x^0(1)} .
\end{align}
The product in Equation \eqref{eq:product_local_sys} is defined via the formula
\begin{equation} \label{eq:product_local_sys_formula}
  \phi_{1,2} \otimes \phi_{0,1} \mapsto \sum_{u}  \left( \gamma_{u}^{1,2}  \phi_{1,2}\right)  \cdot \left( \gamma_{u}^{0,1}  \phi_{0,1}\right).
\end{equation}
 
There is a special case of this construction in which $E^{0}$, $E^{1}$, and $E^{2}$ are three local systems and $ E^{i,j} = \Hom( E^i, E^j) $.  By comparing Equation \eqref{eq:Floer_cohomology_local_system_coeff} with the definition of $CW^{*}(E^{i},E^{j})$, we find the same description as a direct sum.  Moreover, comparing the product formula in Equation \eqref{eq:product_local_sys_formula} with the one given in Equation \eqref{eq:higher_product_curve}, we see that exactly the same holomorphic curves are counted.  We summarise this discussion in the following result:
\begin{lem}
If $E^i$ and $E^j$ are local systems on a closed exact Lagrangian $Q$, then, for a generic Hamiltonian which is $C^{2}$ small near $Q$, there is an isomorphism of chain complexes
\begin{equation}
CW^{*}(E^{i}, E^{j}) =    CW^{*}(Q, \Hom(E^i,E^j)).
\end{equation}
 This isomorphism intertwines the product $\mu^{2}$ with the one defined by Equation \eqref{eq:product_local_sys_formula}. \qed
\end{lem}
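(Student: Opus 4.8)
The plan is to prove the two asserted statements — the identification of chain complexes and the intertwining of products — by a direct comparison of the two definitions, exploiting the hypothesis that $H$ is $C^2$-small near $Q$ so that all chords in $\Chord(Q)$ are constant at critical points of $H$.

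\textbf{Step 1: Identify the underlying graded vector spaces.} First I would recall that, by definition, $CW^{*}(E^i,E^j) = \bigoplus_{x \in \Chord(Q)} \Hom^{*-\deg(x)}(E^i_{x(0)}, E^j_{x(1)})$ from Equation \eqref{eq:wrapped_complex}. Because $H$ is $C^2$-small near $Q$, every chord $x \in \Chord(Q)$ is constant, so $x(0) = x(1)$ and $\Hom(E^i_{x(0)}, E^j_{x(1)}) = \Hom(E^i_{x(0)}, E^j_{x(0)}) = (\Hom(E^i,E^j))_{x(0)}$, the fibre of the local system $\Hom(E^i,E^j)$ at that critical point. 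Comparing with Equation \eqref{eq:Floer_cohomology_local_system_coeff}, which presents $CW^{*}(Q, \Hom(E^i,E^j))$ as $\bigoplus_{x \in \Chord(Q)} \Hom(E^i,E^j)_{x(1)}$, we get the desired identification of graded vector spaces, compatible with the grading shift by $\deg(x)$.

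\textbf{Step 2: Match the differentials.} Next I would check that this identification is a chain map, i.e. that the differential $\mu^1$ on $CW^{*}(E^i,E^j)$ corresponds to the differential on $CW^{*}(Q,\Hom(E^i,E^j))$. This is the analogue for $d=1$ of the product comparison: the differential on $CW^{*}(E^i,E^j)$ is the sum of the internal differential $\delta(\phi) = \phi \circ \delta_i + \delta_j \circ \phi$ and the strip contributions $\mu^u(\phi) = \gamma^1_u \circ \phi \circ \gamma^0_u$ from Equation \eqref{eq:transport_map_strip}. On the local-system side, the differential on $CW^{*}(Q, \Hom(E^i,E^j))$ is built from the internal differential of the complex $\Hom(E^i,E^j)$ (which is exactly $\delta$) together with parallel transport of sections of $\Hom(E^i,E^j)$ along the boundary of the same holomorphic strips — and parallel transport in $\Hom(E^i,E^j)$ is precisely $\phi \mapsto \gamma^1_u \circ \phi \circ \gamma^0_u$, by functoriality of $\Hom$ applied to the transport maps of $E^i$ and $E^j$. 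Since the two constructions count the same rigid strips (the almost complex structures and moving boundary conditions are the same), the differentials agree.

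\textbf{Step 3: Match the products, and identify the obstacle.} Finally, for the $\mu^2$ statement I would compare Equation \eqref{eq:higher_product_curve} in the case $d=2$ with Equation \eqref{eq:product_local_sys_formula}. On the left, $\mu^u(\phi^2,\phi^1)(a) = \gamma^2_u \circ \phi^2 \circ \gamma^1_u \circ \phi^1 \circ \gamma^0_u(a)$ where the $\gamma^k_u$ are the transport maps of Corollary \ref{cor:parallel_transport_from_curve}. On the right, the map $E^{1,2}\otimes E^{0,1} \to E^{0,2}$ in the special case $E^{i,j} = \Hom(E^i,E^j)$ is composition of linear maps, and the formula $\phi_{1,2}\otimes\phi_{0,1} \mapsto \sum_u (\gamma^{1,2}_u \phi_{1,2})\cdot(\gamma^{0,1}_u\phi_{0,1})$ unwinds, once one expands $\gamma^{1,2}_u$ and $\gamma^{0,1}_u$ as conjugation by the transport maps of the $E^i$'s along the appropriate boundary arcs, to exactly the same composite of four transport maps and two $\phi$'s. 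The main obstacle — and the only genuinely non-formal point — is bookkeeping: one must verify that the boundary arcs along which parallel transport is taken in the two constructions match up, i.e. that the transport map $\gamma^{0,1}_u$ appearing in Equation \eqref{eq:product_local_sys_formula} is the one along the arc of $\partial S$ from $\xi^1$ around to $\xi^0$, and combines with the transport maps $\gamma^0_u, \gamma^1_u, \gamma^2_u$ of Corollary \ref{cor:parallel_transport_from_curve} — which use the diffeomorphisms $\psi_{z,S}$ of Lemma \ref{lem:diffeo_for_evaluation} — to give the same net map on $\Hom(E^i,E^j)$-fibres. Since only the homotopy class of $\psi_{z,S}$ on the Lagrangian enters (as noted in the Remark after Lemma \ref{lem:diffeo_for_evaluation}), and since all chords are constant so the fibres at the two ends of each chord are canonically identified, this reduces to checking that the concatenation of boundary arcs is the same in both pictures, which it is by inspection of Figures \ref{fig:strip} and \ref{fig:product}. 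Having matched generators, differentials, and the degree-two products, the isomorphism of chain complexes intertwining $\mu^2$ with Equation \eqref{eq:product_local_sys_formula} follows.
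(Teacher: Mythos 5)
Your proposal is correct and follows essentially the same route as the paper: the paper's proof is precisely the comparison you carry out — since the $C^2$-smallness of $H$ makes all chords in $\Chord(Q)$ constant, the direct-sum descriptions in Equations \eqref{eq:wrapped_complex} and \eqref{eq:Floer_cohomology_local_system_coeff} coincide, and the same rigid strips and discs are counted on both sides, with the parallel transport in $\Hom(E^i,E^j)$ unwinding to conjugation by the transport maps of $E^i$ and $E^j$. Your Steps 2 and 3 simply make explicit the bookkeeping that the paper compresses into "exactly the same holomorphic curves are counted."
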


\section{Simply connected cotangent bundles}
\label{sec:simply-conn-cotang}

The goal of this section is to prove Theorem \ref{thm:simply_connected} assuming that the total space is simply connected.  The only input that is needed from later sections is Theorem \ref{thm:localising}, which was already stated in the introduction, and the results appearing in the Appendices.  Moreover, we shall, without further mention, apply the homological perturbation lemma to pass from the $A_{\infty}$ structure defined by Floer cochains, to a minimal $A_{\infty}$ structure supported on cohomology.

We start by recalling, from \cite{ASchwarz}, that the wrapped Floer cohomology  of a cotangent fibre is isomorphic to the homology of the based loop space.  With our cohomological grading, this isomorphism is such that $HW^{*}(\Tn)$ is supported in non-positive degree.  The first result we state holds for all cotangent bundles:
\begin{lem} \label{lem:E-iso-complex-local-systems}
If $E$ is a local system of complexes on a closed exact Lagrangian, which is supported in finitely many degrees, then $E$ is quasi-isomorphic, in the category of twisted complexes over $\Seidel(\TN)$ to a twisted complex constructed from iterated cones of local systems $E^{k}_{N}$ on $N$ whose fibres are $HW^{k}( \Tn,  E ) $:
\begin{equation}
  \xymatrix{\cdots \ar[r] \ar@/_1pc/[rr] \ar@/_2pc/[rrr]  \ar@/_3pc/[rrrr]& E^{k+1}_{N} \ar[r]  \ar@/_1pc/[rr] \ar@/_2pc/[rrr]  & E^{k}_{N} \ar[r]   \ar@/_1pc/[rr]   & E^{k-1}_{N} \ar[r]   & \cdots\\
\\ }
\end{equation}
\end{lem}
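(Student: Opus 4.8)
The plan is to exhibit $E$ explicitly as a twisted complex built from the local systems $E^k_N$. First I would use the fact that a cotangent fibre $\Tn$ resolves the diagonal (by the results of \cite{fibre-generate}), so that Theorem \ref{thm:localising} applies with $L = \Tn$: the trivial local systems over $\Tn$ split-generate the subcategory of $\Seidel(\TN)$ consisting of local systems of bounded dimension, and in particular the twisted complexes over $\Seidel(\TN)$ contain $(E, Q)$ in the subcategory generated by $\Tn$. Concretely, Theorem \ref{thm:localising} produces a twisted complex on objects that are trivial local systems over $\Tn$, whose underlying graded vector space computes $CW^*(\Tn, E)$; passing to cohomology via the homological perturbation lemma, the generators in degree $k$ are a basis for $HW^k(\Tn, E)$.

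The key observation is that the module structure organizing this resolution is over $CW^*(\Tn) \simeq C_{-*}(\Omega_n N)$, the chains on the based loop space (by \cite{ASchwarz}), and a module over $C_{-*}(\Omega_n N)$ is the same data as a local system on $N$ — this is the standard equivalence used throughout the paper (cf. the discussion preceding the Lemma, and Appendix \ref{sec:floer-morse-class} for the Floer–Morse comparison). So the next step is: the twisted complex furnished by Theorem \ref{thm:localising} has, in homological degree $k$, a summand which is a trivial local system over $\Tn$ of dimension $\dim HW^k(\Tn, E)$; reinterpreting the $CW^*(\Tn)$-module structure on this summand as a local system on $N$ gives $E^k_N$ with fibre $HW^k(\Tn, E)$, and the connecting maps in the twisted complex (including the higher, curved arrows in the displayed diagram) become the differentials and $A_\infty$-structure maps of an iterated cone. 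The finiteness hypothesis — $E$ supported in finitely many degrees — together with the fact that $HW^*(\Tn, E)$ is then also bounded (here one uses that $HW^*(\Tn)$ is supported in non-positive degrees and $Q$ is compact, so only finitely many chords contribute and the bimodule spectral sequence degenerates into finitely many columns) guarantees the twisted complex has finitely many terms, so the iterated cone makes sense.

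The main obstacle I anticipate is making precise the translation between "the twisted complex over $\Seidel(\TN)$ whose objects are trivial local systems over the fibre" and "an iterated cone of honest local systems on $N$": one must check that the $A_\infty$-module structure over $CW^*(\Tn)$ that Theorem \ref{thm:localising} naturally equips the pieces with is exactly the one that, under the dictionary $CW^*(\Tn)\textrm{-mod} \leftrightarrow \textrm{local systems on } N$, yields a genuine local system (i.e. the monodromy action is by isomorphisms, which requires invertibility statements about the loop-space algebra action that are not automatic at the chain level and need the passage to cohomology / a minimal model). A secondary technical point is the convergence/boundedness of the filtration: one should run the argument filtering $\Seidel(\TN)$ by action, use Lemma \ref{lem:action_filtration_infinity} to see the structure maps shift action in a controlled way, and only afterward collapse to the finite cohomological complex. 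Once these bookkeeping issues are handled, the displayed diagram is just the unpacking of a twisted complex with the $E^k_N$ as its objects, so no further geometric input is required.
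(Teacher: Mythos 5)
Your outline has the right ingredients (the fibre resolves the diagonal, Theorem \ref{thm:localising}, connectivity of $HW^{*}(\Tn)$, and the reinterpretation of the $HW^{0}(\Tn)\cong\bF_{2}[\pi_{1}(N)]$--action as monodromy defining $E^{k}_{N}$), but two central steps do not work as you state them. First, the twisted complex that Theorem \ref{thm:localising} actually produces is the action-filtered bar-type object $\Univ^{n}_{L}(E)$, built from $CW^{*}(L)^{\otimes d}\otimes CW^{*}(E,L)$; its pieces are \emph{not} organised by the cohomological degree of $HW^{*}(\Tn,E)$, so you cannot ``read off'' summands whose generators are bases of $HW^{k}(\Tn,E)$ and then declare the connecting maps to be the arrows of the desired iterated cone. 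The reorganisation by degree is a separate homological-algebra step carried out on the module side: split-generation makes the module functor associated to the generator fully faithful on the relevant subcategory, one passes to a minimal model, uses that the endomorphism algebra of the fibre is connective so that the degree filtration of the minimal module is a filtration by submodules (Lemma \ref{lem:filtration_degree}), and then uses that a module concentrated in a single degree is determined by its ordinary module structure over $HW^{0}$ (Lemma \ref{lem:determined_by_vector_space}); since $HW^{0}(\Tn)$ is the group ring, group elements are units and monodromy is automatically invertible --- the ``invertibility'' worry you raise is a non-issue once one is in a single degree, whereas your blanket claim that a module over $C_{-*}(\Omega_{n}N)$ ``is the same data as a local system on $N$'' is false at that generality (it is only true for derived local systems; honest ones correspond to modules with cohomology in one degree). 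Only after expressing the module as an iterated cone of the single-degree pieces does one transfer the cone structure back to $\Seidel(\TN)$ through the fully faithful functor.

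Second, you run the argument with $L=\Tn$ and modules over $CW^{*}(\Tn)$, asserting that $E$ lies in the subcategory generated by $\Tn$. This is exactly the point the paper is at pains to avoid: for $E$ of infinite dimension (the case actually needed later, e.g.\ the group-ring local system $\bF_{2}[\pi_{1}(Q)]$), $CW^{*}(\Tn,\_)$ need not be fully faithful --- see the $T^{*}S^{1}$ example in the introduction --- and Theorem \ref{thm:localising} only places $E$ in the subcategory split-generated by $V\boxtimes\Tn$ for a suitably large $V$. The paper therefore works with modules over $\End(V)\otimes HW^{*}(\Tn)$ and uses Lemma \ref{lem:Floer_infinity-structure-cpatible-with-tensor} to identify $HW^{*}(V\boxtimes\Tn,E)\cong\Hom\bigl(V,HW^{*}(\Tn,E)\bigr)$ before running the filtration argument; without this step your transfer of the cone decomposition back to $\Seidel(\TN)$ has no justification in the infinite-dimensional case. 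Your finiteness discussion (compactness of $Q$ plus boundedness of $E$ gives boundedness of $HW^{*}(\Tn,E)$) is fine, but the two gaps above are where the real content of the paper's proof lies.
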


\begin{proof}
The main result of \cite{fibre-generate} is that a cotangent fibre resolves the diagonal.  By Theorem \ref{thm:localising}, $E$ lies in the subcategory of $\Seidel(\TN)$ split-generated by $V \boxtimes \Tn$ for some vector space $V$.  Moreover, since $HW^{*}(  V \boxtimes \Tn)$ is supported in non-negative degrees, Lemma \ref{lem:filtration_degree} implies that the right module $HW^{*}(  V \boxtimes \Tn,  E )  $ admits a filtration whose subquotients are the cohomology groups
\begin{equation}
  HW^{k}(  V \boxtimes \Tn,  E)
\end{equation}
equipped with their natural module structure over $HW^{*}(  V \boxtimes \Tn) $.  Applying Lemma \ref{lem:Floer_infinity-structure-cpatible-with-tensor}, then passing to minimal models, we conclude that under the identification
\begin{equation}
  HW^{*}(  V \boxtimes \Tn)  = \End(V) \otimes  HW^{*}(\Tn)
\end{equation}
as $A_{\infty}$ algebras, we have an isomorphism of modules
\begin{equation}
   HW^{k}(  V \boxtimes \Tn,  E ) = \Hom(V,  HW^{k}( \Tn,  E ) ).
\end{equation}
In the right hand side, the module structure is given by the action of $\End(V) $ on $V$, and the (possibly non-trivial) action of $HW^{0}( \Tn )$ on $ HW^{k}( \Tn,  E ) $.  Using the isomorphism between $HW^{0}( \Tn )$  and the group ring of $\pi_{1}(N)$, we may think of this action as giving a (possibly non-trivial) local system over $N $  with fibre $ HW^{k}( \Tn,  E )  $.  Let $E_{N}^{k}$ denote this local system, and observe that we now have an identification of modules
\begin{equation}
     HW^{k}(  V \boxtimes \Tn,  E ) = HW^{*}(V \boxtimes \Tn, E_{N}^{k}).
\end{equation}

The condition that $E$ be supported in finitely many degrees implies that only finitely many of these cohomology groups do not vanish.  In particular, as a module over $  HW^{*}(  V \boxtimes \Tn) $, the image of $E $ can be expressed as an iterated cone of the modules associated to $\{ E_{N}^{k} \}_{k=-\infty}^{+\infty} $.  Since both $E $ and $ E_{N}^{k} $ lie in the category split-generated by $ V \boxtimes \Tn $, we conclude that $ E $ is an iterated cone of the local systems $ \{ E_{N}^{k} \}_{k=-\infty}^{+\infty} $.  As these local systems are supported on $N$, we have proved the desired result.
\end{proof}

We now restrict to the case $N$ is simply connected, which implies that  $HW^{0}(\Tn)$ has rank one and all local systems on $N$ are trivial:
\begin{lem} \label{lem:local_system_iso_system_on_base}
If $N$ is simply connected, and $E$ is a local system of vector spaces on a closed Lagrangian then
 \begin{equation}
   HW^{*}(\Tn,E)
 \end{equation}
is supported in a single degree, and, up to shift, $E$ is isomorphic, in $\Seidel(\TN)$ to a trivial local system over the zero section. 
\end{lem}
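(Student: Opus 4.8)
The plan is to feed $E$ into Lemma \ref{lem:E-iso-complex-local-systems} and then exploit the poverty of local systems on a simply connected base. A local system of vector spaces is in particular a complex concentrated in a single degree, so it is supported in finitely many degrees and Lemma \ref{lem:E-iso-complex-local-systems} applies: $E$ is quasi-isomorphic, in $\Tw \Seidel(\TN)$, to an iterated cone $T$ of local systems $E^{k}_{N}$ on $N$ whose fibres are the vector spaces $V^{k} := HW^{k}(\Tn, E)$. Since $N$ is simply connected, $\pi_{1}(N)$ is trivial, hence every local system on $N$ is trivial; in particular $E^{k}_{N} \cong V^{k} \boxtimes N$, a trivial local system supported on the zero section. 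It therefore suffices to show that $V^{k} \neq 0$ for at most one value of $k$, since then $T$ is the single object $(V^{k_{0}}\boxtimes N)[-k_{0}]$.

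To establish this, I would compute the self-Floer cohomology $HW^{*}(E,E)$ in two ways. On the one hand, $Q$ is closed, so the Floer--Morse comparison of Appendix \ref{sec:floer-morse-class} identifies $HW^{*}(E,E)$ with $H^{*}(Q;\End(E))$, which vanishes in negative degrees. On the other hand, $HW^{*}(E,E)\cong HW^{*}(T,T)$, which I would compute using the finite, strictly decreasing filtration of $T$ by the index $k$ visible in Lemma \ref{lem:E-iso-complex-local-systems}; using Lemma \ref{lem:Floer_infinity-structure-cpatible-with-tensor} together with $HW^{*}(N,N)\cong H^{*}(N)$ and $HW^{*}(\Tn,N)\cong \bF_{2}$ concentrated in degree $0$, the first page of the associated spectral sequence is $\bigoplus_{j,k}\Hom(V^{j},V^{k})\otimes H^{*+j-k}(N)$. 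If $k_{\min}<k_{\max}$ were the two extreme indices with $V^{k}\neq 0$, the summand $\Hom(V^{k_{\max}},V^{k_{\min}})\otimes H^{0}(N)$ would lie in total degree $k_{\min}-k_{\max}<0$, strictly below every other summand on the first page; such a class receives no differential, and (see the last paragraph) it survives to the limit, contradicting the vanishing of $HW^{<0}(E,E)$. Hence $k_{\min}=k_{\max}=:k_{0}$ and $HW^{*}(\Tn,E)$ is concentrated in degree $k_{0}$.

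The conclusion is then formal: with only the index $k_{0}$ present, $E[k_{0}]$ is quasi-isomorphic in $\Tw\Seidel(\TN)$ to the trivial local system $V^{k_{0}}\boxtimes N$ over the zero section; since $E$ and $V^{k_{0}}\boxtimes N$ are genuine objects of $\Seidel(\TN)$ and the Yoneda embedding $\Seidel(\TN)\hookrightarrow \Tw\Seidel(\TN)$ is cohomologically full and faithful, this quasi-isomorphism is induced by an isomorphism in the cohomology category of $\Seidel(\TN)$, which is the assertion of Lemma \ref{lem:local_system_iso_system_on_base}.

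The one delicate point, which I expect to be the main obstacle, is ruling out that the bottom-degree class above is killed by an outgoing differential or a higher $A_{\infty}$ product. The cleanest way to secure it is to route the argument through the cohomologically full and faithful embedding of the category of local systems on $Q$ (with morphisms $H^{*}(Q;\Hom(E^{1},E^{2}))$) into the category of local systems on $N$, which follows from Theorem \ref{thm:localising} and the fact that a cotangent fibre resolves the diagonal, exactly as sketched in the introduction: over the simply connected base $N$ this target category is equivalent to complexes of $\bF_{2}$-vector spaces, so $T$ genuinely splits as $\bigoplus_{k}(V^{k}\boxtimes N)[-k]$, the spectral sequence degenerates, and $HW^{*}(E,E)=\bigoplus_{j,k}\Hom(V^{j},V^{k})\otimes H^{*+j-k}(N)$ on the nose; vanishing in negative degrees then forces single-degree support with no further work.
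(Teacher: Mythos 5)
Your overall strategy is the paper's: apply Lemma \ref{lem:E-iso-complex-local-systems}, use simple connectivity to replace the local systems $E^{k}_{N}$ by trivial ones, and then play the non-negativity of $HW^{*}(E,E)\cong H^{*}(Q,\End(E))$ against the existence of a negative-degree endomorphism of the twisted complex. The paper closes this last step with Lemma \ref{lem:co-connective_result}: the extreme class in $\Hom(V^{k_{\max}},V^{k_{\min}})\otimes H^{0}(N)$ is a cycle because there are no connecting maps out of the minimal index or into the maximal one, and it cannot be a boundary because every connecting map of the twisted complex is a class in $\Hom(V^{j},V^{k})\otimes H^{\geq 2}(N)$, so the image of the differential never involves $H^{0}(N)$. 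This is exactly the ``delicate point'' you flag but do not resolve, and so your proposal has a genuine gap at the decisive step.

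The substitute you offer is incorrect. The target of the embedding coming from Theorem \ref{thm:localising} is not the naive category of representations of $\pi_{1}(N)$; its morphism spaces are $H^{*}(N,\Hom(E^{1}_{N},E^{2}_{N}))$ (equivalently wrapped Floer cohomology, or modules over $HW^{*}(V\boxtimes \Tn)$), so for simply connected $N$ the endomorphism algebra of the trivial rank-one system is $H^{*}(N)$, not $\bF_{2}$. Consequently the category is \emph{not} equivalent to complexes of $\bF_{2}$-vector spaces, the connecting maps $\delta_{j,k}\in\Hom(V^{j},V^{k})\otimes H^{\geq 2}(N)$ have no reason to vanish, and the twisted complex $T$ need not split: already for $N=S^{2}$ the cone on the generator of $H^{2}(S^{2})$, viewed as a degree-one morphism between shifted copies of the zero section, is a non-split twisted complex of trivial local systems. (Indeed, if triviality of $\pi_{1}(N)$ forced splitting, no endomorphism-degree argument would be needed at all.) Your spectral-sequence setup is fine as far as it goes, but note that the danger is an \emph{outgoing} differential on the bottom class, not an incoming one; the correct repair is precisely the extremality-plus-degree argument of Lemma \ref{lem:co-connective_result} sketched above, which your filtration can accommodate verbatim. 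The rest of your outline (triviality of the $E^{k}_{N}$, $HW^{*}(\Tn,N)\cong\bF_{2}$, $HW^{*}(N)\cong H^{*}(N)$, and the passage from a quasi-isomorphism in $\Tw\Seidel(\TN)$ back to $\Seidel(\TN)$) matches the paper and is unobjectionable.
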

\begin{proof}
From the previous Lemma, we know that $E$ may be expressed as a twisted complex on the trivial local systems with fibres $  HW^{k}( \Tn,  E )  $.  Lemma \ref{lem:Floer_infinity-structure-cpatible-with-tensor} implies that such local systems are quasi-isomorphic  to (possibly infinite) direct sums of the zero section.

Since the self-Floer cohomology of $N$ is supported in non-negative degrees, we are in the situation of Lemma \ref{lem:co-connective_result}.  Namely, we have an algebra $S=HW^{*}(N)$, and a twisted complex built from the vector spaces $ HW^{k}( \Tn,  E )  $.  Since $E$ is supported on a compact Lagrangian, $ HW^{*}(E,E) $ is supported in non-negative degrees, and hence so is the endomorphism algebra of this twisted complex.  We conclude from Lemma \ref{lem:co-connective_result}  that the twisted complex is supported in one degree, hence that $E$ is isomorphic to a local system of trivial vector spaces on $N$.
\end{proof}

With this in mind, we shall end this section with the proof of Theorem \ref{thm:simply_connected} in this special case:
\begin{proof}[Proof of Theorem \ref{thm:simply_connected} in the simply connected case]
A result of Fukaya, Seidel, and Smith (see Appendix \ref{sec:FSS}) implies that any closed exact Lagrangian $Q$ is  quasi-isomorphic   to the zero section in $\Seidel(\TN)$.  Lemma \ref{lem:Floer_infinity-structure-cpatible-with-tensor}, implies that trivial local systems over $N$ are also isomorphic, in $\Seidel(\TN)  $, to trivial local systems over $Q$ of the same rank.  From Lemma \ref{lem:local_system_iso_system_on_base}, we conclude that every local system over $Q$ defines an object of $ \Seidel(\TN)  $   which is isomorphic to a trivial local system, up to shift.

In order to conclude that every local system on $Q$ is indeed trivial, we appeal to the results of Appendix \ref{sec:floer-morse-class} (in particular, Lemma \ref{lem:floer_class}), which show that a quasi-isomorphism in $ \Seidel(\TN)  $ implies a  quasi-isomorphism in the (classical) category of local systems of chain complexes over $Q$.  In this category, the morphism spaces are given by
\begin{equation}
  H^{*}(Q, \End(E^1,E^2))
\end{equation}
and the degree $0$ part of this graded group is generated by  global maps of local systems.  In particular, an isomorphism in this category is the same as an isomorphism of local systems in the usual sense.
\end{proof}

\subsection{A generalisation of Lemma \ref{lem:E-iso-complex-local-systems}}
\label{sec:gener-lemma-ref}

While it shall not be used it in this paper, we record a generalisation of Lemma \ref{lem:E-iso-complex-local-systems}, which should clarify the feature of cotangent bundles that is being used:  Let $M$ be a Liouville manifold, and $L$ an exact Lagrangian which resolves the diagonal.  We assume that
\begin{equation} \label{eq:assumption_1_beyond_cotangent}
  \parbox{35em}{the wrapped Floer cohomology $HW^{*}(L)$ is supported in non-positive degrees.}
\end{equation}
From this we would like to extract a criterion for a finite collection of compact Lagrangians generating the subcategory of $\Seidel(M)$ whose objects are bounded complexes supported on compact Lagrangians.  Observe that Lemmata  \ref{lem:filtration_degree} and \ref{lem:determined_by_vector_space} apply to the algebra $ HW^{*}(L) $, so the argument of Lemma \ref{lem:E-iso-complex-local-systems} shows that every complex supported on a compact Lagrangian, and whose underlying local systems are non-zero in only finitely many degrees, defines a module over $HW^{*}(L)  $  which lies in the category generated by the modules induced by  ring homomorphisms
\begin{equation}
  HW^{0}(L) \to \End(U)
\end{equation}
for a vector space $U$.  Let us say that such a module is indecomposable if it does not factor through $\End(U_1) \oplus \End(U_2)  $ for a non-trivial decomposition of $U$.

Let us now consider a collection of closed exact Lagrangians $\sQ$ with the following property:
\begin{equation}\label{eq:assumption_2_beyond_cotangent}
    \parbox{35em}{each indecomposable module of $ HW^{0}(L) $ is isomorphic to $HW^{*}(L,F)$ for some local system $F$ supported on a Lagrangian lying in $\sQ$.}
\end{equation}
In particular, as modules over $HW^{*}(L)  $ any local system of bounded complexes over a closed manifold lies in the category generated by local systems over Lagrangians in $\sQ$. 

The main point of this paper is that the assignment of  $E \mapsto HW^{*}(L,E)  $ may not define a fully faithful embedding from the category $\Seidel(M)$ to the category of modules over $HW^{*}(L)$.  Rather, if we fix the dimension of $E$, there is a vector space $V$ such that the assignment $E \mapsto HW^{*}(V \boxtimes L,E)  $ defines a fully faithful embedding into  the category of modules over $HW^{*}(V \boxtimes L)$.  But for compact Lagrangians, this module is simply $\Hom(V, HW^{*}(L,E) )$, so any local system $E$ supported on a closed Lagrangian lies in the category of modules over $HW^{*}(V \boxtimes L)  $ which is generated by local systems over Lagrangians in $\sQ$.  Assuming $V$ to be large enough for Theorem \ref{thm:localising} to hold, we conclude:
\begin{prop}
  Under assumptions \eqref{eq:assumption_1_beyond_cotangent} and \eqref{eq:assumption_2_beyond_cotangent}, the category of local systems supported on Lagrangians in $\sQ$ generates the subcategory of $\Seidel(M)$ whose objects are local systems of bounded complexes, supported on closed exact Lagrangians.
\end{prop}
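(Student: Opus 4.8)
The plan is to run the argument proving Lemma~\ref{lem:E-iso-complex-local-systems} essentially verbatim, with the cotangent fibre $\Tn$ replaced by $L$ and the local systems on the base $N$ replaced by local systems supported on Lagrangians in $\sQ$. Fix an object $E$ of $\Seidel(M)$ which is a local system of bounded complexes on a closed exact Lagrangian, and let $d$ be its dimension. Using Theorem~\ref{thm:localising}, choose a vector space $V$ large enough that $\dim V \ge d$, that trivial local systems over $L$ of dimension $\dim V$ split-generate the subcategory of $\Seidel(M)$ of objects whose dimension is at most $\dim V$, and that the local systems $F$ produced below all belong to this subcategory (since $E$ is supported on a closed Lagrangian only finitely many chords intervene, so these $F$ have bounded dimension). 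On this subcategory the functor $CW^{*}(V\boxtimes L,\_)$ is a cohomologically fully faithful embedding into right modules over $HW^{*}(V\boxtimes L) = \End(V)\otimes HW^{*}(L)$, so it is enough to realise $HW^{*}(V\boxtimes L, E)$ as an iterated cone of the modules $HW^{*}(V\boxtimes L, F)$ associated to local systems $F$ supported on Lagrangians in $\sQ$, and then transport this back through the embedding.

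To build such a decomposition, note that by assumption~\eqref{eq:assumption_1_beyond_cotangent} the algebra $HW^{*}(V\boxtimes L)$ is supported in non-positive degrees, whereas $HW^{*}(E,E)$, and hence the endomorphism algebra of the module $HW^{*}(V\boxtimes L, E)$, is supported in non-negative degrees because $E$ is supported on a compact Lagrangian. Lemma~\ref{lem:filtration_degree} then gives a finite filtration of $HW^{*}(V\boxtimes L, E)$ whose associated graded pieces are the cohomology groups $HW^{k}(V\boxtimes L, E)$ with their natural module structure, which factors through $HW^{0}(V\boxtimes L) = \End(V)\otimes HW^{0}(L)$ since the algebra lies in non-positive degrees. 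By Lemma~\ref{lem:Floer_infinity-structure-cpatible-with-tensor} and passage to minimal models, $HW^{k}(V\boxtimes L, E) \cong \Hom\bigl(V, HW^{k}(L,E)\bigr)$, with $\End(V)$ acting through $V$ and $HW^{0}(L)$ acting on the factor $HW^{k}(L,E)$.

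Now decompose each of the finitely many non-vanishing modules $HW^{k}(L,E)$ over $HW^{0}(L)$: Lemmata~\ref{lem:filtration_degree} and~\ref{lem:determined_by_vector_space} ensure that the modules in play are of the type induced by ring homomorphisms $HW^{0}(L) \to \End(U)$, so they split into indecomposable summands of that kind, and by assumption~\eqref{eq:assumption_2_beyond_cotangent} each such indecomposable is isomorphic to $HW^{*}(L,F)$ for some local system $F$ supported on a Lagrangian in $\sQ$. Applying $\Hom(V,\_)$ writes each $HW^{k}(V\boxtimes L, E)$ as a direct sum of modules $\Hom(V, HW^{*}(L,F)) = HW^{*}(V\boxtimes L, F)$, and feeding this into the filtration of the previous paragraph presents $HW^{*}(V\boxtimes L, E)$ as an iterated cone of such modules. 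Since $E$ and all the $F$ lie in the subcategory split-generated by $V\boxtimes L$, on which $CW^{*}(V\boxtimes L,\_)$ is a cohomologically fully faithful embedding, this cone decomposition pulls back to $\Seidel(M)$, so $E$ is an iterated cone of the local systems $F$; as these are supported on Lagrangians in $\sQ$, the proposition follows.

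The step I expect to require the most care is the uniform choice of $V$: one must ensure that a single vector space $V$ at once makes Theorem~\ref{thm:localising} applicable and dominates the dimensions of every local system $F$ extracted from the indecomposable decompositions of the $HW^{k}(L,E)$. Compactness of the support of $E$ (finitely many chords, hence a bound on $\dim HW^{*}(L,E)$ and, with $\sQ$ of bounded dimension, on the dimensions of the $F$'s) is precisely what makes this possible. The remaining ingredients --- the two-sided degree estimate needed to invoke Lemma~\ref{lem:filtration_degree}, the identification of module structures through Lemma~\ref{lem:Floer_infinity-structure-cpatible-with-tensor}, and the transport along the embedding --- are exactly as in the proof of Lemma~\ref{lem:E-iso-complex-local-systems}.
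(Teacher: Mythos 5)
Your proposal is correct and follows essentially the same route as the paper, whose ``proof'' of this Proposition is precisely the discussion preceding it: assumption \eqref{eq:assumption_1_beyond_cotangent} together with Lemmata \ref{lem:filtration_degree} and \ref{lem:determined_by_vector_space} reduces a bounded complex on a closed Lagrangian to modules induced by homomorphisms $HW^{0}(L) \to \End(U)$, assumption \eqref{eq:assumption_2_beyond_cotangent} identifies these with $HW^{*}(L,F)$ for $F$ supported in $\sQ$, and a sufficiently large $V$ (via Theorem \ref{thm:localising} and the identification $HW^{*}(V \boxtimes L, E) = \Hom\left(V, HW^{*}(L,E)\right)$ for compactly supported $E$) provides the fully faithful embedding through which the decomposition is transported back to $\Seidel(M)$. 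You have simply spelled out, in somewhat more detail, the same argument the paper records informally.
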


\section{Bimodules and twisted complexes} \label{sec:bimod-twist-compl}
In this section, we shall reduce Theorem \ref{thm:localising} to two technical results which we shall state after introducing the relevant ingredients.  Throughout, we are assuming that we have a pair of objects in $\Seidel(M)$, one of which is a trivial local system on a Lagrangian $L$, and the other an arbitrary local system $E$.

The main technical ingredient is a bimodule $ \Bimod_{L}(E)  $ over $CW^{*}(L)$ which we shall construct in Section \ref{sec':bimod}.  Whenever $E$ is a local system of finite rank supported over a compact Lagrangian, this bimodule is simply the tensor product of the left and right modules associated to $E$:
\begin{equation} \label{eq:basic_finite_dim_morphisms}
  CW^{*}(L,E) \otimes CW^{*}(E,L).
\end{equation}
In general, however, this tensor product is not ``large enough'', and $ \Bimod_{L}(E)  $ is the appropriate replacement whenever the morphism spaces are not finite dimensional (see Equation \eqref{eq:bimodule_L_E}).

In Section \ref{sec:bimod-homom} we construct a degree $n$ map of bimodules
\begin{equation}
  \Delta \co CW^{*}(L)  \to \Bimod_{L}(E) 
\end{equation}
whose source is the diagonal bimodule, which is obtained by counting holomorphic discs with two outgoing boundary punctures. Such a map induces a morphism on Hochschild homology groups which we denote
\begin{equation}
 HH_{*}(\Delta) \co HH_{*-n}( CW^{*}(L))  \to HH_{*}\left( CW^{*}(L), \Bimod_{L}(E) \right).
\end{equation}
In Section \ref{sec:an-evaluation-map}, we shall show the existence of a natural evaluation map
\begin{equation} \label{eq:cohomology_map_induced_mu}
  H^*(\mu) \co HH_{*}\left( CW^{*}(L), \Bimod_{L}(E) \right) \to HW^{*}(E)
\end{equation}
 defined using only the curves counted in the $A_{\infty}$ structure of $\Seidel(M)$.  Whenever the bimodule reduces to the basic case of Equation \eqref{eq:bimodule_L_E}, this evaluation map generalises the product 
 \begin{equation}
   CW^{*}(L,E) \otimes CW^{*}(E,L) \to CW^{*}(E).
 \end{equation}

By counting discs with one interior and one boundary puncture, we shall also construct a map
\begin{equation}
H^{*}( \CO) \co SH^*(M) \to HW^{*}(E)
\end{equation}
in Section \ref{sec:from-sympl-cohom} which takes the identity in symplectic cohomology to the identity of $E$.  We can now state the main technical result of the paper which is proved in Section \ref{sec:homotopy}:
\begin{prop} \label{prop:commutative_diagram}
These maps fit in a commutative diagram
\begin{equation} \label{eq:diagram_bimodules}
\xymatrix{ HH_{*-n}( CW^{*}(L)) \ar[r]^<<<<<{HH_{*}(\Delta)} \ar[d]^{H^*(\OC)}  & HH_{*}\left( CW^{*}(L),  \Bimod_{L}(E) \right) \ar[d]^{H_{*}(\mu)}   \\  
SH^{*}(M) \ar[r]^{H^*(\CO)}  & HW^{*}(E).   }
\end{equation}
\end{prop}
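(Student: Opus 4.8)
The plan is to prove commutativity of \eqref{eq:diagram_bimodules} by exhibiting, for each corner, a chain-level model built from a single universal family of holomorphic curves, and then identifying the two composites as the two extreme degenerations of a one-parameter moduli space. Concretely, I would work at the cochain level throughout, choosing minimal models only at the end if needed. The map $\OC$ is represented by counting discs with one interior negative puncture (asymptotic to a closed orbit) and a cyclically ordered collection of boundary positive punctures labelled by $CW^{*}(L)$; the map $\CO$ counts discs with one interior positive puncture and one boundary negative puncture (on $E$) — but to land in $HW^{*}(E)$ rather than $CW^{*}(E)$ we actually need the version with boundary punctures carrying parallel transport data for the local system $E$, exactly as in Corollary \ref{cor:parallel_transport_from_curve}. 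The map $\Delta$ counts discs with two boundary negative punctures and cyclically ordered positive boundary punctures on $L$, and $\mu$ (from \eqref{eq:cohomology_map_induced_mu}) is the evaluation that splices a "left half" in $CW^{*}(L,E)$ and a "right half" in $CW^{*}(E,L)$ against a string of $CW^{*}(L)$-inputs to produce an element of $CW^{*}(E)$, using the bimodule structure of $\Bimod_L(E)$.

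The key steps, in order: (1) Set up a master moduli space $\mathcal{N}$ of discs with one interior puncture, two designated boundary points, and a cyclic word of auxiliary boundary punctures labelled by $CW^{*}(L)$, where the interior puncture and the two boundary points are constrained to lie on a codimension-one locus (e.g. the interior puncture and the "output" point at fixed conformal position, with one free modulus). (2) Interpolate the almost complex structures / Floer data so that one boundary of $\mathcal{N}$ records the composite $H^*(\CO)\circ H^*(\OC)$: the interior puncture bubbles off an $\OC$-disc producing a closed orbit, which is then fed into a $\CO$-disc on $E$. (3) Show the other boundary of $\mathcal{N}$ records $H_*(\mu)\circ HH_*(\Delta)$: here the two designated boundary points collide, producing a $\Delta$-disc (two boundary outputs) glued along both outputs to the left and right halves of a $\mu$-configuration on $E$. (4) The remaining boundary/codimension-one strata are the usual semistable degenerations (breaking of Floer strips at inputs, Deligne–Mumford boundary where auxiliary punctures collide), which contribute the $A_\infty$- and Hochschild-differentials; these cancel in pairs or assemble into the statement that the count is a chain map, so on homology only the two ends of step (2)–(3) survive. (5) Conclude that the two composites agree on $HH_*$, i.e. the diagram commutes. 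Throughout, Lemma \ref{lem:differential_respects_filtration} and Lemma \ref{lem:action_filtration_infinity} guarantee that all the relevant sums are finite (action-bounded), so that every count is well-defined even when $E$ is infinite-dimensional — this is the only place where the extension to arbitrary local systems requires care beyond \cite{generate}.

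The main obstacle I expect is step (3): identifying the collision-of-outputs boundary of $\mathcal{N}$ with $H_*(\mu)\circ HH_*(\Delta)$ requires matching the bimodule $\Bimod_L(E)$ — which is \emph{not} simply $CW^{*}(L,E)\otimes CW^{*}(E,L)$ in the infinite-rank case — with the gluing profile of the degenerated curve, and checking that the parallel transport maps along the boundary of the glued configuration compose correctly (the subtlety being that after gluing along the two outputs of the $\Delta$-disc, the transport around $E$ closes up to give precisely the trace/evaluation built into $\mu$). This is exactly the content that Lemma \ref{lem:evaluation_is_composition} is set up to provide, so in the write-up I would invoke that lemma here and reduce step (3) to a compatibility of gluing maps with the structure maps of $\Bimod_L(E)$. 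A secondary technical point is transversality for the master family with its interior constraint; as per the Notation section, I would suppress this and assert that the abstract families of Cauchy–Riemann operators from \cite{generate} extend to $\mathcal{N}$.
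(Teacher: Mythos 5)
Your overall strategy is the paper's: both composites are exhibited as the two extreme degenerations of a one-parameter family of domains, and the remaining codimension-one strata assemble into a chain homotopy (a Cardy-type relation). However, the master family you describe is not the right one, and as stated this step would fail. There is no one-parameter family of \emph{discs with an interior puncture} interpolating between the two broken configurations: a disc with an interior puncture asymptotic to an orbit already \emph{is} (half of) the broken $\CO\circ\OC$ configuration, so your $\mathcal{N}$ has no modulus to vary. The interpolating domains are annuli $\Ann^{-}_{d}(x; x^{1},\ldots,\underline{x}^{d})$, with the $d$ inputs on one boundary circle (mapped to $L$) and the single output $x$ on the other (mapped to $K$), the conformal modulus providing the interpolation parameter: as the modulus tends to infinity the annulus pinches along its core circle into two once-punctured discs joined at an interior node (the $\CO\circ\OC$ stratum), while as it tends to $1$ the annulus degenerates into two discs joined at two boundary nodes (the $\mu\circ CC_{*}(\Delta)$ stratum). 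Replacing your $\mathcal{N}$ by this annulus moduli space is exactly what the paper does; the count of rigid annuli then furnishes the homotopy, with the remaining strata \eqref{eq:boundary_stratum_annuli_differential_CC_0}--\eqref{eq:boundary_stratum_annuli_differential_CC} accounting for the differentials, as you anticipate in step (4).

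Second, invoking Lemma \ref{lem:evaluation_is_composition} in your step (3) is a misdirection: that lemma compares $\mu$ with composition against $\tau$ in the category of modules (via the twisted complexes $\Univ_{L}^{n}(E)$) and is used \emph{together with} Proposition \ref{prop:commutative_diagram} to deduce Theorem \ref{thm:localising}; it says nothing about degenerations of annuli or gluing profiles. The identification of the two-boundary-node stratum with $\mu\circ CC_{*}(\Delta)$ needs no such input: it is immediate from the definitions, since $\Delta^{r|1|s}$ takes values in the direct sum \eqref{eq:bimodule_explicit} by counting discs with two boundary outputs $x^{-1}\in\Chord(K,L)$ and $x^{0}\in\Chord(L,K)$, and $\mu$ counts discs with these chords among their inputs, so the composite counts precisely the nodal configurations added at the modulus-$1$ end. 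The compatibility of the parallel-transport data under this gluing is supplied by the consistency of the rescaling maps with breaking (Lemma \ref{lem:diffeo_for_evaluation} and Corollary \ref{cor:parallel_transport_from_curve}); no trace-type subtlety with the infinite-rank bimodule arises, since each rigid curve contributes a map landing in a single summand $\Hom(E_{x^{-1}(0)},E_{x^{0}(1)})$, and finiteness of all counts is just Gromov compactness.
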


In order to conclude Theorem \ref{thm:localising}, we must have a different interpretation of $HH_{*}\left( CW^{*}(L),    \Bimod_{L}(E) \right)$.  We shall therefore show in Section \ref{sec:twist-compl} that there exists a directed system of  twisted complexes $\Univ_{L}^{n}(E)$ in the category of modules over $\Seidel(M) $, which are built from an action filtration on the trivial local systems on $L$ with fibre $CW^{*}(L,E)$, and which carry a canonical map
  \begin{equation}
    \tau \co  \colim_{n} \Univ_{L}^{n}(E) \to \cY_{E},
  \end{equation}
where $\cY_{E}$ is the image of $E$ under the Yoneda embedding.  Note that, even though we work at the chain level, the colimit on the left hand side is of a highly benign nature:  the maps in the directed system are in fact inclusions of twisted complexes.

We shall prove the following result in  Section \ref{sec:comp-two-eval}:
\begin{lem} \label{lem:evaluation_is_composition}
There exists a commutative diagram
\begin{equation} \label{eq:bimodule_to_evaluation}
\xymatrix{ HH_{*} \left( CW^{*}(L), \Bimod_{L}(E) \right) \ar[r] \ar[d]^{H_{*}(\mu)}  &  \colim_{n}   \Hom_{\mod-\Seidel(M)  }^{*}(\cY_{E},  \Univ_{L}^{n}(E)) \ar[d]^{\mu^{2}(\tau, \_)}   \\  
HW^{*}(E) \ar[r]^{\cong}  &  \End_{\mod-\Seidel(M)  }(\cY_{E})  }
\end{equation}
\end{lem}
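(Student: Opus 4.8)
The plan is to identify both $HH_{*}\left( CW^{*}(L), \Bimod_{L}(E) \right)$ and $HW^{*}(E)$ with morphism spaces in $\mod$-$\Seidel(M)$ so that the evaluation map $H_{*}(\mu)$ becomes composition with $\tau$. First I would recall that, by the construction of Section \ref{sec:twist-compl}, the twisted complex $\Univ_{L}^{n}(E)$ is built from the trivial local systems on $L$ with fibre (a finite-action piece of) $CW^{*}(L,E)$, so that $\Hom_{\mod-\Seidel(M)}^{*}(\cY_{E}, \Univ_{L}^{n}(E))$ is computed by a bar-type complex whose terms involve $CW^{*}(E, L)$ (the Yoneda module $\cY_{E}$ evaluated at $L$) and the bimodule structure of $CW^{*}(L)$ acting on $CW^{*}(L,E)$. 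Passing to the colimit over $n$, i.e. over the action filtration, removes the truncation and the resulting complex is, by inspection, precisely the two-sided bar complex computing $HH_{*}\left( CW^{*}(L), \Bimod_{L}(E) \right)$—here one uses the defining property (Equation \eqref{eq:bimodule_L_E}) that $\Bimod_{L}(E)$ is the ``large enough'' replacement for $CW^{*}(L,E) \otimes CW^{*}(E,L)$, which is exactly what makes the identification work when the morphism spaces are infinite-dimensional. This produces the horizontal map in the top row of \eqref{eq:bimodule_to_evaluation}, and it is a quasi-isomorphism (or at least the canonical comparison map) rather than merely some map.

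Next I would treat the bottom row: the isomorphism $HW^{*}(E) \cong \End_{\mod-\Seidel(M)}(\cY_{E})$ is the Yoneda lemma applied to the $A_{\infty}$-module $\cY_{E}$ over $\Seidel(M)$, which holds at the cohomological level for any object of an $A_{\infty}$ category. With both rows in place, the content of the lemma is the commutativity of the square, i.e. that the geometrically-defined evaluation map $H_{*}(\mu)$ of \eqref{eq:cohomology_map_induced_mu} agrees, under these identifications, with post-composition by the canonical map $\tau \co \colim_{n} \Univ_{L}^{n}(E) \to \cY_{E}$. The strategy here is to unwind both sides to the level of the underlying $A_{\infty}$ operations: $H_{*}(\mu)$ is defined by the curves appearing in the $A_{\infty}$ structure of $\Seidel(M)$ (the higher products $\mu^{d}$ of Equation \eqref{eq:A_oo_structure_local_systems}), composing elements of $CW^{*}(L,E)$ and $CW^{*}(E,L)$ through strings of elements of $CW^{*}(L)$ to land in $CW^{*}(E)$; and $\mu^{2}(\tau, \_)$ is, after unwinding the bar complex for $\Hom(\cY_{E}, \Univ_{L}^{n}(E))$ and the definition of $\tau$ on each summand, built from exactly the same $A_{\infty}$ operations acting on exactly the same Floer generators. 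Matching them is therefore a matter of checking that the combinatorics of which discs are counted, and in which order the parallel transport maps and module operations are applied, coincide on the nose—this is bookkeeping of the sort already carried out in \cite{generate} in the absence of local systems, now decorated with the parallel transport data of Corollary \ref{cor:parallel_transport_from_curve}.

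The main obstacle I expect is making the colimit identification in the top row rigorous and compatible at the chain level with both evaluation maps simultaneously—in particular, verifying that the directed system $\{\Univ_{L}^{n}(E)\}$ is genuinely a system of \emph{inclusions} of twisted complexes (as asserted after the statement of Lemma \ref{lem:evaluation_is_composition}) so that the colimit commutes with taking $\Hom$ out of $\cY_{E}$, and that the bar complex so obtained maps isomorphically, not just quasi-isomorphically, onto the Hochschild complex of $\Bimod_{L}(E)$. A secondary subtlety is the bounded-below action filtration: one must invoke Lemma \ref{lem:action_filtration_infinity} (and the properness Lemma \ref{lem:properness_chords}) to ensure that the relevant sums are finite on each filtered piece, so that the evaluation map $H_{*}(\mu)$ is well-defined on the Hochschild complex and commutes with the colimit. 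Once these finiteness and inclusion statements are in hand, the commutativity of \eqref{eq:bimodule_to_evaluation} reduces to the term-by-term comparison of $A_{\infty}$ operations sketched above, which is routine.
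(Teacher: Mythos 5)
There is a genuine gap, and it sits exactly where you declare the remaining work to be ``routine bookkeeping.'' After unwinding, the two compositions in Diagram \eqref{eq:bimodule_to_evaluation} do \emph{not} count the same configurations on the nose. Going down then right, an element $\sum_i \phi^i \otimes x^{d}\otimes\cdots\otimes x^{1}\otimes \phi_i$ is sent to the endomorphism $\underline{y}\otimes y^{s}\otimes\cdots\otimes y^{1}\mapsto \sum_i \mu^{s+2}\bigl( \mu^{d+2}( \phi^i, x^{d},\ldots,x^{1},\phi_i),\underline{y},y^{s},\ldots,y^{1}\bigr)$, whereas going right then down (applying the map \eqref{eq:cyclic_complex_qi_hom_modules} and then composing with $\tau$ in the dg category of modules) produces the double sum $\sum_{i,k,\ell}\mu^{d-k+s-\ell+2}\bigl(\phi^i,x^{d},\ldots,x^{k+1},\mu^{k+2+\ell}(x^{k},\ldots,x^{1},\phi_i,\underline{y},y^{s},\ldots,y^{s-\ell+1}),y^{s-\ell},\ldots,y^{1}\bigr)$. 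In a genuinely $A_{\infty}$ setting these chain-level expressions differ; the content of the proof is to exhibit an explicit chain homotopy between them, namely the single higher operation $\sum_i \mu^{d+s+3}(\phi^i,x^{d},\ldots,x^{1},\phi_i,\underline{y},y^{s},\ldots,y^{1})$ as in \eqref{eq:homotopy_evaluations}: the $A_{\infty}$ relations sort the remaining terms into (differential of $\End_{\mod-\Seidel(M)}(\cY_{E})$)$\circ$(homotopy) and (homotopy)$\circ$(differential of the cyclic bar complex), which is why the square commutes on cohomology even though it fails to commute on chains. Your plan to match the disc counts ``on the nose'' would not close, and it also skips the reduction (made in the paper by first restricting to classes represented in the subcomplex where $\psi$ factors through $CW^{*}(L,E)\otimes CW^{*}(E,L)$, i.e.\ through finite-rank pieces) that lets one argue with the $A_{\infty}$ equations rather than re-stratifying moduli spaces.

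A secondary, but real, problem is your identification of the top row: $\colim_{n}\Hom_{\mod-\Seidel(M)}^{*}(\cY_{E},\Univ_{L}^{n}(E))$ is \emph{not} isomorphic to the cyclic bar complex of $\Bimod_{L}(E)$. A module homomorphism has components over arbitrary sequences of test objects $E'_{0},\ldots,E'_{s}$ of $\Seidel(M)$, so this $\Hom$ complex is much larger than anything built only from $CW^{*}(L)$, $CW^{*}(L,E)$ and $CW^{*}(E,L)$; the lemma only requires the canonical comparison map of \eqref{eq:cyclic_complex_qi_hom_modules} (extended over the action truncations), not an isomorphism or even a quasi-isomorphism, and the paper never asserts one. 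Your Yoneda identification of the bottom row and your appeal to Lemma \ref{lem:action_filtration_infinity} for finiteness are fine, but without the homotopy above the argument does not prove the lemma.
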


With this at hand, we can prove our extension of the generation criterion:
\begin{proof}[Proof of Theorem \ref{thm:localising}]
The commutativity of Diagram \eqref{eq:diagram_bimodules} and the hypothesis of Theorem \ref{thm:localising}  imply that the identity of $HW^{*}(E)$ lies in the image of $H_{*}(\mu)$.  Using the commutativity of Diagram \eqref{eq:bimodule_to_evaluation}, we conclude that there exists some $n$ such that we have a morphism in
  \begin{equation}  \Hom_{\mod-\Seidel(M) }^{*}(\cY_{E},  \Univ_{L}^{n}(E))   \end{equation} 
whose product with $\tau$ is the identity of $\cY_{E}$.  We conclude that $\cY_{E}$ is a summand of $\Univ_{L}^{n}(E)  $, which is by construction an iterated cone of direct sums of the images of local systems on $L$ under the Yoneda embedding.

The statement about dimensions follows from the fact that the local systems used to build $ \Univ_{L}^{n}(E) $ arise from an action filtration on $CW^{*}(L,E)$.  If $E$ is finite, then every such local system is isomorphic to a finite direct sum of copies of $L$, so we conclude that $E$ lies in the category split generated by $L$.  Otherwise, the dimension of $ CW^{*}(L,E) $ is bounded by the dimension of $E$, so that $E$ indeed lies in the category split-generated by local systems on $L$ of dimension no greater than that of $E$.
\end{proof}

\subsection{Construction of the bimodule}  \label{sec':bimod}
Whenever $L$ resolves the diagonal and $E \in \Seidel(M)$ is supported on a closed Lagrangian $K$, we shall show that $E$ is a direct summand of the trivial local system of complexes $CW^{*}(L,E) $ over $L$, which we shall continue denoting $ CW^{*}(L,E) \boxtimes L $.   The general case will require using filtrations by finite dimensional subcomplexes, as we discuss at the end of this section.

We define
\begin{equation} \label{eq:bimodule_L_E}
\Bimod_{L}(E) \equiv  CW^{*}(E,  CW^{*}(L,E) \boxtimes L ) \textrm{ if the support of $U$ is compact},
\end{equation}
with the differential $\mu^1$ on the right hand side now denoted $\mu^{0|1|0}$.  Note that $L$ appears once as the input and once as the output of $CW^{*}(\_,\_)$ in the above equation, which accounts for the two different actions of $ CW^*(L) $.   More explicitly, the left module action comes from the natural inclusion
\begin{align}
\kappa \co CW^{*}(L) & \to CW^{*}(CW^{*}(L,E) \boxtimes L)    \cong \End( CW^{*}(L,E) )  \otimes CW^{*}(L) \\
x & \mapsto \id \otimes x,
\end{align}
which is a strict map of $A_{\infty}$ algebras in the sense that
\begin{equation}
\id \otimes  \mu^{d}(x^d, \ldots, x^1)  = \mu^{d}(  \id \otimes x^d, \ldots,  \id \otimes x^1).
\end{equation}
Whenever $d > 0$, the left module structure maps are given by
\begin{align} \label{eq:left_action}
  \mu^{d|1|0}  \co CW^{*}(L)^{\otimes d-1} \otimes CW^{*}(E,  CW^{*}(L,E) \boxtimes L ) & \to CW^{*}(E,  CW^{*}(L,E) \boxtimes L ) \\
(x^{d}, \ldots, x^{1}, \phi) & \mapsto \mu^{d+1}(   \id \otimes x^d, \ldots,  \id \otimes x^1, \phi).
\end{align}

The right module maps are given by the action of $CW^{*}(L)$ on the vector space $ CW^{*}(L,E) $.  To lift this to a map on $CW^{*}(E,  CW^{*}(L,E) \boxtimes L )  $, recall that this is, by definition, a direct sum
\begin{equation} \label{eq:direct_sum_decomp_tensor_with_L}
  \bigoplus_{x \in \Chord(K,L)} \Hom( E_{x(0)}, CW^{*}(L,E) ).
\end{equation}
The right module action for $d >0$ preserves this decomposition into direct summands, and is given on each such summand as follows
\begin{align} \label{eq:pre-right_action}
  \mu^{0|1|d}  \co  \Hom( E_{x(0)}, CW^{*}(L,E) )   \otimes CW^{*}(L)^{\otimes d-1}  & \to  \Hom( E_{x(0)}, CW^{*}(L,E) )  \\
 \mu^{0|1|d}(\phi, x^{d}, \ldots, x^{1}) & = \mu^{d+1}( \_ , x^d, \ldots, x^1) \circ \phi
\end{align}

We claim that the left and right actions commute, which follows from the following general fact: any endomorphism $\Psi$ of $ CW^{*}(L,E)$  induces an endomorphism of $ CW^{*}(E,  CW^{*}(L,E) \boxtimes L ) $ which acts on each summand 
\begin{equation}
  \Hom( E_{x(0)}, CW^{*}(L,E) )
\end{equation}
by post composition.  Since the parallel transport maps on $ CW^{*}(L,E)  $ are trivial,  composition with $\Psi$ commutes with the module action of $ CW^{*}(L) $ on $ CW^{*}(E,  CW^{*}(L,E) \boxtimes L )  $:
\begin{equation}
   \mu^{d+1}(   \id \otimes x^d, \ldots,  \id \otimes x^1, \phi) \circ \Psi =   \mu^{d+1}(   \id \otimes x^d, \ldots,  \id \otimes x^1, \phi \circ \Psi).
\end{equation}
Since the right module structure is defined using such linear maps $\Psi$, we conclude:
\begin{lem}
  The complex $\Bimod_{L}(E)$, equipped with structure maps $\mu^{r|1|s}$ which  vanish if both $r$ and $s$ are strictly positive, and are otherwise given by Equations \eqref{eq:left_action} and \eqref{eq:pre-right_action} is a bimodule over $CW^{*}(L)$.
\end{lem}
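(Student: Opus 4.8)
The plan is to write out the $A_\infty$-bimodule relations over $CW^*(L)$ for the operations $\mu^{r|1|s}$ and observe that, because $\mu^{r|1|s}=0$ whenever $r,s>0$ and because we work over $\bF_2$ (so no signs intervene), they collapse into three families: for $s=0$, the left $A_\infty$-module relations among $\{\mu^{d|1|0}\}_{d\ge0}$; for $r=0$, the right $A_\infty$-module relations among $\{\mu^{0|1|d}\}_{d\ge0}$; and for $r,s>0$, the single identity $\mu^{r|1|0}(\vec{x},\mu^{0|1|s}(\phi,\vec{y}))=\mu^{0|1|s}(\mu^{r|1|0}(\vec{x},\phi),\vec{y})$ saying that the two actions commute. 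That this is all that survives of the $(r,s)$-relation when $r,s>0$ is immediate: every other term of it contains either an inner operation $\mu^{r'|1|s'}$ with $r',s'>0$, or --- this covers all the terms in which some $\mu_{CW^*(L)}$ acts on a block of inputs --- an outer operation $\mu^{r''|1|s''}$ with $r'',s''>0$; all such terms vanish.

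For $s=0$ I would appeal directly to the $A_\infty$-relations of $\Seidel(M)$. By Equation~\eqref{eq:left_action} the operations $\mu^{d|1|0}$ are the values of $\mu^{d+1}_{\Seidel(M)}$ on tuples $(\kappa(x^d),\dots,\kappa(x^1),\phi)$, and $\kappa$ is a \emph{strict} homomorphism of $A_\infty$-algebras --- this is Equation~\eqref{eq:higher_products_tensor} with every $\End(CW^*(L,E))$-factor equal to the identity. Feeding such tuples into the $A_\infty$-relations of $\Seidel(M)$ therefore reproduces, term for term, the left $A_\infty$-module relations for $\{\mu^{d|1|0}\}$; equivalently, $\Bimod_{L}(E)$ with these operations is the restriction along $\kappa$ of the tautological left $CW^*(CW^*(L,E)\boxtimes L)$-module structure carried by the morphism complex $CW^*(E,CW^*(L,E)\boxtimes L)$ inside $\Seidel(M)$. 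Its $d=0$ instance is that $\mu^{0|1|0}$ squares to zero, i.e.\ Lemma~\ref{lem:differential_squares_0}.

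The $r=0$ family is where the work lies, since the right $CW^*(L)$-action was grafted on by hand in Equation~\eqref{eq:pre-right_action} rather than inherited from $\Seidel(M)$ (the morphism complex $CW^*(E,CW^*(L,E)\boxtimes L)$ is naturally a right module over $CW^*(E)$, not $CW^*(L)$). I would work on a single summand $\Hom(E_{x(0)},N)$ of the decomposition~\eqref{eq:direct_sum_decomp_tensor_with_L}, with $N:=CW^*(L,E)$ carrying its right $A_\infty$-module structure $\lambda^{1|\bullet}$ over $CW^*(L)$ inherited from $\Seidel(M)$. For $d>0$, Equation~\eqref{eq:pre-right_action} says $\mu^{0|1|d}(\phi,\vec{y})=\lambda^{1|d}(\_,\vec{y})\circ\phi$, i.e.\ post-composition of $\phi$ with an operator on the \emph{target} $N$; while the differential $\mu^{0|1|0}$ --- the full Floer differential $\mu^1$ on $CW^*(E,CW^*(L,E)\boxtimes L)$ --- splits, using that the parallel transport on the trivial local system $CW^*(L,E)\boxtimes L$ is the identity, as $\mu^{0|1|0}(\phi)=d_N\circ\phi+\phi\circ d_E$, where $d_N=\lambda^{1|0}$ and $d_E$ collects the internal differential of $E$ and the strip contributions $\gamma_u^{0}$ (which may move between summands, harmlessly). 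Since $d_E$ acts on the \emph{source}, it commutes with each $\lambda^{1|d}(\_,\vec{y})$; hence in the right-module relation at order $s$ the two occurrences of $d_E$, in the two extreme positions, cancel over $\bF_2$, and what is left --- after post-composition with $\phi$ --- is precisely the right $A_\infty$-module relation for $\lambda^{1|\bullet}$ on $N$, which holds. The auxiliary identity $d_E^{2}=0$, needed to close the $s=0$ end of this relation, is again part of $(\mu^{0|1|0})^{2}=0$.

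Finally, the compatibility identity for $r,s>0$ is exactly the displayed computation preceding the statement: by the same triviality of parallel transport, $\mu^{r|1|0}(\vec{x},\phi)$ is pre-composition of $\phi$ with an operator $\theta_{\vec{x}}$ on $E$, while $\mu^{0|1|s}(\phi,\vec{y})$ is post-composition with $\lambda^{1|s}(\_,\vec{y})$ on $N$; a source-operator and a target-operator commute, so both sides equal $\lambda^{1|s}(\_,\vec{y})\circ\phi\circ\theta_{\vec{x}}$. The one step I expect to require care is the bookkeeping in the $r=0$ family --- isolating the ``source'' part of the Floer differential and verifying that it drops out of the higher right-module relations; the $s=0$ family is a formal consequence of the $A_\infty$-axioms and strictness of $\kappa$, and the compatibility identity has essentially been carried out in the text.
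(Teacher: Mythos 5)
Your proposal is correct and takes essentially the same route as the paper, whose justification is the discussion preceding the statement: the left action is the restriction along the strict homomorphism $\kappa$ of the module structure on $CW^{*}(E, CW^{*}(L,E)\boxtimes L)$ inherited from $\Seidel(M)$, the right action is post-composition with the right $CW^{*}(L)$-module structure maps of $CW^{*}(L,E)$, and the two commute because parallel transport in the trivial local system is trivial. Your explicit check of the $r=0$ family --- splitting $\mu^{0|1|0}$ into a target part $d_N$ and a source part $d_E$ and cancelling the two $d_E$ occurrences over $\bF_{2}$ --- spells out a point the paper leaves implicit, and is accurate.
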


We shall find the above description of the right module structure a bit inconvenient in the discussion that follows, so we shall assume, for simplicity, that $L$ admits a strict unit $\id_{L}$, i.e. that $\mu^{d}(x^{d}, \ldots, \id_{L}, \ldots, x^1)$ vanishes unless $d=2$ in which case it is the identity.  In this case, we obtain a map
\begin{align}
\iota \co  CW^{*}(L)^{\otimes d} & \to CW^{*}( CW^{*}(L,E) \boxtimes L  )  \cong \End( CW^{*}(L,E) )  \otimes CW^{*}(L)  \\
(x^{d}, \ldots, x^{1}) & \mapsto \mu^{d+1}(\_, x^{d}, \ldots, x^{1}) \otimes \id_{L}.
\end{align}
The right module action defined above can be written instead as
\begin{align} \label{eq:right_action}
    \mu^{0|1|d}  \co   CW^{*}(E,  CW^{*}(L,E) \boxtimes L ) \otimes CW^{*}(L)^{\otimes d} & \to CW^{*}(E,  CW^{*}(L,E) \boxtimes L ) \\
(\phi , x^{d}, \ldots, x^{1}) & \mapsto \mu^{2}(   \mu^{d+1}(\_, x^{d}, \ldots, x^{1}) \otimes \id_{L} , \phi).
\end{align}

\subsubsection{Non-closed Lagrangians and action filtrations}
We now consider the case of a general Lagrangian:
\begin{defin} \label{def:bimodule_non-closed}
  If $E$ is supported on $K$, we define $\Bimod_{L}(E)$  to be the double direct sum
  \begin{equation}
    \label{eq:bimodule_explicit}
    \Bimod_{L}(E) = \bigoplus_{\substack{y \in \Chord(K,L) \\ x \in \Chord(L,K)} } \Hom(E_{y(0)}, E_{x(1)}) 
  \end{equation} \qed
\end{defin}
With this description,  one may easily show that the counts of pseudo-holomorphic curves used in the case of closed Lagrangians define operations $\mu^{d|1|0}$ and $\mu^{0|1|d}$ which make $  \Bimod_{L}(E) $  into an $A_{\infty}$ bimodule.  We shall give an equivalent definition by observing that
\begin{equation} \label{eq:univ_bimodule}
\Bimod_{L}(E) \equiv \colim_{a} CW^{*}(E,  CW^{*}_{\geq a}(L,E) \boxtimes L ).
\end{equation}
The right hand side is the directed colimit with respect to the inclusions  of subcomplexes $ CW^{*}_{\geq a}(L,E)   \to  CW^{*}_{\geq b}(L,E)  $ whenever $b < a$.   Lemma \ref{lem:properness_chords} implies that $ CW^{*}_{\geq a}(L,E)   $  vanishes whenever $a$ is sufficiently large, so we obtain a directed system of objects in $\Seidel(M)$
\begin{equation}
 0 \to  CW^{*}_{\geq N}(L,E) \boxtimes L \to  CW^{*}_{\geq N -1}(L,E) \boxtimes L \to \cdots \to CW^{*}_{\geq 0}(L,E) \boxtimes L \to \cdots
\end{equation}
The colimit of this system may not be representable in $\Seidel(M)$, but we may always form such a colimit in the category of modules, and shall use a similar idea to construct a bimodule structure. In order to keep our descriptions relatively simple, we shall assume that $L$ is strictly unital.  By going back to the definitions of morphism spaces in $\Seidel(M)$, the reader can check, as in the closed case, that the construction makes sense in full generality.  

To define the left module action, we use the natural inclusion 
\begin{align}
\iota_{a} \co CW^{*}(L) & \to CW^{*}(CW^{*}_{\geq a}(L,E) \boxtimes L)    \cong \End( CW^{*}_{\geq a}(L,E) )  \otimes CW^{*}(L) \\
x & \mapsto \id \otimes x,
\end{align}
Since the maps in the colimit are induced by inclusions of subcomplexes, the restriction of $\iota_{b} $ to $ CW^{*}_{\geq a}(L,E) \boxtimes L $ agrees with $\iota_a$, so the structure maps
\begin{align}
  \mu^{d|1|0}  \co CW^{*}(L)^{\otimes d-1} \otimes CW^{*}(E,  CW^{*}_{\geq a}(L,E) \boxtimes L ) & \to CW^{*}(E,  CW^{*}_{\geq a}(L,E) \boxtimes L ) \\
(x^{d}, \ldots, x^{1}, \phi) & \mapsto \mu^{d+1}( \iota_{a}(x^d), \ldots,  \iota_{a}(x^1), \phi)
\end{align}
define a left module action
\begin{equation}
    \mu^{d|1|0}  \co CW^{*}(L)^{\otimes d} \otimes \Bimod_{L}(E)   \to \Bimod_{L}(E) . \end{equation}

To define the right action, the action filtration allows us to write any element $ x^{d}\otimes  \cdots \otimes x^{1}  \in  CW^{*}(L)^{\otimes d}$ as lying in $ CW^{*}_{\geq c}(L)^{\otimes d} $  for some $c$.  Moreover, Lemma \ref{lem:action_filtration_infinity} implies that every product $\mu^{d+1}$ of such elements factors through the subspace supported by chords of sufficiently large action:
\begin{equation}
   \mu^{d+1} \co CW^{*}_{\geq a}(L,E) \otimes   CW^{*}_{\geq c}(L)^{\otimes d} \to  CW^{*}_{\geq b}(L,E) \textrm{ for some $b$.}
\end{equation}
This operation induces a map of trivial local systems over $L$
\begin{equation}
  CW^{*}_{\geq c}(L)^{\otimes d} \to CW^{*}\left( CW^{*}_{\geq a}(L,E) \boxtimes L,   CW^{*}_{\geq b}(L,E) \boxtimes L\right).
\end{equation}
For each given action level, we may therefore consider the composite
\begin{equation}
\xymatrix{    CW^{*}( E, CW^{*}_{\geq a}(L,E) \boxtimes L)  \otimes  CW^{*}_{\geq c}(L)^{\otimes d} \ar[d]   \\
CW^{*} ( CW^{*}_{\geq a}(L,E) \boxtimes L,  CW^{*}_{\geq b}(L,E) \boxtimes L )  \otimes  CW^{*}( E, CW^{*}_{\geq a}(L,E) \boxtimes L) \ar[d] \\
 CW^{*}( E, CW^{*}_{\geq b}(L,E) \boxtimes L) . }
\end{equation}
\begin{rem}
Note that we switch factors in the first map above.  This is an artefact of having constructed the right module structure by appealing to the existence of a unit on $L$, as can be seen by comparing Equation \eqref{eq:pre-right_action} with Equation \eqref{eq:right_action}, in which the appearance of $\mu_2$ is indicating the switch of factors.
\end{rem}

By construction, these maps are compatible with the inclusion of the subcomplexes $  CW^{*}_{\geq c}(L)^{\otimes d}  \subset   CW^{*}(L)^{\otimes d}   $  and $  CW^{*}_{\geq a}(L,E) \subset   CW^{*}(L,E)  $, so they define a map
\begin{equation}
  \mu^{0|1|d} \co   \Bimod_{L}(E) \otimes CW^{*}(L)^{\otimes d}  \to \Bimod_{L}(E).
\end{equation}

\subsection{An evaluation map on the cyclic bar complex} \label{sec:an-evaluation-map}
If we compute morphisms from $  CW^{*}(L,E) \boxtimes L$ to $E$, we find that
\begin{align}
  CW^{*}(   CW^{*}(L,E) \boxtimes L , E)  & \cong \bigoplus_{x \in \Chord(L,K)}  \Hom(  CW^{*}(L,E), E_{x(1)}) \\
& \cong \bigoplus_{x \in \Chord(L,K)}  \Hom\left(  \bigoplus_{y \in \Chord(L,K)} E_{y(1)} , E_{x(1)}\right).
\end{align}
Using the projection map $ \pi_{x} \co   \bigoplus_{y \in \Chord(L,K)} E_{y(1)} \to  E_{x(1)} $  which vanishes whenever $y \neq x$ and is the identity otherwise, we obtain  a canonical evaluation map
\begin{equation} \label{eq:evaluation_closed}
\tau_{0}  = \sum_{x \in \Chord(L,K)}  \pi_{x} \in CW^{*}(   CW^{*}(L,E) \boxtimes L , E) 
\end{equation}
which is a sum over finitely many chords whenever  $E$ is supported on a closed Lagrangian.   In this case, composition with $\tau_{0}$ defines an evaluation map
\begin{equation}
  CW^{*}(E,   CW^{*}(L,E) \boxtimes L)  \to CW^{*}(E,E)
\end{equation}
and the left hand side is $  \Bimod_{L}(E) $.  Without restriction on the support of $E$, we shall extend this to a map  whose source is the cyclic bar complex 
\begin{equation} \label{eq:direct_sum_CC_*_bimod}
 CC_{*}( CW^{*}(L), \Bimod_{L}(E)) \equiv \bigoplus_{d \geq 0} \Bimod_{L}(E) \otimes CW^{*}(L)^{\otimes d}
\end{equation}
which is equipped with the differential
\begin{multline} \label{eq:HH-differential}
  \psi \otimes x^d \otimes \ldots \otimes x^1 \mapsto \sum \mu^{k-1|1|0}(x^{k-1}, \ldots, x^{1}, \psi)  \otimes  x^{d} \otimes \cdots \otimes  x^{k} \\
+  \sum  \mu^{0|1|k-1} (\psi ,  x^{d},  \ldots,   x^{d-k+2}) \otimes x^{d-k+1} \otimes  \cdots \otimes x^{1} \\
+  \sum  \psi  \otimes x^{d} \otimes \cdots \otimes x^{\ell+k+1} \otimes \mu^{k}(x^{\ell+k},   \ldots,   x^{\ell+1}) \otimes x^{\ell} \otimes  \cdots \otimes x^{1}.
\end{multline}
The source of the map $\tau_0$ sits as a subcomplex in $CC_{*}( CW^{*}(L), \Bimod_{L}(E))  $ whenever $E$ is supported on a closed Lagrangian; in terms of the direct sum decomposition appearing in the right hand side of Equation  \eqref{eq:direct_sum_CC_*_bimod}, this is the summand $d=0$.

We shall now give an explicit construction of the map
\begin{equation}
  \mu \co CC_{*}( CW^{*}(L), \Bimod_{L}(E)) \to CW^{*}(E,E).
\end{equation}
Recall that $ \Bimod_{L}(E) $  is introduced in Definition \ref{def:bimodule_non-closed} as a direct sum
\begin{equation} \label{eq:double_direct_sum_cpt}
 \bigoplus_{\substack{ y \in \Chord(K,L) \\ x^{d+1} \in \Chord(L,K) } } \Hom \left(  E_{y(0)} ,   E_{x^{d+1}(1)}  \right).
\end{equation}
We write $\psi_{y}^{ x^{d+1}} $ for an element of one of these summands, and consider a sequence $\vx = (x^1, \ldots,x^d)$ of chords with endpoints on $L$.  We shall define 
\begin{equation}
  \mu(x^{d}, \ldots,  x^1, \psi_{y}^{ x^{d+1}} ) \in CW^{*}(E,E) =  \bigoplus_{x^0 \in \Chord(K,K)} \Hom \left(  E_{x^0(0)} ,   E_{x^{0}(1)}  \right)
\end{equation}
by counting holomorphic discs.

Each rigid disc $u \in \Disc(x^0, x^{d+1}, \vx, y) $ will contribute a homomorphism from $ E_{x^0(0)}  $ to $ E_{x^{0}(1)}  $.  As in the definition of the $A_{\infty}$ structure, we have parallel transport maps
\begin{align}
   \gamma_{u}^{0} \co E_{x^0(0)} & \to E^{0}_{y(0)} \\ 
  \gamma_{u}^{d+1} \co E_{x^{d+1}(1)} & \to E_{x^0(1)} .
\end{align}
Given an element $a $ of the fibre of $E$ at the starting point of $x^0$ we obtain an element in the fibre at the endpoint of $x^0$ by parallel transporting to the starting point of $y$, applying $\psi$, then transporting from the endpoint of $x^{d+1}$ to the endpoint of $x^0$:
\begin{equation}
  \label{eq:composition_bimodule}
    \mu(x^{d}, \ldots,  x^1, \psi_{y}^{ x^{d+1}} )(a) = \sum_{u}   \gamma_{u}^{d+1} \circ  \psi_{y}^{ x^{d+1}}  \circ  \gamma_{u}^{0}(a).
\end{equation}
This map is denoted $\mu$ because it is defined using the same moduli spaces as the higher products.  Indeed, note that we have an inclusion
\begin{equation}
  CW^{*}(E, L)  \otimes    CW^{*}(L,E)  \to   \Bimod_{L}(E)
\end{equation}
obtained by considering morphisms from $E_{y(0)}  $  to $  E_{x^{d+1}(1)}  $ whose image factor through a finite dimensional vector space.  In the special case where 
\begin{equation}
   \psi_{y}^{ x^{d+1}}  = \sum_{i} \phi^{i} \otimes \phi_{i},
\end{equation}
our construction is given by
\begin{equation}
   \mu(x^{d}, \ldots,  x^1, \psi_{y}^{ x^{d+1}} ) = \sum_{i} \mu^{d+2}(  \phi_{i}, x^{d}, \ldots,  x^1,   \phi^{i}).
\end{equation}
In particular, the $A_{\infty}$ equations imply that the restriction of $\mu$ to $CW^{*}(E, L)  \otimes    CW^{*}(L,E)   $  is a chain map.  

Even if we do not start with elements $ \Bimod_{L}(E)  $ of this special nature, it is still true that moduli spaces controlling $\mu^{d+2}$ and $\mu$ are the same.  In particular, by analysing the boundary of the compacitification of the  moduli spaces $ \Disc(x^0, x^{d+1}, \vx, y)  $, we find that there are two different types of strata:  If a rigid strip breaks off at the outgoing vertex $x^{0}$, we obtain the composition of $\mu$ with the differential on the target.  The remaining breakings all involve at least one input, and correspond to the composition of $\mu$ with the differential on the source.  We conclude:
\begin{lem} \label{lem:chain-map}
$\mu$ is a chain map. \qed
\end{lem}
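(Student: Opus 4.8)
The plan is to prove that $\mu$ is a chain map by the standard argument of analysing the codimension-one boundary of the Gromov compactification of the one-dimensional moduli spaces $\Disc(x^0, x^{d+1}, \vx, y)$. Since $\mu$ is built from exactly the same moduli spaces as the $A_\infty$ products $\mu^{d+2}$, the combinatorics of degenerations is already understood; what must be checked is that, when the geometric counts are decorated by parallel transport maps on the local systems $E$, the algebraic contributions of the two types of boundary strata match the two terms (differential on the target and differential on the source) in the chain-map equation.

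First I would fix the inputs: a sequence $\vx = (x^1,\dots,x^d)$ of chords with endpoints on $L$, an element $\psi_y^{x^{d+1}}$ of the summand $\Hom(E_{y(0)}, E_{x^{d+1}(1)})$ of $\Bimod_L(E)$, and an output chord $x^0 \in \Chord(K,K)$, chosen so that the expected dimension of $\Disc(x^0, x^{d+1}, \vx, y)$ is one. By Lemma~\ref{lem:diffeo_for_evaluation} and Corollary~\ref{cor:parallel_transport_from_curve} the parallel transport maps $\gamma_u^0, \gamma_u^{d+1}$ associated to a curve $u$ depend only on the homotopy class of the boundary arcs, and this class is compatible with breakings; this is the key point that makes the matching work, exactly as in the proof of Lemma~\ref{lem:differential_squares_0}. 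Next I would invoke Gromov compactness (together with the finiteness statements from Lemma~\ref{lem:properness_chords} and Lemma~\ref{lem:action_filtration_infinity}, which guarantee all the relevant sums are finite) to identify $\overline{\Disc(x^0, x^{d+1}, \vx, y)}$ as a compact one-manifold with boundary, whose boundary points are the broken configurations. These fall into the two families already described in the paragraph preceding the lemma: (i) a rigid Floer strip breaks off at the negative puncture $x^0$, which contributes $\mu^1 \circ \mu$ (the differential $\mu^{0|1|0}$ on $CW^*(E,E)$ postcomposed with $\mu$); and (ii) a rigid disc with boundary on some subcollection of $L$'s (resp.\ one involving $E$ at an input) breaks off, which reassembles, after reading off the induced parallel transports, into $\mu$ precomposed with the Hochschild differential of Equation~\eqref{eq:HH-differential}, i.e.\ with the terms $\mu^{k-1|1|0}$, $\mu^{0|1|k-1}$, and the internal $\mu^k$. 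Summing over all boundary points and using that the signed count (here, the mod-$2$ count) of boundary points of a compact one-manifold is zero yields the chain-map identity.

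To carry out step (ii) carefully I would treat the three sub-types of degeneration separately. When a disc with inputs $x^{\ell+k},\dots,x^{\ell+1}$ and boundary on consecutive Lagrangians among $L$ bubbles off in the "interior" of the input string, the remaining curve carries the chord $\mu^k(x^{\ell+k},\dots,x^{\ell+1})$ as a new input, and the parallel transport maps along the glued boundary compose correctly because the homotopy class of the boundary arc is the concatenation of the pieces; this gives the third line of Equation~\eqref{eq:HH-differential}. When the bubble absorbs the puncture $y$ together with some $x^i$'s — equivalently, when a left-module operation $\mu^{k-1|1|0}$ occurs — or when it absorbs $x^{d+1}$ together with some $x^i$'s — a right-module operation $\mu^{0|1|k-1}$ — one reads off precisely the first two lines of the Hochschild differential, using the identifications of $\mu^{r|1|0}$ and $\mu^{0|1|s}$ with restricted higher products from Section~\ref{sec':bimod}. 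The special case where $\psi_y^{x^{d+1}} = \sum_i \phi^i \otimes \phi_i$ factors through a finite-dimensional space, already noted in the excerpt, serves as a sanity check: there $\mu$ reduces to $\sum_i \mu^{d+2}(\phi_i, x^d,\dots,x^1,\phi^i)$ and the claim follows directly from the $A_\infty$ relations.

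The main obstacle I anticipate is purely bookkeeping: making sure that each boundary stratum of $\overline{\Disc(x^0, x^{d+1}, \vx, y)}$ is assigned to the correct term of the Hochschild differential, and that the parallel transport decorations glue consistently across the node. The geometric transversality and compactness inputs are all quoted from \cite{generate} (no new moduli spaces appear, per the Notation section), so the only genuine content is verifying that the "parallel transport along $\psi_{z,S}\circ u$" recipe of Corollary~\ref{cor:parallel_transport_from_curve} is compatible with the splitting $S = S_1 \cup S_2$ — which is exactly the last displayed compatibility in Lemma~\ref{lem:diffeo_for_evaluation}. Once that compatibility is in hand, the argument is a routine adaptation of the proof that $\mu^1$ squares to zero, and no further estimates are needed.
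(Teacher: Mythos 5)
Your proposal is correct and follows essentially the same route as the paper: the paper likewise proves the lemma by noting that $\mu$ is controlled by the same moduli spaces as the higher products $\mu^{d+2}$, and by identifying the two types of boundary strata of the compactified one-dimensional spaces $\Disc(x^0,x^{d+1},\vx,y)$ (a strip breaking at the output, giving the target differential composed with $\mu$, versus all other breakings, giving $\mu$ composed with the cyclic bar differential), with the finite-dimensional case $\psi=\sum_i\phi^i\otimes\phi_i$ handled directly by the $A_\infty$ relations. The only quibble is notational: the differential on the target $CW^*(E,E)$ is $\mu^1$, not $\mu^{0|1|0}$ (which denotes the differential on $\Bimod_L(E)$), but this does not affect the argument.
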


\subsection{An infinite twisted complex}
We begin by writing $\cY_{F}$  for the right Yoneda module associated to an object $F$ of $\Seidel(M)$:
\begin{equation}
  \cY_{F}(E') = CW^{*}(E',F).
\end{equation}
Recall that an endomorphism of such a module consists of maps $t=(t^1, t^2, \ldots)$ where
\begin{equation} \label{eq:morphism_modules}
  t^{s+1} \in  \prod_{E'_i \in \Seidel(M)}\Hom\Big( \cY_{F}(E'_{s}) \otimes  CW^{*}(E'_{s-1}, E'_{s}) \otimes \cdots \otimes CW^{*}(E'_0, E'_1) , \cY_{F}(E'_0) \Big) 
\end{equation}
with the differential induced from the $A_{\infty}$ structure on $\Seidel(M)$:
\begin{align} \label{eq:differential_modules}
  (\partial t)^{s+1}( \underline{y} ,  y^{s},  \ldots ,  y^{1}) & = \sum t^{s-\ell+1}(\mu^{\ell+1}( \underline{y} ,  y^{s}, \ldots, y^{s - \ell +1}), y^{s - \ell}, \ldots , y^{1}) \\ \notag
& \quad +  \sum \mu^{s-\ell+1}(t^{\ell+1}( \underline{y} ,  y^{s}, \ldots, y^{s- \ell+1}), y^{s - \ell}, \ldots , y^{1})  \\ \notag
& \quad + \sum t^{s-\ell+1}( \underline{y} ,  y^{s}, \ldots, \mu^{\ell+1}(y^{k}, \ldots, y^{k - \ell }), y^{k - \ell-1}, \ldots , y^{1}). 
\end{align}

 We say that $t^1$ is the linear term of such an endomorphism, and that $t$ is \emph{linear} if all other terms vanish.  By construction, every element $x^r \otimes \cdots \otimes x^1$ of $CW^{*}(F)^{\otimes r}$ defines an endomorphism of $  \cY_{F}  $ given by the formula
\begin{equation}
  \label{eq:formula_action_bar_complex}
  \underline{y} \otimes y^{s} \otimes \cdots \otimes y^{1} \mapsto \mu^{r+s+1}( x^r ,  \ldots , x^1 , \underline{y} ,  y^{s},  \ldots ,  y^{1}).
\end{equation}

As a shorthand, we shall write $\cY_{L,E}$ for the Yoneda module associated to $ CW^{*}(L,E) \boxtimes L $.    The next result is the appropriate extension of the construction of Section \ref{sec':bimod} in the setting of Yoneda modules:
\begin{lem}
The action of $ CW^{*}(L)^{d}  $ on $ CW^{*}(L,E)  $ defines a homomorphism
\begin{equation}
  \label{eq:linear_homomorphism}
  \iota \co  CW^{*}(L)^{d} \to  \End_{\mod- \Seidel(M)}( \cY_{L,E} )
\end{equation}
whose image consists of linear endomorphisms.   Moreover, the inclusion of $CW^{*}(L)^{d}  $  into $ CW^{*}(  CW^{*}(L,E) \boxtimes  L)^{d} $ together with Equation \eqref{eq:formula_action_bar_complex} induces a different homomorphism
\begin{equation}
  \label{eq:non-linear_homomorphism}
  \kappa \co  CW^{*}(L)^{d} \to  \End_{\mod- \Seidel(M)}( \cY_{L,E} ).
\end{equation}\qed
\end{lem}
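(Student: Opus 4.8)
The plan is to realise $\iota$ and $\kappa$ as two $A_{\infty}$-homomorphisms $CW^{*}(L)\to\End_{\mod-\Seidel(M)}(\cY_{L,E})$, each obtained by composing an elementary $A_{\infty}$-map out of $CW^{*}(L)$ with a ``tautological action'' map into the endomorphism algebra of a Yoneda module; the point of the lemma is merely to record the two, and to note that the image of $\iota$ consists of linear endomorphisms while that of $\kappa$ does not. For $\iota$, I would start from the right $A_{\infty}$-module structure of $CW^{*}(L,E)$ over $CW^{*}(L)$ coming from the $A_{\infty}$ structure of $\Seidel(M)$ (this is the action referred to in the statement; under the identification $CW^{*}(CW^{*}(L,E)\boxtimes L)\cong\End(CW^{*}(L,E))\otimes CW^{*}(L)$ of Equation \eqref{eq:end(V)-factors_out} it is the expected one). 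Re-expressed in the standard way, this module structure is an $A_{\infty}$-homomorphism $CW^{*}(L)\to\End\bigl(CW^{*}(L,E)\bigr)$ into the dg-algebra of endomorphisms of the complex $CW^{*}(L,E)$. I would then post-compose it with the map sending an endomorphism $\Psi$ of $CW^{*}(L,E)$ to the endomorphism of $\cY_{L,E}$ that acts by $\phi\mapsto\Psi\circ\phi$ on each summand $\Hom(E'_{x(0)},CW^{*}(L,E))$ of $\cY_{L,E}(E')=CW^{*}(E',CW^{*}(L,E)\boxtimes L)$. Exactly as in the construction of $\Bimod_{L}(E)$ in Section \ref{sec':bimod}, this post-composition commutes with all the module structure maps of $\cY_{L,E}$ because these are built out of parallel transport along pseudo-holomorphic curves, and parallel transport on the trivial local system $CW^{*}(L,E)\boxtimes L$ is the identity; moreover it is a strict dg-algebra homomorphism $\End(CW^{*}(L,E))\to\End_{\mod-\Seidel(M)}(\cY_{L,E})$ landing in linear endomorphisms. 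Composing, $\iota$ is an $A_{\infty}$-homomorphism whose image lies in the linear endomorphisms, as claimed.

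For $\kappa$, I would compose the strict unital $A_{\infty}$-homomorphism $CW^{*}(L)\to CW^{*}(CW^{*}(L,E)\boxtimes L)$, $x\mapsto\id\otimes x$, already recorded in Section \ref{sec':bimod}, with the tautological action of an $A_{\infty}$-algebra on its Yoneda module given by Equation \eqref{eq:formula_action_bar_complex}: for $F=CW^{*}(L,E)\boxtimes L$ this sends $z^{d}\otimes\cdots\otimes z^{1}$ to the endomorphism $\underline{y}\otimes y^{s}\otimes\cdots\otimes y^{1}\mapsto\mu^{d+s+1}(z^{d},\ldots,z^{1},\underline{y},y^{s},\ldots,y^{1})$ of $\cY_{F}$, and the $A_{\infty}$ relations in $\Seidel(M)$ say precisely that this assignment is an $A_{\infty}$-homomorphism $CW^{*}(F)\to\End_{\mod-\Seidel(M)}(\cY_{F})$. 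The resulting composite $\kappa$ differs from $\iota$: its value on a tensor $z^{d}\otimes\cdots\otimes z^{1}$ has nonzero terms $t^{s+1}$ for $s\ge 1$ whenever the relevant moduli spaces of discs are non-empty, so it is in general not a linear endomorphism, and hence $\kappa\neq\iota$.

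The content of the lemma is entirely the verification that these two composites are well defined; I expect the only (routine) obstacle to be the bookkeeping: checking that post-composition with $\Psi$ really does commute with \emph{all} the structure maps $\mu^{1|s}$ of the module $\cY_{L,E}$ rather than merely with its differential, and that the tautological assignment of Equation \eqref{eq:formula_action_bar_complex} satisfies the full system of $A_{\infty}$-homomorphism equations. Both are term-by-term rewritings of the $A_{\infty}$ relations of $\Seidel(M)$, the sole geometric ingredient being the triviality of parallel transport on trivial local systems; no new moduli spaces, transversality arguments, or energy estimates enter, and the ordering and sign conventions are vacuous since we work over $\bF_{2}$.
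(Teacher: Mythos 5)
Your proposal is correct and follows essentially the same route as the paper, which states this lemma without proof precisely because it is immediate from the constructions already made: $\iota$ is post-composition with $\mu^{d+1}(\_,x^d,\ldots,x^1)$ on each summand $\Hom(E'_{x(0)},CW^{*}(L,E))$, which commutes with the module structure maps since parallel transport on the trivial local system is the identity (the argument of Section \ref{sec':bimod}), while $\kappa$ is the strict inclusion $x\mapsto\id\otimes x$ followed by the tautological Yoneda action of Equation \eqref{eq:formula_action_bar_complex}. Your elaboration of the chain-map and compatibility checks is exactly the bookkeeping the paper leaves implicit.
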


We shall use these two actions to define a natural differential on the direct sum
\begin{equation}
  \Univ_{L}(E) = \bigoplus_{d \geq 0} CW^{*}(L)^{\otimes d} \otimes \cY_{L,E}
\end{equation}
which gives an infinite twisted complex in the category of modules.   Recall that a twisted complex in a differential graded category consists of  the datum of such a direct sum, together with a strictly upper triangular matrix of morphisms
\begin{multline*}
  \delta_{d,d-k+1} \in \Hom \left(  CW^{*}(L)^{\otimes d} \otimes \cY_{L,E},   CW^{*}(L)^{\otimes d-k+1} \otimes \cY_{L,E} \right) \\ 
=  \Hom \left(  CW^{*}(L)^{\otimes d}, CW^{*}(L)^{\otimes d-k+1}   \otimes   \End_{\mod-  \Seidel(M)}( \cY_{L,E} )    \right)
\end{multline*}
such that
\begin{equation}
  \partial( \delta_{d,d-k+1}  ) = \sum_{ j  }\delta_{d,d-j+1} \cdot \delta_{d-j+1, d-k+1}.
\end{equation}

Given a generator $x^d \otimes \cdots \otimes x^1$ in the source of $\delta_{d,d-k+1} $, we explicitly write out this differential as the sum of three terms. The first is obtained by applying the higher products to consecutive factors
\begin{equation}
  \label{eq:first_differential_twisted}
  \sum_{\ell \leq d-k} x^{d} \otimes \cdots \otimes x^{\ell+k+1} \otimes \mu^{k}(x^{\ell+k},   \ldots,   x^{\ell+1}) \otimes x^{\ell} \otimes  \cdots \otimes x^{1}  \otimes \id_{  \cY_{L,E} }.
\end{equation}
The other two are obtained by applying $\kappa$ to terms at the beginning and $\iota$ to terms at the end
\begin{equation}  \label{eq:second_differential_twisted}
  x^{d} \otimes \cdots \otimes  x^{k}  \otimes \kappa(x^{k-1}, \ldots, x^{1}) +    x^{d-k+1} \otimes  \cdots \otimes x^{1}\otimes \iota(  x^{d},  \ldots,   x^{d-k+2}) . 
\end{equation}

The reader may easily check that the maps $\kappa$ and $\iota$ used to construct this twisted complex are compatible with the similarly named maps used in Section \ref{sec':bimod}.  By comparing the above three expressions with the formula for the differential in the cyclic bar complex given in Equation \eqref{eq:HH-differential}, we conclude:
\begin{lem}
The sum of Equations \eqref{eq:first_differential_twisted} and \eqref{eq:second_differential_twisted} equips $\Univ_{L}(E)   $ with the structure of a twisted complex.  \qed
\end{lem}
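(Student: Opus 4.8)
The plan is to verify the twisted complex equation
\[
  \partial(\delta_{d,d-k+1}) = \sum_{j} \delta_{d,d-j+1}\cdot\delta_{d-j+1,d-k+1}
\]
by a direct comparison with the Hochschild differential of Equation \eqref{eq:HH-differential}, exploiting the fact that $\iota$ and $\kappa$ (as constructed here) are genuine chain maps / module homomorphisms, just as their namesakes in Section \ref{sec':bimod}. Concretely, I would first record that $\delta_{d,d-k+1}$ is the sum of three pieces: the ``internal'' piece \eqref{eq:first_differential_twisted} which applies a single $\mu^{k}$ to $k$ consecutive middle tensor factors and is the identity on $\cY_{L,E}$; the ``left'' piece which applies $\kappa$ to the last $k-1$ factors; and the ``right'' piece which applies $\iota$ to the first $k-1$ factors. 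The claim is then that, term by term, these reproduce exactly the three lines of \eqref{eq:HH-differential} once one remembers that $\Bimod_{L}(E)$ sits (via the $d=0$ summand, and more generally via the colimit description \eqref{eq:univ_bimodule}) inside $\Hom$-spaces of the form $CW^{*}(E, CW^{*}_{\ge a}(L,E)\boxtimes L)$, and that an element of $CW^{*}(L)^{\otimes d}\otimes\cY_{L,E}$ with $\cY_{L,E}$-component equal to a Yoneda element is precisely a bar-complex element.

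The key steps, in order, would be: \emph{(i)} expand $\partial(\delta_{d,d-k+1})$ using the module differential \eqref{eq:differential_modules} on $\End_{\mod-\Seidel(M)}(\cY_{L,E})$ together with the Leibniz rule with respect to the bar differential on the $CW^{*}(L)^{\otimes\bullet}$ factors; \emph{(ii)} observe that the image of $\iota$ consists of \emph{linear} endomorphisms (as asserted in the Lemma preceding $\iota,\kappa$), so its interaction with the module differential collapses to the ordinary $A_\infty$ relations, while the image of $\kappa$ is handled by the strict $A_\infty$-algebra map $CW^{*}(L)\to CW^{*}(CW^{*}(L,E)\boxtimes L)$ used in Section \ref{sec':bimod}; \emph{(iii)} match the quadratic terms $\delta_{d,d-j+1}\cdot\delta_{d-j+1,d-k+1}$ against the ``two consecutive $\mu$'s'', ``$\mu$ then $\kappa$'', ``$\mu$ then $\iota$'', ``$\kappa$ then $\kappa$'', ``$\iota$ then $\iota$'', and ``$\kappa$ then $\iota$'' contributions, checking that the mixed $\kappa$-$\iota$ composition vanishes because $\kappa$ acts only on the front of the tensor word and $\iota$ only on the back (the same commutation of left and right actions that was verified in Section \ref{sec':bimod}). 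After these matchings, the desired identity is literally the statement that the three lines of \eqref{eq:HH-differential} square to zero, which is the (already known) fact that $CC_{*}(CW^{*}(L),\Bimod_{L}(E))$ is a complex.

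The main obstacle I anticipate is purely bookkeeping rather than conceptual: keeping track of signs---here trivial since we work over $\bF_{2}$---is a non-issue, but keeping track of \emph{which} tensor factors each of $\mu^{k}$, $\kappa$, and $\iota$ consumes, and verifying that the strictly-upper-triangular indexing in $\delta_{d,d-k+1}$ is consistent with the reindexing that occurs when $\iota$ or $\kappa$ absorbs factors into $\End_{\mod-\Seidel(M)}(\cY_{L,E})$, requires care. A secondary subtlety is the non-closed case: one must check that all these operations are compatible with the colimit \eqref{eq:univ_bimodule}, i.e.\ that they respect the action filtration, which follows from Lemma \ref{lem:action_filtration_infinity} exactly as in the construction of the bimodule structure on $\Bimod_{L}(E)$; since the twisted complex is assembled from the \emph{same} maps $\iota,\kappa$ (now valued in module endomorphisms), no new compatibility is needed beyond what Section \ref{sec':bimod} established. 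Once these are in place, the Lemma follows, and the identification of $\Univ_{L}(E)$ with (a module-level avatar of) the cyclic bar complex is immediate from the termwise comparison of \eqref{eq:first_differential_twisted}--\eqref{eq:second_differential_twisted} with \eqref{eq:HH-differential}.
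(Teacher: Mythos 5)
Your proposal is correct and takes essentially the same route as the paper: the lemma is proved there precisely by comparing the three expressions \eqref{eq:first_differential_twisted}--\eqref{eq:second_differential_twisted} with the cyclic bar differential \eqref{eq:HH-differential}, so that the Maurer--Cartan identity for the $\delta_{d,d-k+1}$ reduces to the bar/$A_{\infty}$ relations together with the compatibility of $\iota$ and $\kappa$ with their namesakes from Section \ref{sec':bimod}. The bookkeeping you outline (term matching, cancellation of the mixed $\iota$--$\kappa$ terms over $\bF_{2}$ via commutation of the two actions, and filtration compatibility through Lemma \ref{lem:action_filtration_infinity}) is exactly what the paper leaves to the reader.
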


The Yoneda Lemma for $A_{\infty}$ categories (see, e.g. Section (1l) of \cite{seidel-book}) implies that the Yoneda homomorphism
\begin{equation}
CW^{*}(E, CW^{*}(L,E) \boxtimes L ) =  \cY_{L,E}(E) \to  \Hom_{\mod-  \Seidel(M)  } (\cY_{E},   \cY_{L,E})
\end{equation}
which acts on the right hand side by composition with an element of the left hand side is a quasi-isomorphism.  For each pair of integers $k \leq d$, we also have a map 
\begin{equation} \label{eq:lower_order_terms_map}
 CW^{*}(L)^{\otimes d} \otimes  CW^{*}(E, CW^{*}(L,E) \boxtimes L )  \to \Hom_{\mod-  \Seidel(M)  } (\cY_{E},  CW^{*}(L)^{\otimes d-k} \otimes \cY_{L,E}).
\end{equation}
The linear part consists of maps 
\begin{multline}
   CW^{*}(L)^{\otimes d} \otimes  CW^{*}(E, CW^{*}(L,E) \boxtimes L ) \otimes CW^{*}(E',E) \\
\to CW^{*}(L)^{\otimes d-k} \otimes CW^{*}(E',CW^{*}(L,E) \boxtimes L )
\end{multline}
for every object $E'$ in $\Seidel(M)$, and generalises Equation \eqref{eq:left_action} for the left action on $\Bimod_{L}(E)$
\begin{equation}
 x^{d} \otimes \cdots \otimes x^{1} \otimes \psi \otimes \underline{y}  \mapsto x^{d} \otimes \cdots \otimes x^{k+1} \otimes \mu^{k+2}(\id \otimes x^{k}, \ldots, \id \otimes x^1, \psi, \underline{y} ).
\end{equation}
The higher order terms are given by similar formulae:
\begin{multline}
 x^{d} \otimes \cdots \otimes x^{1} \otimes \psi \otimes \underline{y}  \otimes y^{s} \otimes \cdots \otimes y^{1} \mapsto \\ x^{d} \otimes \cdots \otimes x^{k+1} \otimes \mu^{k+2+s}(\id \otimes x^{k}, \ldots, \id \otimes x^1, \psi, \underline{y},y^{s}, \ldots , y^{1}  ).
\end{multline}

By taking the sum of all such maps, we obtain a chain map
\begin{equation} \label{eq:cyclic_complex_qi_hom_modules}
CC_{*}(CW^{*}(L), \Bimod_{L}(E))  \to \Hom_{\mod-  \Seidel(M)}(   \cY_{E},  \Univ_{L}(E)).
\end{equation} 

The next step is to interpret the evaluation map $\mu$ in terms of a composition in the category of twisted complexes.  To do this, we start by assuming that $E$ is supported on a closed Lagrangian.  First, we note that the analogue of the map $  \tau_{0} \in   CW^{*}(CW^{*}(L,E) \boxtimes L, E) $ is a natural homomorphism $  \cY_{L,E} \to \cY_{E} $ whose linear term, for every object $E'$ supported on a Lagrangian $L_{E'}$, is a map
\begin{equation} \label{eq:tautological_evaluation_bimodule}
 CW^{*}(E', CW^{*}(L,E) \boxtimes L)  \to CW^{*}(E', E) 
\end{equation}
obtained by extending the homomorphism
\begin{equation}
  \mu^{2} \co CW^{*}(L,E) \otimes CW^{*}(E',  L)  \to CW^{*}(E', E).
\end{equation}
 By ``extending'', we mean that we have a natural inclusion
\begin{equation}
\xymatrix{  CW^{*}(L,E) \otimes CW^{*}(E',  L) \ar[d] \ar[r]^<<<<<{=} & \displaystyle{\bigoplus_{x \in \Chord(L_{E'}, L)}} \Hom( E'_{x(0)}, \bF_{2}) \otimes CW^{*}(L,E) \ar[d] \\
CW^{*}(E', CW^{*}(L,E) \boxtimes L)  \ar[r]^<<<<<<{=} &\displaystyle{ \bigoplus_{x \in \Chord(L_{E'}, L)}} \Hom( E'_{x(0)}, CW^{*}(L,E) ) }
\end{equation}
and that the count of the curves which defines $\mu^2$ also defines a map in \eqref{eq:tautological_evaluation_bimodule}.  More precisely, Lemma \ref{lem:action_filtration_infinity} implies that whenever chords $x \in \Chord(L_{E'}, L)$ and $y \in \Chord(L,L_{E})$ are fixed, the moduli spaces $\Disc(z;x,y)$ are empty for all but finitely many chords $z \in \Chord(L_{E'}, L)$.  The count of such curves therefore defines a map
\begin{equation} \label{eq:product_map}
  \Hom( E'_{x(0)}, E_{y(0)}) \to CW^{*}(E',E).
\end{equation}
The key point here is that there are only finitely many chords  $y \in \Chord(L,L_{E})$, so that we have an isomorphism
\begin{equation}
  \bigoplus_{x \in \Chord(L_{E'}, L)}  \bigoplus_{y \in \Chord(L, L_{E})}    \Hom( E'_{x(0)}, E_{y(0)})  = CW^{*}(E', CW^{*}(L,E) \boxtimes L)
\end{equation}
which we use to define the map in Equation \eqref{eq:tautological_evaluation_bimodule} as the sum of the maps in Equation \eqref{eq:product_map}.

Using the higher products we similarly define an  evaluation map
\begin{equation}
CW^{*}(L)^{\otimes d} \otimes \cY_{L,E} \to \cY_{E}.
\end{equation}
Again, the linear term
\begin{align}
 CW^{*}(L) ^{\otimes d} \otimes  CW^{*}(E, CW^{*}(L,E) \boxtimes L)  \to CW^{*}(E, E) 
\end{align}
is obtained by extending the operation
\begin{equation}
\mu^{d+2} \co   CW^{*}(L,E) \otimes CW^{*}(L) ^{\otimes d} \otimes  CW^{*}(E, L)  \to CW^{*}(E, E) .
\end{equation}
The fact that the operations $\mu^d$ define an $A_{\infty}$ structure readily yields:
\begin{lem}
If $E$ is supported on a compact Lagrangian, these evaluation maps fit together into a closed morphism
\begin{equation}
\tau \co  \Univ_{L}(E) \to \cY_{E}
\end{equation} \qed
\end{lem}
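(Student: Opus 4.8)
The plan is to deduce the closedness of $\tau$ directly from the $A_{\infty}$ relations of $\Seidel(M)$, in the same way that Lemma~\ref{lem:chain-map} (``$\mu$ is a chain map'') was obtained from an analysis of the boundary of the moduli spaces $\Disc(x^{0}, x^{d+1}, \vx, y)$. First I would spell out what ``closed morphism'' means here: a morphism from the twisted complex $\Univ_{L}(E) = \bigoplus_{d \ge 0} CW^{*}(L)^{\otimes d} \otimes \cY_{L,E}$ to the module $\cY_{E}$ is a collection of components $\tau_{d}$ on the summands, each of which is itself a pre-morphism of $\Seidel(M)$-modules with terms $\tau_{d}^{\,s+1}$ $(s \ge 0)$, and $\tau$ is closed exactly when
\[
 \partial_{\cY_{E}} \circ \tau \;+\; \tau \circ \delta \;+\; \big(\text{correction terms from the module }A_{\infty}\text{-structure over }\Seidel(M)\big) \;=\; 0 ,
\]
where $\delta$ is the twisted differential on $\Univ_{L}(E)$ defined by \eqref{eq:first_differential_twisted} and \eqref{eq:second_differential_twisted}. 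The key observation is that each $\tau_{d}^{\,s+1}$, evaluated on $(x^{d}, \ldots, x^{1}, \psi, y^{s}, \ldots, y^{1})$ with $\psi \in \cY_{L,E}(E')$ and the $y^{i}$ test morphisms, is the count of the very rigid discs that define the higher product $\mu^{d+s+2}$ of $\Seidel(M)$; after the identifications around Equation~\eqref{eq:tautological_evaluation_bimodule} it equals $\mu^{d+s+2}$ applied to the string $(\tau_{0}, \kappa(x^{d}), \ldots, \kappa(x^{1}), \psi, y^{s}, \ldots, y^{1})$, so that $\tau$ is ``post-composition with $\tau_{0}$ spread over the bar complex''. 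As a preliminary one records that $\tau_{0} \in CW^{*}(CW^{*}(L,E)\boxtimes L, E)$ is a cycle --- the $d=s=0$ case, i.e.\ the statement, proved as in \cite{generate}, that post-composition with $\tau_{0}$ is a closed natural transformation $\cY_{L,E} \to \cY_{E}$ --- using here that compactness of the support $K$ of $E$ makes the sum over $\Chord(L,K)$ in \eqref{eq:evaluation_closed} finite and that Lemma~\ref{lem:action_filtration_infinity} keeps all other ambient direct sums effectively finite.

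With this dictionary, closedness follows by writing out the quadratic $A_{\infty}$ relation of $\Seidel(M)$ applied to the string $(\tau_{0}, \kappa(x^{d}), \ldots, \kappa(x^{1}), \psi, y^{s}, \ldots, y^{1})$ --- equivalently, by enumerating the codimension-one strata of the Gromov compactification of the relevant one-dimensional moduli spaces --- and sorting its terms by where the inner operation sits. An inner product absorbing only a consecutive block of the $\kappa(x^{i})$ reproduces, because $\kappa$ is a strict map of $A_{\infty}$ algebras, the composition of $\tau$ with the first term \eqref{eq:first_differential_twisted} of $\delta$; an inner product absorbing $\tau_{0}$ together with a leading block of the $x^{i}$ is $\tau$ composed with the $\iota$-term of \eqref{eq:second_differential_twisted}, by the defining formula \eqref{eq:right_action} for $\iota$; an inner product absorbing $\psi$ together with adjacent factors splits into a part giving $\tau$ composed with the $\kappa$-term of \eqref{eq:second_differential_twisted} and a part feeding the module corrections; an inner product absorbing only a block of test morphisms $y^{i}$ supplies the remaining module corrections; and the outermost product yields $\partial_{\cY_{E}} \circ \tau$, test inputs included. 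Every term of the $A_{\infty}$ relation --- every codimension-one boundary stratum --- belongs to exactly one of these families, and conversely each summand of the three groups above arises this way, so their sum vanishes and $\tau$ is closed.

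The labour here is bookkeeping rather than mathematics: one must be careful to match the $\kappa$- and $\iota$-terms of $\delta$ with breakings at the correct end of the disc, and to check that the ``extension'' of all these operations from $CW^{*}(E,L)$ to $CW^{*}(E, CW^{*}(L,E)\boxtimes L)$ of Equation~\eqref{eq:tautological_evaluation_bimodule} commutes with the $A_{\infty}$ operations; both are routine given the constructions of Section~\ref{sec':bimod}. The one feature not already present in \cite{generate} is that each disc $u$ now carries the parallel transport maps $\gamma_{u}^{j}$ of the local system $E$; by Corollary~\ref{cor:parallel_transport_from_curve} and the remark after it these depend only on homotopy classes and respect curve breakings, hence are carried along passively in every term above without affecting the combinatorics. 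I expect this compatibility check --- trivial but ubiquitous --- to be the main, and only mild, obstacle.
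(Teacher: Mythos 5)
Your proposal is correct and is essentially the paper's own argument: the paper simply asserts that closedness of $\tau$ follows from the $A_{\infty}$ relations for the operations $\mu^{d}$ (together with compactness of the support of $E$ making $\tau_{0}$ and the extensions of Equation \eqref{eq:tautological_evaluation_bimodule} well defined), and your term-by-term sorting of the quadratic $A_{\infty}$ relation against the twisted differential \eqref{eq:first_differential_twisted}--\eqref{eq:second_differential_twisted} and the module corrections is exactly the bookkeeping the paper leaves to the reader.
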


\subsection{Construction of the twisted complexes} \label{sec:twist-compl}
Note that the construction of $ \Univ_{L}(E)  $ did not make any assumption on the Lagrangian supporting the local system $E$, but that the construction of $\tau$ relied on such an assumption because it required the existence of only finitely many chords.  For general $E$, $\tau$ may not be defined on $\Univ_{L}(E)   $, but we shall construct a replacement which admits such a map by considering $CW^{*}_{\geq b}(E,L) $,  the subcomplex of $ CW^{*}(E,L) $ generated by chords whose action is greater than $b$.

At the same time, even for a closed Lagrangian, the complex $  \Univ_{L}(E)  $ is infinite.  We shall give a construction which resolves both of these problems at once.  First, we choose real numbers $s_{d,n}$ defining an increasing and exhaustive filtration 
\begin{equation}
 \bigoplus_{d=0}^{n} CW^{*}(L)^{\otimes d}_{\geq s_{d,n}}  \subset  \bigoplus_{d =0}^{\infty} CW^{*}(L)^{\otimes d} 
\end{equation}
by finite dimensional subcomplexes (with respect to the bar differential) generated by the tensor product of morphisms supported on chords the sum of whose actions is bounded by $ s_{d,n}  $.  The condition that this is an exhaustive filtration is simply that for each $d$,
\begin{equation}
  \lim_{n \to +\infty} s_{d,n} = -\infty.
\end{equation}
To ensure that we have a filtration by subcomplexes, note that the bar differential is the sum of the contributions of the ordinary differential $\mu^{1}$ and of the higher product $\mu^{d}$.  We start, for each $n$, by choosing an arbitrary value for $s_{n,n}$, and note that the fact that $\mu^{1}$ strictly preserves the action filtration (see Lemma \ref{lem:differential_respects_filtration}) implies that the corresponding subspace is closed under the differential.  We then choose $s_{n-1,d}$ to (i) be smaller than $s_{n-1,n-1}$  and (ii) contain the image of $ CW^{*}(L)^{\otimes n}_{\geq s_{n,n}} $ under applying $\mu^{2}$ to two consecutive elements.  The second condition may be achieved because of Lemma \ref{lem:action_filtration_infinity}.  Proceeding by induction on both $n$ and $d$, we obtain the desired exhaustive filtration.

In a similar manner, we choose real numbers $r_{d,n}$ such that
\begin{equation}
   \bigoplus_{d=0}^{n} CW^{*}(L)^{\otimes d}_{\geq s_{d,n}} \otimes CW^{*}_{\geq r_{d,n}}(E,L)  \\ \subset \bigoplus_{d =0}^{\infty} CW^{*}(L)^{\otimes d}  \otimes CW^{*}(E,L) 
\end{equation}
is also an exhaustive filtration by finite dimensional subcomplexes.

If we let $ \cY^{\geq  r_{d,n}}_{L,E} $ denote the right Yoneda module corresponding to the local system $  CW^{*}_{\geq  r_{d,n}}(E,L) $ over $L$, then we define
\begin{equation}
 \Univ_{L}^{n}(E)  \equiv  \bigoplus_{d =0}^{n-1} CW^{*}(L)_{\geq  s_{d,n}}^{\otimes d} \otimes \cY^{r_{d,n}}_{L,E} \subset  \Univ_{L}(E).
 \end{equation}
Note that we are implicitly using the inclusion of Yoneda modules
\begin{equation}
  \cY^{\geq  r_{d,n}}_{L,E} \subset \cY_{L,E}
\end{equation}
associated to the inclusion of chain complexes.
\begin{lem}
The differential on $  \Univ_{L}(E) $ induces a differential on $  \Univ_{L}^{n}(E)  $.  Moreover, whenever $E$ is supported on a closed Lagrangian, we have
  \begin{equation}
     \Univ_{L}(E) = \colim_{n}  \Univ_{L}^{n}(E) 
  \end{equation}
\end{lem}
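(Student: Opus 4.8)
The plan is to verify the two assertions by inspecting the differential $\delta$ on $\Univ_{L}(E)$ --- given by Equations \eqref{eq:first_differential_twisted} and \eqref{eq:second_differential_twisted} together with the internal differentials of the tensor factors $CW^{*}(L)$ and of $\cY_{L,E}$ --- term by term against the chosen action bounds $s_{d,n}$ and $r_{d,n}$.

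For the first assertion I would begin with the observation that $\delta$ is non-increasing in the tensor length $d$: the internal differential $\mu^{1}$ preserves $d$, while the interior contractions $\mu^{k}$ ($k\geq 2$) in \eqref{eq:first_differential_twisted} and the end-absorptions $\kappa$ and $\iota$ in \eqref{eq:second_differential_twisted} strictly lower it. Hence $\delta$ respects the truncation at $d=n-1$, and it remains only to match the action filtrations. The $\mu^{1}$ terms strictly raise the action of each chord and so preserve every $CW^{*}(L)^{\otimes d}_{\geq s_{d,n}}$ and every $\cY^{\geq r_{d,n}}_{L,E}$, by Lemma \ref{lem:differential_respects_filtration}. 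The interior $\mu^{k}$ terms are exactly the bar differential of $\bigoplus_{d}CW^{*}(L)^{\otimes d}$, which the numbers $s_{d,n}$ were chosen --- by downward induction on $d$, invoking Lemma \ref{lem:action_filtration_infinity} --- to stabilise, the $\cY$-factor being untouched. Finally $\kappa$ and $\iota$, which are assembled from the higher products and agree with the operations defining the two $CW^{*}(L)$-module structures on $\Bimod_{L}(E)$, are again governed by Lemma \ref{lem:action_filtration_infinity}: fed a tensor supported in action $\geq s_{d,n}$ and an element of $\cY^{\geq r_{d,n}}_{L,E}$ they output an element of some $\cY^{\geq b}_{L,E}$, and the $r_{d,n}$ were chosen --- again by downward induction, using only that each $r_{d',n}$ need be finite --- precisely so as to absorb this. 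Assembling these three observations gives $\delta(\Univ_{L}^{n}(E))\subseteq\Univ_{L}^{n}(E)$, and since $\Univ_{L}^{n}(E)$ is a graded subspace of $\Univ_{L}(E)$ stable under a square-zero operator, the induced operator is a differential.

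For the colimit I would argue that, the filtrations $\{s_{d,n}\}_{n}$ and $\{r_{d,n}\}_{n}$ being increasing, the associated inclusions of complexes $CW^{*}(L)^{\otimes d}_{\geq s_{d,n}}\subseteq CW^{*}(L)^{\otimes d}_{\geq s_{d,n+1}}$ and of local systems $CW^{*}_{\geq r_{d,n}}(L,E)\boxtimes L\subseteq CW^{*}_{\geq r_{d,n+1}}(L,E)\boxtimes L$ identify $\Univ_{L}^{n}(E)$ with a subcomplex of $\Univ_{L}^{n+1}(E)$, so that $\colim_{n}\Univ_{L}^{n}(E)$ is the increasing union $\bigcup_{n}\Univ_{L}^{n}(E)$. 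That this union exhausts $\Univ_{L}(E)=\bigoplus_{d\geq 0}CW^{*}(L)^{\otimes d}\otimes\cY_{L,E}$ is then immediate from the exhaustiveness conditions imposed on the two filtrations, namely $\lim_{n}s_{d,n}=-\infty$ and $\lim_{n}r_{d,n}=-\infty$ for each $d$, which make the $d$-th graded pieces $CW^{*}(L)^{\otimes d}_{\geq s_{d,n}}\otimes\cY^{\geq r_{d,n}}_{L,E}$ increase to $CW^{*}(L)^{\otimes d}\otimes\cY_{L,E}$. The hypothesis that $E$ be supported on a closed Lagrangian is what makes $\Univ_{L}(E)$ the object of interest here --- it is only then that it carries the evaluation map $\tau$ to $\cY_{E}$ --- the equality itself resting solely on the exhaustiveness just invoked.

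The step I expect to be the main obstacle is not any individual inclusion but the combinatorics of the inductive construction of $\{s_{d,n}\}$ and $\{r_{d,n}\}$: for each fixed $n$, a single pair of finite sequences must simultaneously absorb all of the finitely many ways $\delta$ lowers $d$ --- interior $\mu^{k}$ contractions for every $k$, and absorptions at either end through $\kappa$ and $\iota$ --- while keeping the resulting filtration exhaustive as $n\to\infty$. This is precisely the content that Lemma \ref{lem:action_filtration_infinity} supplies, applied once to control the $s$'s and once (through the module operations) to control the $r$'s; the only other point requiring care, the compatibility of the truncation $d\leq n-1$ with $\delta$, is dispatched by the observation above that $\delta$ never raises $d$.
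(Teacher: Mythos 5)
Your treatment of the first assertion is fine and is essentially the paper's own argument: the differential never increases the tensor length, and the inductive choice of the bounds $s_{d,n}$ and $r_{d,n}$ via Lemma \ref{lem:action_filtration_infinity} (together with Lemma \ref{lem:differential_respects_filtration} for the $\mu^{1}$ terms) was made precisely so that the truncated, action-filtered pieces are closed under the interior contractions and the end absorptions $\kappa$ and $\iota$.

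The second assertion is where you go wrong, and the error is conceptual rather than cosmetic. You claim the equality $\Univ_{L}(E)=\colim_{n}\Univ_{L}^{n}(E)$ ``rests solely on exhaustiveness'' and that the closed-support hypothesis only serves to make $\tau$ exist. Exhaustiveness of the action filtration is not enough. The $d$-th graded piece of $\Univ_{L}^{n}(E)$ involves the Yoneda module $\cY^{\geq r_{d,n}}_{L,E}$, whose value on a test object $E'$ is a direct sum of spaces $\Hom\bigl(E'_{x(0)},CW^{*}_{\geq r_{d,n}}(E,L)\bigr)$, and $\Hom(W,-)$ does not commute with the filtered colimit of a non-stabilising exhaustive filtration when $W$ is infinite dimensional. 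Since $\Seidel(M)$ contains local systems of arbitrary dimension by design, this failure is unavoidable: taking $E'=CW^{*}(L,E)\boxtimes L$ itself, the identity endomorphism of the fibre lies in $\cY_{L,E}(E')$ but in no $\cY^{\geq r}_{L,E}(E')$ unless the filtration stabilises. Hence $\colim_{n}\cY^{\geq r_{d,n}}_{L,E}\subsetneq\cY_{L,E}$ in general, and $\colim_{n}\Univ_{L}^{n}(E)$ is in general a proper submodule of $\Univ_{L}(E)$ --- indeed this is exactly why the paper works with the colimit of the $\Univ_{L}^{n}(E)$, rather than with $\Univ_{L}(E)$ itself, for local systems on non-closed Lagrangians.

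The closed-support hypothesis enters the equality itself, and it does so through stabilisation rather than exhaustion: when the support $K$ of $E$ is closed, $\Chord(K,L)$ is finite (the observation already used in Lemma \ref{lem:factoring_trivial_complexes}, via Lemma \ref{lem:properness_chords}), so the actions of these chords are bounded below and $CW^{*}_{\geq r_{d,n}}(E,L)=CW^{*}(E,L)$ once $n$ is large for each fixed $d$. Then $\cY^{\geq r_{d,n}}_{L,E}=\cY_{L,E}$ on the nose, and since the $s$-filtration of $CW^{*}(L)^{\otimes d}$ genuinely exhausts it, the union of the $\Univ_{L}^{n}(E)$ is all of $\Univ_{L}(E)$. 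This stabilisation argument is the paper's one-line proof of the second claim, and it is the step your proposal is missing.
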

\begin{proof}
Since the differential on $   \Univ_{L}(E)  $ is defined using the differential on the bar complex, the fact that $ \Univ_{L}^{n}(E)  $ is preserved by the differential follows from our choices of action bounds.  Whenever $E$ is supported on a closed Lagrangian, $ CW^{*}_{\geq s_{d,n}}(E,L) $ stabilises for $n$ sufficiently large, implying the second result.
\end{proof}

\begin{rem} \label{rem:ranks_agree}
We will show that $E$ is a summand of $ \Univ_{L}^{n}(E)   $  for $n$ sufficiently large.  If $E$ is a finite-dimensional local system,  observe that all the objects appearing in the description of $ \Univ_{L}^{n}(E)   $ as a twisted complex are in fact quasi-isomorphic to finite direct sums of the image of $L$ under the Yoneda embedding.  The image of $E$ under this embedding therefore lies in the category generated by the image of $L$, so this implies that $E$ lies in the category split-generated by $L$ as objects of $\Seidel(M)$.

If $E$ is not finite dimensional, then $CW^{*}(E,L)$  has dimension equal to the dimension of $E$,  and hence $  \Univ_{L}^{n}(E)  $ is in fact constructed from the image, under the Yoneda embedding, of local systems over $L$ whose dimension, while infinite, is nonetheless equal to that of $E $.  In this case, we shall conclude that $E$ lies in the category split-generated by local systems on $L$ of dimension equal to the dimension of $E$.
\end{rem}

Now, for any object $E$, the construction of the previous section produces evaluation maps
\begin{equation}
\tau \co  \Univ_{L}^{n}(E) \to \cY_{E}
\end{equation}
which are compatible with the maps in the direct system.  In particular, we obtain an evaluation map
\begin{equation}
\tau \co  \colim_{n} \Univ^{n}_{L}(E) \to \cY_{E}.
\end{equation} 
Moreover, comparing the definitions of $ \Univ^{n}_{L}(E) $  and of $\Bimod_{L}(E)$, we note that the argument used to construct the chain map of Equation \eqref{eq:cyclic_complex_qi_hom_modules} extends to the non-closed case and gives a map
\begin{equation}
  \Bimod_{L}(E) \to  \colim_{n} \Univ^{n}_{L}(E) .
\end{equation}

\subsection{Comparing the two evaluation maps} \label{sec:comp-two-eval}

Starting with an element of the cyclic bar complex of $\Bimod_{L}(E)$, we may now apply Equation \eqref{eq:cyclic_complex_qi_hom_modules}, and then compose with $\tau$ to obtain an endomorphism of $\cY_{E}$, or we might map to $CW^{*}(E)$ using $\mu$ then apply the Yoneda embedding.   We now explain why,  at the level of cohomology, these two compositions agree:
\begin{proof}[Proof of Lemma \ref{lem:evaluation_is_composition}]
For simplicity, we shall only prove that Diagram \eqref{eq:bimodule_to_evaluation} commutes when restricted to the subcomplex
\begin{equation}
   \bigoplus_{d}   CW^{*}(L,E)  \otimes CW^{*}(L)^{\otimes d} \otimes  CW^{*}(E, L)  \subset \Bimod_{L}(E)
\end{equation}
where the element  $ \psi \in  CW^{*}(E, CW^{*}(L,E) \boxtimes L)  $  factors through a finite-dimensional local system over $L$.   This allows us to directly use the $A_{\infty}$ equation rather than return to the stratification of the moduli spaces of pseudo-holomorphic discs from which the $A_{\infty}$ equation arises.    

Consider such an element $ \sum_{i} \phi^i \otimes x^{d} \otimes \cdots \otimes x^{1}  \otimes \phi_i$ of $ \Bimod_{L}(E) $.  For any sequence of objects $\left(E'_s, \ldots E'_0 \right) $ of $\Seidel(M)$, we have two linear maps
\begin{align}
CW^{*}(E'_{s},E)  \otimes  CW^{*}(E'_{s-1}, E'_{s}) \otimes \cdots \otimes CW^{*}(E'_0, E'_1)  & \to CW^{*}(E'_{0},E) .
\end{align}
The first is obtained by applying $\mu$ then the Yoneda embedding, and assigns to $   \underline{y} \otimes y^{s} \otimes \cdots \otimes y^{1} $ 
\begin{equation}
\sum_{i} \mu^{s+2}( \mu^{d+2}( \phi^i,  x^{d}, \ldots ,  x^{1},  \phi_i  )     ,  \underline{y} ,  y^{s},  \ldots ,  y^{1}).  
\end{equation}

The second comes from applying the  map defined in Equation \eqref{eq:cyclic_complex_qi_hom_modules} then composing in the category of modules:
\begin{equation}
  \sum_{i,k,\ell}  \mu^{d-k+s-\ell+2}( \phi^i,  x^{d}, \ldots , x^{k+1},   \mu^{k+2+\ell}(x^{k}, \ldots,  x^{1},  \phi_i   ,   \underline{y} ,  y^{s},  \ldots ,  y^{s-\ell+1}) , y^{s-\ell}, \ldots, y^{1})
\end{equation}
The fact that the operations $\mu^{d}$ define an $A_{\infty}$ structure implies that a homotopy between these two maps is provided by 
\begin{equation} \label{eq:homotopy_evaluations}
\sum_{i} \mu^{d+s+3}(  \phi^i,  x^{d}, \ldots ,  x^{1},  \phi_i    ,   \underline{y} ,  y^{s},  \ldots ,  y^{1} ).
\end{equation}
In order to see this, note that the other terms in the $A_{\infty}$ equation are
\begin{align*}
&  \sum_{i,\ell} \mu^{d+s-\ell+3}(  \phi^i,  x^{d}, \ldots ,  x^{1},  \phi_i    , \mu^{\ell+1}\left(  \underline{y} ,  y^{s},  \ldots ,  y^{s-\ell+1}\right), y^{s-\ell}, \ldots, y^{1}  ) \\
 &  \sum_{i,\ell} \mu^{s-\ell+1} (   \mu^{d+\ell+3}(  \phi^i,  x^{d}, \ldots ,  x^{1},  \phi_i    ,  \underline{y} ,  y^{s},  \ldots ,  y^{s-\ell+1} ), y^{s-\ell}, \ldots, y^{1}  )  \\
 &  \sum_{i,\ell}   \mu^{d+s-\ell+3}(  \phi^i,  x^{d}, \ldots ,  x^{1},  \phi_i    ,  \underline{y} ,  y^{s},  \ldots ,  \mu^{\ell+1}(y^{k}, \ldots, y^{k - \ell }), \ldots, y^{1}  )  \\
&\sum_{i,k,\ell} \mu^{d-k+s+3}( \mu^{k+1}(   \phi^i,  x^{d}, \ldots ,  x^{d-k-1}), x^{d-k}, \ldots, x^{1},  \phi_i    ,  \underline{y} ,  y^{s},  \ldots ,  y^{1} )  \\
&\sum_{i,k,\ell}  \sum \mu^{d-k+s+3}(  \phi^i,  x^{d}, \ldots ,  x^{k+1}, \mu^{k+1}( x^{k}, \ldots, x^{1},  \phi_i  )  ,  \underline{y} ,  y^{s},  \ldots ,  y^{1} )  \\
&\sum_{i,k,\ell}  \sum \mu^{d-k+s+4}( \phi^i,  x^{d}, \ldots ,  x^{\ell+k+1}, \mu^{k}(x^{\ell+k},   \ldots,   x^{\ell+1}) ,  x^{\ell}, \ldots, x^{1},  \phi_i    ,  \underline{y} ,  y^{s},  \ldots ,  y^{1} ).
\end{align*}
By comparing with Equation \eqref{eq:morphism_modules}, we find that the first three terms correspond to applying Equation \eqref{eq:homotopy_evaluations}, then the differential in $\End_{\mod-\Seidel(M)}(\cY(E)) $.  On the other hand, the last three correspond to the applying the differential in the cyclic bar complex of $ \Bimod_{L}(E) $, then Equation \eqref{eq:homotopy_evaluations}.  We conclude that the two compositions  in Diagram \eqref{eq:bimodule_to_evaluation} commute.
\end{proof}

\section{Constructions of the structure maps}\label{sec:constr-struct-maps}
In this section, we prove Proposition \ref{prop:commutative_diagram} which requires constructing all the maps in the following diagram, as well as a homotopy between the two compositions:
\begin{equation}
\xymatrix{ CC_{*-n}( CW^{*}(L)) \ar[r]^<<<<<{CC_{*}(\Delta)} \ar[d]^{\OC}  & CC_{*}\left( CW^{*}(L),  \Bimod_{L}(E) \right) \ar[d]^{\mu}   \\  
SC^{*}(M) \ar[r]^{\CO}  & CW^{*}(E).   }
\end{equation}
We shall review the constructions of \cite{generate}, and adapt them to presence of local systems.  We start with the case that requires a modicum of thought.

\subsection{The bimodule homomorphism} \label{sec:bimod-homom}
The first map we shall construct is the bimodule homomorphism
\begin{equation}
  \Delta \co CW^{*}(L) \to \Bimod_{L}(E) .
\end{equation}
Recall that such a homomorphism consists of a collection of maps
\begin{equation}
    \Delta^{r|1|s} \co CW^{*}(L)^{\otimes r} \otimes CW^{*}(L)  \otimes CW^{*}(L)^{\otimes s}  \to \Bimod_{L}(E)
\end{equation}
for each non-negative integer $r$ and $s$ satisfying the appropriate $A_{\infty}$ equation.

For $r=s=0$, this map is defined by considering pseudo-holomorphic maps from a disc with one incoming end $\underline{\xi}$ and two outgoing ends $\xi^{0}$ and $\xi^{-1}$.  Given a chord $\underline{x}$ with both endpoints on $L$, and a pair of chords $x^{-1} \in \Chord(K,L)  $ and $ x^0 \in \Chord(L,K) $, we write 
\begin{equation}
  \Disc_{0|1|0}(x^{-1},x^0;\underline{x})
\end{equation}
for the corresponding moduli space of discs with moving Lagrangian boundary conditions (see Section 3.3 of \cite{generate}).  As in Lemma \ref{lem:diffeo_for_evaluation}, we can apply a diffeomorphism so that an element $u$ of this moduli space maps the components of the boundary to $K$ or $L$, as shown on Figure \ref{fig:comult}.
\begin{rem}
If the local system $E$ is trivial, our clockwise conventions for ordering points on the boundary would have the map defined by the curve in Figure \ref{fig:comult} take value in $CW^{*}(K,L) \otimes CW^{*}(L,K)  $.  With our algebraic conventions on composition, the natural evaluation map goes from this tensor product to $CW^{*}(L,L)$.  In order to obtain an evaluation map into $CW^{*}(K,K)$, we must switch the order of the outputs.  This is represented, in Figure \ref{fig:comult}, by braiding the ends corresponding to the outputs.
\end{rem}
\begin{figure}
  \centering
  \includegraphics{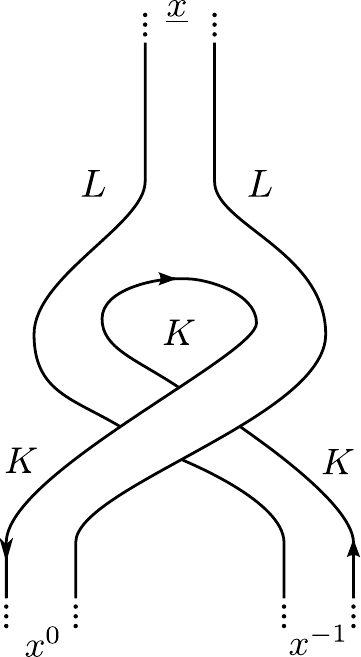}
  \caption{ }
  \label{fig:comult}
\end{figure}

In particular, we obtain a map 
\begin{equation}
  \gamma_{u} \co E_{x^{-1}(0)} \to  E_{x^{0}(1)}
\end{equation}
by parallel transport along the image under $u$ of the boundary component of the disc mapping to $K$.  We define
\begin{align} \label{eq:coproduct_map}
  \Delta^{0|1|0} \co CW^{*}(L) & \to \bigoplus_{\substack{x^{-1} \in  \Chord(K,L) \\   x^0 \in \Chord(L,K)} }  \Hom(E_{x^{-1}(0)} , E_{x^{0}(1)}) \\
\underline{x} & \mapsto \sum_{\substack{ u \in  \Disc_{0|1|0}(x^{-1},x^0;\underline{x}) \\ u \textrm{ is rigid.}}  } \gamma_{u}. \notag
\end{align}
\begin{lem}
The map $   \Delta^{0|1|0}  $  defines a degree $n$ chain homomorphism
\begin{equation}
  CW^{*}(L) \to  \Bimod_{L}(E)
\end{equation}
\end{lem}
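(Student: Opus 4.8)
The plan is to verify two things: first, that $\Delta^{0|1|0}$ has the claimed degree $n$, and second, that it is a chain map, i.e. that it intertwines the differential $\mu^1$ on $CW^{*}(L)$ (the source diagonal bimodule) with the differential $\mu^{0|1|0}$ on $\Bimod_{L}(E)$.

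For the degree count, I would appeal to the standard index/dimension computation for the moduli space $\Disc_{0|1|0}(x^{-1}, x^{0}; \underline{x})$: a disc with one incoming and two outgoing punctures has virtual dimension $\deg(\underline{x}) - \deg(x^{-1}) - \deg(x^{0}) + n - 1$ (the $+n-1$ coming from the one extra negative puncture relative to an ordinary strip, parallel to the dimension formula for the coproduct in \cite{generate}). Rigidity forces this to be $0$, so the map $\gamma_{u}$ contributes a homomorphism $E_{x^{-1}(0)} \to E_{x^{0}(1)}$ sitting in the summand indexed by $(x^{-1}, x^{0})$, which in the grading of $\Bimod_{L}(E)$ carries internal degree $\deg(x^{-1}) + \deg(x^{0})$; hence $\underline{x}$ of degree $\deg(\underline{x})$ maps to something of degree $\deg(\underline{x}) + n$. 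This is exactly where the degree $n$ shift enters, and no real work is needed beyond quoting the index formula.

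For the chain map property, I would run the usual Gromov compactness argument on the one-dimensional components of $\Disc_{0|1|0}(x^{-1}, x^{0}; \underline{x})$. The boundary of the compactification consists of configurations where a rigid strip breaks off: either at the incoming end $\underline{\xi}$ (giving $\Delta^{0|1|0} \circ \mu^{1}$ on the source), or at one of the two outgoing ends $\xi^{0}$ or $\xi^{-1}$ (giving the two contributions to $\mu^{0|1|0} \circ \Delta^{0|1|0}$, namely composition with the differential on $CW^{*}(L,E)$ on the $L$ side and with the Floer differential on $E$ on the $K$ side). One must keep track of parallel transport exactly as in the proof of Lemma \ref{lem:differential_squares_0}: since $E^{0}=E^{1}=E$ here is a local system, any two curves in the same component of a moduli space restrict to homotopic paths on each boundary arc, so the composed parallel transport maps agree across each broken configuration, and the internal differentials $\delta_{0}, \delta_{1}$ of $E$ commute with parallel transport. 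Summing over the boundary points of these closed intervals, the total count vanishes, which is precisely the identity $\Delta^{0|1|0} \circ \mu^{1} = \mu^{0|1|0} \circ \Delta^{0|1|0}$.

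The main obstacle — really the only subtle point — is bookkeeping the braiding of the two outgoing ends and the resulting directions of the parallel transport maps, so that the breaking at $\xi^{0}$ versus $\xi^{-1}$ is correctly identified with the left-module-side versus right-module-side (equivalently, $K$-side versus $L$-side) contribution to $\mu^{0|1|0}$ on $\Bimod_{L}(E) = CW^{*}(E, CW^{*}(L,E) \boxtimes L)$; the remark preceding Figure \ref{fig:comult} already pins down the convention, so this reduces to a careful but routine matching of the strata of the compactified moduli space with the terms of the differential on $\Bimod_{L}(E)$, exactly as in the corresponding argument in \cite{generate}. As in the conventions of that paper and of this excerpt, the families of Cauchy--Riemann operators achieving transversality for $\Disc_{0|1|0}$ are suppressed.
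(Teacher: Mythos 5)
Your chain-map argument is the same as the paper's: the codimension-one boundary of the compactified moduli spaces $\Disc_{0|1|0}(x^{-1},x^{0};\underline{x})$ consists of strip breakings at the three ends, matched with the differential in $CW^{*}(L)$ at the incoming end and, at the two outgoing ends, with the two strip-counting contributions to $\mu^{0|1|0}$ (the Floer differential of the fibre $CW^{*}(L,E)$ and strips between $K$ and $L$), with parallel transport handled exactly as in Lemma \ref{lem:differential_squares_0}. One wording slip: the breaking at $\xi^{-1}$ corresponds to the differential in $CW^{*}(E,L)$, i.e.\ strips with boundary on $K$ and $L$, not to the self-Floer differential of $E$; from your phrase ``on the $K$ side'' you appear to intend the correct identification.

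Two points need repair. First, you omit the half of the paper's proof concerning the target: one must check that the right-hand side of Equation \eqref{eq:coproduct_map} actually lies in $\Bimod_{L}(E)$, that is, in the double direct sum of Equation \eqref{eq:bimodule_explicit}, equivalently in the subspace of $CW^{*}(E, CW^{*}(L,E)\boxtimes L)$ of morphisms factoring through a finite direct sum. Since $E$ may be infinite dimensional and its supporting Lagrangian non-compact, this is not automatic: for each fixed $\underline{x}$ one needs that only finitely many pairs $(x^{-1},x^{0})$ admit rigid discs, which follows from the energy estimate of Lemma \ref{lem:differential_respects_filtration} (in the form used in Lemma \ref{lem:action_filtration_infinity}) together with Lemma \ref{lem:properness_chords}; without it the sum would only make sense in a completed direct product, and the differential $\mu^{0|1|0}$ would not be available on the image. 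This is a genuine, though easily filled, gap. Second, your degree count is internally inconsistent: setting your stated virtual dimension $\deg(\underline{x})-\deg(x^{-1})-\deg(x^{0})+n-1$ equal to zero would place $\Delta^{0|1|0}(\underline{x})$ in degree $\deg(\underline{x})+n-1$, not $\deg(\underline{x})+n$. The expected dimension is $\deg(x^{-1})+\deg(x^{0})-\deg(\underline{x})-n$: the thrice-punctured disc carries no moduli, and the shift by $n$ comes from the extra negative puncture, with no additional $-1$. Your conclusion (degree $n$) is correct, but the formula as written does not yield it.
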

\begin{proof}
Recall that we have computed that
\begin{equation}
  CW^{*}(E, CW^{*}(L,E) \boxtimes L) = \bigoplus_{x^{-1} \in  \Chord(K,L) }  \Hom\left(E_{x^{-1}(0)} ,  \bigoplus_{  x^0 \in \Chord(L,K)}   E_{x^{0}(1)}\right)
\end{equation}
and that the image of $  \Bimod_{L}(E) $ in this direct sum of $\Hom$ spaces is precisely the subspace where each morphism factors through a finite direct sum as in the right hand side of Equation \eqref{eq:coproduct_map}.  The fact that $   \Delta^{0|1|0} $  is a chain map then follows from the usual description of the Gromov compactification of $ \Disc_{0|1|0}(x^{-1},x^0;\underline{x}) $, which  is obtained by allowing breakings of strips at the three ends; these correspond to applying the differential in either $ CW^{*}(L) $ or  $ CW^{*}(L,E)  $  or $ CW^{*}(E, L) $. 
\end{proof}

The construction of the higher morphisms $  \Delta^{r|1|s} $ is now entirely straightforward.  Given sequences of chords with both endpoints on $L$ which we denote $\vx = \{ x^{k} \}_{k=1}^{r}$,  $\underline{x}$,  $\vx[|] = \{ x^{|k} \}_{k=1}^{s}$ as well as $x^{-1} \in \Chord(K, L) $ and  $x^0 \in \Chord(L, K) $, we define a moduli space
\begin{equation}
  \Disc_{r|1|s}(x^{-1}, x^0 ;  \vx[|], \underline{x}, \vx)
\end{equation}
of pseudo-holomorphic maps from a disc with $r+s+1$ incoming ends, and two outgoing ends (see Section 4.2 of \cite{generate}).  After applying the appropriate diffeomorphism, an element of this moduli space maps one of the boundary segments to $K$, converging in one direction to $ x^{-1}(0)$ and in the other to $ x^{0}(1)$.  Writing $\gamma_{u}$ for the associated parallel transport map, we define
\begin{equation}
  \Delta^{r|1|s}(x^r, \ldots, x^1, \underline{x}, x^{|1}, \ldots,   x^{|s}  ) = \sum_{\substack{ u \in  \Disc_{r|1|s}(x^{-1}, x^0 ;  \vx[|], \underline{x}, \vx)  \\ u \textrm{ is rigid.}}  } \gamma_{u}.
\end{equation}

The proof that the collection of maps $ \Delta^{r|1|s} $ satisfies the axioms of an $A_{\infty}$-bimodule map follows from analysing the Gromov compactification of $   \Disc_{r|1|s}(x^{-1}, x^0 ;  \vx[|], \underline{x}, \vx) $ as in Section 4.2 of \cite{generate}.
\subsection{From symplectic cohomology to wrapped Floer cohomology} \label{sec:from-sympl-cohom}
Next, we construct the map
\begin{equation}
  \CO \co SC^{*}(M) \to CW^{*}(E).
\end{equation}

Recall that $SC^{*}(M)$ is a chain complex generated by the time-$1$ \emph{periodic orbits} of a time-dependent Hamiltonian in $M$.  Given such an orbit $y$, and a time-$1$ chord $x$ with both endpoints on $K$, we consider the moduli space
\begin{equation}
  \Disc_{1}^{1}(x;y)
\end{equation}
of pseudo-holomorphic discs mapping the boundary to $K$, equipped with one interior puncture converging to $y$ and a negative end along the boundary converging to $x$, as shown in Figure ~\ref{fig:CO}.
\begin{figure}
  \centering
  \includegraphics{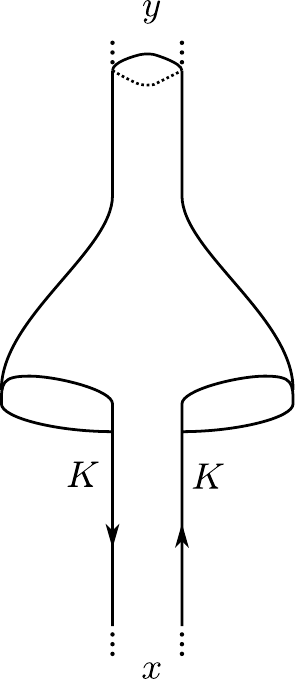}
  \caption{ }
  \label{fig:CO}
\end{figure}
Given such a punctured disc $u$, the parallel transport map along the boundary defines a homomorphism
\begin{equation}
    \gamma_{u} \co E_{x(0)} \to  E_{x(1)},
\end{equation}
and we define
\begin{equation}
  \CO(y) = \sum_{\substack{ u \in   \Disc_{1}^{1}(x;y) \\ u \textrm{ is rigid.}}  } \gamma_{u}.
\end{equation}
The boundary of those moduli spaces $  \Disc_{1}^{1}(x;y) $ of dimension $1$ has two types of strata.  Either (1) a pseudo-holomorphic cylinder  breaks at the interior puncture, or (2) a strip breaks at the boundary puncture.  The first corresponds to the differential in $SC^*(M)$, while the second corresponds to the differential in $ CW^{*}(E)$.

 Recall that the identity in $ SC^{*}(M) $  is obtained by counting rigid pseudo-holomorphic planes converging to a Hamiltonian orbit.  To see that its image under $\CO$ represents the identity, one glues such a plane to the interior puncture of the disc controlling the operation $\CO$ whose outgoing end is then glued to one of the inputs of the disc controlling the operation $\mu^2$ on $CW^{*}(E)  $.  The resulting disc now has one input and one output, and the count of such discs is homotopic to the identity map of  $CW^{*}(E)$. 
\begin{lem}
  The image under  $H^{*}(\CO)$ of the identity of $SH^{*}(M)$  is the identity of $HW^{*}(E)$. \qed
\end{lem}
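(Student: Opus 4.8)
The plan is to turn the gluing picture sketched above into an explicit chain homotopy; since the whole paper is over $\bF_{2}$, no signs intervene. Because $CW^{*}(E)=CW^{*}(E,E)$ is the endomorphism algebra of $E$ in $\Seidel(M)$ it is cohomologically unital, with unit $\id_{E}\in HW^{*}(E)$, and it suffices to show that left multiplication
\begin{equation}
  \mu^{2}(\CO(e_{M}),\_)\co CW^{*}(E)\to CW^{*}(E)
\end{equation}
is chain homotopic to the identity, where $e_{M}=\sum_{y}n_{y}\,y\in SC^{*}(M)$ is the chosen cocycle representing the unit of $SH^{*}(M)$, obtained as the count of rigid pseudo-holomorphic planes asymptotic to the time-$1$ orbits $y$. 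Since $\CO$ is a chain map, $\CO(e_{M})=\sum_{y}n_{y}\,\CO(y)$ is then a cocycle of $CW^{*}(E)$; once we know $[\mu^{2}(\CO(e_{M}),\_)]=[\id]$, cohomological unitality gives $[\CO(e_{M})]=[\CO(e_{M})]\cdot\id_{E}=\id_{E}$.

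Next I would introduce a one-parameter family of moduli spaces of discs with boundary on $K$, one incoming boundary puncture asymptotic to $x^{1}$, one outgoing boundary puncture asymptotic to $x^{0}$, and one interior puncture, the parameter $\rho\in[0,1]$ interpolating the data at the interior puncture: at $\rho=1$ the interior puncture is asymptotic to a Hamiltonian orbit and carries a glued-on rigid plane of the type counted by $e_{M}$, while at $\rho=0$ it degenerates to a free interior marked point subject to no constraint. Following \cite{generate}, the perturbation data can be chosen compatibly with those already fixed for $e_{M}$, $\CO$ and $\mu^{2}$, and, as in the definition of the $A_{\infty}$ operations, parallel transport of $E$ along the boundary of such a disc $u$ gives a map $\gamma_{u}\co E_{x^{1}(0)}\to E_{x^{0}(1)}$, so the count of the rigid elements of the family assembles into a degree $-1$ operator $h\co CW^{*}(E)\to CW^{*-1}(E)$. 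The $\rho=1$ locus, after pushing the glued-on plane and then the $\mu^{2}$-disc out to infinite neck length, reproduces the broken configuration in which a plane is glued to the interior puncture of the $\CO$-disc, whose outgoing end is in turn glued to one input of the $\mu^{2}$-disc; its contribution is exactly $\mu^{2}(\CO(e_{M}),\_)$. The $\rho=0$ locus consists of discs with one input, one output and one free interior marked point; the count of these is a chain map $\Phi$, namely the action of $1\in H^{0}(M)$ under the natural module structure of $CW^{*}(E)$ over $H^{*}(M)$, and is therefore chain homotopic to the identity by the standard argument, which is implicit in \cite{generate}.

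The remaining codimension-one strata of the one-dimensional part of the Gromov compactification of this family are: (i) the breaking of a rigid Floer strip at the incoming or outgoing puncture, contributing $h\circ\mu^{1}$ and $\mu^{1}\circ h$; and (ii), at the $\rho=1$ end only, the breaking of a Floer cylinder at the interior puncture together with the breaking of the glued-on plane, which together reproduce $\CO$ applied to the differential of $e_{M}$, which vanishes, so these cancel. Counting the boundary of this compact one-manifold then gives
\begin{equation}
  \mu^{2}(\CO(e_{M}),\_)+\Phi=\mu^{1}\circ h+h\circ\mu^{1},
\end{equation}
whence $[\mu^{2}(\CO(e_{M}),\_)]=[\Phi]=[\id]$ in $HW^{*}(E)$, and the lemma follows from the first paragraph.

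The hard part is not geometric but organisational: one must pin down the one-parameter family so that its two ends are genuinely the asserted gluings of the moduli spaces already used to define $e_{M}$, $\CO$ and $\mu^{2}$, and so that the $\rho=0$ end is precisely the module action of $1\in H^{0}(M)$; this forces the auxiliary choices to be coherent across all of these simultaneously. Such coherence is of exactly the kind established in \cite{generate}, and since no new moduli spaces are introduced here it may be invoked directly. The one feature absent there, the parallel transport of $E$ around disc boundaries, causes no difficulty: exactly as in the proof of Lemma \ref{lem:differential_squares_0}, the homotopy class of the boundary path is constant on connected components of each moduli space in the family and is compatible with all the degenerations above, so the transport maps assemble in lockstep with the counts of curves.
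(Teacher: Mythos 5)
Your proposal is correct and follows essentially the same route as the paper: glue the rigid plane representing the unit of $SH^{*}(M)$ into the interior puncture of the $\CO$ disc and its output into an input of the $\mu^{2}$ disc, then show the resulting one-input, one-output count is homotopic to the identity of $CW^{*}(E)$. The only difference is that you make the final step explicit, realising the homotopy by turning the interior puncture into an unconstrained marked point and invoking the standard fact that the resulting $H^{*}(M)$-unit action is homotopic to the identity, a step the paper leaves implicit.
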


\subsection{The homotopy} \label{sec:homotopy}
The map induced by $\Delta$ at the level of Hochschild chains is given by
\begin{multline}
  \label{eq:hochschild_map_induced_by_coproduct}
CC_{*}(\Delta) (x^{d} \otimes \ldots \otimes x^{1}) =  \\ \sum_{r+1+s \leq d}  \Delta^{r|1|s}(x^{r}, \ldots, x^{1}, \underline{x}^{d}, x^{d-1}, \ldots, x^{d-s}) \otimes x^{d-s-1} \otimes \cdots \otimes x^{r+1}.
\end{multline}
Note our convention of underlining the input of $\Delta^{r|1|s}$ which is an element of the bimodule.

Using the definition of $\mu$ given in Section \ref{sec:an-evaluation-map}, we conclude that the composition $ \mu \circ CC_{*}(\Delta) $  is given by counting pseudo-holomorphic curves consisting of two components (see the right most picture of Figure \ref{fig:broken} for one of the possibilities whenever $d=3$):
\begin{itemize}
\item An element of the moduli space $ \Disc_{r|1|s}(x^{-1}, x^0 ; x^{r}, \ldots, x^{1}, \underline{x}^{d}, x^{d-1}, \ldots, x^{d-s} )$ for some integers $r$ and $s$ whose sum is smaller than $d$ and for a pair of chords  $x^{-1} \in \Chord(K, L) $ and  $x^0 \in \Chord(L, K) $.
\item An element of the moduli space  $\Disc_{d-r-s+2}(x; x^{-1},  x^{r+1} , \ldots , x^{d-s-1} , x^{0})$ for some chord $x$ with both endpoints on $K$.
\end{itemize}
\begin{figure}
  \centering
  \includegraphics{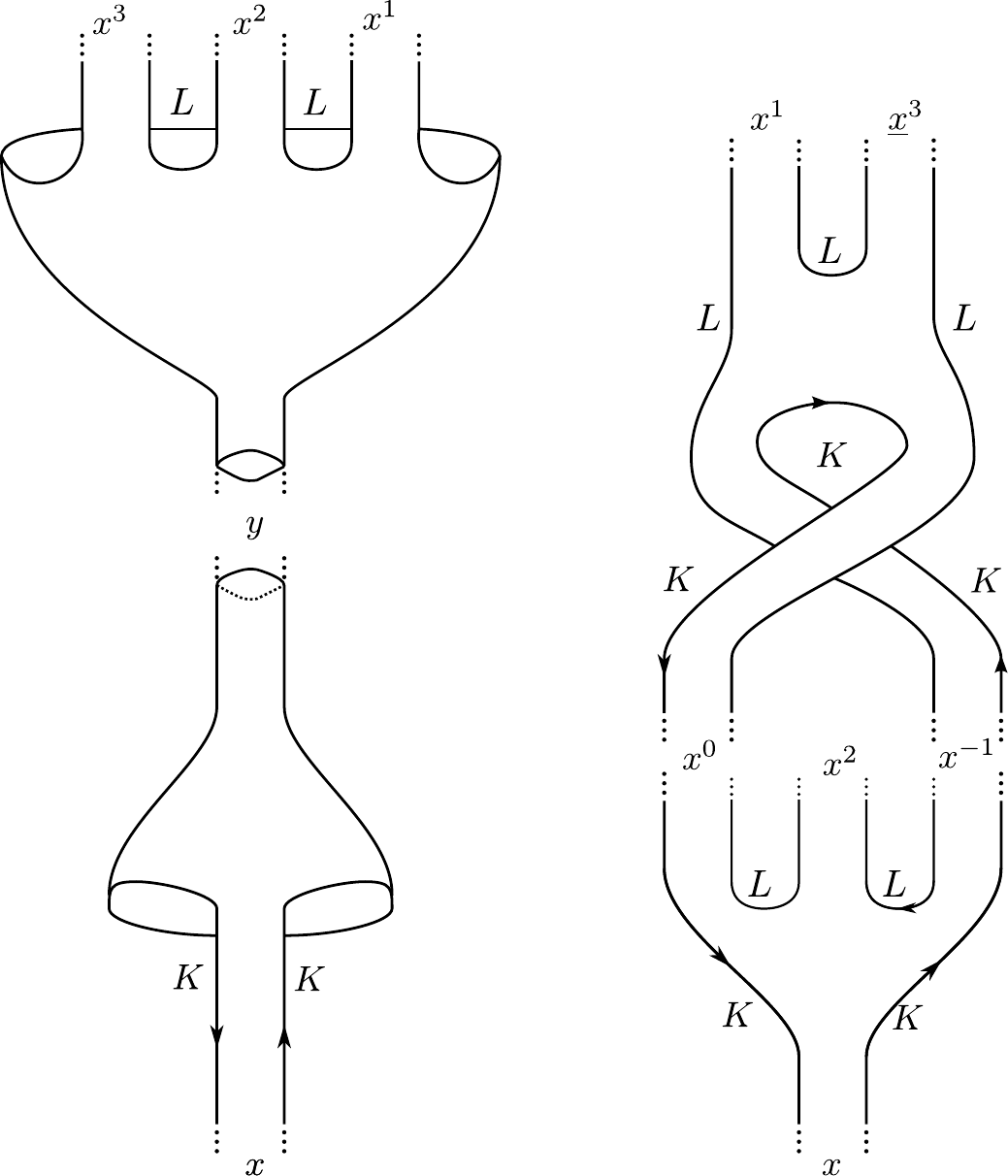}
  \caption{ }
  \label{fig:broken}
\end{figure}
The composition  $\CO \circ \OC$ is obtained by counting  pseudo holomorphic curves with an interior node as shown on the left side of Figure \ref{fig:broken}.  The two components of such a curve are:
\begin{itemize}
\item A pseudo-holomorphic disc with boundary mapping to $L$, $d$ positive boundary ends ordered counterclockwise with the $k$\th end converging to $x^k$, and one interior puncture converging to a periodic orbit $y$ (i.e. to a generator of $SC^{*}(M)$).  A detailed description of this moduli space is given in Section 5.3 of \cite{generate}.
\item An element of the moduli space $  \Disc_{1}^{1}(x;y) $ for some periodic chord $x$ with both endpoints on $K$.
\end{itemize}

By inspecting Figure \ref{fig:broken}, the reader can see that the result of gluing both nodes on the right side gives an annulus.  One of the observations of \cite{generate} is that the abstract curves which control the compositions $\CO \circ \OC  $ and $ \mu \circ CC_{*}(\Delta) $  arise as the boundary of the moduli spaces $\Ann^{-}_{d} $ of annuli with $d$ ordered  marked points on one boundary component which are labelled incoming, and $1$ outgoing marked point on the other satisfying the following property:  the annulus with $2$ marked points obtained by forgetting all the incoming marked points but the last admits a biholomorphism to a region in the plane bounded by two circles centered at the origin, which takes  one marked point to a positive real number, and the other to a negative real number.  If $d=1$, the abstract moduli space of such annuli has dimension $1$, and the ratio between the radii of the images of the boundary circles gives a parametrisation by the interval $(1,+\infty)$.  We compactify this moduli space by adding two singular configurations: whenever the ratio of radii converges to $1$ we add a singular surface with two nodes, obtained by gluing a pair of discs with $3$ boundary marked points, while when the ratio converges to $+\infty$ we add a pair of discs, each carrying both an interior and a boundary marked point, glued along the interior marked points.

 A version of the Cardy relation implies that the two types of broken curves we are considering form part of the boundary of a moduli space of maps from punctured annuli with $d$ incoming ends on one circle converging to the inputs $(x^{d}, \ldots, x^{1})$ and the connecting segments mapping to $L$, and a single outgoing end on the other boundary circle which converges to the output $x$ such that the complementary segment maps to $K$.  We write
\begin{equation}
   \Ann^{-}_{d}(x; x^{1}, \ldots, \underline{x}^{d} ) 
\end{equation}
for the moduli space of maps from such annuli to $M$. Note that we again have a map
\begin{equation}
   \gamma_{u} \co E_{x(0)} \to  E_{x(1)}
\end{equation}
associated to each element $u \in  \Ann^{-}_{d}(x; x^{1}, \ldots, \underline{x}^{d} ) $, obtained by parallel transport along the boundary component carrying the outgoing end.  In addition to the two broken curves representing the compositions  $\CO \circ \OC$ and $ \mu \circ CC_{*}(\Delta) $, the boundary of this moduli space is given by  taking the following products:
\begin{align}\label{eq:boundary_stratum_annuli_differential_CC_0}
& \Disc(x; x^0)  \times \Ann_{d}(x^0;  x^{1}, \ldots, \underline{x}^{d} ) \\
& \Ann_{d-r-s+1}^{-}(x;   x^{r+1} , \ldots , x^{d-s-1}, \underline{x}^0) \times  \Disc_{r+s+1}(\underline{x}^0; x^{r}, \ldots, x^{1}, \underline{x}^{d}, x^{d-1}, \ldots, x^{d-s} ) \label{eq:boundary_stratum_annuli_differential_CC_1} \\
&  \Ann_{d-k}^{-}(x;   x^{1} , \ldots , x^{\ell}, x^0, x^{\ell+k+1}, \ldots, \underline{x}^d) \times  \Disc_{k}(x^0; x^{\ell+1},  \ldots, x^{\ell+k} ) .
\label{eq:boundary_stratum_annuli_differential_CC}
\end{align}

\begin{proof}[Proof of Proposition \ref{prop:commutative_diagram}]
We claim that the map
\begin{align}
  CC_{*}( CW^{*}(L))  & \to  CW^{*}(E) \\ 
 x^d \otimes \ldots \otimes x^1 & \mapsto \sum_{\substack{u \in \Ann^{-}_{d}(x; x^{1}, \ldots, \underline{x}^{d})  \\ u \textrm{ is rigid}}} \gamma_{u} 
\end{align}
defines a homotopy between the two compositions.  This follows immediately from our description of the boundary of $ \Ann^{-}_{d}(x; x^{1}, \ldots, \underline{x}^{d}) $  since two of the types of boundary strata correspond to these compositions, while the other two correspond to applying the differential in $CW^{*}(E)$ to the image of the  homotopy as in Equation \eqref{eq:boundary_stratum_annuli_differential_CC_0} or applying the differential in the cyclic bar complex followed by the homotopy as in Equations \eqref{eq:boundary_stratum_annuli_differential_CC_1} and \eqref{eq:boundary_stratum_annuli_differential_CC}.
\end{proof}

\section{The Fukaya category of a cover} \label{sec:fukaya-categ-cover}
In this section, we shall develop the tools that allow us to prove Theorem \ref{thm:simply_connected} in the non-simply connected case.   For any Liouville manifold $M$,  we shall construct a wrapped Fukaya category $\tSeidel(M) $ associated to a covering space $\pi \co \tM \to M$, whose objects are covers of Lagrangians in $M$ equipped with local systems.  Given a Lagrangian in $M$ the full inverse image in $\tM$ is such a cover, and we shall prove the following result in Section \ref{sec:floer-cochains-cover}:
\begin{prop} \label{prop:pullback_functor_exists}
The assignment $L \to \pi^{-1}(L)$ extends to a \emph{pullback functor} 
\begin{equation} \label{eq:pullback}
  \pi^{*} \co \Seidel(M) \to \tSeidel(M)
\end{equation}
which takes a local system to its classical pullback.  
\end{prop}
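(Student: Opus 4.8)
The plan is to construct $\pi^{*}$ by pulling back all the data that defines objects and morphisms of $\Seidel(M)$, verifying that the geometric hypotheses \eqref{eq:infinite_end_contact}, \eqref{eq:boundary_Legendrian} are preserved, and then checking compatibility with the $A_\infty$-structure. First I would observe that a covering $\pi\co \tM\to M$ inherits a Liouville structure by pulling back $\lambda$: the form $\pi^{*}\lambda$ has differential $\pi^{*}\omega$, which is symplectic since $\pi$ is a local diffeomorphism, and the product decomposition of the infinite end in \eqref{eq:infinite_end_contact} lifts to a (possibly disconnected) product decomposition of $\pi^{-1}(M-\inte(M^{in}))$ because covers of products over a simply connected base factor. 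Similarly the quadratic complex volume form $\eta$, the primitive $\lambda|L$ on each Lagrangian, the grading data, and the Hamiltonian $H=r^2$ near infinity all pull back; since $2c_1(M)=0$ implies $2c_1(\tM)=0$ and the phase map of $\pi^{-1}(L)$ is $\pi^{*}$ of that of $L$, a grading on $L$ restricts to a grading on each component of $\pi^{-1}(L)$. Given a local system $E$ on $L$, its classical pullback $\pi^{*}E$ assigns to $\tx\in\pi^{-1}(L)$ the fibre $E_{\pi(\tx)}$, with parallel transport along a path $\tilde\gamma$ given by parallel transport along $\pi\circ\tilde\gamma$; this is a local system of complexes on $\pi^{-1}(L)$ supported in the same degrees, so $(\pi^{*}E,\pi^{-1}(L))$ is a legitimate object of $\tSeidel(M)$.

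Next I would define the functor on morphisms. The key geometric point is that Hamiltonian chords and pseudo-holomorphic strips (and discs) for the lifted data on $\tM$ are exactly the lifts of those on $M$: a chord $x\co[0,1]\to M$ in $\Chord(L^0,L^1)$ has lifts indexed by lifts of its starting point, and since $X_{\pi^{*}H}$ is $\pi$-related to $X_H$, these lifts are precisely $\Chord(\pi^{-1}(L^0),\pi^{-1}(L^1))$; likewise each strip $u$ on $M$ with a chosen lift of one end point lifts uniquely, and the moduli spaces $\widetilde{\Disc}(\tx^0,\tx^1)$ on $\tM$ fibre over those on $M$. Because the parallel transport maps $\gamma_u^i$ for $\pi^{*}E$ are defined by composing $u$ with $\pi$, the formula \eqref{eq:transport_map_strip} for the lifted strip agrees verbatim with that for its image, so $\mu^1$ and more generally the higher products $\mu^d$ of \eqref{eq:A_oo_structure_local_systems} are compatible under the natural inclusion of generators. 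Since Lemma \ref{lem:diffeo_for_evaluation} produces the diffeomorphisms $\psi_{z,S}$ on $M$, their pullbacks $\pi^{*}\psi_{z,S}$ serve the same role on $\tM$, so Corollary \ref{cor:parallel_transport_from_curve} applies with no change. The functor $\pi^{*}$ is then defined on objects as above and on morphisms by the tautological identification $CW^{*}(E^0,E^1)\hookrightarrow CW^{*}(\pi^{*}E^0,\pi^{*}E^1)$ coming from summing over the lifts of each chord; the $A_\infty$-functor terms $\pi^{*}_d$ for $d\geq 2$ can be taken to vanish since the structure maps already match strictly.

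The remaining issue, and the one I expect to be the main obstacle, is that the covering $\tM$ need not have finite type, so that the morphism complexes $CW^{*}(\pi^{*}E^0,\pi^{*}E^1)$ on $\tM$ are genuinely different from those on $M$ (indeed, as the remark about $T^{*}S^1$ versus $\bR^2$ emphasises, wrapped Floer cohomology on the cover can vanish identically). This means $\pi^{*}$ is not simply ``base change'' but really requires that $\tSeidel(M)$ be \emph{defined} in a way adapted to covers — using the finite-propagation/coarse-geometry model alluded to in the introduction — so that the infinitely many lifts of a given chord are organised correctly and Gromov compactness for the finiteness of $\mu^d$ still holds. Thus the honest content of the proof is: (i) check that the finite-propagation constraint defining $\tSeidel(M)$ is stable under the identifications above, so that the image of $CW^{*}(E^0,E^1)$ lands in the allowed subcomplex; and (ii) verify that action filtrations pull back, i.e.\ the analogue of Lemma \ref{lem:differential_respects_filtration} and Lemma \ref{lem:action_filtration_infinity} hold on $\tM$, which is immediate because action is computed from $\pi^{*}\lambda$, $\pi^{*}H$, and the pulled-back primitives, all of which are $\pi$-pullbacks of the corresponding data on $M$. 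Granting the construction of $\tSeidel(M)$ from Section \ref{sec:floer-cochains-cover}, the functoriality and the statement that local systems go to their classical pullbacks are then formal consequences of the strict compatibility of all structure maps established above.
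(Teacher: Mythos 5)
Your proposal is correct and follows essentially the same route as the paper: the paper's proof (Proposition \ref{prop:functor_well_defined}) likewise sends $\phi_{x}$ to $\sum_{\pi(\tx)=x}\phi_{\tx}$, observes that such infinite sums are exactly what the morphism spaces of Equation \eqref{eq:CW-cover} (direct sum over base chords, product over lifts) were designed to contain, and concludes strict compatibility with all $\mu^{d}$ because the $A_{\infty}$ structure on $\tSeidel(M)$ is defined by counting lifts of the same discs. Your preliminary discussion of pulling back the Liouville form, grading data, and Hamiltonian to $\tM$ is harmless but not needed in the paper's formulation, since $\tSeidel(M)$ is built directly from lifted chords and lifted discs rather than from Floer theory set up intrinsically on $\tM$.
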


Using this result to determine the homotopy type of closed Lagrangians requires knowing that the correspondence between Floer and classical cohomology applies in our setting.   The precise result we need is proved in Section \ref{sec:proof-lemma-refl}, using the adjunction between local systems on a space and it covers and the results of Appendix \ref{sec:floer-morse-class}.
\begin{lem} \label{lem:floer_class_cover}
If $ \tQ $  is a cover of a closed exact Lagrangian $Q \in M$, and $\tE^1$ and $ \tE^2$ are local systems on $\tQ$, then
\begin{equation} \label{eq:floer=classical-cover}
  HW^{*}(\tE^1,\tE^2) = H^{*}(\tQ, \Hom(\tE^1,\tE^2)).
\end{equation}
Moreover, the natural compositions on both sides agree.
\end{lem}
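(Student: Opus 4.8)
The plan is to deduce the statement from the correspondence between Floer and classical cohomology for closed exact Lagrangians (Appendix \ref{sec:floer-morse-class}, cf. Lemma \ref{lem:floer_class}), together with the pullback functor of Proposition \ref{prop:pullback_functor_exists} and the standard adjunction between local systems on $Q$ and on the cover $\tQ$. Write $\pi \co \tQ \to Q$ also for the restriction of the covering map, and let $\pi_{*}$ denote the right adjoint of $\pi^{*}$ on local systems (its fibre over $q$ is the product $\prod_{\tq \in \pi^{-1}(q)}(\,\cdot\,)_{\tq}$; this is a legitimate, though possibly infinite dimensional, local system over the closed Lagrangian $Q$). Since $\pi$ is a covering map it has no higher direct images, so $H^{*}(\tQ,\mathcal{G}) = H^{*}(Q, \pi_{*}\mathcal{G})$ for any local system $\mathcal{G}$ on $\tQ$, and the internal--hom adjunction gives $\pi_{*}\,\underline{\Hom}_{\tQ}(\pi^{*}E^{1},\mathcal{G}) = \underline{\Hom}_{Q}(E^{1}, \pi_{*}\mathcal{G})$ for a local system $E^{1}$ on $Q$.

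First I would prove the lemma when $\tE^{1} = \pi^{*}E^{1}$ is pulled back from $Q$. On the classical side the two identities above give
\begin{equation*}
  H^{*}(\tQ, \Hom(\pi^{*}E^{1}, \tE^{2})) = H^{*}(Q, \Hom(E^{1}, \pi_{*}\tE^{2})),
\end{equation*}
which by Lemma \ref{lem:floer_class} equals $HW^{*}_{\Seidel(M)}(E^{1}, \pi_{*}\tE^{2})$. On the geometric side I would unwind the definition of $\tSeidel(M)$ from Section \ref{sec:floer-cochains-cover}: because the almost complex structures, Hamiltonian and finite--propagation data upstairs are pulled back from $M$, unique path lifting identifies each holomorphic strip (and disc) on the cover with its projection to $M$, and the finite--propagation condition reassembles the generators lying over a fixed chord of $Q$ into the fibre of $\pi_{*}\tE^{2}$. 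This yields an isomorphism of $A_{\infty}$--modules $CW^{*}_{\tSeidel(M)}(\pi^{*}E^{1}, \tE^{2}) \cong CW^{*}_{\Seidel(M)}(E^{1}, \pi_{*}\tE^{2})$, the Floer--theoretic shadow of the adjunction $\pi^{*} \dashv \pi_{*}$; both identifications are compatible with compositions, since each is induced by composition of $\underline{\Hom}$'s. Combining, the lemma holds for pullbacks, with matching product structures.

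For a general local system $\tE^{1}$ on $\tQ$, I would reduce to the previous case by a resolution by pullbacks. Over the field $\bF_{2}$ the counit $\pi^{*}\pi_{!}\tE^{1} \to \tE^{1}$ (where $\pi_{!}$ is the left adjoint, fibrewise a direct sum over $\pi^{-1}(q)$) is a fibrewise split epimorphism, so the bar resolution of the comonad $\pi^{*}\pi_{!}$,
\begin{equation*}
  \cdots \to \pi^{*}\pi_{!}\pi^{*}\pi_{!}\tE^{1} \to \pi^{*}\pi_{!}\tE^{1} \to \tE^{1},
\end{equation*}
is a genuine resolution all of whose terms in positive homological degree are pullbacks. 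Since $\tQ$ is finite dimensional, truncating at a sufficiently large stage produces a \emph{finite} twisted complex built from pullbacks which agrees with $\tE^{1}$ through any prescribed range of degrees. Both $\tE^{1} \mapsto HW^{*}_{\tSeidel(M)}(\tE^{1}, \tE^{2})$ and $\tE^{1} \mapsto H^{*}(\tQ, \Hom(\tE^{1},\tE^{2}))$ carry cones to exact triangles, so applying the pullback case termwise and passing to the limit over truncations proves \eqref{eq:floer=classical-cover} in all degrees, again compatibly with compositions (which are multiplicative with respect to the resolution in each variable).

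The main obstacle is the geometric identification in the second paragraph: showing that the finite--propagation condition entering the definition of $\tSeidel(M)$ reproduces, under pushforward along $\pi$, the ordinary wrapped complex of $\Seidel(M)$ with the infinite rank coefficient system $\pi_{*}\tE^{2}$ — equivalently, that the resulting Floer groups compute ordinary rather than compactly supported cohomology of $\tQ$, i.e. that it is $\pi_{*}$ and not $\pi_{!}$ that appears. This is precisely the content of the construction carried out in Section \ref{sec:floer-cochains-cover}; once it is in place, the comparison with Appendix \ref{sec:floer-morse-class} and the homological algebra of the third paragraph are routine.
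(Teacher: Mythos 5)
Your overall strategy (reduce to the base via the adjunction between local systems on $Q$ and on $\tQ$, then invoke the Floer--classical comparison of Appendix \ref{sec:floer-morse-class}) is the same as the paper's, but you take a longer route in the key step, and the detour is where the soft spots are. The paper does not need to treat the case $\tE^1 = \pi^{*}E^1$ separately and then resolve a general $\tE^1$ by pullbacks: choosing the Hamiltonian $C^{2}$-small near $Q$ so that all chords with endpoints on $Q$ are constant, it observes that the \emph{entire} morphism complex $CW^{*}(\tE^1,\tE^2)$ in $\tSeidel(M)$ is, generator by generator, the downstairs complex $CW^{*}\bigl(Q,\pi_{*}\Hom(\tE^1,\tE^2)\bigr)$ with coefficients in the single pushed-forward local system $\pi_{*}\Hom(\tE^1,\tE^2)$ (Equation \eqref{eq:express_hom_by_pushforward}); Lemma \ref{lem:floer_class} applied to $Q$ with these (infinite-rank) coefficients and the classical adjunction then give the isomorphism for arbitrary $\tE^1,\tE^2$ at once, and the compatibility of products is handled by the generalised product of Section \ref{sec:gener-prod} together with Lemma \ref{lem:compare_Floer_product_structures}. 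Pushing forward the internal $\Hom$ rather than $\tE^2$ alone is exactly what makes your third paragraph unnecessary.

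Concerning your version: the pullback case is essentially the paper's identification (and is fine provided you make explicit what the paper does, namely that the chords on $Q$ are constant -- for a cover $\tQ$ that is not the full preimage, lifts of a non-constant chord starting on $\tQ$ need not end on $\tQ$, so the identification of the generators over a fixed chord with the fibre of $\pi_{*}\tE^2$ uses this). In the reduction step there is a slip: $\pi^{*}\pi_{!}$ is a monad, not a comonad, and the map to $\tE^1$ you want is the counit of $\pi^{*}\dashv\pi_{*}$, i.e. the projection $\pi^{*}\pi_{*}\tE^1 \to \tE^1$ (or the composite $\pi^{*}\pi_{!}\to\pi^{*}\pi_{*}\to\mathrm{id}$); this is repairable. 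The genuinely delicate unproven point is the last clause of the lemma under your scheme: compositions involve $\tE^1$ in both covariant and contravariant positions, so verifying that the isomorphisms obtained term-by-term from the resolution are multiplicative requires a bicomplex/naturality argument that you only assert. That is precisely the work the paper's direct identification, together with Lemma \ref{lem:compare_Floer_product_structures}, avoids.
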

With this result at hand, we may extend the arguments presented in Section \ref{sec:simply-conn-cotang} to non-simply connected cotangent bundles:

\begin{proof}[Proof of Theorem  \ref{thm:simply_connected}]
Given a closed exact Lagrangian $Q$ in $\TN$, we shall prove that any cover  $\tQ \subset \TtN$ is connected and that the map on fundamental groups induced by the inclusion of $Q$ in $\TN$ is an isomorphism.

We first prove connectedness:  We know from the results of Fukaya, Seidel and Smith (see Appendix \ref{sec:FSS}) that $Q$ is equivalent as an object of the Fukaya category of $\TN$ to a local system of rank $1$ over the zero section of $\TN$.  From Proposition \ref{prop:pullback_functor_exists}, we conclude that $\pi^{-1}(Q)$ is equivalent, in the Fukaya category of $\TtN$ to a local system of rank $1$ over $\tN$.  Since $\tN $ is simply connected, we conclude that $\pi^{-1}(Q)$ and the zero section of $\TtN$ are  quasi-isomorphic objects of the category $\tSeidel(\TN) $, hence have isomorphic self-Floer cohomology.  Applying Equation \eqref{eq:floer=classical-cover} first to $\TtN$ then to $\pi^{-1}(Q)  $, we see that $ HW^{0}(\tN,\tN)  $ has rank $1$, hence $H^{0}( \pi^{-1}(Q), \bF_{2})  $ has rank $1$.  We conclude that the inverse image of $Q$ in $\Ttn$ is connected, which implies that the map $\pi_{1}(Q) \to \pi_{1}(\TN)$ is surjective.

To prove the simple connectivity of $\tQ$, we consider the local system $E_{ \pi_{1}(Q) } $ on $Q$ corresponding to the group ring $\bF_{2}[\pi_{1}(Q)]$.  By Lemma \ref{lem:E-iso-complex-local-systems}, this is quasi-isomorphic to an iterated extension of local systems on $N$.  Applying the pullback functor, we find that $\pi^{*}(  E_{ \pi_{1}(Q)})  $ is quasi-isomorphic to an iterated extension of local systems on $\tN$ which are necessarily trivial by simple connectivity.

As in the simply connected case, we are now again in the situation of Lemma \ref{lem:co-connective_result}.  Namely, Lemma \ref{lem:floer_class_cover} implies that $HW^{*}(\tN)  $  is isomorphic to the ordinary cohomology of $\tN$, hence is supported in non-negative degree.  Moreover, the endomorphism algebra of $ \pi^{*}(  E_{ \pi_{1}(Q)})  $ is itself supported in non-negative degree.  The argument given in Lemma \ref{lem:E-iso-complex-local-systems} implies that $ \pi^{*}(  E_{ \pi_{1}(Q)})  $ is in fact isomorphic to a trivial local system on $ \tN $, supported in a single degree.

Having established previously that $\tQ$ and the zero section of $\TtN$ are quasi-isomorphic objects of the category $\tSeidel(\TN) $, we see that $ \pi^{*}(  E_{ \pi_{1}(Q)})  $ is isomorphic, in $\tSeidel(\TN) $, to a trivial local system on $ \tQ $.  Using Lemma \ref{lem:floer_class_cover}, we conclude that $  \pi^{*}(  E_{ \pi_{1}(Q)})  $ is in fact trivial as a local system on $ \tQ $.

To complete the proof, note that the representation of $\pi_{1}( \tQ )   $ corresponding to $  \pi^{*}(  E_{ \pi_{1}(Q)})  $ is induced, from the group ring of  $\pi_{1}( Q )  $, by the inclusion   of $ \pi_{1}( \tQ )  $ as a subgroup of $\pi_{1}( Q )  $.  In particular, the group ring of $ \pi_{1}( \tQ )  $ sits as a subrepresentation, hence is also trivial.  This implies that $ \pi_{1}( \tQ )   $  vanishes, and therefore that the inclusion of $\pi_{1}( Q )  $ in $\pi_{1}(\TN)$ is an isomorphism.
\end{proof}
\begin{rem}
 By using the adjunction between local systems on $Q$ and $\tQ$, we have avoided proving an analogue of Theorem \ref{thm:localising} for the Fukaya category we attach to the cover, and in particular, showing that a Lagrangian which resolves the diagonal in $M$ has inverse image in $\tM$ which also resolves the diagonal.  There is a superficial problem in proving such a statement, coming from the fact that annuli are not simply connected and hence not all maps from an annulus to $M$ lift to $\tM$; however, this would not really affect such an argument because all annuli we need to consider, when proving the existence of a resolution of the diagonal,  lie in the trivial homotopy class.  More problematic is the fact that one should not expect that the Hochschild homology and cohomology of $\tSeidel(M) $ be isomorphic whenever the cover is not finite, and that symplectic cohomology can be defined in at least two different ways depending on support conditions. 
\end{rem}

\subsection{Floer cochains in a cover} \label{sec:floer-cochains-cover}
Let $\pi \co \tM \to M$ be a covering space.  For the purpose of this section, we consider a collection of exact Lagrangians  $L \subset M$ together with covers which embed in $\tM$:
\begin{equation}
  \xymatrix{ \tL      \ar[d] \ar@{^{(}->}[r] & \tM \ar[d] \\
L \ar@{^{(}->}[r] & M.}
\end{equation}
We shall construct a category $\tSeidel(M)$ whose objects are complexes of local systems over $\tL$, with $\tWrap(M)$ denoting the subcategory where the local systems are trivial of rank $1$.  We do not require that $\tL$ be the full inverse image of $L$ under the covering map $\pi$, nor do we require $\tL$ to be connected.   Whenever $\tM$ is a finite cover,  the Floer cohomology groups of $\tL$ with various other Lagrangians are the same in $\tWrap(M)$ as they would be in any reasonable definition of the wrapped Fukaya category of $\tM$.  However, some care is to be taken when $\tL$ is an infinite cover of $L$, as we explain in Example \ref{ex:all-fibres} below.

First, we define $\Chord(\tL^0, \tL^1)$ to be the set of lifts $\tx \co [0,1] \to \tM$ of chords $x \in \Chord(L^0,L^1)$ such that $\tx(0) \in \tL^0$    and $\tx(1) \in \tL^1$.  Given local systems $\tE^{0}  $ and $\tE^{1}$ over these Lagrangians, we define
\begin{equation}
  \label{eq:CW-cover}
  CW^{k}(\tE^0, \tE^1)  = \bigoplus_{\substack{x \in \Chord(L^0, L^1) \\ |x| = k}} \prod_{\substack{\tx \in  \Chord(\tL^0, \tL^1) \\ \pi(\tx) = x}} \Hom(  \tE^0_{\tx(0)} , \tE^1_{\tx(1)} ).
\end{equation}
In particular, whenever $\tE^0$ and $\tE^1$ are both trivial, this is the vector space generated by those infinite sequences of elements of $\Chord(\tL^0, \tL^1)  $ which involve only lifts of finitely many elements of $ \Chord(L^0,L^1) $. 

To define a differential, we denote by $\Disc(\tx^0,\tx^1)$ the set of lifts of elements of $\Disc(x^0,x^1) $ which converge to $\tx^0$ at one end and to $\tx^1$ at the other.  Given such a disc $\tu$, we have parallel transport maps
\begin{align}
  \gamma_{\tu}^{0} \co \tE^{0}_{\tx^0(0)} & \to \tE^{0}_{\tx^1(0)} \\ 
  \gamma_{\tu}^{1} \co \tE^{1}_{\tx^1(1)} & \to \tE^{1}_{\tx^0(1)}
\end{align}
as in Section \ref{sec:diff}.

\begin{lem} \label{lem:differential_cover}
The map
\begin{align} \notag
\mu^1 \co   CW^{*}(\tE^0, \tE^1)  & \mapsto CW^{*}(\tE^0, \tE^1)  \\
\sum  \phi_{\tx^1}  & \mapsto    \sum_{\substack{ \tu \in  \Disc(\tx^0,\tx^1)\\ u \textrm{ is rigid}} }  \gamma_{\tu}^{1} \circ \phi_{\tx^1}   \circ    \gamma_{\tu}^{0}
  \label{eq:differential-cover}
\end{align}
defines a differential.
\end{lem}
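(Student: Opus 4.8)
The plan is to follow the proof of Lemma~\ref{lem:differential_squares_0} closely; the only genuinely new point is a verification that $\mu^1$ is well defined on $CW^{*}(\tE^0,\tE^1)$, which, unlike its analogue in the base, is a direct sum of \emph{products} and could \emph{a priori} receive infinitely many contributions in a single summand.

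The first step is to relate the moduli spaces upstairs and downstairs. Since the domain $\bR\times[0,1]$ of a strip is contractible, each $u\in\Disc(x^0,x^1)$ has a unique lift $\tu\co\bR\times[0,1]\to\tM$ once one prescribes the lift $\tx^1$ to which it converges at $+\infty$; by uniqueness of path lifting for the coverings $\tL^i\to L^i$ the boundary components of $\tu$ then automatically lie on $\tL^0$ and $\tL^1$, and the incoming end converges to a well-defined lift $\tx^0$ of $x^0$. Thus $\Disc(\tx^0,\tx^1)$ is canonically a subset of $\Disc(x^0,x^1)$, and for fixed $x^0$ these subsets partition $\Disc(x^0,x^1)$ as $\tx^0$ ranges over the lifts of $x^0$. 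I would also note that this identification is compatible with Gromov compactification: a sequence in $\Disc(\tx^0,\tx^1)$ converging in $\Discbar(x^0,x^1)$ to a broken strip has, as its Gromov limit in $\tM$, the unique lift of that broken configuration with ends $\tx^0$ and $\tx^1$, and conversely every such broken lifted configuration arises by gluing; so $\Discbar(\tx^0,\tx^1)$ inherits from $\Discbar(x^0,x^1)$ the structure of a manifold with corners of the same dimension.

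For well-definedness, fix a lift $\tx^1$ of a chord $x^1$ and an element $\phi$ supported there. Each rigid $\tu$ converging to $\tx^1$ projects to a rigid $u\in\Disc(x^0,x^1)$ with $|x^0|=|x^1|+1$, of which there are only finitely many by the finiteness of the base differential (Lemma~\ref{lem:differential_squares_0}, via Lemma~\ref{lem:properness_chords} and Gromov compactness); since each such $u$ lifts uniquely once its $+\infty$-limit is prescribed, $\mu^1(\phi)$ is a finite sum supported on lifts of the finitely many base chords appearing in $\mu^1(x^1)$. To extend this to a general element, which may be supported on infinitely many lifts of one base chord, I would use the computation in Lemma~\ref{lem:differential_respects_filtration}, which in fact gives $E(u)=\Action(x^0)-\Action(x^1)$, so that the energy of a strip is determined by its endpoints and $\Disc(x^0,x^1)$ has only finitely many rigid elements for each fixed pair $(x^0,x^1)$; each lifts uniquely once its $-\infty$-limit is prescribed, so only finitely many input lifts $\tx^1$ contribute to the component of $\mu^1$ at any fixed $\tx^0$. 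Combined with column-finiteness of the base differential this shows $\mu^1$ maps $CW^{*}(\tE^0,\tE^1)$ to itself, and Lemma~\ref{lem:differential_respects_filtration} shows it respects the action filtration.

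Finally, $\mu^1\circ\mu^1=0$ is proved exactly as in Lemma~\ref{lem:differential_squares_0}, now upstairs. For $\phi$ supported on a lift $\tx^2$, the component of $\mu^1\mu^1(\phi)$ at a lift $\tx^0$ (with $|x^0|=|x^2|+2$) is a sum over broken rigid strips $\tu_1\#\tu_2$ through intermediate lifts $\tx^1$, which by the previous step are exactly the boundary points of the compact $1$-manifold $\Discbar(\tx^0,\tx^2)$. Any two broken configurations in the closure of the same component of $\Disc(\tx^0,\tx^2)$ have boundary paths which, after concatenation, are homotopic rel endpoints in $\tL^0$ and in $\tL^1$, the $1$-parameter family furnishing the homotopy; since $\tE^0$ and $\tE^1$ are local systems the composite parallel transport maps they induce agree, so the contributions of the two ends of each interval cancel over $\bF_{2}$. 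If the $\tE^i$ are complexes of local systems, one adds the induced internal differential $\delta$ to $\mu^1$ and checks, as in Lemma~\ref{lem:differential_squares_0}, that $\delta^2=0$ and that $\delta$ commutes with parallel transport and hence with the strip part, accounting for all terms of $(\mu^1)^2$. The step requiring the most care is the well-definedness of $\mu^1$ on the direct sum of products; once the unique lifting of strips and the fact that energy is determined by endpoints are in place, this reduces to the base-level finiteness already used in Lemma~\ref{lem:differential_squares_0}.
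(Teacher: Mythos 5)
Your proposal is correct and follows essentially the same route as the paper: well-definedness is reduced, via unique lifting of strips once one asymptotic lift is prescribed, to the finiteness of the downstairs moduli spaces $\Disc(x^0,x^1)$ guaranteed by Gromov compactness, and $\mu^1\circ\mu^1=0$ is obtained by the argument of Lemma \ref{lem:differential_squares_0}, keeping track of (relative) homotopy classes of boundary paths. Your additional remarks (the energy identity from Lemma \ref{lem:differential_respects_filtration} and the compatibility of lifting with the Gromov compactification) are consistent elaborations of the same argument rather than a different approach.
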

\begin{proof}
To keep matters clearer, we consider the case where $ \tE^0$ and  $  \tE^1 $ are both trivial.  We can then write a generator of the Floer complex as an infinite sum
\begin{equation}
  \sum  a_{\tx^1}  \tx^1 \in     CW^{*}(\tL^0, \tL^1)
\end{equation}
with $a_{\tx^1} \in \bF_{2}$.  First, we note that the vanishing of the coefficients away from the lifts of finitely many chords is preserved by $\mu^1$, since $ \Disc(x^0,x^1) $ is empty for all but finitely many $x^0$ whenever $x^1$ is fixed.  We must therefore prove that the expression
\begin{equation}
  \sum_{\tx^1}  a_{\tx^1} \sum | \Disc(\tx^0,\tx^1) |
\end{equation}
has only finitely many non-zero terms for each $\tx^0 \in  \Chord(\tL^0, \tL^1)   $.   Since $CW^{*}(\tL^0, \tL^1)  $ consists of a direct sum over $x \in \Chord(L^0, L^1)  $, it suffices to consider the case where $a_{\tx^1} $ vanishes unless $\pi(\tx^1)=x^1$ for a given $x^1$.   In this case, it suffices to prove the finiteness of
\begin{equation}
  \sum_{\pi(\tx^1)=x^1}  | \Disc(\tx^0,\tx^1) |
\end{equation}
for fixed $x^1$ and $\tx^0$.  This expression is strictly smaller than the number of elements of $  \Disc(x^0,x^1)  $ which is finite by Gromov compactness.   Knowing that the differential is well defined, the fact that it squares to zero follows from the usual techniques if we keep track of  relative homotopy classes of paths as in the proof of Lemma \ref{lem:differential_squares_0}.
\end{proof}

To understand the construction, it helps to keep in mind a few examples in the case where $M $ is a cotangent bundle $\TN$, and $\tM$ is the universal cover:
\begin{example} \label{ex:one-fibre}
Assume $L$ is a cotangent fibre $\Tn$, and that $\tL$ is the  cotangent fibre $\Ttn$ of $\tN$ at some fixed lift $\tn$ of $n$.  In this case, the condition that a lift of a chord in $T^*N$ have both endpoints on $\Ttn$ implies that $x$ is a contractible chord, and moreover fixes a unique lift.  Noting that the equivalence between the wrapped Floer complex and the chains of the based loop space respects  relative homotopy classes,  we conclude that we have a quasi-isomorphism
\begin{equation}
 CW^{*}(\Ttn)  \cong C_{-*}(\Omega_{\tn} \tN).
\end{equation}
\end{example}
\begin{example} \label{ex:all-fibres}
Assuming still that $L$ is a cotangent fibre, we may choose $ \tL $  to be the union over all cotangent fibres lying over $L$.  In this case,
\begin{equation}
CW^{*}( \cup_{\pi(\tn)=n} \Ttn  ) = \bigoplus_{g \in \pi_{1}(N)} \prod_{ \pi(\tn)=n }  CW^{*}(\Ttn, T^{*}_{g(\tn)} \tN). 
\end{equation}
Applying the equivalence with the homology of the based loop space, we find that this is quasi isomorphic to
\begin{equation} \label{eq:based_loops_all_lifts}
 \bigoplus_{g \in \pi_{1}(N)} \prod_{ \pi(\tn)=n }   C_{-*}(\Omega_{\tn, g(\tn)} \tN) .
\end{equation}
The interested reader may compare this group with the notion of finite propagation matrices which appears in coarse geometry, and  which we would obtain by passing to cohomology, and considering only the degree $0$ part of $ H_{-*}(\Omega_{\tn, g(\tn)} \tN) $ which is free of rank $1$.  By fixing a basepoint $\tn_{0}$ for  $\tN$, we may write every lift of $\tn$ as $h \tn_{0}$ for $h \in  \pi_{1}(N)$.  In particular, every element of
\begin{equation}
 \bigoplus_{g \in \pi_{1}(N)} \prod_{h \in \pi_{1}(N) } H_{0}(\Omega_{h \tn_{0}, g \cdot h (\tn_{0})} \tN)  
\end{equation}
gives rise to a matrix whose rows are labelled by $h$ and columns by $gh$.  The matrices arising from this construction are called finite propagation matrices (see, e.g. Remark 2.2 of \cite{roe}).
\end{example}

\subsection{Construction of the category}
Let us now consider a collection $L^0, \ldots, L^{d}$ of Lagrangians in $M$, together with lifts $\tL^{0}, \ldots, \tL^{d}$ to $\tM$ and local systems $\tE^0, \ldots, \tE^{d}$ on these lifts. We shall define maps
\begin{equation}
  \mu^{d} \co CW^{*}(\tE^{d-1} ,\tE^{d}) \otimes \cdots \otimes CW^{*}(\tE^{0} ,\tE^{1})  \to  CW^{*}(\tE^{0} ,\tE^{d}) 
\end{equation}
which encode the  $A_{\infty}$ structure on the category $\tSeidel(M)$.    First, we recall from Section \ref{sec:wrapp-floer-cohom} that, given a sequence of chords $\vx = (x^1, \ldots, x^d)$ with $x^{k} \in \Chord(L^{k-1}, L^{k})$ and a chord $x^0 \in  \Chord(L^{0}, L^{d})$, the moduli space $\Disc(x^0, \vx)$ is the space of pseudo-holomorphic maps $u \co S \to M$ whose source is an abstract disc $S \in \Disc_{d}$.  After applying the appropriate diffeomorphism, the maps take the components of the boundary to $L^0, \ldots, L^d$ and converge along the $k$\th end to $x^{k}$.

Because the domain $S$ is contractible, every element of $\Disc(x^0, \vx)  $ lifts uniquely to $\tM$ once the lift of any point is specified.  In particular, if we choose lifts $\tx^{k} \in \Chord(\tL^{k-1}, \tL^{k})$ and $\tx^0 \in  \Chord(\tL^{0}, \tL^{d}) $, we define
\begin{equation}
  \Disc(\tx^0, \vtx)
\end{equation}
so be the set of those lifts which converge to the images of $\tx^{k}$ along the $k$\th end; this is a subset of $\Disc(x^0, \vx)  $.  Given such a lift $\tu$, we obtain parallel transport maps
 \begin{align}
  \gamma_{\tu}^{0} \co \tE^{0}_{\tx^0(0)} & \to E^{0}_{\tx^1(0)} \\ 
  \gamma_{\tu}^{k} \co \tE^{k}_{\tx^k(1)} & \to E^{k}_{\tx^{k+1}(0)} \textrm{ if  $1 \leq k \leq d-1$}  \\ 
  \gamma_{\tu}^{d} \co \tE^{d}_{\tx^d(1)} & \to E^{d}_{\tx^0(1)} 
\end{align} 
as in Lemma \ref{lem:diffeo_for_evaluation}.  If we are given  morphisms
\begin{equation}
  \sum_{\tx^{k} \in  \Chord(\tL^{k-1}, \tL^{k}) }   \phi^{\tx^k}_{k} \in CW^{*}(\tE^{k-1}, \tE^{k})  \subset \prod_{ \tx^{k} \in  \Chord(\tL^{k-1}, \tL^{k}) } \Hom( \tE^{k}_{\tx^k(0)}  ,  \tE^{k}_{\tx^k(1)} )
\end{equation}
 for each integer $k$ between $1$ and $d$, we define their $d$\th higher product via the formula
\begin{equation} \label{eq:higher_product_structure}
  \mu^{d}\left(\sum  \phi^{\tx^d}_{d}  , \ldots,  \sum  \phi^{\tx^1}_{1}  \right) = \sum_{ \substack{u \in \Disc(\tx^0, \vtx) \\  u \textrm{ is rigid} } } \gamma_{\tu}^{d} \circ \ \phi^{\tx^d}_{d}   \circ \cdots \circ \gamma_{\tu}^{1} \circ  \phi^{\tx^1}_{1}  \circ \gamma_{\tu}^{0}.
\end{equation}

We omit the proof of the next result, which follows the same argument as that of Lemma \ref{lem:differential_cover}.   The main point is that, if we  fix the chords $x^{k}$  for $1 \leq k \leq d$, and choose a lift $\tx^{0}$,  then the expression
\begin{equation}
   \sum_{\pi(\tx^{k}) = x^{k}}   | \Disc(\tx^0, \vtx) | 
\end{equation}
is finite by Gromov compactness.  In particular, even though each of the series $ \sum \phi^{\tx^k}_{k}  $ has infinitely many terms, there are only finitely many terms in their image under $\mu^d$ lying in each component
\begin{equation}
  \Hom( \tE^{0}_{\tx^0(0)}  ,  \tE^{d}_{\tx^0(1)} )
\end{equation}
for any given lift $ \tx^0 $.   
\begin{lem}
The operations $\mu^{d}$ define an $A_{\infty}$ structure on $ \tSeidel(M) $. \qed
\end{lem}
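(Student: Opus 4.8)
The plan is to repeat, essentially verbatim, the proof of Lemma~\ref{lem:differential_cover}, with the one-dimensional moduli spaces of strips replaced by the one-dimensional components of the moduli spaces $\Disc(\tx^0, \vtx)$ of discs with $d$ inputs. First I would dispose of well-definedness of each $\mu^{d}$. Fix downstairs chords $x^1, \ldots, x^d$ and a lift $\tx^0$ of some $x^0 \in \Chord(L^0, L^d)$. As already indicated just before the statement, lifting a disc determines it uniquely once a lift of one point is specified, so $\Disc(\tx^0,\vtx)$ for varying lifts $\tx^k$ of the $x^k$ inject into $\Disc(x^0,\vx)$, whence $\sum_{\pi(\tx^k)=x^k} |\Disc(\tx^0,\vtx)|$ is finite by Gromov compactness. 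Thus even though each input series $\sum \phi^{\tx^k}_k$ is infinite, only finitely many terms of its image under $\mu^{d}$ land in a given summand $\Hom(\tE^0_{\tx^0(0)}, \tE^d_{\tx^0(1)})$, and, exactly as in Lemma~\ref{lem:differential_cover}, the property of being supported on lifts of only finitely many downstairs chords is preserved, so $\mu^{d}$ indeed takes values in $CW^{*}(\tE^0,\tE^d)$ as defined in Equation~\eqref{eq:CW-cover}.

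Next I would establish the $A_\infty$ relations. Working over $\bF_{2}$, these are the statement that for each fixed collection of lifted asymptotic chords the one-dimensional components of $\Disc(\tx^0,\vtx)$ have an even number of boundary points. The structural input is that the source $S \in \Disc_{d}$ is contractible, so every element of $\Disc(x^0,\vx)$ — and, more importantly, every element of its Gromov compactification — lifts uniquely to $\tM$ once a lift of one interior point is fixed, and this unique lift is compatible with breaking: if $S$ degenerates into components $S_1$, $S_2$ glued along a node, then the lift of the broken configuration decomposes into the unique lifts of $S_1$ and $S_2$ whose chosen asymptotic chords agree at the node. This is the lifted analogue of the last assertion of Lemma~\ref{lem:diffeo_for_evaluation}. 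Since parallel transport depends only on the homotopy class of a path, and a broken configuration together with its gluings restricts to homotopic paths on each Lagrangian boundary component — precisely as in the proof of Lemma~\ref{lem:differential_squares_0} — the contributions of the maps $\gamma_{\tu}^{\bullet}$ at the two broken strata appearing at a given boundary point of a one-dimensional moduli space are exactly the two terms $\mu^{i}(\ldots, \mu^{j}(\ldots), \ldots)$ of the $A_\infty$ equation, which therefore cancel in pairs.

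Finally one must check that the double sums defining the compositions $\mu^{i}(\ldots, \mu^{j}(\ldots), \ldots)$ are themselves locally finite, so that the cancellation argument of the previous paragraph makes sense term by term. This again reduces to the downstairs finiteness: fixing $\tx^0$ and the outermost downstairs chords, the set of broken lifted configurations contributing to the $\tx^0$-component of such a composition injects, via the lifting bijection, into the finite set of broken configurations downstairs with the same output, so only finitely many summands are nonzero. With well-definedness in hand, the boundary-of-moduli-space argument goes through and the $\mu^{d}$ satisfy the $A_\infty$ equations.

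The main obstacle, such as it is, is purely bookkeeping: verifying that uniqueness of lifts interacts correctly with the Gromov compactification, so that no spurious broken configuration appears upstairs and none is lost, while simultaneously keeping all sums locally finite through the composition $\mu \circ \mu$. Both points are settled by the single observation already used for $\mu^{1}$: after fixing a lift of the output chord, lifting is a bijection from (broken) lifted configurations with that prescribed output onto (broken) configurations downstairs with that output, restricting compatibly under breaking. No analysis beyond that of Lemma~\ref{lem:differential_cover} is required.
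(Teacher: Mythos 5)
Your proposal is correct and follows essentially the same route as the paper, which omits the proof precisely on the grounds that it repeats the argument of Lemma \ref{lem:differential_cover}, with the key finiteness input being that $\sum_{\pi(\tx^{k}) = x^{k}} | \Disc(\tx^0, \vtx) |$ is finite by Gromov compactness once $\tx^0$ and the downstairs chords are fixed. Your additional care about unique lifting being compatible with breaking and about local finiteness of the composed sums is exactly the bookkeeping the paper leaves implicit.
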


Given an exact Lagrangian $L \subset M$, the inverse image $\pi^{-1}(L) \subset \tM$ is an object of $ \tWrap(M) $.    Moreover, the pullback of a local system $E$ on $L$ gives a local system $\pi^{*}(E)$ on $ \pi^{-1}(L) $.
\begin{prop} \label{prop:functor_well_defined}
The assignment $ E \mapsto \pi^{*}(E)$ extends to an $A_{\infty}$ functor
\begin{equation}
  \pi^{*} \co  \Seidel(M) \to  \tSeidel(M) .
\end{equation}
\end{prop}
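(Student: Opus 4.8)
The plan is to realise $\pi^{*}$ as a \emph{strict} $A_{\infty}$ functor: on objects it is the assignment $(E,L) \mapsto (\pi^{*}E, \pi^{-1}(L))$, which by the discussion preceding the Proposition already lands in $\tSeidel(M)$; its linear term $\pi^{*}_{1}$ is a chain map that strictly intertwines every higher product $\mu^{d}$; and all higher-order terms $\pi^{*}_{d}$ with $d \geq 2$ vanish. To make the two moduli problems compatible, I would first equip $\tM$ with the pullbacks of all the auxiliary data chosen on $M$ — the Liouville form, the quadratic complex volume form, the Hamiltonian $H$, the families of compatible almost complex structures, and the families of moving Lagrangian boundary conditions. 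By unique path-lifting for $\pi$, this guarantees that $\Chord(\pi^{-1}(L^{0}), \pi^{-1}(L^{1}))$ is exactly the set of lifts of elements of $\Chord(L^{0},L^{1})$, that a lift $\tx$ of $x$ has $|\tx| = |x|$, and that each $u \in \Disc(x^{0}, \vx)$ has a unique lift $\tu$ to $\tM$ once the lift of a single boundary point is specified.

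Next I would define the linear term. Via the tautological identification $(\pi^{*}E^{i})_{\tx(j)} = E^{i}_{\pi(\tx)(j)}$, for a generator supported on $x \in \Chord(L^{0},L^{1})$ with morphism $\phi \in \Hom(E^{0}_{x(0)}, E^{1}_{x(1)})$ I set
\[
  \pi^{*}_{1}(\phi \cdot x) \;=\; \sum_{\pi(\tx) = x} \phi \cdot \tx,
\]
i.e.\ $\pi^{*}_{1}(\phi \cdot x)$ is the element of the $x$-summand of \eqref{eq:CW-cover} whose value in every fibre is $\phi$. Since $CW^{*}(E^{0},E^{1})$ is a direct sum over chords, every element is a finite sum of such generators, so $\pi^{*}_{1}$ does land in $CW^{*}(\pi^{*}E^{0},\pi^{*}E^{1})$, and it visibly preserves degree.

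The heart of the matter is the identity $\pi^{*}_{1} \circ \mu^{d}_{M} = \mu^{d}_{\tM} \circ (\pi^{*}_{1})^{\otimes d}$ for all $d \geq 1$ (the case $d = 1$ including the internal differentials of the local systems, for which it is immediate since $\pi^{*}_{1}$ acts as the identity on fibres). I would check it on generators. Fixing a lift $\tx^{0}$ of the output $x^{0}$, projection $\tu \mapsto \pi \circ \tu$ identifies the disjoint union $\bigsqcup_{\vtx} \Disc(\tx^{0}, \vtx)$, taken over all tuples $\vtx$ of lifts of the inputs, with $\Disc(x^{0}, \vx)$; since the latter is finite by Gromov compactness (as in the proof of Lemma \ref{lem:differential_cover}), only finitely many summands on the left are nonempty. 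The one point requiring attention is that the parallel transport maps match up: for a lift $\tu$, the maps $\gamma_{\tu}^{k}$ of \eqref{eq:higher_product_structure} agree, under the identification of fibres above, with the maps $\gamma_{u}^{k}$ of \eqref{eq:higher_product_curve}. This holds because parallel transport in a pulled-back local system along a path in $\pi^{-1}(L^{k})$ is parallel transport in $E^{k}$ along the projected path, and the only datum beyond the boundary path of $u$ that enters the construction — the homotopy class on $\partial S$ relating $u|_{\partial S}$ to $\psi_{z,S} \circ u|_{\partial S}$, see the Remark following Corollary \ref{cor:parallel_transport_from_curve} — lifts once $\tu$ is fixed because $\partial S$ is contractible. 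Inserting the matched transport maps into \eqref{eq:higher_product_structure} and summing over all input lifts reproduces \eqref{eq:higher_product_curve} evaluated on $\phi^{d} \cdot x^{d}, \ldots, \phi^{1} \cdot x^{1}$, which is exactly the asserted identity; and since $\pi^{*}_{d} = 0$ for $d \geq 2$, the $A_{\infty}$ functor equations reduce to precisely these identities.

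I expect the main obstacle to be the bookkeeping in that last paragraph: reconciling the parallel-transport conventions across the cover, and checking that the homotopy classes of the moving-boundary-condition diffeomorphisms can be, and are, chosen compatibly with lifting of curves and with their breakings — that is, transferring to the covering setting the compatibility flagged in the Remark after Corollary \ref{cor:parallel_transport_from_curve}. Everything else is a routine unwinding of the definitions already in place.
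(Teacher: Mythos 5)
Your proposal is correct and is essentially the paper's own argument: you define the functor on morphisms exactly as in Equation \eqref{eq:pull_back_map} (the sum over all lifts of a chord, which \eqref{eq:CW-cover} was designed to accommodate), take all higher terms of the functor to vanish, and deduce strict compatibility with every $\mu^{d}$ from the fact that the operations of $\tSeidel(M)$ are defined by counting lifts of the very discs counted downstairs, with parallel transport in a pulled-back local system tautologically agreeing with parallel transport downstairs. Your additional care about lifting the homotopy classes of the $\psi_{z,S}$-corrections along the (contractible) boundary segments is a legitimate elaboration of the same point the paper compresses into one sentence, not a different route.
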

\begin{proof}
Given $\phi_{x} \in \Hom(E^{0}_{x(0)}, E^{1}_{x(1)})$, we write $\phi_{\tx}$ for the corresponding homomorphism from the fibre of $ \tE^{0} $ at $ \tx(0) $ to the fibre of $ \tE^{1} $ at $ \tx(1) $. The action on morphism spaces is given by
\begin{equation}
  \label{eq:pull_back_map}
    \pi^{*}(\phi_{x}) = \sum_{\pi(\tx) = x}  \phi_{\tx}.
\end{equation}
In Equation \eqref{eq:CW-cover}, we precisely allowed expressions of this form  in the morphism spaces of $\tSeidel(M)  $.  The proof that $  \pi^{*} $ strictly commutes with all the higher products follows from the fact the $A_{\infty}$ structure on $ \tSeidel(M)  $ is obtained by using lifts of discs from $M$.
\end{proof}
It is helpful to keep in mind the two canonical examples whenever $M = \TN$.  First, we continue Example \ref{ex:all-fibres}
\begin{example}
If we choose a quadratic Hamiltonian on the cotangent bundle, then the unit of $CW^{*}(\Tn)$ is represented by the critical point $p=0$, and the other generators of the wrapped Floer complex correspond to non-constant geodesics.  The pullback map $\pi^{*}$ takes every such geodesic to the sum of all its possible lifts, and in particular, the image of the unit under $\pi^{*}$ is the sum  of the intersection points of all the Lagrangian $\Ttn$ with the zero section.   Applying the equivalence with chains of the based loop space, we obtain the sum, over all possible lifts $\tn$ of $n$, of the constants loops at $\tn$.  Concatenation with these loops is the identity.
\end{example}
\begin{example}
 If we choose a Hamiltonian which is sufficiently $C^2$ small near $N$, then under the equivalence with Morse theory, the identity in $CW^{*}(N)$ is represented by the maxima of a Morse function, and the associated cycle in homology which is Poincar\'e dual to the identity is obtained by taking the closure of the descending manifolds, which happens to be all of $N$.  The inverse image in $CW^{*}(\tN)  $ again corresponds to the sum over all maxima, and the descending manifolds now cover all of $\tilde{N}$.  This makes sense, because the fundamental class of $\tilde{N}  $ lives in locally finite homology.  Applying the same argument to more general classes, we find that our pullback map $\pi^{*}$ agrees with the usual pullback of cohomology classes from $N$ to its cover.
\end{example}

\subsection{Proof of Lemma \ref{lem:floer_class_cover}}
\label{sec:proof-lemma-refl}

Generalising the case of the zero section of a cotangent bundle, consider any closed exact Lagrangian $Q \subset M$, and use a Hamiltonian function which is $C^{2}$ small near $Q$ to define the Floer complex.  In this case, all chords with endpoints on $Q$ are constant, and map to critical points of the restriction of $H$ to $Q$, so the fibres of any local system on $Q$ at the starting and ending points of a chord are the same.  In particular, given local systems $\tE^{1}$ and $\tE^{2}$, we have an isomorphism
 \begin{equation} \label{eq:hom_global_sections}
   CW^*(\tE^1, \tE^{2}) \cong CW^*(Q, \Hom(\tE^1, \tE^{2})) =  \bigoplus_{x \in \Chord(Q)} \prod_{\substack{\tx \in  \Chord(\tQ) \\ \pi(\tx) = x}} \Hom(  \tE^1_{\tx(1)} , \tE^2_{\tx(1)} ),
 \end{equation}
where we abuse notation by writing, as before, $Q$ for the trivial local system of rank $1$ on $Q$.  Also, note that the assumption that all chords are constant implies that $x(0)=x(1)$, which we use in the above expression.

In order to express these complexes in terms of the base, let us recall  that  the pushforward of a local system $\tE$ on $\tQ$ by $\pi$ is defined to be
\begin{equation} \label{eq:push_forward_equation}
  \pi_{*}(\tE)_{x} = \prod_{\pi(\tx) = x} \tE_{\tx}
\end{equation}
which is to be interpreted as the set of sections of the restriction of $\tE$ to the inverse image of $x$.
Applying this formula to Equation \eqref{eq:hom_global_sections}, and observing that the differential on $ CW^*(\tE^1, \tE^{2})  $ was defined by lifting discs from $\TN$, we conclude that we have an isomorphism of chain complexes.  
 \begin{equation} \label{eq:express_hom_by_pushforward}
  CW^*(\tE^1, \tE^{2})  = CW^*\left(\tQ,\pi_{*}\left( \Hom(  \tE^1 , \tE^2 )\right)\right). 
 \end{equation}

In order to prove the last remaining part of Lemma \ref{lem:floer_class_cover}, we establish a similar formula in Floer cohomology.  For convenience of notation, we consider the slightly more general situation studied in Section \ref{sec:gener-prod} and Appendix \ref{sec:floer-morse-class}.  Namely, we start with a  map of local systems
\begin{equation}
  \tE^{1,2} \otimes \tE^{0,1} \to \tE^{0,2}
\end{equation}
over $\tQ$, yielding a map of local systems
\begin{equation}
  \pi_{*}(\tE^{1,2}) \otimes   \pi_{*}(\tE^{0,1}) \to   \pi_{*}(\tE^{0,2})
\end{equation}
over $Q$.  In terms of the formula \eqref{eq:push_forward_equation} for the pushforward, one simply multiplies sections of $ \tE^{1,2} $ and $\tE^{0,1}  $ pointwise over the fibre of $\pi$ at a point $x$. In particular, using the product defined in Section \ref{sec:gener-prod}, we obtain a map
\begin{equation} \label{eq:pushforward_product_Floer}
   CW^*(Q,  \pi_{*}(\tE^{1,2})   ) \otimes CW^*(Q,  \pi_{*}(\tE^{0,1})   ) \to CW^*(Q,  \pi_{*}(\tE^{0,2})).  
\end{equation}

\begin{lem} \label{lem:compare_Floer_product_structures}
If $\tE^{i,j} = \Hom(\tE^i,\tE^j)$, the isomorphism of Equation \eqref{eq:express_hom_by_pushforward} intertwines the product in Equation \eqref{eq:pushforward_product_Floer} with the product defined in Equation \eqref{eq:higher_product_structure}. 
\end{lem}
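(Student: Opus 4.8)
The plan is to trace both products through the definition of the pushforward and check that they count the same lifted holomorphic discs with the same parallel transport contributions. First I would fix the geometric setup: choose the Hamiltonian $H$ to be $C^2$-small near $Q$, so that all chords with endpoints on $Q$ are constant and lie at critical points of $H|Q$, and so that Equation \eqref{eq:Floer_cohomology_local_system_coeff} applies both upstairs and downstairs; this is exactly the regime in which the product of Section \ref{sec:gener-prod} is defined, and it is also the regime underlying Equation \eqref{eq:express_hom_by_pushforward}. Under the identification \eqref{eq:push_forward_equation}, a generator of $CW^{*}(Q,\pi_{*}(\Hom(\tE^i,\tE^j)))$ supported at a chord $x$ is a section over the fibre $\pi^{-1}(x)$, i.e.\ a tuple $(\phi_{\tx})_{\pi(\tx)=x}$ with $\phi_{\tx}\in\Hom(\tE^i_{\tx(1)},\tE^j_{\tx(1)})$; this is precisely a generator of $CW^{*}(\tE^i,\tE^j)$ as given by Equation \eqref{eq:CW-cover} (using $\tx(0)=\tx(1)$ for constant chords). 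So the two complexes agree on the nose, and the content is that the two products agree.

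The key step is to unwind both product formulas. On the upstairs side, the product \eqref{eq:higher_product_structure} counts, for each triple of lifted chords $(\tx^0,\tx^1,\tx^2)$, the rigid lifts $\tu\in\Disc(\tx^0,\tx^1,\tx^2)$ of discs $u\in\Disc(x^0,x^1,x^2)$, weighted by $\gamma^2_{\tu}\circ\phi^{\tx^2}\circ\gamma^1_{\tu}\circ\phi^{\tx^1}\circ\gamma^0_{\tu}$; since the chords are constant, all the parallel transport factors $\gamma^k_{\tu}$ become the transport along the boundary arcs of $\tu$, which is exactly the data entering the formula \eqref{eq:product_local_sys_formula} for the product of Section \ref{sec:gener-prod} applied to the local systems $\Hom(\tE^i,\tE^j)$ over $\tQ$. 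On the downstairs side, the product \eqref{eq:pushforward_product_Floer} is the product of Section \ref{sec:gener-prod} applied to the pushed-forward local systems $\pi_{*}(\Hom(\tE^i,\tE^j))$ over $Q$; by the description following Equation \eqref{eq:pushforward_product_Floer}, the pairing $\pi_{*}(\tE^{1,2})\otimes\pi_{*}(\tE^{0,1})\to\pi_{*}(\tE^{0,2})$ multiplies sections pointwise over $\pi^{-1}(x)$. Because every disc $u$ in $M$ with contractible domain lifts uniquely once a lift of one point is specified, the sum over discs $u\in\Disc(x^0,x^1,x^2)$ together with the pointwise-over-$\pi^{-1}(x)$ structure reorganises exactly into the sum over all lifts $\tu$ indexed by the choice of lifted output chord $\tx^0$ and the induced lifts $\tx^1,\tx^2$. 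Matching the two bookkeeping schemes term by term — including the switch-of-factors convention already present in Section \ref{sec:gener-prod} and hence harmless here — gives the claimed equality of products.

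The main obstacle I anticipate is purely combinatorial rather than analytic: carefully matching the indexing of lifts on the two sides, i.e.\ verifying that ``choose lifts $\tx^1,\tx^2$ of the inputs, then count lifts $\tu$'' and ``fix a lift $\tx^0$ of the output, sum over lifts $\tu$ converging to it'' produce the same element of $\prod_{\pi(\tx^0)=x^0}\Hom(\tE^0_{\tx^0(1)},\tE^2_{\tx^0(1)})$, and that the deck-group equivariance of the disc lifts makes the pointwise-section interpretation consistent. Once one has fixed the convention that the product is expressed as a sum over the output chord $x^0$ and its lifts $\tx^0$ — which is how both \eqref{eq:product_local_sys_formula} and \eqref{eq:higher_product_structure} are organised — this reduces to the observation that $\Disc(\tx^0,\tx^1,\tx^2)$ partitions $\Disc(x^0,x^1,x^2)$ according to the lift $\tx^0$ and the attendant lifts of the inputs, with no discs lost since domains are contractible. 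I would state this matching as the one substantive computation and leave the remaining verifications (that the internal differentials $\delta_i$ also correspond, which is immediate from $\delta_i$ commuting with transport) to the reader, exactly as in the analogous Lemma at the end of Section \ref{sec:gener-prod}.
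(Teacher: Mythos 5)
Your proposal is correct and follows essentially the same route as the paper: both arguments fix a holomorphic disc downstairs and compare its contributions term by term, using the unique lifting of the (contractible) domain once a boundary lift is specified, the interpretation of parallel transport in the pushforward local system as transport along lifted boundary paths, and the constancy of chords (so $\tx(0)=\tx(1)$) to match the bookkeeping of input and output lifts. The paper phrases the matching as an equivalence of non-vanishing conditions (the middle boundary segment lifts to connect $\tx(1)$ and $\ty(0)$ iff the two transported lifts land at the same output lift $\tz$), which is the same combinatorial observation you isolate as the substantive step.
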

\begin{proof}
  We use Equation \eqref{eq:express_hom_by_pushforward} to compare these two products. Let us fix lifts $\tx$ and $\ty$ of chords $x$ and $y$ in $\Chord(Q)$.  Given a disc $u$ with boundary on $Q$ which converges at the inputs to $x$ and $y$ and at the output to a chord $z$, both constructions induce maps on
\begin{equation}
  \Hom(\tE^{1}_{\ty(0)}, \tE^{2}_{\ty(1)}) \otimes  \Hom(\tE^{0}_{\tx(0)}, \tE^{1}_{\tx(1)}).
\end{equation}
For the product $\mu^2$, this map is non-zero if and only if the boundary segment of the disc which converges to $ x(1) $ at one end and $y(0)$ at the other lifts to a path connecting $ \tx(1)  $ and $ \ty(0) $.  On the other hand, the  product defined in Section \ref{sec:gener-prod} admits a non-trivial contribution if the lifts of the paths from $x(0)$ to $z(0)$ and from $y(1)$ to $z(1)$ which start at $\tx(0)$ and $\ty(1)$ satisfy the property that they end at $\tz(0)$ and $\tz(1)$ for the same lift $\tz$.  Using a Hamiltonian whose only chords with endpoints on $Q$ are constant, we use the fact that the starting and ending points of each chord are equal to conclude that  the contributions are indeed the same.
\end{proof}

We end this Section by completing the proof of its titular Lemma:
\begin{proof}[Proof of  Lemma \ref{lem:floer_class_cover}]
 At the cohomology level, Lemma \ref{lem:floer_class} asserts that the (classical) cohomology of $Q $ with coefficients in $ \pi_{*}( \Hom(  \tE^0 , \tE^1 )) $ is isomorphic to its Floer cohomology with those coefficients.  By the classical adjunction formula, we know that the first is isomorphic to the classical cohomology of $\tQ$ with coefficients in $  \Hom(  \tE^0 , \tE^1 ) $.  Together with the isomorphism in Equation \eqref{eq:express_hom_by_pushforward}, this proves that
\begin{equation}
    HW^*(\tE^0, \tE^{1})  = H^{*}(\tQ,  \Hom(  \tE^0 , \tE^1 ) ).
\end{equation}

To see that the product structures agree, note that the map of local systems
\begin{equation}
\pi_{*}( \Hom(  \tE^1 , \tE^2 ))   \otimes  \pi_{*}( \Hom(  \tE^0 , \tE^1 ))  \to   \pi_{*}(\Hom(  \tE^0 , \tE^2) ),
\end{equation}
together with the cup product on cochains, defines a map of cohomology groups over $Q$
\begin{equation}
 H^{*}(Q,  \pi_{*}( \Hom(  \tE^1 , \tE^2 ) ) )  \otimes  H^{*}(Q,  \pi_{*}(\Hom(  \tE^0 , \tE^1 ) ))  \to  H^{*}(Q,  \pi_{*}(\Hom(  \tE^0 , \tE^2) ) ). 
 \end{equation}
By Lemma \ref{lem:floer_class}, under the identification between Floer and ordinary cohomology, this product agrees with the one counting holomorphic discs
   \begin{equation}
HW^{*}(Q,  \pi_{*}( \Hom(  \tE^1 , \tE^2 ) ) )  \otimes  HW^{*}(Q,  \pi_{*}(\Hom(  \tE^0 , \tE^1 ) ))   \to  HW^{*}(Q,  \pi_{*}(\Hom(  \tE^0 , \tE^2) ) ).
   \end{equation}
Finally, Lemma \ref{lem:compare_Floer_product_structures} and its analogue in classical cohomology show that the corresponding products defined on the cover
\begin{align}
 H^{*}(\tQ,  \Hom(  \tE^1 , \tE^2 ) )  \otimes  H^{*}(\tQ,  \Hom(  \tE^0 , \tE^1 ) ) & \to H^{*}(\tQ,  \Hom(  \tE^0 , \tE^2) )  \\
 HW^{*}( \tE^1 , \tE^2  )  \otimes  HW^{*}( \tE^0 , \tE^1  ) & \to HW^{*}(  \tE^0 , \tE^2 ) 
\end{align}
also agree, which proves the desired result.
\end{proof}

\appendix
 
\section{Categories of modules over (co)-connective $A_{\infty}$ algebras} \label{sec:DGI}
Given an $A_{\infty}$ algebra $R$, recall that a right $A_{\infty}$ module consists of a graded vector space $P$ together with a collection of maps 
\begin{equation}
\mu^{1|d} \co  P \otimes R^{d}\to P  
\end{equation}
of degree $1-d$.  One of the most useful facts about such algebras is called the homological perturbation lemma (see \cite{kad}):
\begin{lem}
There exists an $A_{\infty}$ structure on $H^*(R)$, together with an $A_{\infty}$ quasi-isomorphism
\begin{equation}
  R \to H^*(R).
\end{equation}
Moreover, if $P$ is an $A_{\infty}$ module, then there also exists a quasi-isomorphic  $A_{\infty}$-module structure on $H^*(P)$. \qed
\end{lem}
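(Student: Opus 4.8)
The plan is to invoke the standard homotopy-transfer (perturbation) construction, as in \cite{kad}. Since everything in sight is a complex of vector spaces over $\bF_{2}$, the differential $\mu^{1}$ on $R$ admits a contraction onto its cohomology: choosing cocycle representatives for a basis of $H^{*}(R)$ together with complements to the cocycles and to the coboundaries inside $R$, one obtains a cochain inclusion $\iota\co H^{*}(R)\to R$ (with $H^{*}(R)$ carrying the zero differential), a cochain projection $\pi\co R\to H^{*}(R)$ with $\pi\iota=\id$, and a degree $-1$ operator $G\co R\to R$ with
\begin{equation*}
  \iota\circ\pi-\id_{R}=\mu^{1}\circ G+G\circ\mu^{1},
\end{equation*}
which one may further arrange so that $\pi G=0$, $G\iota=0$ and $GG=0$.

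Using this data I would construct, by induction on $d$, higher products $\mu^{d}_{H}\co H^{*}(R)^{\otimes d}\to H^{*}(R)$ together with the components $\Phi^{d}\co H^{*}(R)^{\otimes d}\to R$ of an $A_{\infty}$-morphism $\Phi\co H^{*}(R)\to R$ extending $\Phi^{1}=\iota$. At stage $d$, the $A_{\infty}$-algebra relations for $\{\mu^{\le d}_{H}\}$ and the morphism equations for $\{\Phi^{\le d}\}$ together amount to a single identity in $\Hom(H^{*}(R)^{\otimes d},R)$ of the form $\mu^{1}\circ\Phi^{d}+\Theta_{d}=\iota\circ\mu^{d}_{H}$, where $\Theta_{d}$ is built entirely from the already-constructed operations $\mu^{<d}_{H}$ and the maps $\Phi^{<d}$. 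The only genuinely computational point is that $\Theta_{d}$ is a $\mu^{1}$-cocycle; this is a formal consequence of the lower-order $A_{\infty}$-relations, and the verification is streamlined by the absence of signs over $\bF_{2}$. One then sets $\mu^{d}_{H}$ equal to $\pi$ applied to $\Theta_{d}$ and $\Phi^{d}$ equal to $G$ applied to $\Theta_{d}$, using the side conditions on $(\iota,\pi,G)$ to close the induction. Since $\Phi^{1}=\iota$ is a quasi-isomorphism of complexes, $\Phi$ is an $A_{\infty}$-quasi-isomorphism, and the standard fact that $A_{\infty}$-quasi-isomorphisms are invertible up to homotopy (see e.g.\ \cite{seidel-book}) produces the asserted quasi-isomorphism $R\to H^{*}(R)$. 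Equivalently, one may assemble all $\mu^{d}_{H}$ and $\Phi$ at once via the closed tree-sum formula: sum over planar rooted trees with $d$ leaves, decorating leaves by $\iota$, each $k$-ary internal vertex by $\mu^{k}$, each internal edge by $G$, and the root by $\pi$ (for $\mu^{d}_{H}$) or by $G$ (for $\Phi^{d}$).

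For the module assertion, fix in the same way a contraction $(\iota_{P},\pi_{P},G_{P})$ of the complex $(P,\mu^{1|0})$ onto $H^{*}(P)$, and run the analogous induction: define operations $\mu^{1|d}_{H}\co H^{*}(P)\otimes H^{*}(R)^{\otimes d}\to H^{*}(P)$ together with the components $\Psi^{1|d}\co H^{*}(P)\otimes H^{*}(R)^{\otimes d}\to P$ of an $A_{\infty}$-module morphism $\Psi\co H^{*}(P)\to P$ over $\Phi$ with $\Psi^{1|0}=\iota_{P}$. At stage $d$ the module relations exhibit a $\mu^{1|0}$-cocycle in $\Hom(H^{*}(P)\otimes H^{*}(R)^{\otimes d},P)$ built from the previously constructed $\mu^{*}_{H}$, $\Phi$, and the lower-order module data, whose class under $\pi_{P}$ is $\mu^{1|d}_{H}$ and whose $G_{P}$-primitive is $\Psi^{1|d}$. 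As $\Psi^{1|0}=\iota_{P}$ is a quasi-isomorphism, $H^{*}(P)$ equipped with these operations is an $A_{\infty}$-module over $H^{*}(R)$ quasi-isomorphic to $P$, and quasi-inverting $\Psi$ gives the quasi-isomorphism $P\to H^{*}(P)$. The only real obstacle throughout is organisational — correctly setting up the two inductions and confirming the cocycle conditions — since no geometric input enters, which is why the paper states the lemma without proof.
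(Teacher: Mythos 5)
Your argument is correct, and for the algebra half it is exactly the standard Kadeishvili-style homotopy transfer that the paper implicitly invokes by citing \cite{kad}: choose a contraction $(\iota,\pi,G)$ with side conditions, build $\mu^{d}_{H}$ and $\Phi^{d}$ by induction (or by the planar-tree formula), and then use the invertibility of $A_{\infty}$ quasi-isomorphisms to reverse the arrow and obtain $R\to H^{*}(R)$ as stated --- a point worth making explicit, as you do, since the transfer naturally produces a map in the other direction. Where you genuinely diverge from the paper is the module statement: you run a second, parallel induction transferring the module operations $\mu^{1|d}$ directly onto $H^{*}(P)$, producing a module morphism $\Psi$ over $\Phi$, whereas the paper disposes of this case in one line by forming the trivial square-zero extension $R\oplus P$ (an $A_{\infty}$ algebra whose operations are those of $R$ together with the module maps, with products of two or more $P$-factors set to zero) and applying the algebra-level transfer to it; the transferred structure on $H^{*}(R)\oplus H^{*}(P)$ is again a square-zero extension, and reading off its $P$-component recovers the transferred module. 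Your route is more self-contained and makes the module-level formulas explicit, at the cost of setting up and checking a second induction (including that $\Psi$ is a morphism over the nontrivial $A_{\infty}$ map $\Phi$); the paper's trick buys brevity by reducing everything to the already-cited algebra case. Both are complete proofs of the lemma.
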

This result is usually not stated for $A_{\infty}$ modules, but follows quite easily by considering the $A_{\infty}$ algebra $A \oplus P$.  Algebras for which $\mu^1$ vanishes, and modules for which $\mu^{1|0}$ vanishes are called minimal.  In this section, we shall only consider such algebras and modules.

Let us now consider the case of a minimal $A_{\infty}$ algebra $R$ which is supported in non-positive degrees (the reader should have in mind the wrapped Floer cohomology of a fibre in a cotangent bundle).  A discussion of the sort of results we shall use in the setting of spectra is given in \cite{DGI} where they call such algebras connective.  

Let $P$ be a minimal module over $R$, and denote by $P^{i}$ its component in degree $i$, and $P^{\leq i}$ the direct sum of the components in degree less than $i$.  The operation $\mu^{1|d}$ has strictly negative degree, so its restriction to $R^{d} \otimes P^{\leq i}$ has image in $P^{\leq i} $, which implies the following result:
\begin{lem} \label{lem:filtration_degree}
The ascending degree filtration on a minimal $A_{\infty}$ module is a filtration by submodules.  In particular, a module supported in finitely many cohomological degrees is an iterated extension of modules supported in a single degree. \qed
\end{lem}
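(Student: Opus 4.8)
The plan is to deduce both assertions from a single degree count on the $A_{\infty}$ module structure maps; no input is needed beyond the standing hypotheses that $R$ is minimal and supported in non-positive degrees and that $P$ is minimal. Write $P^{\leq i} = \bigoplus_{j \leq i} P^{j}$, so that $\{P^{\leq i}\}_{i \in \bZ}$ is an exhaustive ascending filtration of $P$ by graded subspaces. What must be shown is that each $P^{\leq i}$ is closed under all the operations $\mu^{1|d} \co P \otimes R^{\otimes d} \to P$; once this is done, the first assertion is immediate, and the subquotients $P^{\leq i}/P^{\leq i-1}$ of the filtration are precisely the pieces needed for the second.

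For the closure statement I would argue on homogeneous elements. Recall that $\mu^{1|d}$ has degree $1-d$. Given $p \in P^{\leq i}$ and homogeneous $r_{1}, \dots, r_{d} \in R$, the assumption that $R$ lives in non-positive degrees gives $|r_{j}| \leq 0$, hence
\[
  \deg\, \mu^{1|d}(p, r_{d}, \dots, r_{1}) \;=\; |p| + \sum_{j=1}^{d} |r_{j}| + (1-d) \;\leq\; |p| \;\leq\; i,
\]
since $1-d < 0$ when $d \geq 2$, $1-d = 0$ (and $|r_{1}| \leq 0$) when $d = 1$, and $\mu^{1|0} = 0$ by minimality of $P$. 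Thus $P^{\leq i}$ is a sub-$A_{\infty}$-module and $\{P^{\leq i}\}$ is a filtration of $P$ by submodules. I expect the only point that deserves a moment's care to be the boundary case $d = 1$: there $1-d = 0$, so one genuinely needs $R$ concentrated in non-positive — not merely strictly negative — degrees.

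For the second assertion, if $P$ is supported in degrees $[a,b]$ then $P^{\leq a-1} = 0$ and $P^{\leq b} = P$, so the filtration is the finite chain of strict submodules
\[
  0 = P^{\leq a-1} \subseteq P^{\leq a} \subseteq \cdots \subseteq P^{\leq b} = P.
\]
Each quotient $P^{\leq i}/P^{\leq i-1}$ inherits an $A_{\infty}$ module structure and coincides, as a graded vector space, with $P^{i}$, so it is supported in the single degree $i$. Since the sequence $0 \to P^{\leq i-1} \to P^{\leq i} \to P^{i} \to 0$ is split over the ground field, it is a strict short exact sequence of $A_{\infty}$ modules and hence an exact triangle in the triangulated category of modules over $R$, realising $P^{\leq i}$ as the cone of a morphism $P^{i}[-1] \to P^{\leq i-1}$. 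Iterating over $i = a, \dots, b$ displays $P$ as an iterated cone, i.e. an \emph{iterated extension}, of the single-degree modules $P^{a}, \dots, P^{b}$, which is the claim. Everything here is elementary; the only non-mechanical point, apart from the $d = 1$ edge case, is the passage from the finite filtration by strict submodules to an iterated extension in the derived sense, which is standard.
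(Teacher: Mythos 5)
Your proof is correct and is essentially the paper's argument: the paper disposes of the lemma by the same degree count (stated in the sentence preceding it, which is why the statement carries a \qed), namely that since $R$ sits in non-positive degrees and $\mu^{1|d}$ has degree $1-d$, each $P^{\leq i}$ is closed under all structure maps, with the finite filtration then giving the iterated extension. Your explicit handling of the $d=1$ edge case and of the passage from strict short exact sequences to cones only makes precise what the paper leaves implicit.
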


This reduces the study of such modules to ones supported in a single degree.  For such a module, the operations $ \mu^{1|d} $ vanish unless $d=1$ for otherwise their degree is negative:
\begin{lem} \label{lem:determined_by_vector_space}
Every module whose cohomology is supported in a single degree is  determined up to $A_{\infty}$ quasi-isomorphism by the corresponding ordinary module over $R^{0}$. In particular, if $R^{0}$ has rank one, then the module structure is determined by the vector space structure.  \qed
\end{lem}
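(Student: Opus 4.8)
The plan is to verify the heuristic stated just above the lemma: for a minimal module $P$ concentrated in a single cohomological degree the only surviving operation is the linear action $\mu^{1|1}$ on $P \otimes R^{0}$, and this datum is precisely an ordinary right module over the associative algebra $R^{0}$.

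The first step is degree bookkeeping. Suppose $P$ is supported in degree $j$. Since every factor of $R$ lies in non-positive degree, $P \otimes R^{\otimes d}$ is concentrated in degrees $\le j$, with the top degree $j$ attained exactly when all the $R$-factors lie in $R^{0}$. As $\mu^{1|d}$ has degree $1-d$, it follows that $\mu^{1|0}$ and (for $d \ge 2$) $\mu^{1|d}$ land in $P^{<j} = 0$, while $\mu^{1|1}$ vanishes on $P \otimes R^{\le -1}$; hence $\mu^{1|1}|_{P\otimes R^{0}}$ is the only nonzero structure map. The same count applied to $R$ itself shows that $R^{0}$ is closed under $\mu^{2}$, that $R^{\le -1}$ is a two-sided $A_{\infty}$-ideal (here minimality kills the $\mu^{1}$ terms, and for $d \ge 3$ the product $\mu^{d}$ has degree $2-d \le -1$, landing in $R^{\le -1}$), and, from the $d=3$ $A_{\infty}$-relation and minimality, that the induced multiplication on $R^{0}$ is strictly associative. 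Feeding this into the $d=2$ module relation for $P$ — in which the $\mu^{1|0}$, $\mu^{1|2}$ and $\mu^{1}$ contributions all vanish — one reads off that $\mu^{1|1}|_{P\otimes R^{0}}$ makes $P$ an ordinary right module over $(R^{0},\mu^{2})$, concentrated in degree $j$.

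For the converse, I would use that $R^{\le -1}$ being a two-sided $A_{\infty}$-ideal exhibits $R^{0} = R/R^{\le -1}$ as an $A_{\infty}$-quotient; since moreover $\mu^{d}_{R}$ lands in $R^{\le -1}$ for every $d \ge 3$ by the degree count above, this quotient is the ordinary algebra $(R^{0},\mu^{2})$ and the projection $\rho \co R \to R^{0}$ is a strict $A_{\infty}$-algebra map. Restriction of scalars along $\rho$ then sends an ordinary $R^{0}$-module $N$, placed in a single degree, to an $A_{\infty}$ $R$-module whose only nonzero operation is $\mu^{1|1}(x,r) = x\cdot\rho(r)$. These two passages are mutually inverse on the nose: starting from $P$ and pulling back its extracted $R^{0}$-module gives $\mu^{1|1}(x,r) = \mu^{1|1}(x,\rho(r))$, which equals the original $\mu^{1|1}$ because the latter already annihilates $R^{\le -1}$; and starting from $N$ and restricting the pulled-back action to $R^{0}$ returns $N$ since $\rho$ restricts to the identity on $R^{0}$. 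Thus $P$ is determined up to isomorphism — in particular up to $A_{\infty}$ quasi-isomorphism — by the ordinary $R^{0}$-module it induces. When $R^{0}$ has rank one it is, being unital (as is the wrapped Floer cohomology to which we apply this), simply $\bF_{2}$, so an $R^{0}$-module concentrated in a degree is merely a graded vector space with its tautological scalar action; hence $P$ is determined by its underlying graded vector space.

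There is no serious obstacle here — over $\bF_{2}$ there are no signs to manage — and the computations are the usual unwinding of the $A_{\infty}$-module axioms. The only point that merits explicit care is the degree argument showing that $R^{\le -1}$ is an $A_{\infty}$-ideal, and hence that $\rho$ is a genuine strict $A_{\infty}$-morphism, so that restriction of scalars along it is available; everything else is immediate from the vanishing of the relevant operations for degree reasons.
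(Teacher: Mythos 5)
Your proposal is correct and follows essentially the same route as the paper, whose proof is exactly the degree count showing that on a minimal module concentrated in one degree every operation $\mu^{1|d}$ with $d\neq 1$ vanishes and $\mu^{1|1}$ only sees $R^{0}$; your additional verification that $R^{\leq -1}$ is an ideal and that restriction of scalars along $R\to R^{0}$ inverts the construction is a careful spelling-out of what the paper leaves implicit. (The only slip is cosmetic: $\mu^{1|0}$ has degree $+1$, so it lands in $P^{>j}$, not $P^{<j}$ --- and in any case it vanishes by minimality.)
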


We now consider an $A_{\infty}$ algebra $S$ (still minimal) with endomorphism algebra supported in non-negative degrees.  We consider a twisted complex of (free) $S$-modules of the form 
\begin{equation}
  P =( \oplus_{i= 0}^{D} V_{i}[i] \otimes S, \delta_{i,j})
\end{equation}
such that $V_{i}$ is a vector space, and $\delta_{i,j}$ vanishes whenever $ j < i$.   Because we have assumed that the $i$\th term in the twisted complex is supported in degree $-i$, the first nontrivial differential is a map
\begin{equation}
  V_{0}  \otimes S \to V_{1}[1] \otimes S
\end{equation}
of degree $1$, which is determined by a map
\begin{equation}
  V_{0} \to V_{1} \otimes S^{2},
\end{equation}
where $S^{2}$ consist of degree $2$ elements in $S$.  Here, we use the fact that left multiplication defines a canonical ring isomorphism from $S$ to the endomorphisms of $S$ as a right module over itself, allowing us to identify such endomorphisms with elements $S$. More generally, all morphisms $\delta_{i,j}$ which are non-zero correspond to elements of $S$ of degree greater than or equal to $2$.
\begin{lem} \label{lem:co-connective_result}
 If the endomorphism algebra of the twisted complex $P = ( \oplus_{i= 0}^{D} V_{i} [i]\otimes S, \delta_{i,j}) $ is supported in non-negative degrees, then it is isomorphic, up to shift, to $V \otimes S$ for some vector space $V$.
\end{lem}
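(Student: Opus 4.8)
The strategy is to induct on the length $D$ of the twisted complex, the base case $D=0$ being trivial. The hypothesis is that $\End(P)$ is supported in non-negative degrees; the content of the lemma is that this positivity forces all the connecting differentials $\delta_{i,j}$ to be ``invertible enough'' to split off the bottom term. I would first focus on the lowest differential $\delta_{0,1}\colon V_0\otimes S \to V_1[1]\otimes S$, which as the excerpt notes corresponds to a linear map $d_0\colon V_0 \to V_1\otimes S^2$. The key claim is that $d_0$ must be split injective as a map of vector spaces $V_0 \to V_1\otimes S^2$, i.e. that there is a left inverse. Indeed, if some nonzero $v\in V_0$ were killed by $d_0$ (and, after a change of basis, by all higher $\delta_{0,j}$ as well — more on this below), then the inclusion of the rank-one summand $\langle v\rangle \otimes S$ would be a closed morphism $\langle v\rangle[0]\otimes S \to P$ of degree $0$, and composing with a projection $P \to \langle v\rangle[0]\otimes S$ (which exists in degree $0$ because all other summands sit in strictly negative degree) would produce a nonzero element of $\End(P)$ in degree $0$ that is idempotent; this is consistent with positivity but we need more: I would instead look at the endomorphism of $P$ given by ``project to $\langle v\rangle\otimes S$, then include $\langle v\rangle \otimes S$ back via a degree-shifting map,'' chasing a contradiction with the vanishing of $\End^{<0}(P)$. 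Concretely, the existence of a cocycle in $V_0$ not hit injectively produces nontrivial cohomology of $\End(P)$ in a negative degree, contradicting the hypothesis.

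**Carrying out the induction.** Once $d_0\colon V_0 \to V_1\otimes S^2$ is split injective as vector spaces, choose a complement $W_1 \subset V_1\otimes S^2$, or better, choose a decomposition $V_1 = V_0' \oplus V_1'$ together with an isomorphism identifying the image of $d_0$ with $V_0'\otimes(\text{unit})$ after absorbing the $S^2$ factor — this is where one uses that left multiplication gives $S \cong \End_S(S)$, so that $d_0$ being injective on the level of $V_0$ lets us perform a change of basis in the twisted complex making $\delta_{0,1}$ into the inclusion of a trivial subcomplex $V_0[0]\otimes S \xrightarrow{\ \cong\ } V_0[0]\otimes S$ sitting inside $V_1[1]\otimes S$ up to the degree-two shift. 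Standard twisted-complex manipulations (the analogue of Gaussian elimination / cancelling a contractible subcomplex) then let me cancel $V_0\otimes S$ against the corresponding summand of $V_1\otimes S$, producing a twisted complex $P'$ of length $D-1$ built from $V_1'[1]\otimes S, V_2[2]\otimes S, \ldots, V_D[D]\otimes S$ — reindexing, of length $D-1$ — with $P \simeq P'$ up to shift. Since $\End(P')\cong \End(P)$ is still supported in non-negative degrees, the inductive hypothesis applies and gives $P' \simeq V'\otimes S$ for some vector space $V'$, hence $P\simeq V'\otimes S$ up to shift. I should be slightly careful that the cancellation does not destroy minimality or introduce negative-degree endomorphisms — but the degrees are forced, so this is automatic.

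**The main obstacle.** The delicate point is the very first claim: translating ``$\End(P)$ has no negative-degree cohomology'' into ``$d_0$ is split injective.'' The subtlety is that a cocycle $v\in V_0$ might be killed by $\delta_{0,1}$ but not by a higher $\delta_{0,j}$; so the correct statement is about the \emph{total} differential out of $V_0\otimes S$. Because $\delta_{0,j}$ for $j\geq 1$ lands in $V_j[j]\otimes S$ and corresponds to an element of $S$ of degree $\geq j+1 \geq 2$, all of these ``raise degree,'' and one argues that if the combined map $V_0 \to \bigoplus_{j\geq 1} V_j\otimes S^{\geq j+1}$ fails to be injective then a nonzero element of the kernel gives a degree-$0$ closed endomorphism of $P$ whose ``inverse direction'' component — the morphism $P \to V_0[0]\otimes S$ reading off the $V_0$-coefficient — composes with it to give something forcing a class in $\End^{-k}(P)$ for some $k>0$; alternatively, and perhaps more cleanly, one shows directly that $H^0\bigl(\Hom(V_0[0]\otimes S, P)\bigr)$ would otherwise be too large, and dualizing (using that $S$ in non-negative degrees makes $V_0\otimes S$ co-connective) forces a negative-degree endomorphism. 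Making this argument airtight — rather than just plausible — is the real work; everything after it is formal homological algebra with twisted complexes.
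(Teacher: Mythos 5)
Your plan has a genuine gap, and it sits exactly where you place the ``real work'': but the deeper problem is that the reduction step you want to run afterwards is impossible, so even an airtight injectivity claim would not save the argument. Gaussian elimination (cancelling a contractible subcomplex of a twisted complex) requires the relevant component of $\delta$ to be an \emph{isomorphism}, i.e.\ to be given by an invertible degree-zero entry over $S$. Here $\delta_{0,1}$ is determined by a map $V_0 \to V_1 \otimes S^{2}$, and more generally every nonzero $\delta_{i,j}$ has entries in $S^{\geq 2}$; since $S$ is supported in non-negative degrees, no element of positive degree is invertible, so the induced map $V_0\otimes S \to V_1[1]\otimes S$ never identifies $V_0\otimes S$ with a free summand $W\otimes S$ of the target. ``Absorbing the $S^2$ factor'' is not a legitimate change of basis, and there is no contractible piece to cancel; the induction on $D$ therefore never gets off the ground. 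Relatedly, your guiding picture --- that positivity of $\End(P)$ forces the differentials to be ``invertible enough'' to split off terms --- is not what the hypothesis does: it forces the twisted complex to have length zero outright, with no condition on (and no use of) injectivity of the total differential out of $V_0$.

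The paper's proof is a short direct degree argument along the lines you gesture at in your last paragraph, but without any injectivity input. Normalise so that $V_0 \neq 0 \neq V_D$ and suppose $D>0$. Pick any nonzero $x \in \Hom_{\bF_{2}}(V_0, V_D\otimes S^{0}) \subset \Hom^{-D}(P,P)$. It is a cycle, because no $\delta_{i,j}$ maps into the $0$-th summand or out of the $D$-th (strict upper triangularity with $0$ minimal and $D$ maximal). It is never a boundary, because every term of the differential on $\Hom^{*}(P,P)$ involves at least one $\delta_{i,j}$, hence at least one factor from $S^{\geq 2}$; since $S$ is non-negatively graded, such compositions can never produce a component in $S^{0}$, whereas $x$ lives entirely in the $S^{0}$-part. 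This exhibits a nonzero class in $\End^{-D}(P)$ with $-D<0$, contradicting the hypothesis; hence $D=0$ and $P \cong V_0\otimes S$ up to shift. Note that this argument needs no case analysis on kernels of $d_0$, no change of basis, and no cancellation --- the step you flagged as the main obstacle is replaced by the observation that boundaries cannot have an $S^{0}$-component, which is where the hypothesis that $S$ is supported in non-negative degrees actually enters.
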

\begin{proof}
Let us assume from the start that $V_{0}$ and $V_{D}$ do not vanish, and prove that $D=0$. The endomorphism algebra of a twisted complex is the cohomology of the complex
\begin{equation}
 \Hom^{k}(P,P) =  \bigoplus_{0 \leq i,j \leq D} \Hom_{\bF_{2} }(V_{i} ,V_{j} \otimes S^{k+j-i })
\end{equation}
with differential obtained by taking all possible higher compositions of an element of this complex with the morphisms $\delta_{i,j}$
\begin{equation} \label{eq:differential_twisted_complex_endomorphism}
  x \mapsto \sum_{d=2}^{\infty} \mu^{d}(\delta_{i_d,i_{d-1}}, \ldots, \delta_{i_k,j}, x, \delta_{i_{k-1}, i}, \ldots, \delta_{i_0,i_1}).
\end{equation}
Now, consider a non-zero element 
\begin{equation}
  x \in  \Hom_{\bF_{2} }(V_{0} ,V_{D} \otimes S^{0})  \subset \Hom^{-D}(P,P) 
\end{equation}
which exists by our assumption that both $V_{0}$ and $V_{D}$ are non-zero.  We claim that the corresponding element in $\Hom^{-D}(P,P)$ survives to cohomology.  The fact that it is a cycle follows immediately from knowing that $\delta_{i,0}$ and $\delta_{D,j}$ both necessarily vanish for all $i$ and all $j$ because $0$ is minimal and $D$ maximal in our indexing set.

To see that this element cannot be exact, recall that all morphisms $ \delta_{i,j} $ arise from the subspace of $S$ generated by elements of degree greater than or equal to $2$.  Combining this with the fact that the higher product $\mu^{d}$ has degree $1-d$ implies that each term in the sum appearing in \eqref{eq:differential_twisted_complex_endomorphism} has degree strictly greater than $\deg(x)+1$ (here, we do not take into account any homological shift on $V_{i}$). In particular, since $ S $ is supported in non-negative degrees by assumption, no element of degree $0$ in $S $ can appear in the image of the differential.
\end{proof}

\section{Floer, Morse, and simplicial cohomology of local systems} \label{sec:floer-morse-class}
The set of differential graded local systems on a topological space forms a category with morphism spaces between $E^1$ and $E^2$ given by the cohomology groups  with coefficients in the differential graded local system $\Hom(E^1,E^2)$.  For manifold (or, more generally, spaces admitting a triangulation), one can define a natural differential graded enhancement of this category by considering simplicial cochains with coefficients in such local systems.  The product on cochains is induced by the existence of a natural map of local systems
\begin{equation}
\Hom(E^2,E^3)  \otimes  \Hom(E^1,E^2) \to \Hom(E^1,E^2) 
\end{equation}
given by composition.  This leads us to consider the following slightly more general situation:  Consider a triple of differential graded local systems $E^{0,1}$, $E^{1,2}$ and $E^{0,2}$ on a manifold $Q$, and a map of local systems
\begin{equation} \label{eq:product_of_local_systems}
  E^{1,2} \otimes E^{0,1} \to E^{0,2}.
\end{equation}
Together with the cup product on cochains, this map gives a cohomological multiplication map
\begin{equation} \label{eq:product_cochains}
  H^{*}(Q,E^{1,2}) \otimes  H^{*}(Q,E^{0,1}) \to  H^{*}(Q,E^{0,2}). 
\end{equation}
In Section \ref{sec:gener-prod}, we used the count of holomorphic triangles to define a product
\begin{equation}\label{eq:product_Floer}
  HW^{*}(Q,E^{1,2}) \otimes  HW^{*}(Q,E^{0,1}) \to  HW^{*}(Q,E^{0,2})
\end{equation}
which, whenever $E^{i,j} = \Hom(E^i,E^j)   $ agrees with the cohomological product induced by $\mu^{2}$.  
\begin{lem} \label{lem:floer_class}
 If $E$  is a differential graded local system on a closed exact Lagrangian $Q$, we have an isomorphism
\begin{equation} \label{eq:floer=classical}
  HW^{*}(Q,E) = H^{*}(Q, E),
\end{equation}
which is compatible with the product structure coming from Equations \eqref{eq:product_cochains} and \eqref{eq:product_Floer}.
\end{lem}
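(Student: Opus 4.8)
The plan is to prove Lemma \ref{lem:floer_class} by reducing the Floer-theoretic side to a Morse-theoretic (or equivalently simplicial) computation, exploiting the assumption that the Hamiltonian may be taken $C^2$-small near the closed exact Lagrangian $Q$. First I would set up the Morse model: choosing a Morse function $g$ on $Q$ (obtained as the restriction of the $C^2$-small Hamiltonian $H$ to $Q$), the chords in $\Chord(Q)$ are exactly the constant chords at critical points of $g$, so that the Floer complex $CW^*(Q,E)$ is literally the Morse complex of $Q$ with coefficients in $E$, i.e.\ $\bigoplus_{p \in \mathrm{Crit}(g)} E_p$, and the Floer differential counts rigid strips which, after the diffeomorphisms $\psi_{z,S}$ and a standard degeneration argument (shrinking the symplectic area, as in Abbondandolo--Schwarz \cite{ASchwarz} or Fukaya--Oh type adiabatic limits), become negative gradient flow lines of $g$, with the parallel transport maps $\gamma_u^0,\gamma_u^1$ along the boundary converging to parallel transport of $E$ along the gradient trajectory. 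This identifies $HW^*(Q,E)$ with the Morse cohomology $H^*_{\mathrm{Morse}}(Q;E)$, which in turn is isomorphic to the simplicial cohomology $H^*(Q,E)$ by the standard comparison (e.g.\ via a triangulation subordinate to the stable/unstable manifolds, or via the Morse--Smale--Witten complex computing local-system cohomology).

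Next I would treat the product. The count of holomorphic triangles defining the product in Equation \eqref{eq:product_Floer} degenerates, under the same $C^2$-small/adiabatic limit, to a count of Morse trees (Y-shaped gradient flow configurations) with the three incoming/outgoing edges flowing to critical points, and the map of local systems $E^{1,2}\otimes E^{0,1}\to E^{0,2}$ applied after parallel transporting the two inputs to the trivalent vertex. This is precisely the Morse-theoretic model for the cup product on $H^*(Q;E^{\bullet,\bullet})$ with respect to the pairing \eqref{eq:product_of_local_systems}; compatibility with the simplicial cup product of Equation \eqref{eq:product_cochains} is then the classical fact that the Morse-theoretic cup product agrees with the simplicial one (which one can cite, or prove by a chain homotopy between the Morse tree count and a triangulated model). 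Putting the two halves together gives the compatibility with product structures asserted in the lemma, and specializing to $E^{i,j}=\Hom(E^i,E^j)$ recovers the statement that the isomorphism intertwines $\mu^2$ with the composition product.

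The main obstacle is the adiabatic/degeneration argument identifying the Floer moduli spaces (strips and triangles with moving boundary conditions, and the diffeomorphisms $\psi_{z,S}$) with Morse gradient configurations while keeping track of the parallel transport data along the boundary. This is a standard but technically delicate piece of analysis; in the spirit of the paper's conventions (see the Notation section, where details of Cauchy--Riemann operators are suppressed) I would not reproduce the full proof but instead invoke the established correspondence — the Abbondandolo--Schwarz isomorphism between wrapped/Lagrangian Floer cohomology of a closed exact Lagrangian and its Morse cohomology, upgraded to local system coefficients, which is purely a matter of threading the parallel transport through the compactness and gluing arguments since the homotopy class of the boundary path is all that enters (exactly as in the proof of Lemma \ref{lem:differential_squares_0}). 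The only genuinely new point over the untwisted case is that the parallel transport maps $\gamma_u$ depend only on the homotopy class of the boundary path, and in the adiabatic limit this boundary path is homotopic to the gradient trajectory itself; once this is observed, both the chain-level isomorphism and its multiplicativity follow from the classical Morse-theoretic statements for local systems.
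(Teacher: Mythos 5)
Your proposal follows essentially the same route as the paper: a $C^2$-small Hamiltonian identifies chords with critical points of a Morse function on $Q$, the Floer differential and the triangle product are matched with gradient flow lines and Morse trees by a Floer/Fukaya--Oh degeneration (with the parallel transport maps threaded through exactly as you describe, depending only on boundary homotopy classes), and one then compares the Morse model with simplicial cochains. The only caveat is that the paper does not regard the Morse-to-simplicial comparison for (differential graded, possibly infinite rank) local systems as a citable classical fact---it explicitly says this does not seem to be in the literature and proves it directly, via chain maps defined by half-infinite gradient trajectories ending on simplices and on dual cells, homotopies built from finite-length flow segments and from a homology to the diagonal in $Q\times Q$, and a tree-parameter homotopy for multiplicativity---so at that step you would need to supply such an argument rather than simply invoke a standard reference.
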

\begin{rem}
Extending the results of \cite{plumbings}, one can show that these categories are in fact $A_{\infty}$ quasi-isomorphic.  As we shall not use such a result,  we focus on the cohomological version which is all that we need.
\end{rem}

We start, as in Section \ref{sec:gener-prod}, by choosing a Hamiltonian which is $C^2$-small near $Q$, so that time-$1$ Hamiltonian chords with boundary on $Q$ are in bijective correspondence with the critical points of some Morse function $f \co Q \to \bR$.   In particular, the complex
\begin{equation} 
CW^{*}(Q, E) = \bigoplus_{ \substack{x \in \tQ \\ df(x) = 0}}  E_x
\end{equation}
is also the graded vector space underlying the Morse chain complex of $f$ with coefficients in $E$. In this complex, the differentials are obtained instead by taking the sum, over all negative gradient flow lines of $f$, of the parallel transport maps between the fibres of the local system at critical points whose index differs by one.  The proof that these two complexes are quasi-isomorphic is omitted, and follows either from a degeneration argument going back to Floer \cite{floer} which shows that for a special choice of almost complex structures, there is a bijection between the set of holomorphic strips and of gradient flow lines, or by defining a chain map using mixed moduli space of gradient flow lines and holomorphic discs as in \cite{plumbings}.  

At the level of morphisms, it remains to prove that the cohomology of the Morse complex with local coefficients 
\begin{equation} 
CM^{*}(Q, E) = \bigoplus_{ \substack{x \in Q \\ df(x) = 0}}  E_x
\end{equation}
recovers classical cohomology.    This is a general result about Morse cohomology, which unfortunately does not seem to be in the literature.  We explain one strategy to prove it:  choose a simplicial triangulation $\sQ$ of $Q$ and define
\begin{equation} \label{eq:ordinary_twisted_cochains}
  C^{k}(Q, E) = \bigoplus_{ \substack{\sigma \in \sQ \\ \dim(\sigma) = k}} E_{b_{\sigma}}
\end{equation}
where $\sigma$ is a simplex in $\sQ$ with barycenter $b_{\sigma}$.  Note that whenever we have a codimension $1$ boundary simplex $\tau \subset \partial \sigma$, we have a canonical path from the barycenter of $\tau$ to that of $\sigma$. We define the differential on $  C^{*}(Q,E) $ to be the sum of the parallel transport maps associated to all such paths.  The cohomology of this complex is the classical cohomology of $Q$ with coefficients in $ E$.

In order to pass from the Morse complex to the simplicial complex, we consider the space $\Tree(\sigma,x)$ of gradient flow lines for $f$
\begin{equation}
  [1,+\infty) \to Q
\end{equation}
which at $+\infty$ converge to a critical point $x$, and at $1$ take value on a simplex $\sigma$.  If the simplicial triangulation is chosen generically with respect to $f$, this space is a smooth manifold of dimension $\dim(\sigma) - \deg(x)$, where $\deg(x)$ is the dimension of the ascending manifold of $x$.  In particular, whenever the dimension of $\sigma$ is equal to that of this ascending manifold, there are finitely many such lines, so the sum of all parallel transport maps along these gradient lines (composed with parallel transport from the starting point of the flow line to the barycenter along a path contained in $\sigma$) define a chain map
\begin{equation} \label{eq:morse-to-simplicial}
  CM^{*}(Q,E)  \to C^{*}(Q, E).
\end{equation}

To construct a map in the other direction, we consider a subdivision $\csQ$ of $Q$ into polyhedra which are dual to the cells of $\sQ$.  In particular, for each cell $\sigma \in \sQ$, we have a unique cell $\csigma \in \csQ$ of equal codimension which intersects it, and the intersection between them consists of a single point which may be chosen to be the barycentre $b_{\sigma}$.

We now consider the space $\Tree(x, \csigma)$ of gradient flow lines for $f$
\begin{equation}
  (-\infty,-1 ] \to Q
\end{equation}
which at $-\infty$ converge to a critical point $x$, and at $-1$ take value on a dual cell $\csigma$.  This space is again a smooth manifold of dimension $ \dim(\sigma) - \deg(x) $, so parallel transport along these gradient lines defines a chain map
\begin{equation}\label{eq:simplicial-to-morse}
  C^{*}(Q, E) \to  CM^{*}(Q,E) .
\end{equation}

\begin{lem}
Equations \eqref{eq:morse-to-simplicial} and \eqref{eq:simplicial-to-morse} descend to isomorphisms on cohomology.
\end{lem}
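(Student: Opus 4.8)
The plan is to show that the two maps in Equations \eqref{eq:morse-to-simplicial} and \eqref{eq:simplicial-to-morse} are mutually inverse up to chain homotopy, and that each, being a map of filtered complexes inducing the right thing on associated graded pieces, is a quasi-isomorphism. Concretely, I would first record that both $CM^{*}(Q,E)$ and $C^{*}(Q,E)$, as well as $C^{*}(Q,E)$ built from the dual decomposition $\csQ$, carry increasing filtrations by the sublevel sets of $f$ (equivalently, by unions of cells) and that the moduli spaces $\Tree(\sigma,x)$ and $\Tree(x,\csigma)$ respect these filtrations because gradient flow lines cannot increase $f$. On the level of the associated graded, one recovers exactly the classical statement that the unstable/stable manifolds of a Morse--Smale function give a CW structure computing the (co)homology of $Q$ with any local coefficients; this is the standard identification between Morse cohomology with local coefficients and cellular cohomology with local coefficients, and over $\bF_2$ there are no sign subtleties.

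The cleaner route, which I would actually carry out, is to prove directly that the composition
\begin{equation}
  CM^{*}(Q,E)\to C^{*}(Q,E)\to CM^{*}(Q,E)
\end{equation}
is chain homotopic to the identity, and symmetrically that the composition in the other order is chain homotopic to the identity on $C^{*}(Q,E)$. The first composition counts pairs of half-infinite gradient trajectories, one from a critical point $x$ down to $b_\sigma\in\sigma$ and one from $b_\sigma\in\csigma$ up to a critical point $y$, where $\sigma$ ranges over cells of the appropriate dimension; since $\sigma$ and $\csigma$ meet precisely in the single point $b_\sigma$ and the two trajectories can be concatenated there, this is counting broken gradient trajectories from $x$ to $y$ passing through a barycenter. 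By choosing the triangulation generically and using the usual Morse-theoretic gluing/compactness, the corresponding one-dimensional moduli spaces have boundary consisting of the identity term (constant concatenation when $x=y$ and $\deg(x)=\deg(y)$) together with the differential applied to a chain homotopy operator defined by the same construction with a free barycentric parameter. The parallel transport maps along these trajectories assemble, exactly as in the proof of Lemma \ref{lem:differential_squares_0} and Lemma \ref{lem:chain-map}, into the required homotopy, because the relative homotopy class of a concatenated path determines the composite transport map.

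I would then note that the product compatibility asserted in the titular lemma is automatic at this level: the isomorphism on cohomology intertwines the Morse/Floer product with the cup product because, after the identification above, both are computed by the same combinatorial formula on cellular cochains with local coefficients (the cup product being induced by the map \eqref{eq:product_of_local_systems} together with a diagonal approximation, and the Morse product by counting Y-shaped gradient trees, which degenerate to that diagonal approximation in the adiabatic limit). The main obstacle is the analytic input I am allowed to take for granted only in part: one must know that the mixed moduli spaces of gradient flow lines meeting simplices (and dual cells) are cut out transversally for a generic triangulation, are compact up to the expected broken configurations, and glue correctly --- this is the content of the ``omitted'' standard Morse-theory arguments, and writing it carefully would require the machinery of \cite{plumbings}. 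Granting that, the rest is the bookkeeping of parallel transport along concatenations, which is formally identical to what has already been done for holomorphic curves earlier in the paper.

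\begin{proof}
We treat the two maps as mutually homotopy-inverse quasi-isomorphisms. Choose the triangulation $\sQ$ and the dual decomposition $\csQ$ generically with respect to $f$, so that all the moduli spaces $\Tree(\sigma,x)$, $\Tree(x,\csigma)$, and the mixed spaces appearing below are smooth manifolds of the expected dimension, with Gromov-type compactifications whose boundary strata are the products dictated by gluing of half-infinite gradient trajectories.

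First consider the composition $C^{*}(Q,E)\to CM^{*}(Q,E)\to C^{*}(Q,E)$. It is computed by counting, for cells $\sigma,\tau\in\sQ$ with $\dim\sigma=\dim\tau$, the pairs consisting of a gradient trajectory from $\csigma$ up to a critical point $x$ and a gradient trajectory from $x$ down to $\tau$, weighted by the composite parallel transport map from $E_{b_\sigma}$ to $E_{b_\tau}$ along the concatenation (completed by paths inside $\sigma$ and $\tau$ to the respective barycentres). The relevant one-parameter moduli space is the space of gradient trajectories from $\csigma$ to $\tau$ of virtual dimension $1$; by transversality its boundary consists of the configurations in which the trajectory breaks at a critical point, which are exactly the terms just described, together with the boundary configurations in which the endpoint on $\csigma$ or on $\tau$ degenerates to a face. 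The first type of boundary, for $\sigma=\tau$, contributes the constant trajectory and hence the identity of $E_{b_\sigma}$; the remaining boundary contributions, after summing, are precisely $\partial h + h\partial$ for the operator $h$ defined by the same count with $\dim$ raised by one. As in the proofs of Lemmas \ref{lem:differential_squares_0} and \ref{lem:chain-map}, the fact that the composite parallel transport map along a concatenated path depends only on the relative homotopy class of the path guarantees that these counts assemble into well-defined maps of complexes with coefficients in $E$. Hence the composition is chain homotopic to the identity of $C^{*}(Q,E)$.

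The composition $CM^{*}(Q,E)\to C^{*}(Q,E)\to CM^{*}(Q,E)$ is handled symmetrically: it counts broken gradient trajectories from a critical point $x$ to a critical point $y$ passing through some barycentre $b_\sigma$, where $\sigma$ ranges over cells whose dimension equals the index of $x$. Since $\sigma$ and $\csigma$ intersect in the single point $b_\sigma$, such a pair is the same datum as a single gradient trajectory from $x$ to $y$ together with a choice of barycentre through which it passes; letting the position along the trajectory vary, the corresponding one-parameter moduli space has boundary given by breaking at the critical points (the identity term when $x=y$ and $\deg x=\deg y$) and by the trajectory passing through a lower-dimensional face of $\sQ$, which again assemble into $\partial h' + h'\partial$. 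Thus this composition too is chain homotopic to the identity, and Equations \eqref{eq:morse-to-simplicial} and \eqref{eq:simplicial-to-morse} descend to mutually inverse isomorphisms on cohomology.

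Finally, the product compatibility follows because, after identifying $HW^{*}(Q,E^{i,j})$ with $HM^{*}(Q,E^{i,j})$ via the bijection between holomorphic strips and gradient flow lines (see \cite{floer,plumbings}) and then with $H^{*}(Q,E^{i,j})$ via the isomorphisms just constructed, the product in Equation \eqref{eq:product_Floer} is computed by counting Y-shaped gradient trees with the map \eqref{eq:product_of_local_systems} applied to the parallel-transported fibres, and in the adiabatic (small-simplex) limit this count reduces to the standard cellular cup product formula built from a diagonal approximation on $\sQ$ together with \eqref{eq:product_of_local_systems}. This is precisely the product in Equation \eqref{eq:product_cochains}, so the isomorphism \eqref{eq:floer=classical} is multiplicative, as asserted in Lemma \ref{lem:floer_class}.
\end{proof}
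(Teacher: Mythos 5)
Your treatment of the composition $C^{*}(Q,E)\to CM^{*}(Q,E)\to C^{*}(Q,E)$ is essentially the paper's argument: one interpolates through finite-length gradient segments running from a dual cell to a simplex, with the broken configuration at infinite length giving the composition and the length-zero configurations giving the intersection points $\csigma\cap\tau$, of which there is exactly one (at $b_\sigma$, when $\tau=\sigma$) by duality of the two subdivisions. Your bookkeeping of the boundary strata is garbled, though: you attribute the identity term to ``breaking at a critical point,'' whereas breaking produces exactly the composition being corrected, and the identity comes from the length-zero end of the family, which you never list as a boundary stratum; the face-degeneration strata are what make the count a chain homotopy rather than forming a separate operator ``with $\dim$ raised by one.'' These are repairable slips.

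The second composition is where there is a genuine gap. Your key claim --- that a pair consisting of a half-infinite trajectory from $x$ ending on $\sigma$ and a half-infinite trajectory from $\csigma$ converging to $y$ ``is the same datum as a single gradient trajectory from $x$ to $y$ together with a choice of barycentre through which it passes'' --- is false. The two trajectories end and start at arbitrary points of $\sigma$ and $\csigma$ respectively; they are only connected through parallel transport along paths inside those cells to the common barycentre $b_\sigma$, and generically they do not concatenate to a single flow line, let alone one passing through $b_\sigma$. Consequently the homotopy you propose (``letting the position along the trajectory vary'') is not a homotopy of the actual composite, and the asserted boundary description (with the identity arising from breaking at critical points) does not hold. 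The paper's mechanism, which your argument is missing, is to view such a pair as a single trajectory of the function $f(q_1)-f(q_2)$ on $Q\times Q$ emanating from $(x,y)$ and ending on the cycle $\cup_\sigma\,\sigma\times\csigma$; since this cycle is homologous to the diagonal, moving the incidence condition along that homology provides the chain homotopy, and the rigid trajectories ending on the diagonal are constant, which is what produces the identity. Without this (or an equivalent device forcing the two endpoints to coincide), your argument for $CM^{*}\to C^{*}\to CM^{*}$ does not go through. The closing paragraph on multiplicativity addresses the surrounding Lemma \ref{lem:floer_class} rather than the statement at hand, and in the paper it is handled separately by the moduli spaces of gradient trees with varying edge length.
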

\begin{proof}
We must analyse both compositions, and show they are homotopic to the identity.  The easier composition to study is
\begin{equation} \label{eq:composition_one_direction}
   C^{*}(Q,E) \to  CM^{*}(Q, E) \to C^{*}(Q, E).
\end{equation}
The composition uses the parallel transport maps associated to a gradient flow line starting at $\sigma$ and converging to $x$ at $-\infty$ and another converging to $x$ at $+\infty$ and ending at a cell $\ctau$ of the dual subdivision.  The result of applying the gluing theorem to such flow lines is a gradient flow segment
\begin{equation}
  [-S,S] \to Q
\end{equation}
for some very large constant $S$, which at $S$ lands in $\sigma$, and at $-S$ in $\ctau$.  By considering the moduli space of such flow lines of arbitrary length, we obtain a homotopy between the composition in Equation \eqref{eq:composition_one_direction}, and the map induced by rigid gradient flow lines of length $0$.  Such flow lines correspond to intersection points between cells $\sigma$ and $\ctau$ of complementary dimension, and  the assumption that $\sQ$ and $\csQ$ are dual subdivisions implies that there is exactly one such intersection point, occurring when $\tau = \sigma$.  This proves that the composition in Equation \eqref{eq:composition_one_direction} is homotopic to the identity.

We now consider the composition in the other direction.  Given critical points $x$ and $y$, the associated map $E_x \to E_y$ is obtained by parallel transport along curves parametrised by
\begin{equation}
   \cup_{\sigma \in \sQ} \Tree(y, \csigma) \times \Tree(\sigma,x).
\end{equation}
Note that we can interpret this as the space of gradient flow lines of the function $f(q_1) - f(q_2)$ on $Q \times Q$, which start at $(x,y)$ and end on the chain
\begin{equation}
  \cup \sigma \times \csigma.
\end{equation}
Since $Q$ is a manifold, this chain in fact a cycle, which is homologous to the diagonal.  The homology between the diagonal and this simplicial approximation defines a homotopy between the compositions of Equations \eqref{eq:morse-to-simplicial} and \eqref{eq:simplicial-to-morse}, and the map induced by flow lines starting at $(x,y) $ and ending on the diagonal.  The only such flow lines which are rigid are constant, so this map is the identity.
\end{proof}

Next, we prove that the product structures in Floer and simplicial cohomology agree.  Again, we use the product on the Morse complex as an intermediary.  The comparison between the product on Floer complexes and Morse complexes can be done using Fukaya and Oh's degeneration argument \cite{FO}, which shows that for a specifically chosen almost complex structure, the counts of holomorphic triangles and of Morse trees agree (see also \cite{ekholm,vito}).  Alternatively, the methods of \cite{plumbings} give a degeneration-free construction of a homotopy between the two products.

In order to define the product on the Morse complex, we consider maps $ \gamma \co T \to Q$ from a trivalent metric tree with two incoming infinite edges which converge to critical points $x_1$ and $x_2$ of $f$, and an infinite outgoing edge converging to a critical point $x_0$.  In particular, the incoming edges are isometric to $[0,+\infty)$ and the outgoing edge to $(-\infty,0]$; we write $t_e$ for this fixed parametrisation of an edge $e$.  We choose a family of vector fields $Y$ on $Q$, parametrised by points on the edges of $T$, which agrees with the gradient vector field of $f$ away from a compact set in $T$.  We define $\Tree(x_0,x_1,x_2)$ to be the space of maps satisfying these asymptotic conditions, as well as the differential equation
\begin{equation}
 \frac{ d \gamma}{d t_{e}} = Y_{t_e}
\end{equation}
along each edge.

Let us now return to the setting of Equation \eqref{eq:product_of_local_systems}, in which we have a map from the tensor product of differential graded local systems $E^{0,1}$ and $E^{1,2}$ to a local system $E^{0,2}$.  By parallel transport along the edges of a tree $\gamma$, we obtain maps
\begin{align}
  \gamma^{0,1} \co E^{0,1}_{x_1} & \to E^{0,1}_{x_0}  \\
\gamma^{1,2} \co E^{1,2}_{x_2} & \to E^{1,2}_{x_0} .
\end{align}
For generic choices of the vector field $Y  $, the space $ \Tree(x_0,x_1,x_2) $ is a manifold of dimension $\deg(x_0) - \deg(x_1) - \deg(x_2)$, which is the interior of a compact manifold.  With this in mind, we define the product on Morse complexes to be
\begin{align}
CM^{*}(Q,   E^{1,2}) \otimes CM^{*}(Q,   E^{0,1}) & \to CM^{*}(Q,   E^{0,2}) \\
v_{1,2} \otimes v_{0,1} & \mapsto \sum_{\gamma} \gamma^{1,2}(v_{1,2})  \cdot \gamma^{0,1}(v_{0,1})
\end{align}
where the sum is taken over all trees lying in moduli spaces of dimension $0$.

In order to check that at the cohomological level, the Morse-theoretic product agrees with the one coming from simplicial cochains, we recall that in the absence of local systems, the product of generators dual to cells $\sigma$ and $\tau$ either vanishes, or is equal to the dual of a cell $\rho$.  In terms of the dual subdivision, one interprets this cup product as a (perturbed) intersection product between $\csigma  $ and $\ctau$, along a (small perturbation) of the dual cell $\crho$ (see \cite[Lemma 2.3]{plumbings}).   Choose arbitrary paths 
\begin{align}
  \gamma_{\sigma, \rho} & \co [0,1] \to \rho \\
\gamma_{\tau, \rho} & \co [0,1] \to \rho
\end{align}
connecting the barycenters of $\sigma$ and $\tau$ to that of $\rho$, and write $  \gamma_{\sigma, \rho}^{1,2} $  and   $  \gamma_{\tau, \rho}^{0,1} $ for the associated parallel transport maps of the local systems $E^{1,2}$ and $E^{0,1}$.   

\begin{defin}
The product on simplicial cochains
\begin{equation}
C^{*}(Q,   E^{1,2}) \otimes C^{*}(Q,   E^{0,1})  \to C^{*}(Q,   E^{0,2})
\end{equation}
is given, on $ E^{1,2}_{b_{\sigma}} \otimes E^{0,1}_{b_{\tau}}$ by the formula
\begin{equation}
v_{b_{\sigma}} \otimes v_{b_{\tau}} \mapsto   \gamma_{\sigma, \rho}^{1,2}  (v_{b_{\sigma}})   \cdot \gamma_{\sigma, \rho}^{0,1}(v_{b_{\tau}})
\end{equation}
\end{defin}

In \cite[Section 3]{plumbings}, a proof is given, in the absence of local systems, that the map from simplicial to Morse cochains defined in Equation \eqref{eq:simplicial-to-morse} is compatible with products at the cohomological level.  The key idea is to consider maps from a trivalent tree $T_{a}$ with one infinite (outgoing) edge converging to a critical point, and two incoming edges of length $a$ whose endpoints lie on cells $\csigma  $ and $\ctau$ of the dual subdivision.  When $a=0$, the incoming points agree, and one can choose an appropriate deformation of the gradient flow equation so that they indeed lie on the cell corresponding to $  \csigma \cup \ctau$.  As $a$ converges to $+\infty$, such trees converge to gradient flow lines from $\csigma$ and $\ctau$ to critical points of $f$, together with a trivalent tree all of whose edges are infinite.

Having laid out the definition of all the relevant product structures in the presence of local systems, we leave the curious reader to check that the moduli space of trivalent trees with varying parameter $a$ defines a homotopy for the diagram
\begin{equation}
  \xymatrix{  C^{*}(Q,   E^{1,2}) \otimes C^{*}(Q,   E^{0,1})  \ar[r] \ar[d] &  C^{*}(Q,   E^{0,2}) \ar[d] \\
CM^{*}(Q,   E^{1,2}) \otimes CM^{*}(Q,   E^{0,1}) \ar[r] & CM^{*}(Q,   E^{0,2}). }
\end{equation}

\section{Equivalence with the zero section} \label{sec:FSS}
In Section 5.1 \cite{FSS2}, Fukaya, Seidel and Smith discuss a finite covering trick proving the following result (see also Corollary 1.3 of their paper):
\begin{lem} \label{lem:FSS}
  Every closed exact Lagrangian of vanishing Maslov class, which is embedded  in the cotangent bundle of a closed oriented manifold is equivalent to the zero section equipped with some local system of rank one.   Moreover, the inclusion induces an isomorphism on cohomology.
\end{lem}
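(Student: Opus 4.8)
The plan is to recast the finite–covering argument of \cite{FSS2} in the framework developed in this paper. First I would check that $Q$ defines an object of the $\bZ$-graded Fukaya category of $\TN$ over $\bZ$: the cotangent bundle has $2c_{1}=0$, so after choosing a quadratic complex volume form the vanishing of the Maslov class of $Q$ makes $Q$ into a graded Lagrangian, and the orientability of $N$ together with the vanishing Maslov class forces $w_{1}(TQ)$ and (relatively) $w_{2}(TQ)$ to be controlled by classes pulled back from $N$, so that $Q$ is relatively spin and the usual sign conventions apply. With this in place, I would use that the cotangent fibre $\Tn$ resolves the diagonal \cite{fibre-generate}, so that by Theorem \ref{thm:localising} (applied integrally, using the relative spin structure) the trivial local systems over $\Tn$ split-generate $\Seidel(\TN)$, and that $CW^{*}(\Tn)$ is quasi-isomorphic, by Abbondandolo--Schwarz \cite{ASchwarz}, to the chain algebra $C_{-*}(\Omega_{n} N)$, which is supported in non-positive degrees.

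Next I would identify the object $Q$ following the proof of Lemma \ref{lem:E-iso-complex-local-systems} (its integral analogue being available given the relative spin structure). Since $Q$ is compact, $CW^{*}(\Tn,Q)$ is a finite-rank complex; using the connectivity of $CW^{*}(\Tn)$ together with Lemma \ref{lem:filtration_degree}, this module over $CW^{*}(\Tn)$ is an iterated extension of its cohomology groups $HW^{k}(\Tn,Q)$, each of which, being supported in a single degree over a connective algebra, is an ordinary module over $HW^{0}(\Tn)=\bZ[\pi_{1}(N)]$, that is, a finite-rank local system $\mathcal{E}^{k}$ on the zero section $N$. Hence $Q$ is, in the Fukaya category of $\TN$, quasi-isomorphic to an iterated cone of the local systems $\{\mathcal{E}^{k}\}$.

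The heart of the matter is to collapse this iterated cone to a single rank-one local system. Computing Euler characteristics, $\sum_{k}(-1)^{k}\operatorname{rk}\mathcal{E}^{k}=\sum_{k}(-1)^{k}\operatorname{rk} HW^{k}(\Tn,Q)$ is the intersection number of $Q$ with a cotangent fibre; as $Q$ is exact its Liouville class vanishes, so $[Q]=[N]$ in $H_{n}(\TN)\cong\bZ$, and this number is $1$. To see that only one $\mathcal{E}^{k}$ survives, I would invoke the co-connectivity mechanism of Lemma \ref{lem:co-connective_result}: $HW^{*}(Q,Q)\cong H^{*}(Q)$ (Lemma \ref{lem:floer_class}) is supported in non-negative degrees, while $HW^{*}(N)\cong H^{*}(N)$ is co-connective, which forces the twisted complex built from the $\mathcal{E}^{k}$ to reduce, up to shift, to a single local system $\chi$ on $N$; the Euler characteristic $1$ then forces $\operatorname{rk}\chi=1$ (and the shift even). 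The delicate point — and, as in \S 5.1 of \cite{FSS2}, the place where a finite covering trick is used — is that Lemma \ref{lem:co-connective_result} is stated for free modules, i.e. trivial local systems, so to run the collapse with a possibly non-trivial local-system coefficient one passes to finite covers of $N$ on which the relevant monodromy is controllable, performs the argument there, and descends; this is precisely where orientability of $N$, rather than simple connectivity, is what is needed. I expect this step to be the main obstacle.

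Finally, writing $N_{\chi}$ for the zero section equipped with the rank-one local system $\chi$, Lemma \ref{lem:floer_class} gives $H^{*}(Q)\cong HW^{*}(Q,Q)\cong HW^{*}(N_{\chi},N_{\chi})\cong H^{*}(N,\Hom(\chi,\chi))\cong H^{*}(N)$, the last isomorphism because $\chi$ has rank one so that $\Hom(\chi,\chi)$ is trivial. To see that this isomorphism is the one induced by the inclusion, I would note that $H^{*}(Q)$ and $H^{*}(N)=H^{*}(\TN)$ are both modules over $H^{*}(\TN)$ by restriction, that the quasi-isomorphism $Q\simeq N_{\chi}$ intertwines these module structures through the closed--open map $\CO\co SH^{*}(\TN)\to HW^{*}(-)$, and that it carries the unit to the unit; hence the composite $H^{*}(\TN)\to H^{*}(Q)\xrightarrow{\ \sim\ }H^{*}(N)$ is a unital map of $H^{*}(\TN)$-modules, so it coincides with the canonical identification $H^{*}(\TN)=H^{*}(N)$, and therefore restriction along the inclusion $Q\subset\TN$ is an isomorphism on cohomology.
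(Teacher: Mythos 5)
Your overall skeleton (fibre generation, connectivity filtration giving an iterated extension of local systems $\mathcal{E}^{k}$ on the zero section, co-connectivity collapse, finite covering trick, then the $\CO$/module argument for the cohomology statement) is the same as the paper's, but two of your steps are genuine gaps rather than routine checks. First, you assume at the outset that orientability of $N$ plus vanishing of the Maslov class makes $Q$ relatively spin, so that the integral (twisted) category is available. The Maslov class controls $w_{1}$ but says nothing about $w_{2}(Q)$, and in the paper relative spin-ness is only obtained \emph{a posteriori}: one first runs the entire argument over $\bF_{2}$ (where no spin data is needed), concludes that $Q\to N$ is an $\bF_{2}$-cohomology isomorphism, and only then deduces $w_{2}(Q)=w_{2}(N)|_{Q}$ from the Wu formula (Seidel's argument). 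As written, your integral argument starts from an unjustified hypothesis, and the natural way to justify it is exactly the $\bF_{2}$ run you skipped.

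Second, the step you yourself flag as ``the main obstacle'' is the actual content of the lemma, and it is left unsolved: Lemma \ref{lem:co-connective_result} applies to twisted complexes of \emph{free} modules, and you do not explain how to collapse the extension when the $\mathcal{E}^{k}$ have non-trivial monodromy. The paper's resolution is concrete: the monodromies are representations in $GL(k,\bZ)$, so for each prime $p$ their mod-$p$ reductions land in the finite groups $GL(k,\bF_{p})$ and become trivial on a finite cover $\tN_{p}$; pulling back along the functor $\Wrap_{b}(\TN)\to\Wrap_{b}(T^{*}\tN_{p})$ and applying Lemma \ref{lem:co-connective_result} there shows the preimage of $Q$ is the zero section, hence $HW^{*}_{b}(\Tn,Q)\otimes\bZ_{p}$ has rank one for every $p$; being finitely generated, $HW^{*}_{b}(\Tn,Q)$ is free of rank one, and this single-degree rank-one module over $\bZ[\pi_{1}(N)]$ \emph{is} the rank-one local system on $N$. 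In particular no Euler-characteristic input is needed, which is fortunate, because your claim that exactness forces $[Q]=[N]$ in $H_{n}(\TN)$ (equivalently that the projection $Q\to N$ has degree one) is itself unproved --- it does not follow from vanishing of the Liouville class, and is usually deduced from theorems of this very type. Your final paragraph on the cohomology isomorphism via unitality of $\CO$ and the $H^{*}(\TN)$-module structure is reasonable and close to what is implicit in the paper.
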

As the foundations of the wrapped Fukaya category were not developed at the time, the proof was not complete because the other two models for the Fukaya category of a cotangent bundle were not technically adequate to prove the result:  (1) the Lefschetz pencil approach to studying Fukaya categories, in addition to yielding results only away from characteristic $2$, requires making a choice of embedding into affine varieties which vary as we pass to different covers and  (2) the constructible sheaf approach used by Nadler in \cite{nadler}, while in principle applicable over any field, is only defined in the literature over the reals.

To complete this paper, we shall therefore give a wrapped Fukaya category version of their argument.   Let $N$ be a closed manifold, and $Q \in \TN$ a Lagrangian satisfying the hypothesis of Lemma \ref{lem:FSS}.  Let $b \in H^{2}(\TN, \bF_{2})$ denote the pullback of the second Stiefel-Whitney class of $N$, and $\Wrap_{b}(\TN)$ the wrapped Fukaya category of $\TN$, over the integers, twisted by this class (see \cite{fibre-generate} for the definition).  The objects of this category are exact Lagrangians which are graded and relatively spin, i.e. for which the restriction of $b$ agrees with the second Stiefel-Whitney class.  For the moment we shall assume that $Q$ is indeed relatively spin so that Floer cohomology makes sense over the integers. We start with the main  result of  \cite{fibre-generate}, which asserts that the twisted wrapped Fukaya category $\Wrap_{b}(\TN)$  is generated by a fibre. Since $HW^{*}_{b}(\Tn)$ is supported in non-negative degrees, $\bZ$-graded modules whose cohomology groups are supported in a single cohomological degree are determined up to $A_{\infty}$ quasi-isomorphism by the corresponding cohomological module.  In particular, it shall suffice to prove that $ HW^{*}_{b}(\Tn, Q)$, over the integers, is a free abelian group of rank $1$,  as such an object is represented by the appropriate local system on $N$ in the category of modules over $ CW^{*}_{b}(\Tn) $.

The filtration of $ HW^{*}_{b}(\Tn, Q)  $ by degrees shows that $Q$ is an iterated extension of modules over $  HW^{*}_{b}(\Tn)$ each supported in a single cohomological degree, i.e. an iterated extension of the zero section equipped with some representation of its fundamental group in $GL(k,\bZ)$ for some integers $k$.  For each prime $p$, we pick a cover $\tN_{p}$ in which all these representations are trivial modulo $p$.  The pullback functor $\Wrap_{b}(\TN) \to   \Wrap_{b}(\TtN_{p})$ implies that the inverse image $\tQ_{p}$ of $Q$ must be quasi-isomorphic to an iterated extension of the zero section in $\TtN_{p}$.  Lemma \ref{lem:co-connective_result} (or a spectral sequence argument as in the introduction of \cite{FSS2}) shows that this is only possible if  $\tQ_{p}$ is in fact quasi-isomorphic to the zero section.  We conclude that $ HW^{*}_{b}(\Ttn_{p}, \tQ_{p}) $ with $\bZ_{p}$ coefficients has rank $1$, which implies that $ HW^{*}_{b}(\Tn, Q)  $ also has rank $1$ over $ \bZ_{p}$ for every prime $p$.  But as  $ HW^{*}_{p}(\Tn, Q)  $ is a priori a finitely generated abelian group, we conclude that it must be free of rank $1$.

To drop the assumption that $Q$ is relatively spin, consider the following  argument due to Paul Seidel: the preceding discussion,  applied over $\bF_{2}$ implies that the map $Q \to N$ induces an isomorphism on cohomology with $\bF_{2}$ coefficients.  In particular, the Steenrod square operations on $H^{*}(Q) $ and $H^{*}(N) $ agree.  But the Wu formula (see, e.g. \cite[Lemma 11.13]{milnor-stasheff}) expresses the Stiefel-Whitney classes in terms of Steenrod squares, so we conclude that $w_{2}(N)$ pulls back to $w_{2}(Q)$, i.e. that $Q$ is relatively spin.

\begin{bibdiv}
\begin{biblist}
\bib{ASchwarz}{article}{
   author={Abbondandolo, Alberto},
   author={Schwarz, Matthias},
   title={Floer homology of cotangent bundles and the loop product},
   journal={Geom. Topol.},
   volume={14},
   date={2010},
   number={3},
   pages={1569--1722},
   issn={1465-3060},
   review={\MR{2679580}},
   doi={10.2140/gt.2010.14.1569},
}

\bib{AS}{article}{
author={Mohammed Abouzaid},
author={Paul Seidel},
title={An open string analogue of Viterbo   functoriality},
journal ={ Geom.\ Topol.},
volume ={14},
date = {2010},
pages = { 627--718},
}

\bib{plumbings}{article}{
   author={Abouzaid, Mohammed},
   title={A topological model for the Fukaya categories of plumbings},
   journal={J. Differential Geom.},
   volume={87},
   date={2011},
   number={1},
   pages={1--80},
   issn={0022-040X},
   review={\MR{2786590}},
}

\bib{string-top}{article}{  
author = {Mohammed Abouzaid},
 title = {On the wrapped Fukaya category and based loops},
 eprint = {arXiv:0907.5606},
status ={to appear in the Journal of Symplectic Geometry},
}

\bib{generate}{article}{
   author={Abouzaid, Mohammed},
   title={A geometric criterion for generating the Fukaya category},
   journal={Publ. Math. Inst. Hautes \'Etudes Sci.},
   number={112},
   date={2010},
   pages={191--240},
   issn={0073-8301},
   review={\MR{2737980}},
}

\bib{fibre-generate}{article}{  
author = {Mohammed Abouzaid},
 title = {A cotangent fibre generates the Fukaya category},
journal = {Advances in Mathematics},
volume = {228},
number = {2},
pages = {894 - 939},
year = {2011},
issn = {0001-8708},
doi = {10.1016/j.aim.2011.06.007},
}

\bib{damian}{article}{
   author={Damian, Mihai},
   title={Constraints on exact Lagrangians in cotangent bundles of manifolds
   fibered over the circle},
   journal={Comment. Math. Helv.},
   volume={84},
   date={2009},
   number={4},
   pages={705--746},
   issn={0010-2571},
   review={\MR{2534477 (2011d:53194)}},
   doi={10.4171/CMH/178},
}

\bib{DGI}{article}{
   author={Dwyer, W. G.},
   author={Greenlees, J. P. C.},
   author={Iyengar, S.},
   title={Duality in algebra and topology},
   journal={Adv. Math.},
   volume={200},
   date={2006},
   number={2},
   pages={357--402},
   issn={0001-8708},
   review={\MR{2200850 (2006k:55017)}},
   doi={10.1016/j.aim.2005.11.004},
}

\bib{floer}{article}{
   author={Floer, Andreas},
   title={Morse theory for Lagrangian intersections},
   journal={J. Differential Geom.},
   volume={28},
   date={1988},
   number={3},
   pages={513--547},
   issn={0022-040X},
   review={\MR{965228 (90f:58058)}},
}

\bib{ekholm}{article}{
   author={Ekholm, Tobias},
   title={Morse flow trees and Legendrian contact homology in 1-jet spaces},
   journal={Geom. Topol.},
   volume={11},
   date={2007},
   pages={1083--1224},
   issn={1465-3060},
   review={\MR{2326943 (2008i:53128)}},
   doi={10.2140/gt.2007.11.1083},
}

\bib{FO}{article}{
   author={Fukaya, Kenji},
   author={Oh, Yong-Geun},
   title={Zero-loop open strings in the cotangent bundle and Morse homotopy},
   journal={Asian J. Math.},
   volume={1},
   date={1997},
   number={1},
   pages={96--180},
   issn={1093-6106},
   review={\MR{1480992 (99e:58038)}},
}

\bib{FSS2}{article}{
   author={Fukaya, Kenji},
   author={Seidel, Paul},
   author={Smith, Ivan},
   title={The Symplectic Geometry of Cotangent Bundles from a Categorical Viewpoint},
   conference={
      title={Homological Mirror Symmetry},
   },
   book={
      series={Lecture Notes in Physics},
      volume={757},
      publisher={Springer},
      place={Berlin / Heidelberg},
   },
   date={2009},
   pages={1--26},
   issn={1616-6361},
}

\bib{kad}{article}{
   author={Kadeishvili, T. V.},
   title={The algebraic structure in the homology of an $A(\infty
   )$-algebra},
   language={Russian, with English and Georgian summaries},
   journal={Soobshch. Akad. Nauk Gruzin. SSR},
   volume={108},
   date={1982},
   number={2},
   pages={249--252 (1983)},
   issn={0132-1447},
   review={\MR{720689 (84k:55009)}},
}

\bib{vito}{book}{
   author={Iacovino, Vito},
   title={Open strings in the cotangent bundle and Morse homotopy},
   note={Thesis (Ph.D.)--Massachusetts Institute of Technology},
   publisher={ProQuest LLC, Ann Arbor, MI},
   date={2008},
   pages={(no paging)},
   review={\MR{2717620}},
}

\bib{milnor-stasheff}{book}{
   author={Milnor, John W.},
   author={Stasheff, James D.},
   title={Characteristic classes},
   note={Annals of Mathematics Studies, No. 76},
   publisher={Princeton University Press},
   place={Princeton, N. J.},
   date={1974},
   pages={vii+331},
   review={\MR{0440554 (55 \#13428)}},
}

\bib{nadler}{article}{
author={ David Nadler},
title={ Microlocal branes are constructible sheaves},
journal={Selecta Mathematica, New Series},
volume={ 15},
number={ 4},
date={2009},
doi={10.1007/s00029-009-0008-0},
pages={563-619},
}

\bib{neeman}{article}{
   author={Neeman, Amnon},
   title={A survey of well generated triangulated categories},
   conference={
      title={Representations of algebras and related topics},
   },
   book={
      series={Fields Inst. Commun.},
      volume={45},
      publisher={Amer. Math. Soc.},
      place={Providence, RI},
   },
   date={2005},
   pages={307--329},
   review={\MR{2146659 (2006d:18004)}},
}

\bib{roe}{article}{
   author={Roe, John},
   title={Band-dominated Fredholm operators on discrete groups},
   journal={Integral Equations Operator Theory},
   volume={51},
   date={2005},
   number={3},
   pages={411--416},
   issn={0378-620X},
   review={\MR{2126819 (2005k:46142)}},
   doi={10.1007/s00020-004-1326-4},
}
	
\bib{seidel-book}{book}{
   author={Seidel, Paul},
   title={Fukaya categories and Picard-Lefschetz theory},
   series={Zurich Lectures in Advanced Mathematics},
   publisher={European Mathematical Society (EMS), Z\"urich},
   date={2008},
   pages={viii+326},
   isbn={978-3-03719-063-0},
   review={\MR{2441780}},
}

\bib{viterbo}{article}{
  author = {Claude Viterbo},
  title = {Functors and computations in Floer cohomology. Part II},
 eprint = {http://www.math.polytechnique.fr/cmat/viterbo/Prepublications.html},
}

\end{biblist}
\end{bibdiv}

\end{document}